\DeclareRobustCommand{\SkipTocEntry}[5]{}
\setlist{itemsep=.5\baselineskip,topsep=.5\baselineskip}
\numberwithin{equation}{section}
\theoremstyle{plain}
\newtheorem{theorem}{Theorem}[section]
\newtheorem{lemma}[theorem]{Lemma}
\newtheorem{proposition}[theorem]{Proposition}
\newtheorem{example}[theorem]{Example}
\newtheorem{definition}[theorem]{Definition}
\newtheorem{remark}[theorem]{Remark}
\newtheorem{cor}[theorem]{Corollary}
\newtheorem{conjecture}[theorem]{Conjecture}
\newcommand{\iso}{\cong}
\newcommand{\arr}{\rightarrow}
\newcommand{\incl}{\hookrightarrow}
\newcommand{\R}{\mathbb{R}}
\newcommand{\C}{\mathbb{C}}
\newcommand{\Z}{\mathbb{Z}}
\newcommand{\N}{\mathbb{N}}
\newcommand{\Q}{\mathbb{Q}}
\newcommand{\K}{\mathbb{K}}
\newcommand{\eps}{\epsilon}
\newcommand{\Id}{1}
\newcommand{\mcA}{\mathcal{A}}
\newcommand{\mcB}{\mathcal{B}}
\newcommand{\mcC}{\mathcal{C}}
\newcommand{\mcF}{\mathcal{F}}
\newcommand{\mcH}{\mathcal{H}}
\newcommand{\mcL}{\mathcal{L}}
\newcommand{\mcP}{\mathcal{P}}
\newcommand{\mcR}{\mathcal{R}}
\newcommand{\mcW}{\mathcal{W}}
\newcommand{\mcX}{\mathcal{X}}
\newcommand{\mbN}{\mathbb{N}}
\newcommand{\abs}[1]{\lvert\tinyspace #1 \tinyspace\rvert}
\renewcommand{\Re}{\operatorname{Re}}
\newcommand{\I}{\mathbbm{1}} %identity map
\newcommand{\ang}[1]{\langle #1 \rangle}
\newcommand{\norm}[1]{\lVert #1 \rVert} %norm
\newcommand{\wtd}{\widetilde}
\renewcommand{\abs}[1]{\lvert #1 \rvert}
\newcommand{\F}{\mathcal{F}}
\DeclareMathOperator{\HALT}{HALT}
\DeclareMathOperator{\coHALT}{coHALT}
\DeclareMathOperator{\PSPACE}{PSPACE}
\newcommand{\sgn}{\text{sgn}}
\title{Positivity is undecidable in tensor products of free algebras}
\author[Arthur Mehta]{Arthur Mehta$^{1,2}$}
\author[William Slofstra]{William Slofstra$^{3,4}$}
\author[Yuming Zhao]{Yuming Zhao$^{5}$}
\address[1]{Department of Mathematics and Statistics, University of Ottawa}
\address[2]{Nexus for Quantum Technologies, University of Ottawa}
\address[3]{Institute for Quantum Computing, University of Waterloo}
\address[4]{Department of Pure Mathematics, University of Waterloo}
\address[5]{QMATH, Department of Mathematical Sciences, University of Copenhagen}
\email{amehta2@uottawa.ca}
\email{weslofst@uwaterloo.ca}
\email{yuming@math.ku.dk}
\date{}
\begin{document}
\maketitle
\begin{abstract}
It is well known that an element of the algebra of noncommutative $*$-polynomials is positive in all $*$-representations if and only if it is a sum of squares. This provides an effective way to determine if a given $*$-polynomial is positive, by searching through sums of squares decompositions. We show that no such procedure exists for the tensor product of two noncommutative $*$-polynomial algebras: determining whether a $*$-polynomial of such an algebra is positive is coRE-hard. 
We also show that it is coRE-hard to determine whether a noncommutative $*$-polynomial is trace-positive. Our results hold if noncommutative $*$-polynomial algebras are replaced by other sufficiently free algebras such as group algebras of free groups or free products of cyclic groups.

\end{abstract}

\section{Introduction}

Recall that a self-adjoint bounded operator $T$ on a Hilbert space $H$ is
positive if $\langle v, T v \rangle \geq 0$ for all $v \in H$.  A self-adjoint
element $\alpha$ of a complex $*$-algebra $\mcA$ is said to be \emph{positive}
if $\pi(\alpha)$ is positive for all $*$-representations $\pi : \mcA \to \mcB(H)$.
Suppose that $\mcA$ is the complexification of a finitely generated $*$-algebra
$\mcA_{\Q}$ over $\Q$.  Elements of $\mcA_{\Q}$ can be represented as
noncommutative $*$-polynomials with rational coefficients, and hence we can work with them
concretely, for instance on a computer.
 
In many situations, we'd like to be able to decide whether an element $a \in
\mcA_{\Q}$ is positive in $\mcA$, and if it is, find a certificate of
positivity.  A prototypical example is $\mcA = \C \F_n$, the group algebra of
the free group $\F_n$, with $\mcA_{\Q} = \Q \mcF_n$. In this case, the
following two properties hold:
\begin{enumerate}[(a)]
    \item Every positive element $\alpha \in \mcA$ is a \emph{sum of (hermitian)
        squares (SOS)}, meaning that there are $b_1,\ldots,b_k \in \mcA$ such that
        $\alpha = \sum_{i=1}^k b_i^* b_i$. 
    \item A self-adjoint element $\alpha \in \mcA$ is positive if and only if 
        $\pi(\alpha)$ is positive in all finite-dimensional representations
        $\pi : \mcA \to \mcB(H)$.
\end{enumerate}
Property (a) is called the \emph{SOS property}; for
archimedean algebras (see the next section for a definition) such as $\C \F_n$,
this property is also called \emph{archimedean closed}. Since a sum of squares
is always positive, a sum of squares decomposition for an element $a$ can be
used as a certificate of positivity.  The SOS property states that every
positive element has such a certificate.  For archimedean algebras, property
(b) states that the enveloping $C^*$-algebra of $\mcA$ is \emph{residually
finite-dimensional (RFD)}, or equivalently that the quadratic module generated
by hermitian squares in $\mcA$ is RFD \cite{ANT18}. For convenience, we'll
shorten this and just say that $\mcA$ is RFD.\footnote{But note that this is
not equivalent to the standard definition of RFD for $*$-algebras, which says
that all non-zero elements are non-zero in some finite-dimensional
representation.} If $\mcA$ is RFD, then for every non-positive self-adjoint
element $\alpha \in \mcA$, there is a finite-dimensional representation $\phi$ such
that $\phi(\alpha)$ is not positive.  Thus if $\mcA_{\Q}$ is finitely presented, then
this representation can be used as a certificate of non-positivity.  
If $\mcA$ has the SOS property and is RFD, and $\mcA_{\Q}$ is
finitely presented, it is possible to decide whether a self-adjoint element $\alpha
\in \mcA_{\Q}$ is positive by simultaneously searching for a sum of squares
decomposition for $\alpha$, and for a finite-dimensional representation showing that
$\alpha$ is not positive. 

For the free group algebra, property (a) is sometimes attributed to an
unpublished result of Schm\"{u}dgen from the 1980s (see \cite{NT12,Ozawa13}),
and can also be found in \cite{McC01,HMP,BT07,NT12,Ozawa13}.  Property (b) is a
result of Choi \cite{Choi}. Both properties also hold for the $*$-algebra of
noncommutative $*$-polynomials $\C^*\ang{x_1,\ldots,x_n}$ by theorems of Helton
\cite{Hel02} and McCullough \cite{McC01}, and for the $*$-algebra of $n$
contractions by theorems of Helton-McCullough-Putinar \cite{HMP} or
Helton-Klep-McCullough \cite{HKM12}. For the group algebra of a virtually free
group such as the free product $\Z_m^{*n}$ of cyclic groups, a proof of
both properties is again attributed to Schm\"{u}dgen \cite{NT12}, with a
complete proof appearing in \cite{GonenCohen2023}.  That all these algebras are
RFD also follows from \cite{GM90} or \cite{EL92}. Thus it is possible to decide
positivity for all these algebras using the algorithm above, although actually
we can do better: the proofs of the SOS property above are effective, meaning
that they provide bounds on the degree of polynomials in the sum of squares
needed to show that an element is positive. Hence we can decide positivity by
searching just through SOS decompositions and stopping when the bound is
reached. 

In comparison, the commutative polynomial rings $\C[x_1,\ldots,x_n]$ famously
do not have the SOS property for $n \geq 2$. However, Artin's solution of
Hilbert's 17th problem states that every positive polynomial is a sum of
squares of rational functions \cite{Art27}.  More generally, a result of
Scheiderer \cite{Sch00} states that finitely generated connected commutative
$*$-algebras of dimension $\geq 3$, such as the group algebra $\C \Z^n$ for $n
\geq 3$, do not have the SOS property (provided they have a state). 
Certificates of positivity are provided for all finitely generated commutative
algebras by the Krivine-Stengle Positivstellensatz \cite{Kri64,Ste74}.
Effective versions of Artin's theorem and the Krivine-Stengle
Positivstellensatz are proven in \cite{LPR20}, and although the degree bounds
are too large to yield practical algorithms, this gives a way to decide
positivity in principle. For finitely generated commutative algebras,
positivity can also be decided with Tarski quantifier elimination or other
algorithms for the first-order theory of the reals. In fact, since
non-positivity lies in the existential theory of the reals, positivity can be
decided in PSPACE \cite{Can88}.  

If $\mcA$ is a finitely generated commutative archimedean $*$-algebra, then
Schm\"{u}dgen's Positivstellensatz states that every element which is strictly
positive on the state space of $\mcA$ is a sum of squares \cite{Sch91}. For all
archimedean $*$-algebras $\mcA$, Helton and McCullough's Positivstellensatz
\cite{HM04} says that $\alpha \in \mcA$ is positive if and only if $\alpha +
\eps$ is a sum of squares for all $\eps > 0$ (this exact formulation seems to
appear first in \cite{Ci}, see also \cite{Sch09,Ozawa13}).  This does not give a
method for deciding positivity, but does show that deciding positivity is in
the class $\Pi_2^0$ for finitely presented archimedean algebras. 

While positivity is decidable for the free group algebras and other
prototypical noncommutative $*$-algebras $\mcA$ listed above, the point of this
paper is to show that positivity becomes undecidable for tensor products $\mcA
\otimes \mcA$ of such algebras:
\begin{theorem}\label{thm:mainbipartite}
    Let $\mcA_\Q$ be one of the algebras $\Q^*\langle x_1,\ldots,x_n\rangle$ or
    $\Q \F_n$ for $n \geq 2$, or $\Q \Z_m^{*n}$ for $n \geq 3$, $m \geq 2$ or
    $n \geq 2$, $m \geq 3$, and let $\mcA = \mcA_\Q \otimes_\Q \C$ be the
    complexification. Then the decision problem
    \begin{equation*}\tag{Pos}
        \text{Given $\alpha \in \mcA_\Q \otimes_{\Q} \mcA_\Q$, is $\alpha$ positive
            as an element of $\mcA \otimes_{\C} \mcA$?}
    \end{equation*}
    is coRE-hard. 
\end{theorem}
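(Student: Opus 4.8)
The plan is to reduce from a known coRE-hard problem about group-theoretic or operator-algebraic objects whose complexity is already established, namely the halting problem for a suitable class of Turing machines encoded into the nonlocal-games/synchronous-strategy framework. The key observation driving the reduction is that the tensor product $\mcA \otimes_\C \mcA$ of two free algebras naturally models \emph{bipartite} systems: a $*$-representation of $\mcA \otimes \mcA$ is essentially a pair of commuting $*$-representations of $\mcA$, so positivity of an element $\alpha \in \mcA_\Q \otimes_\Q \mcA_\Q$ across all such representations is an inherently ``two-prover'' condition. This is precisely the setting where the recent resolution of Connes' embedding problem and the $\mathrm{MIP}^*=\mathrm{RE}$ theorem make decision problems undecidable. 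Concretely, I would start from a nonlocal game $G$ (or a linear-system/synchronous game) for which it is coRE-hard to decide whether the game has a perfect commuting-operator strategy, and then engineer a self-adjoint element $\alpha_G \in \mcA_\Q \otimes_\Q \mcA_\Q$ whose positivity over $\mcA \otimes_\C \mcA$ is equivalent to the nonexistence of such a strategy.

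The main steps, in order, would be: (1) Fix the reduction source — a family of games $G_M$ indexed by Turing machines $M$, with the property that $M$ halts iff $G_M$ has \emph{no} perfect commuting-operator strategy (this is the coRE side), and such that the game data $G_M$ is computable from $M$; I would pin down a version where the question-and-answer structure is simple enough to be coordinatized by the generators of $\mcA$ (free variables, free group generators, or generators of order $m$). (2) Translate the ``perfect strategy exists'' condition into an algebraic positivity/feasibility statement: the measurement operators for each prover live in a $*$-representation of $\mcA$, the commutation between provers is exactly what $\otimes$ enforces, and the winning condition of the game becomes a set of relations; package the ``failure to win'' into a single self-adjoint element $\alpha_G$ — for instance a sum of squares of the game's ``error operators'' together with a term forcing the operators to be (projective) measurements — so that $\alpha_G$ is positive in every representation iff every bipartite strategy incurs nonzero error, i.e.\ iff no perfect strategy exists. (3) Verify the two directions: if $M$ halts, show $\alpha_G$ is positive by a compactness/representation-theoretic argument (every commuting-operator strategy has positive error, and one must handle all Hilbert-space dimensions, including infinite-dimensional, since $\mcA \otimes_\C \mcA$ need not be RFD); if $M$ does not halt, exhibit a representation of $\mcA \otimes \mcA$ on which $\alpha_G$ has negative expectation, built from a perfect commuting-operator strategy for $G_M$. (4) Handle the several allowed algebras uniformly: for $\Q^*\langle x_1,\dots,x_n\rangle$ the generators are arbitrary, for $\Q\F_n$ they are unitaries, and for $\Q\Z_m^{*n}$ they have order $m$; I would choose a game encoding that only uses unitary/order-$m$ observables (e.g.\ synchronous games with $\Z_m$-valued answers) so the same $\alpha_G$ works, with minor modifications, across all cases, and check the degree/generator counts match the stated bounds $n\ge 2$ (or $n\ge 3$ when $m=2$).

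The hard part, I expect, is step (3) in the direction ``$M$ halts $\Rightarrow \alpha_G$ positive,'' because positivity in $\mcA \otimes_\C \mcA$ quantifies over \emph{all} $*$-representations, including infinite-dimensional ones and ones not coming from tracial states, whereas the game-theoretic input most naturally controls only representations arising from states or from the algebraic (maximal) tensor product. One has to ensure the chosen $\alpha_G$ genuinely detects nonpositivity in the full representation theory — this is where the freeness of $\mcA$ is essential: because $\mcA$ imposes no relations among its generators, a bipartite strategy can be read off from an arbitrary $*$-representation of $\mcA\otimes\mcA$ without obstruction, so the correspondence between ``representations witnessing $\alpha_G \not\geq 0$'' and ``perfect strategies'' is tight. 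A secondary technical point is making sure the encoding stays within the algebra without auxiliary positivity certificates (no archimedean/Putinar-type normalization is available here since $\mcA\otimes\mcA$ is not archimedean), which I would address by writing $\alpha_G$ as an explicit finite combination of hermitian squares and commutators of the generators, chosen so that its positivity is manifestly equivalent to the (co)halting condition.
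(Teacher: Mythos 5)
You correctly observe that $\mcA \otimes \mcA$ models bipartite measurement scenarios, and you correctly locate the main difficulty in the direction that requires positivity over all (possibly infinite-dimensional) representations. But the proposal has two genuine gaps. The first is a reduction-direction error: you arrange $\alpha_G$ positive iff no perfect strategy iff $M$ halts. A many-one reduction with $\alpha$ positive iff $M$ halts shows (Pos) is RE-hard; coRE-hardness requires $\alpha$ positive iff $M$ does \emph{not} halt, which is what the paper actually constructs. The second gap is decisive: you never confront the stability problem. Even if $G_M$ has no perfect commuting-operator strategy, there is no lower bound on the error of approximate strategies. One can easily have a sequence of states $\tau_k$ on $\mcA \otimes \mcA$ with $\tau_k\bigl(\sum_i r_i^* r_i\bigr) \to 0$ while $\tau_k(w^*w)$ stays bounded below, and then $\sum_i r_i^* r_i - c\, w^* w$ fails to be positive for every $c>0$. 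Your proposed remedy ("write $\alpha_G$ as an explicit finite combination of hermitian squares and commutators chosen so that its positivity is manifestly equivalent to the (co)halting condition") restates the goal without supplying a mechanism. The paper itself explains why a nonlocal-game reduction cannot supply one: the game framework naturally yields the gapped problems (GapFDPos)/(GapPos), and (GapPos) lies in RE by the Helton--McCullough positivstellensatz, hence cannot be coRE-hard. This is exactly why a different method is required.

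The paper's actual route is quite different from yours. It encodes the Turing machine into a finitely presented group $H_{\mcS}$ using amalgamated free products, central products, HNN extensions, and the Birget--Ol'shanskii--Rips--Sapir theorem (to get a polynomial isoperimetric function), and then into a finitely presented $*$-algebra $\mcA_{\mcS}(m)$. The crucial structural feature is a "doubling" relation $\wtd{P}_{n+1} = \wtd{P}_n + \wtd{X}_n \wtd{P}_n \wtd{X}_n$ (valid for $n < h(m)$), which forces, for any approximate tracial state $\tau$, the inequality $\norm{\wtd{P}_n}_\tau \leq \tfrac{1}{\sqrt 2}\norm{\wtd{P}_{n+1}}_\tau + O(\text{poly}(n,m)\sqrt{\eps})$. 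Because each $\wtd{P}_t$ is a contraction, iterating this estimate produces a geometric series and the quantitative stability bound $\norm{\wtd{P}_0}_\tau \leq \Lambda m^k \sqrt{\eps}$; this is precisely what makes $\sum_r r^*r - \Lambda^{-2} m^{-2k}\,\wtd{P}_0^*\wtd{P}_0$ positive when $M$ does not halt. The polynomial control on the sizes of $\mcR_m$-decompositions, supplied by the isoperimetric function, is essential to making the errors summable. Finally, the bipartite \Cref{thm:mainbipartite} is obtained by first proving the tracial \Cref{thm:maintracial} and then transferring to $\mcA \otimes \mcA$ via $(\eps,\mcX)$-synchronous states (\Cref{sec:approxrepn}), an approximation apparatus that does not appear in your proposal at all.
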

Here, $\Q^*\langle x_1,\ldots,x_n\rangle$ denotes the $*$-algebra of noncommutative $*$-polynomials over $\Q$.
Specifically, \Cref{thm:mainbipartite} shows that there is a computable mapping from Turing machines
$M$ to self-adjoint elements $\alpha_M \in \mcA_\Q \otimes_{\Q} \mcA_\Q$, such
that $\alpha_M$ is positive if and only if $M$ does not halt. The bounds on $n$
and $m$ in \Cref{thm:mainbipartite} are tight, as the algebras $\Q^*\ang{x}
\otimes \Q^*\ang{x} = \Q[x_1,x_2]$, $\Q \F_1 \otimes \Q \F_1 = \Q \Z \times
\Z$, and $\Q \Z_m \times \Q \Z_m$ are commutative, and positivity is also
decidable for $\Q \Z_2^{*2} \times \Z_2^{*2}$ (see \Cref{ex:Z22}). As an
immediate corollary of \cref{thm:mainbipartite}, we get:
\begin{cor}\label{cor:mainbipartite}
    Let $\mcA_\Q$ be one of the algebras $\Q^*\langle x_1,\ldots,x_n\rangle$ or
    $\Q \F_n$ for $n \geq 2$, or $\Q \Z_m^{*n}$ for $n \geq 3$, $m \geq 2$ or $n
    \geq 2$, $m \geq 3$, and let $\mcA$ be the complexification. Then there are
    elements of $\mcA_\Q \otimes_{\Q} \mcA_\Q$ which are positive, but are not
    sums of hermitian squares in $\mcA \otimes_{\C} \mcA$, so $\mcA
    \otimes_{\C} \mcA$ does not have the SOS property.
\end{cor}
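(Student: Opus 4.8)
The plan is to deduce \Cref{cor:mainbipartite} from \Cref{thm:mainbipartite} by the standard computability argument: if $\mcA \otimes_\C \mcA$ had the SOS property, then positivity of elements of $\mcA_\Q \otimes_\Q \mcA_\Q$ would be recursively enumerable, which is incompatible with coRE-hardness. So the whole proof is really a short remark, and the only thing that needs care is verifying that ``being a sum of hermitian squares of bounded degree'' is a decidable condition.

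First I would record that for each fixed $d$, the question ``is a given self-adjoint $\alpha \in \mcA_\Q \otimes_\Q \mcA_\Q$ equal to $\sum_i b_i^* b_i$ for some $b_1,\dots,b_k \in \mcA \otimes_\C \mcA$ of degree at most $d$?'' is decidable. This is the usual Gram-matrix reformulation: let $\mathbf{m}$ be the (finite) list of monomials of degree at most $d$ in $\mcA \otimes_\C \mcA$; then such a decomposition exists if and only if there is a Hermitian positive semidefinite matrix $Q$ with $\alpha = \mathbf{m}^* Q \mathbf{m}$, since any $Q \succeq 0$ factors as $\sum_i v_i v_i^*$ and conversely. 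Writing $Q = A + iB$ with $A,B$ real, the requirements $Q \succeq 0$ and $\alpha = \mathbf{m}^* Q \mathbf{m}$ (the latter being a linear system in the entries of $Q$ obtained by collecting coefficients of words, which is computable in each of the listed algebras) become a finite system of polynomial equations and inequalities over $\Q$; its feasibility is decidable by the decidability of the first-order theory of real closed fields (Tarski).

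Now suppose toward a contradiction that $\mcA \otimes_\C \mcA$ has the SOS property. Then any positive self-adjoint $\alpha \in \mcA_\Q \otimes_\Q \mcA_\Q$ is a finite sum of hermitian squares, hence a sum of hermitian squares of elements of degree at most $d$ for some $d$. Therefore the procedure that, on input $\alpha$, runs the decision procedure of the previous paragraph for $d = 1, 2, 3, \dots$ and halts as soon as one of them answers ``yes'' is a semidecision procedure for (Pos) restricted to $\mcA_\Q \otimes_\Q \mcA_\Q$: it halts precisely on the positive elements. So (Pos) would be recursively enumerable. But by \Cref{thm:mainbipartite} the computable map $M \mapsto \alpha_M$ is a many-one reduction of $\coHALT$ to (Pos), since $\alpha_M$ is positive if and only if $M$ does not halt; an r.e.\ (Pos) would make $\coHALT$ r.e., which is false. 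This contradiction shows the SOS property fails. Unwinding slightly, if every $\alpha_M$ with $M$ non-halting were a sum of hermitian squares in $\mcA \otimes_\C \mcA$, the same search applied along the family $\{\alpha_M\}$ would enumerate the non-halting machines; hence there is a (non-halting) $M$ for which $\alpha_M$ is positive but not a sum of hermitian squares, giving an explicit witness. The only non-bookkeeping point is the decidability of the bounded-degree SOS problem, i.e.\ the Gram-matrix step above; everything else is immediate from \Cref{thm:mainbipartite}.
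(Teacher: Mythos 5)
Your proposal is correct and follows the same route as the paper's one-line proof (if the SOS property held, positivity of elements of $\mcA_\Q \otimes_\Q \mcA_\Q$ would be semidecidable by searching through SOS certificates, contradicting the coRE-hardness in \Cref{thm:mainbipartite}). The paper leaves the ``search'' step implicit; your Gram-matrix/Tarski argument is a correct way of spelling out that the bounded-degree SOS feasibility problem is decidable, which is the only nontrivial ingredient.
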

\begin{proof}
    If every element of $\mcA_\Q \otimes_\Q \mcA_\Q$ is a sum of squares in $\mcA
    \otimes_{\C} \mcA$, then the problem of deciding whether elements of $\mcA
    \otimes \mcA$ are positive is contained in RE by searching through SOS
    decompositions. Since coRE is not contained in RE, this is not possible. 
\end{proof}
For the algebras $\Q \mcF_n$ and $\Q \Z_m^{*n}$, $m \geq 3$, the hypothesis $n
\geq 2$ in \Cref{cor:mainbipartite} is necessary, since finite-dimensional
$C^*$-algebras like $\C \Z_m \times \Z_m$ always have the SOS property, and $\C
\Z \times \Z$ has the SOS property by a result of Scheiderer \cite{Sch06}.
We do not know if $\C \Z_2^{*2} \times \Z_2^{*2}$ has the SOS property. For
$\mcA_{\Q} = \Q^*\ang{x_1,\ldots,x_n}$, the corollary already follows (for all $n
\geq 1$) from the fact that $\C[x,y]$ does not have the SOS property; indeed,
if $p(x,y)$ is a positive polynomial in $\C[x,y]$ which is not a sum of
squares, then $p(x_1\otimes 1, 1 \otimes x_1)$ is positive in $\mcA \otimes
\mcA$, but does not have a sum of squares decomposition. The Motzkin
polynomials provide explicit examples of positive polynomials in $\Q[x,y]$
which are not sums of squares, although of course they are sums of squares of
rational functions by Artin's theorem. \Cref{cor:mainbipartite} does not seem
to have been known for $\mcA_{\Q} = \Q \mcF_n$ and $\Q \Z_m^{*n}$; however,
\Cref{thm:mainbipartite} is stronger, in that SOS decompositions can be
replaced in \Cref{cor:mainbipartite} with any other certificate of positivity.
For instance, for $\mcA_{\Q} = \Q^* \ang{x_1,\ldots,x_n}$ with $n \geq 2$,
there are positive elements of $\mcA_{\Q} \otimes_{\Q} \mcA_{\Q}$ which are not
sums of squares of noncommutative rational functions in the sense of
\cite{KSV23}. The proof of \Cref{thm:mainbipartite} is given in
\Cref{sec:actualmain}. We actually prove a more general statement
(\Cref{thm:actualmain}), which applies to other algebras including the algebra
of bounded contractions.

The proof of \Cref{thm:actualmain} is by reduction to tracial positivity. An
element $\alpha$ of a $*$-algebra $\mcA$ is said to be \emph{trace-positive} if
$\tau(\alpha) \geq 0$ for all (bounded) tracial states $\tau$ on $\mcA$. 
We show that tracial positivity is undecidable for free $*$-polynomials and group
algebras:
\begin{theorem}\label{thm:maintracial}
    Let $\mcA_\Q$ be one of the algebras $\Q^*\langle x_1,\ldots,x_N\rangle$,
    $\Q \F_N$, or $\Q \Z_m^{*N}$ for $m \geq 2$. If $N$ is large enough, then
    the decision problem
    \begin{equation*}\tag{TrPos}
   \text{Given $\alpha \in \mcA_\Q$, is $\alpha$ trace-positive as an element
            of $\mcA$? }  
    \end{equation*}
    is coRE-hard.
\end{theorem}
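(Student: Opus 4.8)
The plan is to reduce from the complement of the halting problem, attaching to each Turing machine $M$ a self-adjoint $\alpha_M\in\mcA_\Q$ that is trace-positive precisely when $M$ does not halt. The two external inputs are $\mathrm{MIP}^{*}=\mathrm{RE}$ (the resolution of Connes' embedding problem), in the compression-theoretic form that makes the commuting-operator value of synchronous nonlocal games undecidable, and the dictionary between perfect commuting-operator strategies for a synchronous game $G$ and tracial states on its game $*$-algebra $\mcA(G)$.

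First I would set up game algebras over free products of cyclic groups. For a synchronous game with $n$ questions and $k$ answers, $\mcA(G)$ is generated by projections $e^x_a$ with $\sum_a e^x_a=\Id$ for each question $x$ and $e^x_a e^y_b=0$ for each forbidden pair $(x,a,y,b)$; as the spectral projections of a self-adjoint unitary of order $k$ generate $\C\Z_k$, this presents $\mcA(G)$ as a quotient of $\C\Z_k^{*n}$. A tracial state on $\mcA(G)$ is then the same as a tracial state $\tau$ on $\C\Z_k^{*n}$ vanishing on all of the positive elements $r_i:=(e^{x_i}_{a_i}e^{y_i}_{b_i})(e^{x_i}_{a_i}e^{y_i}_{b_i})^{*}$ indexed by forbidden pairs: pullback along $\C\Z_k^{*n}\twoheadrightarrow\mcA(G)$ gives one direction, and for the other one uses that $\tau(r_i)=0$ forces $e^{x_i}_{a_i}e^{y_i}_{b_i}$ into the kernel of the GNS representation because the trace vector is separating. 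With $R_G$ the positive combination of the $r_i$ weighted by the question distribution, $\inf_\tau\tau(R_G)=1-\omega_{qc}(G)$, where $\omega_{qc}(G)$ is the synchronous commuting-operator value.

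The core step is to build, computably in $M$, a synchronous game $G_M$ over an alphabet of size bounded in terms of the fixed $N$, with $\omega_{qc}(G_M)=1$ if $M$ does not halt and $\omega_{qc}(G_M)\le 1-\delta$ (a fixed $\delta>0$) if $M$ halts: this gap in the commuting-operator value --- with halting on the ``value $<1$'' side --- is what the compression theorems underlying and refining $\mathrm{MIP}^{*}=\mathrm{RE}$ provide. Then $R_M:=R_{G_M}$ satisfies $\inf_\tau\tau(R_M)=0$ exactly when $M$ does not halt (and $\ge\delta$ otherwise), and the remaining work is to turn ``$R_M$ admits a vanishing trace'' --- equivalently, by the gap, ``$R_M$ is \emph{not} bounded below by $\delta/2$ in trace'' --- into a single trace-positivity statement $\alpha_M$, with the halting case witnessed by an explicit finite-dimensional trace read off from the accepting computation. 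Finally $\alpha_M$, constructed in $\C\Z_2^{*N}$ for $N$ large, is transported along the surjections $\Q^{*}\langle x_1,\dots,x_N\rangle\twoheadrightarrow\Q\F_N\twoheadrightarrow\Q\Z_2^{*N}$, and for the unbounded free $*$-polynomial algebra one adds large multiples of the penalty squares $(x_i^{*}x_i-\Id)^2$, $(x_ix_i^{*}-\Id)^2$ so that any trace making $\alpha_M$ negative must render the generators approximately unitary; the fixed gap leaves room for all of this.

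The main obstacle is controlling \emph{all} tracial states, not merely the finite-dimensional ones, and arranging the reduction in the $\mathrm{coRE}$ direction. Since the existence of a perfect commuting-operator strategy for a synchronous game is a $\Pi^0_1$ property --- the NPA hierarchy certifies $\omega_{qc}(G)<1$ from above --- the halting instances are forced onto the ``value $<1$'' side, so one cannot merely invoke the finite-dimensional soundness of a single protocol: the games $G_M$ must be recursive self-tests for non-halting whose commuting-operator strategies are genuinely infinite-dimensional objects encoding a non-halting run, and a hypothetical tracial witness of non-positivity of $\alpha_M$ must be unwindable into such a strategy. Supplying this --- rather than reproving $\mathrm{MIP}^{*}=\mathrm{RE}$ --- is the crux; by comparison the synchronization of $G_M$, the bookkeeping of $n,k,N$ (none optimized), and the passage between the three algebras are routine.
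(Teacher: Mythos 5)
Your proposal takes a genuinely different route from the paper, and the route has a fatal gap: the ``core step'' requires a gapped hardness result for the \emph{commuting-operator} value $\omega_{qc}$ of synchronous games, and this does not follow from $\text{MIP}^*=\text{RE}$. That theorem controls the tensor-product value $\omega^*$: it yields games with $\omega^*(G_M)=1$ when $M$ halts and $\omega^*(G_M)\le 1/2$ when it does not. Since $\omega^*\le\omega_{qc}$, you learn $\omega_{qc}(G_M)=1$ on the halting side, but in the non-halting case you get no upper bound at all on $\omega_{qc}$ --- indeed, the whole content of the Kirchberg/Connes resolution is that $\omega^*$ can strictly undershoot $\omega_{qc}$. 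A family of games with $\omega_{qc}$ bounded away from $1$ on one side is precisely what the (open) $\text{MIP}^{\text{co}}=\text{coRE}$ conjecture would give; the paper's introduction discusses exactly this point and notes that its methods are deliberately disjoint from the compression-theoretic ones. There is also a directional inconsistency in your write-up: you state the gap as ``$\omega_{qc}=1$ if $M$ does not halt, $\le 1-\delta$ if $M$ halts,'' and then set $\alpha_M\approx R_M-\delta/2$. With that orientation every trace satisfies $\tau(R_M)\ge\delta$ in the halting case, so $\alpha_M$ is trace-positive exactly when $M$ halts --- that is a reduction from HALT, giving RE-hardness of (TrPos), not coRE-hardness --- and the halting side then has no witness trace to ``read off,'' contradicting your own final sentence. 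The orientation you actually need (``$\omega_{qc}=1$ if $M$ halts, $\le 1-\delta$ if not'') is again the $\text{MIP}^{\text{co}}$ shape and is not available.

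The paper avoids nonlocal games entirely. It encodes the Turing machine into a finitely generated group $G_\mcS$ with polynomial-time word problem, embeds it into a finitely presented group $H_\mcS$ with a polynomial isoperimetric function (via the Birget--Ol'shanskii--Rips--Sapir theorem), and builds from it a finitely presented $*$-algebra $\mcA_\mcS(m)$ containing elements $\wtd{P}_n$ and unitary involutions $\wtd{X}_n$ such that the ``halving'' relation $\wtd{P}_{n+1}=\wtd{P}_n+\wtd{X}_n\wtd{P}_n\wtd{X}_n$ holds (as long as the machine has not yet halted on input $m$) and, crucially, has an $\mcR_m$-decomposition of size polynomial in $n$ and $m$. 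For a tracial $(\eps,\mcR_m)$-state this gives $\norm{\wtd{P}_n}_\tau\le\tfrac{1}{\sqrt{2}}\norm{\wtd{P}_{n+1}}_\tau+\operatorname{poly}(n,m)\sqrt{\eps}$, and the $1/\sqrt 2$ decay absorbs the polynomial error accumulation, forcing $\norm{\wtd{P}_0}_\tau$ to be controlled by $\sqrt\eps$ when $M$ never halts; the element $\alpha(m)=\sum_r r^*r-\tfrac{1}{\Lambda^2 m^{2k'}}\wtd{P}_0^*\wtd{P}_0$ is then trace-positive exactly on the non-halting instances. The halting case is handled by an explicit finite-dimensional tracial representation built from the halting run. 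Your peripheral remarks --- passing between $\Q^*\ang{x_1,\ldots,x_N}$, $\Q\F_N$, and $\Q\Z_2^{*N}$, and adding penalty squares $(x_i^*x_i-\Id)^2$ etc.\ to force approximate unitarity in the unbounded free algebra --- are consistent with what the paper does (Lemma on the map $\mcW$ plays exactly this role). But the central claim about $\omega_{qc}$ is not a theorem; it is the open conjecture whose avoidance is one of the main points of the paper.
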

Klep and Schweighofer \cite{KS08} prove an analog of Helton and McCullough's
Positivstellensatz for the algebra of bounded contractions: an element $\alpha$
is trace-positive if and only if $\alpha+\eps$ is a sum of hermitian squares
and commutators for all $\eps > 0$. This is extended to free group algebras by
Juschenko and Popovych \cite{JP11}, to archimedean algebras by Ozawa
\cite{Ozawa13}, and to $\Q^*\ang{x_1,\ldots,x_n}$ by Klep, Scheiderer, and
Vol\v{c}i\v{c} \cite{KSV23}. Quarez \cite{Qu15} (see also \cite[Example
3.2]{KSV23}) gives examples of trace-positive polynomials in
$\Q^*\ang{x_1,x_2}$ which are not sums of hermitian squares and commutators, but which are sums of squares and commutators of noncommutative
rational functions. As in \Cref{cor:mainbipartite}, \Cref{thm:maintracial}
immediately implies that all the algebras listed in the theorem contain
trace-positive elements which are not sums of squares and commutators of
noncommutative rational functions.  Unlike \Cref{thm:mainbipartite}, we do not
know the smallest bound $N$ such that \Cref{thm:maintracial} holds. 

Part of the motivation for \Cref{thm:mainbipartite} comes from Kirchberg's
reformulation of the Connes embedding problem, sometimes called Kirchberg's
conjecture \cite{Ki}, and the recent MIP*=RE theorem of Ji, Natarajan, Vidick,
Wright, and Yuen \cite{MIPRE}, which resolves this conjecture.  Kirchberg's
conjecture asks whether the max and min tensor norms on $\C \F_n \otimes \F_n =
\C \F_n \times \F_n$ agree for $n \geq 2$ \cite{Ki}. As Ozawa observes
\cite{Ozawa04,Ozawa13}, if $\mcA$ is archimedean and RFD, then the max and min
tensor norms on $\mcA \otimes \mcA$ agree if and only if $\mcA \otimes \mcA$ is
RFD, so Kirchberg's conjecture is equivalent to whether $\C \F_n \otimes \C
\F_n = \C \F_n \times \F_n$ is RFD. Kirchberg's conjecture is still equivalent
to the Connes embedding problem if $\F_n$ is replaced by $\Z_m^{*n}$ for $n
\geq 3$, $m \geq 2$ or $n \geq 2$, $m \geq 3$ \cite{Ozawa13, Fritz}.  
The algebras $\C \Z_m^{*n} \otimes \C \Z_m^{*n}$ are of particular interest in
quantum information, since they correspond to bipartite (meaning composed of
two spatially separated subsystems) measurement scenarios with $n$ measurement
settings and $m$ measurement outcomes per subsystem, and they are closely
connected with the complexity class MIP* of multiprover interactive proof
systems with entangled provers. 

The main result of \cite{MIPRE} is that MIP* is equal to RE, the class of
recursively enumerable languages. In algebraic language, this implies that
there is a computable mapping from Turing machines $M$ to elements $\alpha_M
\in \overline{\Q} \Z_m^{*n} \times \Z_m^{*n}$ (where $\overline{\Q}$ is the algebraic closure of $\Q$, and $m$ and $n$ also depend on $M$), such
that if $M$ halts then $\alpha_M$ is not positive in finite-dimensional
representations of $\C \Z_m^{*n} \times \Z_m^{*n}$, and if $M$ does not halt, then $\alpha_M-1$ is positive in
finite-dimensional representations of $\C \Z_m^{*n} \times \Z_m^{*n}$.  This shows that the promise problem 
\begin{equation*}
    \tag{GapFDPos}
        \begin{minipage}{.7\textwidth}
            Given $n,m \geq 1$ and $\alpha \in \overline{\Q} \Z_m^{*n} \times \Z_m^{*n}$, determine
    if either $\alpha - 1$ is positive in finite-dimensional representations, or
    $\alpha$ is not positive in some finite-dimensional representation, promised that
    one of the two is the case. 
    \end{minipage}
\end{equation*}
is coRE-hard, so it is undecidable to determine if an element of $\C \Z_m^{*n}
\times \Z_m^{*n}$ is strictly positive in finite-dimensional representations.
If $\C \Z_m^{*n} \times \Z_m^{*n}$ were RFD, then (GapFDPos) would be
equivalent to the promise problem
\begin{equation*}
    \tag{GapPos}
        \begin{minipage}{.7\textwidth}
            Given $n,m \geq 1$ and $\alpha \in \overline{\Q} \Z_m^{*n} \times \Z_m^{*n}$, determine
    if either $\alpha - 1$ is positive, or $\alpha$ is not positive,
    promised that one of the two is the case. 
    \end{minipage}
\end{equation*}
Note that $\alpha-1$ is positive if and only if $\alpha$ is strictly positive
with spectrum contained in $[1,+\infty)$, so (GapPos) is a ``gapped'' version
of the decision problem (Pos) considered in \Cref{thm:mainbipartite} (albeit with $\Q$ replaced by $\overline{\Q}$, and $n$ and $m$ allowed to vary). By
Helton and McCullough's Positivstellensatz, if $\alpha - 1$ is positive then $\alpha$ has
an SOS decomposition proving positivity. Due to the promise in (GapPos), a proof that $\alpha$ is positive is also a proof that $\alpha-1$ is positive.  So unlike (Pos), the problem (GapPos)
is contained in RE.  Since RE does not contain coRE, (GapPos) cannot be
coRE-hard, and hence the MIP*=RE theorem implies that $\C \Z_m^{*n} \times
\Z_m^{*n}$ is not RFD for large enough $n$ and $m$.

There is a natural complement to the $\text{MIP}^*=\text{RE}$ theorem called the $\text{MIP}^{\text{co}}=\text{coRE}$
conjecture, which if true would imply (GapPos) is RE-complete, and hence that (Pos) is RE-hard for $\mcA = \C \Z_m^{*n}$ (assuming again that $\Q$ is replaced by $\overline{\Q}$, and $n$ and $m$ are allowed to
vary). If $\mcA\otimes\mcA$ is RFD, then (Pos) is contained in coRE, so showing that (Pos)
is RE-hard for $\mcA = \C \Z_m^{*n}$ (with or without varying $n$ and $m$)
would give another disproof of the Connes embedding problem. Proving that
$\text{(GapPos)}$ is RE-complete would also imply that it is undecidable to determine
if $\alpha \in \C \Z_m^{*n} \times \Z_m^{*n}$ is a sum of squares. On the other
hand, because (GapPos) is contained in RE, hardness results for (GapPos) cannot
be used to prove \Cref{cor:mainbipartite}, and \Cref{thm:mainbipartite} is not
true if (Pos) is replaced by (GapPos). Hence \Cref{thm:mainbipartite} and the
$\text{MIP}^{\text{co}}=\text{coRE}$ conjecture seem to be complementary. In any case, the proof methods we used here are different from the methods used in the proof of $\text{MIP}^*=\text{RE}$, and our techniques do not appear to be applicable to the $\text{MIP}^{\text{co}}=\text{coRE}$ conjecture. 

As mentioned above, the
only upper bound we have on (Pos) is $\Pi^0_2$. If (Pos) contains both RE and
coRE, then it seems possible that it is equal to $\Pi^0_2$, and we conjecture
that this is the case for all the algebras in \Cref{thm:mainbipartite}.
\begin{conjecture}
    Let $\mcA_\Q$ be one of the algebras $\Q^*\langle x_1,\ldots,x_n\rangle$ or
    $\Q \F_n$ for $n \geq 2$, or $\Q \Z_m^{*n}$ for $n \geq 3$, $m \geq 2$ or
    $n \geq 2$, $m \geq 3$, and let $\mcA$ be the complexification. Then the decision problem
    (Pos) from \Cref{thm:mainbipartite} is $\Pi_2^0$-complete.
\end{conjecture}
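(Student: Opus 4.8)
The upper bound is the routine half: as noted above, (Pos) already lies in $\Pi^0_2$. Indeed, by the Helton--McCullough positivstellensatz a self-adjoint $\alpha\in\mcA\otimes_\C\mcA$ is positive if and only if $\alpha+\eps$ is a sum of hermitian squares for every rational $\eps>0$ (applied directly in the archimedean cases, and after passing to the algebra of contractions when $\mcA_\Q=\Q^*\langle x_1,\dots,x_n\rangle$), and since ``$\alpha+\eps$ is a sum of squares'' is a $\Sigma^0_1$ condition, the outer quantifier over $\eps$ puts (Pos) in $\Pi^0_2$. So the content of the conjecture is $\Pi^0_2$-\emph{hardness}. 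Since the totality problem $\mathrm{Tot}=\{M : M\text{ halts on every input}\}$ is $\Pi^0_2$-complete, it suffices to build a computable map $M\mapsto\alpha_M\in\mcA_\Q\otimes_\Q\mcA_\Q$ with $\alpha_M$ positive if and only if $M\in\mathrm{Tot}$.

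The plan is to iterate the reduction behind \Cref{thm:mainbipartite} (and the general result \Cref{thm:actualmain}) over an input register. That reduction already sends a Turing machine together with one input to an element of $\mcA\otimes\mcA$ whose positivity encodes non-halting; here one wants $\alpha_M$ whose positivity encodes ``$M$ halts on \emph{every} input.'' In the game language used to prove \Cref{thm:mainbipartite}, one would build a single game $\mathcal{G}_M$ over $\mcA=\C\Z_m^{*n}$ in which the two provers first commit to and consistently report an input $x$ (enforced by a standard oracularization/consistency test) and then play a subgame $G_{M,x}$, the subgames coming from the machinery of \Cref{thm:mainbipartite}/$\text{MIP}^*=\text{RE}$ and engineered so that $G_{M,x}$ has a perfect commuting-operator strategy exactly when $M$ does not halt on $x$. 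Then a perfect strategy for a single $G_{M,x}$ lifts to one for $\mathcal{G}_M$ (always report $x$), while conversely in any perfect strategy for $\mathcal{G}_M$ the consistency test forces the reported inputs to agree, so some $G_{M,x}$ has a perfect strategy; hence $\mathcal{G}_M$ has a perfect commuting-operator strategy iff $M\notin\mathrm{Tot}$. Feeding $\mathcal{G}_M$ into the device that proves \Cref{thm:mainbipartite} (which, through \Cref{thm:maintracial}, turns the non-existence of a perfect strategy into positivity of an explicit element) then yields $\alpha_M\geq 0\iff M\in\mathrm{Tot}$. Reducing to the tracial version, designing the consistency test, and collapsing to the tight generator/outcome bounds in \Cref{thm:mainbipartite} are all of the bookkeeping type.

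The essential obstacle --- and the reason this is only a conjecture --- is that the subgames $G_{M,x}$ require a commuting-operator soundness that the methods behind \Cref{thm:mainbipartite} do not provide: when $M$ fails to halt on some $x$, one must \emph{construct} a (necessarily infinite-dimensional) representation of $\mcA\otimes_\C\mcA$ witnessing $\alpha_M\not\geq 0$ out of the infinite non-halting computation, and when $M$ is total one must show that no representation does. The reduction for \Cref{thm:mainbipartite} builds representations only from \emph{halting}, finite computations, which is precisely why it yields coRE-hardness and nothing stronger; producing the missing ingredient is essentially the $\text{MIP}^{\text{co}}=\text{coRE}$ conjecture, as it amounts to certifying that the ``no'' instances of ``$M$ halts on $x$'' have commuting-operator value bounded away from $1$ (equivalently, manufacturing tracial states out of non-recurrent infinite computations). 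We emphasize that this is strictly more than is needed for RE-hardness or coRE-hardness in isolation: $\Pi^0_2$-hardness is not implied by that conjunction --- the join (disjoint union) of an RE-complete and a coRE-complete set is both RE- and coRE-hard but lies in $\Delta^0_2$ --- so even a proof of $\text{MIP}^{\text{co}}=\text{coRE}$ would still have to be combined with the universally quantified construction above to settle the conjecture.
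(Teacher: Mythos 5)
The statement you are asked to prove is explicitly labelled a \emph{conjecture} in the paper; the authors offer no proof, and indeed remark that their techniques ``do not appear to be applicable to the $\mathrm{MIP}^{\mathrm{co}}=\mathrm{coRE}$ conjecture,'' which is the obstacle you also identify. Your submission is honest about this: it is not a proof but an analysis of why the statement is plausible and what is missing, and on that score the substance is sound. The $\Pi^0_2$ upper bound is essentially as you describe for the archimedean (group-algebra) cases; for $\mcA_\Q = \Q^*\langle x_1,\dots,x_n\rangle$ you need one more quantifier than your sketch suggests, namely a universal quantifier over a norm bound $R$ before passing to the archimedean algebra of $R$-contractions, but combining $\forall R\,\forall\eps$ into a single universal quantifier still lands in $\Pi^0_2$, so this is only a presentational gap. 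Your observation that RE-hardness plus coRE-hardness does not yield $\Pi^0_2$-hardness, witnessed by the join of an RE-complete and a coRE-complete set, is correct and is exactly the point that makes the conjecture strictly harder than both \Cref{thm:mainbipartite} and $\mathrm{MIP}^{\mathrm{co}}=\mathrm{coRE}$ together.

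One thing worth flagging: you cast the hypothetical hardness reduction entirely in the language of nonlocal games, consistency tests, and oracularization, but that is not how the paper's coRE-hardness reduction works. The paper's construction is directly algebraic --- it encodes the Turing machine in a finitely presented group $H_\mcS$ via HNN extensions and isoperimetric bounds, builds finitely presented $*$-algebras $\mcA_\mcS(m)$ with the key telescoping relation $\widetilde{P}_{n+1} = \widetilde{P}_n + \widetilde{X}_n \widetilde{P}_n \widetilde{X}_n$, and proves stability of this relation under approximate tracial and synchronous states --- and the paper explicitly distinguishes these methods from the game-theoretic ones behind $\mathrm{MIP}^*=\mathrm{RE}$. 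Your identification of the missing ingredient (a soundness guarantee for the ``no'' instances that manufactures a commuting-operator or tracial witness out of an infinite non-halting computation, which this paper's construction never needs because it only ever builds representations from halting computations) is nonetheless correct and is precisely the gap the paper's authors acknowledge. So: not a proof, and correctly not presented as one; the analysis is accurate modulo the mismatch in framing and the small omission in the upper-bound argument for the free-algebra case.
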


Tracial positivity has also been studied in connection with the Connes embedding problem \cite{Had,KS08,BDKS,JP11,Ozawa13}, and the above connection with $\text{MIP}^*=\text{RE}$ and $\text{MIP}^{\text{co}}=\text{coRE}$ could be described equally as well in terms of tracial positivity and $\text{MIP}$ protocols with synchronous strategies (see e.g. \cite{HMPS19,KPS18,MNY21,NZ23}). For tracial positivity, we make the following conjecture.

\begin{conjecture}\label{conj:tracepositive}
    Let $\mcA_\Q$ be one of the algebras $\Q^*\langle x_1,\ldots,x_N\rangle$,
    $\Q \F_N$, or $\Q \Z_m^{*N}$ for $m \geq 2$, and let $\mcA$ be the complexification. If $N$ is large enough, then
    the decision problem (TrPos) from \Cref{thm:maintracial} is $\Pi_2^0$-complete.
\end{conjecture}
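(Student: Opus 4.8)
The plan is to prove the two halves of $\Pi_2^0$-completeness separately: that the problem (TrPos) lies in $\Pi_2^0$, which is essentially a consequence of results already cited, and that it is $\Pi_2^0$-hard, which is the substantive part.

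For membership, use the tracial positivstellensatz for the relevant algebras (\cite{KS08} for bounded contractions, \cite{JP11} for $\Q\F_N$, \cite{Ozawa13} for the archimedean group algebras $\Q\Z_m^{*N}$, and \cite{KSV23} for $\Q^*\ang{x_1,\dots,x_N}$): an element $\alpha$ is trace-positive if and only if $\alpha+\eps$ is a sum of hermitian squares and commutators for every rational $\eps>0$. For a fixed $\eps$ and a fixed degree bound $d$, whether $\alpha+\eps$ can be written as a sum of hermitian squares of degree at most $d$ plus a linear combination of commutators of degree at most $d$ is a semidefinite feasibility problem with rational data, hence decidable (it is a sentence in the first-order theory of the reals). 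Therefore ``$\exists d$ such that $\alpha+\eps$ has such a decomposition'' is a $\Sigma_1^0$ predicate of $(\alpha,\eps)$, and ``$\forall\,\eps\in\Q_{>0}$ there is such a $d$'' is $\Pi_2^0$, so (TrPos) lies in $\Pi_2^0$.

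For hardness, reduce from the totality problem $\operatorname{TOT}=\{M:\ M\text{ halts on every input}\}$, which is $\Pi_2^0$-complete. From a Turing machine $M$ one first computes the uniform family $(N_{M,n})_{n\ge 1}$, where $N_{M,n}$ simulates $M$ on input $n$; then $M\in\operatorname{TOT}$ iff every $N_{M,n}$ halts, iff the element $\alpha_{N_{M,n}}$ supplied by the proof of \Cref{thm:maintracial} --- which is trace-positive iff $N_{M,n}$ does not halt --- fails to be trace-positive for every $n$. The goal is then to splice this whole family into a single element $\beta_M$ of a fixed algebra of the allowed type, using an auxiliary generator of infinite order as an ``index register'' holding the value of $n$ together with a gadget that runs the construction of \Cref{thm:maintracial} in an $n$-oblivious way, so that $\beta_M$ is trace-positive precisely when every member of the family witnesses halting, i.e.\ precisely when $M\in\operatorname{TOT}$. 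Since the per-instance construction is itself uniform and local, producing $\beta_M$ from the individual $\beta_{N_{M,n}}$ by means of the index register should be a finitary, computable operation; the design problem is to weight the registers so that an arbitrary tracial state is forced to concentrate on a single index.

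The main obstacle is the soundness of this combination. A tracial state on the combined algebra restricts to a family of tracial states on the per-index sub-instances, but it can correlate distinct indices or spread its weight across them non-uniformly, and one must rule out that such a state gains a spurious advantage; one needs the tracial optimum of $\beta_M$ to equal the worst case over honest single-index computations, with no cross-talk between registers. This is aggravated by the fact that the construction of \Cref{thm:maintracial} is gapless: when $N_{M,n}$ halts, the quantity $\inf_\tau\tau(\alpha_{N_{M,n}})$ is negative but may tend to $0$ as $n$ grows, so the combination has to be engineered so that the infimum over $n$ of these possibly vanishing negative quantities still governs trace-positivity of $\beta_M$ correctly --- in effect one wants a quantitative, constructive refinement of \Cref{thm:maintracial} with controlled dependence on $n$. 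An alternative route would be to push the compression/recursion machinery behind $\text{MIP}^*=\text{RE}$ to unbounded recursion depth and detect $\Pi_2^0$-ness through its (non-)convergence, but the techniques of the present paper do not obviously reach that far, so a direct combinatorial argument along the lines above seems the more promising attack, and carrying it out is exactly why \Cref{conj:tracepositive} is so far only a conjecture.
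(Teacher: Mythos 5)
This statement is \Cref{conj:tracepositive}, which the paper explicitly poses as an \emph{open conjecture} and does not prove; there is no proof in the paper to compare against. So the appropriate standard here is whether your discussion is sound and whether it gives a proof or just a plan.

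Your membership-in-$\Pi_2^0$ argument is correct and is essentially the observation the paper itself makes in the introduction for the non-tracial case: by the tracial positivstellensätze of Klep--Schweighofer, Juschenko--Popovych, Ozawa, and Klep--Scheiderer--Vol\v{c}i\v{c} for the respective algebras, $\alpha$ is trace-positive iff $\alpha + \eps$ has a sum-of-squares-plus-commutators certificate for every rational $\eps > 0$; for fixed $\eps$ the existence of such a certificate of degree $\le d$ is a rationally-coefficiented semidefinite feasibility question decidable in the existential theory of the reals, so the inner quantifier block is $\Sigma_1^0$ and the full predicate is $\Pi_2^0$. (For $\Q^*\ang{x_1,\ldots,x_N}$, which is not archimedean, you correctly lean on \cite{KSV23} rather than the archimedean positivstellensatz.) This part is a genuine, complete argument.

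The hardness half, however, is not a proof --- you say as much yourself. You correctly identify the natural target (reduce $\operatorname{TOT}$), the natural building block (the per-instance elements $\alpha_{N_{M,n}}$ from the proof of \Cref{thm:maintracial}), and the two real obstacles: (i) a tracial state on the combined algebra could spread mass or create cross-talk across the index register in a way that doesn't decompose into honest single-index states, and (ii) the paper's coRE-hardness construction is gapless, so when $N_{M,n}$ halts the witnessed negativity $\inf_\tau \tau(\alpha_{N_{M,n}})$ may shrink to zero as $n\to\infty$, making it unclear how a finite $\beta_M$ could detect ``every $n$ halts.'' Neither obstacle is addressed; the idea of ``weighting the registers so that a tracial state is forced to concentrate on a single index'' is precisely the hard open step and is not carried out. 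Since you close by saying this is ``exactly why [the statement] is so far only a conjecture,'' you are not claiming a proof, which is the honest position. But to be clear about status: what you have is a correct upper bound plus a research plan for the lower bound, not a proof of $\Pi_2^0$-completeness. The paper leaves the conjecture open in exactly this state, proving only $\Pi_1^0$-hardness (coRE-hardness) in \Cref{thm:maintracial}.
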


If \Cref{conj:tracepositive} is true, an additional problem is to find, for each algebra listed above, the smallest bound on $N$ such that the decision problem (TrPos) is $\Pi_2^0$-complete.

\subsection{Acknowledgements}

We thank Igor Klep, Tim Netzer, Andreas Thom, and Jurij Vol\v{c}i\v{c} for
helpful conversations. We thank the anonymous referee for helpful comments, including supplying reference \cite{SS11}. AM is supported by NSERC  Alliance Consortia Quantum grants, reference number: ALLRP 578455 - 22. WS is supported by NSERC DG 2018-03968 and an Alfred P.
Sloan Research Fellowship.

\section{Notation and background}

\subsection{Algebras and states}

We work with both groups and $*$-algebras. If $\K$ is a field with an
involution, then a $*$-algebra over $\K$ is a
unital associative $\K$-algebra $\mcA$ with an antilinear involution $\mcA \arr \mcA : a \mapsto
a^*$ such that $(a b)^* = b^* a^*$ for all $a,b \in \mcA$.  A $*$-homomorphism
$\pi : \mcA \to \mcB$ between $*$-algebras $\mcA$ and $\mcB$ is an algebra homomorphism 
such that $\pi(a^*) = \pi(a)^*$. 
In this paper, $\K$ will be either $\C$ or
$\Q$ (with trivial involution), with $\C$ being the default choice.
When $\mcA$ is a $*$-algebra over $\Q$, we use $\mcA_\C$ to denote $\mcA\otimes_\Q\C$. We use $\Id$ for the identity in both groups
and $*$-algebras. 

If $G$ is a group, then we let $\K G$ be the group algebra of $G$ over the field $\K$. Recall that
this is a $*$-algebra with $g^* = g^{-1}$. If $S$ is a set, then $\F_S$ will
denote the free group with generating set $S$, and $\F_n :=
\F_{\{x_1,\ldots,x_n\}}$. We use the standard notation for group presentations,
in which if $S$ is a set, and $R \subseteq \F_S$, then $\langle S: R \rangle$
denotes the quotient of $\F_S$ by the normal subgroup generated by $R$.  We
often want to add generators and relations to a given presentation $G = \langle
S : R \rangle$, so if $S_1$ is another set disjoint from $S$, and $R_1
\subseteq \F_{S \cup S_1}$, then we define $\langle G, S_1 : R_1 \rangle :=
\langle S \cup S_1 : R \cup R_1 \rangle$. If $G$ is a group, then $G^{*n}$
denotes the free product of $n$ copies of $G$. The cyclic group of order $k$
will be denoted by $\Z_k$.  If $S$ is a set, then $\Z_k^{*S}$ denotes the free
product of $|S|$ copies of $\Z_k$, where the generators of the factors are
denoted by the elements of $S$. In other words, $\Z_k^{*S} = \langle S : s^k =
1 \text{ for all } s \in S\rangle$.

We also define $*$-algebras by means of presentations. If $S$ is a set, we let
$\K^*\ang{S}$ denote the free noncommutative $*$-algebra with generating set
$S$ over $\K$. Elements of $\K^*\ang{S}$ are noncommutative $*$-polynomials over $\K$. For instance, if $S=\lbrace x,y \rbrace$, then elements of $\K^*\ang{S}$ are noncommutative polynomials in $x,x^*,y,y^*$, and the involution $a \mapsto a^*$ exchanges $x$ with $x^*$ and $y$ with $y^*$. If $R \subseteq \K^*\ang{S}$, then $\K^*\ang{S : R}$ denotes the
quotient of $\K^*\ang{S}$ by the two-sided $*$-ideal generated by $R$. As with
groups, if $\mcA = \K^*\ang{S : R}$ is a $*$-algebra presentation,
$S_1$ is a set disjoint from $S$, and $R_1 \subseteq \K^*\ang{S \cup
S_1}$, then 
\begin{equation*}
    \K^*\ang{ \mcA, S : R_1 } := \K^*\ang{ S \cup S_1 : R \cup R_1 }. 
\end{equation*}
If $G = \langle S : R \rangle$ is a group presentation, then 
\begin{equation*}
    \K G = \K^*\ang{ S : 1 - r, r \in R, 1 - s s^*, 1 - s^* s, s \in S}. 
\end{equation*}
For both group and $*$-algebra presentations, we'll follow the standard
practice of writing $a=b$ for the relation $b^{-1} a$ (in the case of groups)
or $a-b$ (in the case of $*$-algebras). Also, a presented group (resp. algebra)
is a group (resp. algebra) with a fixed presentation, and a finitely presented
group (resp. algebra) is a presented group (resp. algebra) where the
presentation has a finite number of generators and relations.

As mentioned above, unless specified otherwise a $*$-algebra will mean a
$*$-algebra over $\C$.  If $\mcH$ is a Hilbert space, then $\mcB(\mcH)$ is the
$C^*$-algebra of bounded operators on $\mcH$. A representation of a $*$-algebra
$\mcA$ is a $*$-homomorphism $\mcA \to \mcB(\mcH)$ for some Hilbert space
$\mcH$. A representation is said to be finite-dimensional if $\mcH$ is
finite-dimensional (we mention this notion in the preliminary but we won't work
with it in this paper). As mentioned in the introduction, a self-adjoint
element $a \in \mcA$ is (representation) positive if and only if $\phi(a)$ is
positive in $\mcB(\mcH)$ for all Hilbert spaces $\mcH$ and representations
$\phi : \mcA \to \mcB(\mcH)$. 

A (bounded) state on a $*$-algebra $\mcA$ is a linear function $\psi : \mcA
\arr \C$ with  $\psi(1)=1$ such that  $\psi(a^*) = \overline{\psi(a)}$,
$\psi(a^* a) \geq 0$, and 
\begin{align}\label{eq:bounded}
    \sup\left\{\frac{\psi(b^* a^* a b)}{\psi(b^* b)}: b \in \mcA, \psi(b^* b) \neq 0\right\}<\infty
\end{align}
for all $a\in\mcA$.
The GNS theorem for $*$-algebras states that $\psi$ is a state if and only if
there is a representation $\pi : \mcA \arr \mcB(\mcH)$ and a unit vector $v \in
\mcH$ such that $\psi(a) = \langle v,\pi(a)v \rangle$ for all $a \in \mcA$
\cite[Theorem 4.38]{Sch20}. Given a state $\psi$, there is always such a representation $\pi$
and vector $v$ such that $\pi(\mcA) v$ is dense in $\mcH$. A pair $(\pi,v)$
such that $\pi(\mcA) v$ is dense in $\mcH$ and $\langle v,\pi(a) v \rangle =
\psi(a)$ for all $a \in \mcA$ is called a GNS representation of $\psi$. The
GNS representation theorem implies that a self-adjoint element $a \in \mcA$ is 
positive if and only if $f(a) \geq 0$ for all states $f$ on $\mcA$.

A state $\psi$ on a $*$-algebra $\mcA$ induces a semi-norm
$\norm{a}_\psi:=\sqrt{\psi(a^*a)}$ on $\mcA$ satisfying $\abs{\psi(ab)}\leq
\norm{a}_\psi\norm{b}_{\psi}$ for all $a,b\in \mcA$. A state $\psi$ on $\mcA$
is tracial if $\psi(ab)=\psi(ba)$ for all $a,b\in\mcA$. If $\psi$ is tracial,
then $\norm{\cdotp}_{\psi}$ is unitarily invariant, meaning that
$\norm{uav}_{\psi} = \norm{a}_{\psi}$ for all unitaries $u,v \in \mcA$ and
elements $a \in \mcA$.  For an arbitrary state $\psi$, the seminorm
$\norm{\cdotp}_{\psi}$ is left unitarily invariant, but not necessarily right
unitarily invariant.

An element $a$ of a $*$-algebra $\mcA$ is a hermitian square if there is $b \in
\mcA$ such that $a = b^* b$, and a sum of (hermitian) squares if there is
$b_1,\ldots,b_k \in \mcA$ such that $a = \sum_i b_i^* b_i$. If $a$ is a sum of
squares, then $\psi(a) \geq 0$ for all states $\psi$.  If $a_1$ and $a_2$ are
sums of squares, then $\psi(a_1 a_2) \geq 0$ for all tracial states $\psi$,
since $\psi(b_1^* b_1 b_2^* b_2) = \psi((b_2 b_1^*)^* (b_2 b_1^*))$ when $\psi$
is tracial. 

Every $*$-algebra $\mcA$ has an operator norm
$\norm{\cdotp}_\mcA:\mcA\arr\R_{\geq 0}\cup \{\infty\}$ defined by
$\norm{a}_\mcA:=\sup\{\sqrt{\psi(a^*a)}: \psi \text{ a state on } \mcA\}$.
Using the GNS theorem, it can be shown that
$\norm{a}_\mcA=\sup\{\norm{\pi(a)}_{\mcB(\mcH)}:\pi \text{ a }
*\text{-representation} \text{ on a Hilbert space }\mcH\}$ and
$\psi(b^*a^*ab)\leq \norm{a}^2_{\mcA}\psi(b^*b)$ for all $a,b\in\mcA$ and
states $\psi$ on $\mcA$.

A $*$-algebra $\mcA$ is said to be archimedean if for all $a \in \mcA$ (or
equivalently, all $a$ in a generating set for $\mcA$), there is a scalar
$\lambda > 0$ such that $\lambda - a^* a$ is a sum of squares. If $\mcA$ is
archimedean, then a linear functional $f : \mcA \to \C$ satisfying $f(1) = 1$
and $f(a^*) = \overline{f(a)}$, $f(a^* a) \geq 0$ for all $a \in \mcA$
automatically satisfies the boundedness condition in Equation
\eqref{eq:bounded}.  Also, the operator norm $\norm{\cdotp}_{\mcA}$ takes only
finite values, and (as mentioned in the introduction) Helton and McCullough's
Positivstellensatz states that $a \in \mcA$ is positive if and only if $a +
\eps$ is a sum of squares for all $\eps > 0$.  Group algebras $\C G$ are
archimedean, but free $*$-algebras $\C^* \ang{S}$ are not when $S$ is
non-empty, so not all the algebras we deal with are archimedean. More
background can be found in \cite{Sch09} and \cite{Ozawa13}.

\subsection{Computability}
If $\Sigma$ is a set, a word (also called a string) of length $n$ over $\Sigma$ is a finite sequence
$a_1 a_2 \cdots a_n$, where $a_i \in \Sigma$ for all $1 \leq i \leq n$. We use
the standard terminology from the theory of computation (see, e.g. \cite{TCS}). In
particular, a subset $\mcL$ of words over a finite set $\Sigma$ is in RE if there is a
Turing machine $M$ with input alphabet $\Sigma$ which recognizes $\mcL$,
meaning that there is a computation path where $M$ halts on input $x$ if and
only if $x \in \mcL$. In this definition $M$ can be non-deterministic, but
every language in RE can be recognized by a deterministic Turing machine.
A language $\mcL$ is in coRE if its complement $\overline{\mcL}$ is in RE, and
is decidable if it is in both RE and coRE.  Equivalently, $\mcL$ is decidable
if there is a deterministic Turing machine that halts on every input $x$,
returning ``accept'' if $x \in \mcL$ and ``reject'' otherwise. Since many types
of objects can be encoded as strings, we often talk about sets of other objects
being RE or coRE. For instance, the set
\begin{equation*}
    \HALT = \{ (M,x) \text{ a Turing machine}, x \text{ an input to } M \ |\ M
                \text{ halts on input } x \}
\end{equation*}
is the prototypical example of an undecidable RE set, although formalizing it
as a language requires choosing some encoding of pairs $(M,x)$ as strings. The
complement $\coHALT$ of pairs $(M,x)$ such that $M$ does not halt on input $x$
is the prototypical example of an undecidable coRE set. A function $f$ from
strings on $\Sigma$ to strings on $\Sigma'$ is computable if there is a
deterministic Turing machine with input alphabet $\Sigma$ and
output alphabet $\Sigma'$ which takes input $x$ to $f(x)$. A language $\mcL$ of
strings over $\Sigma$ is RE-hard (resp. coRE-hard) if there is a computable
function $f$ from pairs of Turing machines $(M,x)$ to strings over $\Sigma$,
such that $f(M,x) \in \mcL$ if and only if $M$ halts on input $x$ (sometimes
this is called RE-hard under many-one reductions).  The definition of RE and coRE does not depend on the choice of alphabet $\Sigma$, because strings over a finite set can be reencoded as strings over $\{0,1\}$. Strings over $\{0,1\}$ are called bit strings or binary strings.  Natural numbers can be encoded as binary strings as well, so we can also talk about RE, coRE, RE-hard, and coRE-hard subsets of $\mbN$.

We also care about how many steps it takes a Turing machine to accept an input.
The time function of a non-deterministic Turing machine $M$ is the function $T
: \mbN \arr \mbN $, where $T(n)$ is the smallest possible integer such that,
given an input string $x$ of length $\leq n$, if $M$ halts on $x$, then some
computation path of $M$ halts in $\leq T(n)$ steps. We use the convention that
every Turing machine takes at least one step to halt, so $T(n) \geq 1$ for all
$n$.  We say that $\mcL$ can be recognized in time $T' : \mbN \arr \mbN$ if
there is a Turing machine $M$ recognizing $\mcL$ with time function $T(n) \leq
T'(n)$ for all $n$. In particular, $\mcL$ can be recognized in polynomial time
if it can be recognized in time $C n^k$ for some $C, k \in \mbN$. Similarly, we
say that $\mcL$ can be recognized in deterministic time $T' : \mbN \arr \mbN$
if there is a deterministic Turing machine $M$ recognizing $\mcL$ with time
function $T(n) \leq T'(n)$ for all $n$. A function $f$ is computable in time
$T(n)$ if there is a deterministic Turing machine computing $f$ which halts in
$\leq T(n)$ steps on all input strings of length $\leq n$.

The classes RE and coRE are the first level of a larger hierarchy of classes called the arithmetical hierarchy. Although we don't use it in the rest of the paper, the class $\Pi_2^0$ mentioned in the introduction is in the second level of this hierarchy. 
A language $\mcL$ is in $\Pi_2^0$ if there is a Turing machine $M$ taking a pair of strings $x$ and $y$ as input, such that
$x\in\mcL$ if and only if $M$ halts on input $(x,y)$ for all $y$. As discussed in the introduction, the Helton-McCullough Positivstellensatz states that an element $\alpha$ of an archimedean $*$-algebra $\mcA$ is positive if and only if $\alpha+\eps$ is a sum of squares for all $\eps>0$. When $\mcA$ is finitely presented, one can effectively search through all sums of squares. So deciding positivity for finitely presented archimedean algebras is in $\Pi_2^0$.
\subsection{Group theory}\label{ss:grouptheory}

In addition to the elementary definitions above, we use some more advanced
concepts from group theory.  Suppose $S$ is a finite set of generators for a
group $G$, and let $\mcL$ be the set of words over $S \cup S^{-1}$ which are
trivial in $G$.  The word problem for $G$ is the problem of determining whether
a word over $S \cup S^{-1}$ belongs to $\mcL$. Accordingly, we say that the
word problem for $G$ with respect to $S$ is solvable in non-deterministic time
$T : \mbN \arr \mbN$ (resp. non-deterministic polynomial time) if $\mcL$ can be recognized in time $T$
(resp. polynomial time). 

If $G$ has generating set $S$ (not necessarily finite), a normal form for $G$
with respect to $S$ is a function $\eta$ from words over $S \cup S^{-1}$ to
words over $S \cup S^{-1}$, such that $\eta(x) = x$ in $G$ for all words $x$,
and two words $x$ and $y$ are equal in $G$ if and only if $\eta(x) = \eta(y)$.
Equivalently, a normal form can be specified as a function $\eta$ from $G$
to words over $S \cup S^{-1}$, such that $\eta(g) = g$ in $G$ for all $g \in G$.

We denote the free product of a family of
groups $\{G_i\}_{i \in I}$ by $\Asterisk_{i \in I} G_i$, and the 
product by $\prod_{i \in I} G_i$. If $\{G_i\}_{i \in I}$ is a family of
groups, $K$ is a group, and $\phi_i : K \arr G_i$ are injective homomorphisms, 
then the amalgamated free product of the family over $K$ is 
\begin{equation*}
    {}_K \Asterisk_i G_i := \langle \Asterisk_{i \in I} G_i : \phi_i(k) = \phi_j(k) \text{ for all } k \in K, i,j \in I \rangle.
\end{equation*}
If $\phi_i(K)$ is in the centre of $G_i$ for
all $i \in I$, then the central product of this family over $K$ is defined to
be
\begin{equation*}
    \sideset{_K}{_i}\prod G_i := \langle \prod_i G_i : \phi_i(k) = \phi_j(k) \text{ for all } k \in K, i,j \in I \rangle.
\end{equation*}
In both cases, if we identify $K$ with $\phi_i(K)$, then the image of $K$
inside ${}_{K} \Asterisk_{i} G_i$ and $\sideset{_K}{}\prod_{i} G_i$ is independent
of what $i$ we pick. Note that given a generating set $S$ for $K$, we only need the relations $\phi_i(k)=\phi_j(k)$ for all $k\in S$ in the above presentations.

For each $i \in I$, let $A_i \subset G_i$ be a set of right coset
representatives for $\phi_i(K)$, where we pick the identity element $\Id$ as
the representative of $\phi_i(K)$ itself.  The normal form theorem for
amalgamated free products states that every element $g \in {}_{K} \Asterisk_{i} G_i$
can be written uniquely as $g = k g_1 \cdots g_m$, where $k \in K$, and $g_j
\in A_{i_j} \setminus \{1\}$ for some sequence $i_1,\ldots,i_m \in I$ with $i_{j} \neq i_{j+1}$
for all $1 \leq j < m$. While the normal form theorem for amalgamated free products does not give us a normal form of $g$ in the above sense, if $\eta$ is a normal form for $K$ with respect to generating
set $S$, and $\eta_i$ is a normal form for $G_i$ with respect to generating set
$S_i$, $i \in I$, then $g \mapsto \eta(k) \eta_{i_1}(g_1)
\cdots \eta_{i_m}(g_m)$ is a normal form for ${}_{K} \Asterisk_{i} G_i$ with respect to
generating set $S \sqcup \coprod_{i \in I} S_i$ (where 
$\coprod$ is the disjoint union). 
Similarly, if we pick an ordering for $I$, the normal form theorem for central products
states that every element $g \in \sideset{_K}{}\prod_{i} G_i$ can be written
uniquely as $k g_1 \cdots g_m$, where $k \in K$, and $g_i \in A_{i_j} \setminus \{1\}$ for some
increasing sequence $i_1 < i_2 < \ldots < i_m$ in $I$. Again, if $\eta$ is a normal
form for $K$ and $\eta_i$ is a normal form for $G_i$, $i \in I$, then $g \mapsto \eta(k) \eta_{i_1}(g_1) \cdots \eta_{i_m}(g_m)$ is a normal form for
$\sideset{_K}{}\prod_{i} G_i$. In both cases, the normal form theorem implies
that the natural inclusions $G_j \incl {}_K \Asterisk_i G_i$ and $G_j \incl
\sideset{_K}{}\prod_{i} G_i$ are injections for all $j$, so $G_j$ can be
regarded as a subgroup of the amalgamated free product or central product. 
One application of amalgamated free products that we will use is the following
lemma:
\begin{lemma}\label{lem:involution}
    Let $G = \ang{S:R}$. For any $y \in S$, $G$ is a subgroup of 
    \begin{equation*}
        H := \ang{G, s, t : y = st, s^2=t^2=1}
    \end{equation*}
    via the natural inclusion.
\end{lemma}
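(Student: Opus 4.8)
The plan is to realize $H$ as an amalgamated free product of $G$ with a suitable small group over the cyclic subgroup $\langle y\rangle$, and then invoke the normal form theorem for amalgamated free products (stated above) to conclude that $G$ embeds. First I would introduce the group $D := \langle s, t : s^2 = t^2 = 1\rangle = \Z_2 * \Z_2$, the infinite dihedral group, and note that the element $st \in D$ has infinite order, so $\langle st\rangle \cong \Z$ inside $D$. I would then form the amalgamated free product $G \,{}_{\langle y\rangle}\!\Asterisk\, D$, where the amalgamation identifies the cyclic subgroup $\langle y\rangle \le G$ with the infinite cyclic subgroup $\langle st\rangle \le D$ via $y \mapsto st$. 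Comparing presentations, this amalgamated free product has exactly the generators $S \sqcup \{s,t\}$ and relations $R \cup \{s^2 = t^2 = 1\} \cup \{y = st\}$, which is precisely the presentation of $H$ (using the remark in the excerpt that to impose $\phi_i(k) = \phi_j(k)$ it suffices to do so on a generating set of the amalgamated subgroup, here the single generator $y$). Hence $H \cong G \,{}_{\langle y\rangle}\!\Asterisk\, D$.

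The key technical point that makes the amalgam well-defined is that $y \mapsto st$ must extend to an \emph{injective} homomorphism $\langle y\rangle \hookrightarrow D$; this is where one must be slightly careful about two cases. If $y$ has infinite order in $G$, then both $\langle y\rangle$ and $\langle st\rangle$ are infinite cyclic and the map is an isomorphism onto $\langle st\rangle$, so it is trivially injective. If $y$ has finite order $k$ in $G$, then $st$ must also be required to have order dividing $k$; but $st$ has infinite order in the infinite dihedral group, so instead one should take $D := \langle s,t : s^2 = t^2 = (st)^k = 1\rangle$, the finite dihedral group of order $2k$, in which $st$ has order exactly $k$, making $y \mapsto st$ an isomorphism of $\langle y\rangle$ onto $\langle st\rangle$. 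In either case the hypotheses of the amalgamated free product construction are met, and adding the relation $(st)^k$ when it already holds in $G$ does not change the presentation of $H$. (In the application, $G$ is typically torsion-free or $y$ will be a free generator, so the infinite-order case is the relevant one, but the lemma as stated is uniform.)

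Once $H$ is identified with the amalgamated free product, the normal form theorem for amalgamated free products — quoted in Section~\ref{ss:grouptheory} — immediately gives that the natural inclusion $G \hookrightarrow {}_{\langle y\rangle}\!\Asterisk\,(G, D)$ is injective, which is exactly the assertion of the lemma. I expect the only real obstacle to be the bookkeeping in the finite-order case: verifying that when $y^k = 1$ in $G$ the extra relation $(st)^k = 1$ is harmless, i.e.\ that the presentation $\langle G, s, t : y = st,\ s^2 = t^2 = 1\rangle$ is genuinely isomorphic to $\langle G, s, t : y = st,\ s^2 = t^2 = (st)^k = 1\rangle$. This follows because $(st)^k = y^k$ in the first group, and $y^k = 1$ there since that relation is among $R$. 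Everything else is a direct translation between presentations and an appeal to the normal form theorem.
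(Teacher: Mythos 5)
Your proof is correct and takes essentially the same approach as the paper: the paper also identifies $H$ with the amalgamated free product of $G$ and the dihedral group $D_k$ of order $2k$ (where $k$ is the order of $y$, possibly $+\infty$) over $\ang{y}\cong\Z_k\cong\ang{st}$, and then invokes the embedding of factors into an amalgamated free product. The paper treats the finite and infinite cases uniformly by allowing $k=+\infty$, whereas you split them, and you also spell out the bookkeeping (that the relation $(st)^k=1$ is already a consequence of $y=st$ and $y^k=1$) that the paper leaves implicit; these are only cosmetic differences.
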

\begin{proof}
    Suppose $y$ has order $k \in \mbN \cup \{+\infty\}$. Then $H$ is the 
    amalgamated free product of $G$ with the dihedral group
    \begin{equation*}
        D_k = \ang{s,t : s^2=t^2=(st)^k = 1 }
    \end{equation*}
    of order $2k$ over the subgroups $\ang{y} \iso \Z_k \iso \ang{s,t}$.
\end{proof}

Another construction we'll use are HNN extensions. Suppose that $G$ is a group,
$H$ and $K$ are subgroups of $G$, and $\phi : H \to K$ is an
isomorphism. Then the HNN extension of $G$ by $\phi$ is
\begin{equation*}
    \langle G, t : t h t^{-1} = \phi(h) \text{ for all } h \in H \rangle.
\end{equation*}
It is well-known that the order of the generator $t$ is infinite, and that,
similar to the amalgamated free product and central product, $G$ is a
subgroup of the HNN extension via the natural inclusion. More generally,
given a sequence of subgroups $H_1,\ldots,H_m$ and $K_1,\ldots,K_m$ of a group $G$,
along with isomorphisms $\phi_i : H_i \arr K_i$, $1 \leq i \leq m$, $G$ will be
subgroup of the iterated HNN extension
\begin{equation*}
    \langle G, t_1,\ldots,t_m : t_i h t_i^{-1} = \phi_i(h) \text{ for all } 1 \leq i \leq m
            \text{ and } h \in H_i \rangle.
\end{equation*}
Britton's lemma allows us to determine the trivial words in an iterated HNN extension.
 \begin{lemma}[Britton's lemma, Theorem 11.81 of \cite{Rotman}]\label{lemma:britton}
    Let $H_1,\ldots,H_m$ and $K_1,\ldots,K_m$ be subgroups of a group $G$, and
    suppose $\phi_i : H_i \arr K_i$ is an isomorphism for $i=1\ldots,m$. Let
    \begin{equation*}
        L = \langle G, t_1,\ldots,t_m : t_i h t_i^{-1} = \phi_i(h) \text{ for all } 1 \leq i \leq m
                \text{ and } h \in H_i \rangle
    \end{equation*}
    be the iterated HNN extension, and suppose 
    \begin{equation*}
        w = g_0 t_{i_1}^{e_1} g_1 t^{e_2}_{i_2} \cdots t^{e_n}_{i_n} g_n \in L
    \end{equation*}
    where $g_0,\ldots,g_n \in G$, $e_1,\ldots,e_n \in \{\pm 1\}$, and $1 \leq i_1,\ldots,i_n \leq m$.
    If $w = \Id$ in $L$, then at least one of the following holds:
    \begin{enumerate}[(a)]
        \item $n=0$ and $g_0 = \Id$ in $G$, 
        \item there is $1 \leq j < n$ such that $e_j = 1 = - e_{j+1}$, $i_j = i_{j+1}$, and $g_i \in H_{i_j}$, or
        \item there is $1 \leq j < n$ such that $e_j = -1 = - e_{j+1}$, $i_j = i_{j+1}$, and $g_i \in K_{i_j}$.
    \end{enumerate}
\end{lemma}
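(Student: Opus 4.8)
The plan is to use van der Waerden's permutation trick: exhibit $L$ acting on a set of ``normal form sequences'', and read off the structure of trivial words from that action. Fix, for each $i$, a right transversal $T_i^H$ for $H_i$ in $G$ and a right transversal $T_i^K$ for $K_i$ in $G$, each containing $\Id$. Let $\Omega$ be the set of sequences $\sigma = (g_0, (i_1,e_1), g_1, \ldots, (i_n,e_n), g_n)$ with $n \ge 0$, $g_0 \in G$, $1 \le i_r \le m$, $e_r \in \{\pm1\}$, and (for $r \ge 1$) $g_r \in T_{i_r}^H$ if $e_r = 1$ and $g_r \in T_{i_r}^K$ if $e_r = -1$, subject to the reducedness constraint that no index $r$ has $i_r = i_{r+1}$, $e_r = -e_{r+1}$, and $g_r = \Id$. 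Call a word $w = g_0 t_{i_1}^{e_1} g_1 \cdots t_{i_n}^{e_n} g_n$ in $L$ \emph{reduced} if it has no subword $t_{i_j} g_j t_{i_j}^{-1}$ with $g_j \in H_{i_j}$ and no subword $t_{i_j}^{-1} g_j t_{i_j}$ with $g_j \in K_{i_j}$; then Britton's lemma amounts to the claim that a reduced word with $n \ge 1$ is nontrivial in $L$ (the $n = 0$ case being the injectivity of $G \hookrightarrow L$, which the same construction will give for free).

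Next I would let $G$ act on $\Omega$ by left multiplication on $g_0$, and define the action of the generator $t_i$ by ``prepend $(i,+1)$, then renormalize'': writing $g_0 = h c$ with $h \in H_i$ and $c \in T_i^H$, send $\sigma$ to $(\phi_i(h), (i,+1), c, (i_1,e_1), g_1, \ldots)$, except when this would create a pinch --- precisely when $\sigma$ begins with $(i,-1)$ and $c = \Id$, i.e.\ $g_0 \in H_i$ --- in which case one instead collapses $t_i g_0 t_i^{-1} = \phi_i(g_0)$ and absorbs $\phi_i(g_0) \in K_i$ into the following group element. Define $t_i^{-1}$ symmetrically with $K_i$, $T_i^K$, $\phi_i^{-1}$. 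The heart of the proof is the verification that these rules respect the defining relations of $L$: $t_i$ and $t_i^{-1}$ are mutually inverse bijections of $\Omega$, and $t_i\,h\,t_i^{-1}$ acts as $\phi_i(h)$ for $h \in H_i$. This reduces to a finite case analysis on how the newly prepended stable letter interacts with the front of $\sigma$ (the only interesting case being the same index with opposite sign and trivial leading transversal element); granting it, $\Omega$ becomes an $L$-set.

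With the action in hand, let $\sigma_0 = (\Id) \in \Omega$. For any reduced $w = g_0 t_{i_1}^{e_1} g_1 \cdots t_{i_n}^{e_n} g_n$ with $n \ge 1$, I would compute $w \cdot \sigma_0$ by applying the factors right to left. At each stage a collapse in the renormalization step can occur only if the group element being decomposed lies in $H_{i_r}$ or $K_{i_r}$ in exactly the pattern prohibited by the reducedness of $w$; since $w$ is reduced, no collapse ever happens, and an easy induction shows $w \cdot \sigma_0$ has length exactly $n \ge 1$, hence $w \cdot \sigma_0 \ne \sigma_0$. Therefore $w$ acts nontrivially and $w \ne \Id$ in $L$. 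Taking contrapositives and handling $n = 0$ via $g \cdot \sigma_0 = (g)$ yields the three stated alternatives. The main obstacle is the relation-checking in the second step: one must confirm that ``prepend and renormalize'' genuinely descends from the free product $G * \F_{\{t_1,\ldots,t_m\}}$ to an action of $L$, and this, while standard, is the bookkeeping-heavy core of the argument.
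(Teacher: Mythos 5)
The paper does not prove this lemma; it cites it as a known result from Rotman, whose own proof is precisely the Artin--van der Waerden permutation argument you outline. Your sketch is correct: the transversal convention (right transversals $T_i^H$ for $e_r = +1$, $T_i^K$ for $e_r = -1$) matches the relation $t_ih = \phi_i(h)t_i$, the pinch condition is characterized by the transversal element being $\Id$, and the contrapositive logic recovers all three alternatives (a)--(c), with case (a) coming from the injectivity of $G \hookrightarrow L$ via $g \cdot (\Id) = (g)$. One small point worth being careful about in the final induction: when you apply $t_{i_r}^{e_r}$, the element being decomposed is not $g_r$ itself but $g_r a_{r+1}$, where $a_{r+1}$ is the leading entry produced by the previous stage; the induction should carry along the observation that $a_{r+1}$ automatically lies in $K_{i_{r+1}}$ (if $e_{r+1}=+1$) or $H_{i_{r+1}}$ (if $e_{r+1}=-1$), so that under the collapse hypotheses $i_r = i_{r+1}$, $e_r = -e_{r+1}$, membership of $g_r a_{r+1}$ in the relevant subgroup is equivalent to membership of $g_r$ alone, which is exactly what reducedness of $w$ forbids. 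With that noted, the sketch is sound; the remaining work (verifying that the prepend-and-renormalize rules respect the defining relations of $L$) is, as you say, standard bookkeeping.
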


Let $G = \langle S : R \rangle$ be a finitely presented group. A function $f :
\mbN \arr \mbN$ is an isoperimetric function for $G$ if for all words 
$w = s_1 \cdots s_n$ of length $n \geq 1$ over $S \cup S^{-1}$,
if $w = \Id$ in $G$ then there are elements $u_1,\ldots,u_k \in \F_S$ and
$r_1,\ldots,r_k \in R \cup R^{-1}$ with $k \leq f(n)$ and
\begin{equation*}
    w = u_1 r_1 u_1^{-1} \cdot u_2 r_2 u_2^{-1} \cdots u_k r_k u_k^{-1}.
\end{equation*}

\begin{lemma}[\cite{Ge93}]\label{lemma:isod}
Let $G=\ang{S:R}$ be a finitely presented group with an isoperimetric function $f$, and let $l:=\max\{\abs{r}:r\in R\}$ be the length of the longest relation in $R$. If a word $w\in\F_S$ of length $n$ is equal to $1$ in $G$, then there are $z_1,\cdots,z_k\in\F_S$ and $r_1,\cdots,r_k\in R\cup R^{-1}$ with $k\leq f(n)$ and $\abs{z_i}\leq kl+l+n$ for all $1\leq i\leq k$ such that
\begin{align*}
    w = z_1 r_1 z_1^{-1}z_2 r_2 z_2^{-1} \cdots z_k r_k z_k^{-1}.
\end{align*}
in $\F_{S}$.
\end{lemma}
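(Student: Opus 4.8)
The plan is to pass through van Kampen diagrams. By the definition of an isoperimetric function, the hypotheses ($w =_{\F_S} 1$, $\abs{w} = n$) give a factorization $w = u_1 r_1 u_1^{-1} \cdots u_k r_k u_k^{-1}$ in $\F_S$ with $k \le f(n)$ and $r_1,\ldots,r_k \in R \cup R^{-1}$; the only defect is that the conjugators $u_i$ may be arbitrarily long, and the whole point is to replace them by short ones. First I would invoke the easy direction of van Kampen's lemma: starting from the wedge of ``lollipops'' associated to this factorization and folding it into a reduced diagram, one obtains a planar, simply connected van Kampen diagram $\mathcal D$ over $\ang{S : R}$ with boundary word $w$ (hence boundary length $n$) and at most $k$ two-cells, each two-cell having boundary cycle of length at most $l$.

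Next I would extract short conjugators from $\mathcal D$ by peeling off its two-cells one at a time, proving by induction on the number of two-cells the following claim: if $\mathcal D$ is a van Kampen diagram over $\ang{S:R}$ with $k$ two-cells and boundary word $v$ of length $m$, then $v = \prod_{i=1}^{k} z_i r_i z_i^{-1}$ in $\F_S$ with $r_i \in R \cup R^{-1}$ and $\abs{z_i} \le m + kl$. For the inductive step one locates a two-cell $c$ whose boundary shares an arc $e$ with the boundary cycle of $\mathcal D$; writing that cycle, read from the basepoint, as $v = a\, e\, b$ and letting $\mathcal D'$ be the diagram obtained by deleting $c$, one has $v = (a\, \tilde r\, a^{-1}) \cdot v'$ in $\F_S$, where $\tilde r$ is the boundary word of $c$ (a cyclic conjugate of some $r \in R \cup R^{-1}$) and $v'$ is the boundary word of $\mathcal D'$, of length at most $m + l$ (the arc $e$, of length $\ge 1$, is replaced by the complementary arc of $\partial c$, of length $\le l - 1$). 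Since $a$ is a proper prefix of $v$, $\abs{a} \le m$; writing $\tilde r = s\, r\, s^{-1}$ with $s$ a proper prefix of $r$ (so $\abs{s} \le l$) and setting $z_1 := a s$ gives $\abs{z_1} \le m + l \le m + kl$, while the inductive hypothesis applied to $\mathcal D'$ (which has $k - 1$ two-cells and boundary length $\le m + l$) supplies $z_2, \ldots, z_k$ with $\abs{z_i} \le (m + l) + (k-1)l = m + kl$. Applying the claim with $m = n$ and $k \le f(n)$ yields $\abs{z_i} \le n + kl \le kl + l + n$, as required. (One can instead argue via an edge count: a folded diagram has $E$ edges with $2E = \sum_{\text{faces}} (\text{perimeter}) \le kl + n$, and the conjugators can be read off as embedded $1$-skeleton paths from the basepoint, of length $\le E \le (kl+n)/2$, again giving the stated bound after correcting for cyclic conjugation.)

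The step I expect to be the main obstacle is making precise the ``read the conjugates off the diagram, in the right order, with length control'' assertion — i.e.\ the direction of van Kampen's lemma going from a diagram to a product of conjugates of relators, keeping the number of conjugates equal to the number of two-cells. The subtleties are purely topological bookkeeping: one must know that a diagram with at least one two-cell has a two-cell meeting the boundary in an edge, handle a two-cell meeting $\partial \mathcal D$ in several disjoint arcs (so that the cell is peeled off along a single arc, possibly after first cutting $\mathcal D$ open along a tree), and treat degenerate diagrams (e.g.\ a single two-cell whose boundary is all of $\partial \mathcal D$) directly. All of this is standard in the theory of van Kampen diagrams, and indeed the lemma is essentially \cite{Ge93}; the work is in tracking the length estimates through the standard argument rather than in any new idea.
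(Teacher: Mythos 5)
The paper does not prove this lemma; it is cited directly from Gersten \cite{Ge93}, so there is no in-paper argument to compare yours against. Your van Kampen-diagram route is the standard one and the length bookkeeping checks out: the peeling induction gives $\lvert z_i\rvert \le m + kl$ with $m=n$, i.e.\ $\lvert z_i\rvert \le n + kl$, which is in fact a hair sharper than the stated $kl+l+n$; the edge-count alternative, $2E \le kl + n$ and conjugators read off a spanning tree plus one cyclic shift, gives a bound of the same shape. The one cosmetic slip is in the cyclic-conjugation step: to write a cyclic conjugate $\tilde r$ of $r$ as $s\,r\,s^{-1}$ you want $s^{-1}$ to be a prefix of $r$ (equivalently $s$ a suffix), not $s$ a prefix -- either way $\lvert s\rvert \le l$, so the estimate is unaffected. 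The remaining content is, as you say, the standard bookkeeping for folded diagrams (spurs, cut-vertices, cells meeting $\partial\mathcal{D}$ in several arcs), which is exactly what is handled in the cited reference.
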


We use the following theorem of Birget, Ol'shanskii, Rips, and Sapir:
\begin{theorem}[\cite{BORS02}]\label{thm:BORS}
    Let $G$ be a group with finite generating set $S$. If the word problem for
    $G$ with respect to $S$ is solvable in non-deterministic time $T(n)$, where $T^4$ is
    superadditive, then $G$ can be embedded into a finitely presented group
    $H$ with isoperimetric function $n^2 T(n^2)^4$. 
\end{theorem}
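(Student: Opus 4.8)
The plan is to reproduce the construction of Birget, Ol'shanskii, Rips, and Sapir, which passes through \emph{$S$-machines}. Since the word problem for $G$ with respect to $S$ is solvable in nondeterministic time $T(n)$, there is a nondeterministic Turing machine recognizing the set of words over $S\cup S^{-1}$ trivial in $G$. First I would convert this machine into an $S$-machine --- a Turing machine whose transition rules act on tape letters and state letters and are formally symmetric, i.e.\ invertible --- recognizing the same language with time complexity $O(T(n))$ and space bounded by its time. The symmetry of the rules is what makes such a machine encodable into a group presentation, since group relations are themselves invertible.

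Next I would build the finitely presented group $H=H(\mathcal{S})$ attached to this $S$-machine $\mathcal{S}$, following the Sapir--Birget--Rips recipe: $H$ has generators for the tape alphabet, for the state letters, and one generator $\theta$ for each rule of $\mathcal{S}$, with relations saying that applying $\theta$ conjugates the portion of a configuration word read by the rule to the portion it writes, together with one ``hub'' relation declaring the accept configuration trivial. The group $H$ is built from iterated HNN extensions of a free group, so Britton's lemma (\Cref{lemma:britton}) governs its normal forms. Concatenating rule letters and then closing up with the hub, one shows that the word encoding an input configuration for $w$ is trivial in $H$ precisely when $\mathcal{S}$ accepts $w$, i.e.\ precisely when $w=_G 1$; and the tape letters over the alphabet $S$ generate a copy of $G$ inside $H$ (no extra relations, again by the Britton's lemma analysis), giving the desired embedding $G\hookrightarrow H$.

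The crux is the isoperimetric estimate. Given a length-$n$ word trivial in $H$, I would take a reduced van Kampen diagram for it and dissect it using \emph{bands}: maximal strips of cells sharing a common tape letter, and maximal strips of cells sharing a common rule letter $\theta$. A $\theta$-band records one computation step of $\mathcal{S}$, and a maximal rectangular subdiagram bounded by $\theta$-bands and tape bands is a \emph{trapezium} whose top and bottom boundary labels are a pair of configurations joined by a run of $\mathcal{S}$; hence a trapezium for configurations of size $m$ contains at most $\approx (\text{number of steps})\times(\text{space})\leq T(m)^2$ cells. Bounding the number, sizes, and mutual position of the trapezia, the hubs, and the remaining free subdiagrams in terms of $n$ --- and using that the construction pads the machine so that a length-$n$ input produces configurations and runs controlled by the arguments $n^2$ and $T(n^2)$ --- one assembles the total area bound $n^2 T(n^2)^4$, with superadditivity of $T^4$ used to add the contributions of the diagram pieces without loss. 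The hard part is exactly this quantitative diagram surgery: handling annular subdiagrams, verifying the $S$-machine runs in space linear in its time, excluding pathological configurations via Britton's lemma, and tracking constants and exponents so that the fourth power and the $n^2$ factor come out in the stated form. This is why the theorem is genuinely long, and in the present paper it is used as a black box.
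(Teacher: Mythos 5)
The paper does not prove this theorem; it is quoted verbatim from \cite{BORS02} and used as a black box, exactly as you observe at the end of your write-up. There is therefore no internal proof to compare against. As a summary of the external BORS argument, your sketch is accurate at the level of detail given: the passage through $S$-machines, the group presentation with tape, state, and rule-letter generators plus the hub relation, the appeal to Britton's lemma for the embedding of $G$, and the van Kampen diagram dissection into $\theta$-bands and trapezia with area $\lesssim T(m)^2$ are all the right ingredients, and you correctly flag the real labor (annular diagrams, padding to balance time and space, tracking the exponents) as what produces the $n^2 T(n^2)^4$ bound with superadditivity of $T^4$ used to sum contributions. Nothing you wrote is wrong, but it is a roadmap of a roughly hundred-page construction, not a proof, and it is consistent with how the present paper treats \Cref{thm:BORS} — as an imported result.
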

Here a function $T : \mbN \arr \mbN$ is superadditive if $T(m+n) \geq T(m) +
T(n)$ for all $m,n$.  It follows immediately from this theorem that if the word
problem for $G$ can be solved in non-deterministic polynomial time, then $G$ can be embedded in a finitely presented
group $H$ with a polynomial isoperimetric function. 

\section{Approximate representations}\label{sec:approxrepn}

We introduce some terminology for approximate states.

\begin{definition}\label{defn:erstate}
    Let $\mcA$ be a finitely generated $*$-algebra. Suppose $\eps \geq 0$ and 
    $\mcR\subseteq \mcA$. An \emph{$(\epsilon,\mcR)$-state on $\mcA$} is a state
    $\varphi$ on $\mcA$ such that $\varphi(r^*r)\leq \epsilon$ for all
    $r\in\mcR \cup \mcR^*$.  If, in addition, $\varphi$ is tracial, then we say that
    $\varphi$ is a \emph{tracial $(\epsilon,\mcR)$-state on $\mcA$}.
\end{definition}

If $(\pi : \mcA \to
\mcB(\mcH), \xi)$ is the GNS representation of a state $\varphi$ on $\mcA$, then
$\norm{x}_{\xi}:=\norm{x \cdot \xi}$ is a semi-norm on $\mcB(\mcH)$ such that
$\norm{a}_\varphi=\norm{\pi(a)}_\xi$ for all $a\in\mcA$. If $\varphi$ is an
$(\epsilon,\mcR)$-state on $\mcA$ as in \cref{defn:erstate}, then
$\norm{\pi(r)}_\xi\leq \sqrt{\epsilon}$ for all $r\in\mcR$, so we can think of
$\pi$ as a $\sqrt{\epsilon}$-approximate representation for $\C^*\ang{\mcA :
\mcR}$ with respect to $\norm{\cdot}_{\xi}$. 

\begin{definition}\label{defn:rde}
    Let $\mcA$ be a $*$-algebra with generating set
    $\mcX$. Let $\mcR\subseteq\C^*\ang{\mcX}$ be a set of $*$-polynomials over
    $\mcX$. For any $*$-polynomial $f\in\C^*\ang{\mcX}$ that is trivial in
    $\mcA/\ang{\mcR}$, we say that $\sum_{i=1}^n \lambda_iu_ir_iv_i$ is an
    $\mcR$-decomposition for $f$ in $\mcA$ if
    \begin{enumerate}
        \item $u_i,v_i$ are $*$-monomials in $\C^*\ang{\mcX}$ for all $1\leq i\leq n$, 
        \item $r_i\in\mcR\cup\mcR^*$ for all $1\leq i\leq n$,
        \item $\lambda_i \in \C$ for all $1\leq i\leq n$, and
        \item $f=\sum_{i=1}^n \lambda_iu_ir_iv_i$ in $\mcA$.
    \end{enumerate}
    The size of an $\mcR$-decomposition $\sum_{i=1}^n \lambda_i u_i r_i v_i$ is $\sum_{i=1}^{n} |\lambda_i| (1+ \norm{r_i}_{\mcA} \deg(v_i))$,
    where $\norm{\cdot}_{\mcA}$ is the operator norm in $\mcA$.
 \end{definition}
 Here $\C^*\ang{\mcX}$ means the set of formal $*$-polynomials with indeterminates $\mcX$. There is a natural map from $\C^*\ang{\mcX}\arr\mcA$ sending $x\in \mcX$ to its image in $\mcA$,  so we can also regard elements of $\C^*\ang{\mcX}$ as elements of $\mcA$, e.g. when taking the quotient $\mcA/ \ang{\mcR}$. A $*$-monomial in $\C^*\ang{\mcX}$ is a product $a_1a_2\cdots a_k$ where
$k \geq 0$ and $a_1,\ldots, a_k\in\mcX\cup\mcX^*$ (the integer $k$ is called
the degree). 

We let $\norm{\cdot}_1$ and $\norm{\cdot}_{1,1}$ denote the $1$-norm
and first Sobolev $1$-seminorm on $\C^*\ang{\mcX}$ with respect to the basis of
$*$-monomials.  In other words, if $f = \sum_{i=1}^n \lambda_i u_i \in
\C^*\ang{\mcX}$ for distinct $*$-monomials $u_1,\ldots,u_n$, then $\norm{f}_1
:= \sum_{i=1}^n |\lambda_i|$ and $\norm{f}_{1,1} := \sum_{i=1}^n |\lambda_i|
\deg(u_i)$. Note that $\norm{\cdot}_1$ is submultiplicative, and that
$\norm{f^*}_1 = \norm{f}_1$, $\norm{f^*}_{1,1} = \norm{f}_{1,1}$ for all $f 
\in \C^*\ang{\mcX}$.
The following elementary lemma is useful for computing sizes of
$\mcR$-decompositions.
\begin{lemma}\label{lemma:sizecalc}
    Let $\mcA$ be a $*$-algebra with generating set $\mcX$, and let $\mcR
    \subseteq \C^*\ang{\mcX}$ be a set of $*$-polynomials over $\mcX$.
    Then:
    \begin{enumerate}[(a)]
        \item If $f = \sum_{i=1}^n u_i r_i v_i$ for $*$-polynomials $u_1,\ldots,u_n,v_1\ldots,v_n \in \C^*\ang{X}$
            and $r_1,\ldots,r_n \in \mcR \cup \mcR^{*}$, then $f$ has an $\mcR$-decomposition of size at most
            \begin{equation*}
                \sum_{i=1}^n \norm{u_i}_1 \norm{v_i}_1 + \norm{u_i}_1 \norm{r_i}_\mcA \norm{v_i}_{1,1}.
            \end{equation*}
        \item If $f-g$ has an $\mcR$-decomposition of size $\leq \Lambda_1$, and $g-h$ has an $\mcR$-decomposition
            of size $\leq \Lambda_2$, then $f-h$ has an $\mcR$-decomposition of size $\leq \Lambda_1 + \Lambda_2$.
        \item If $g$ has an $\mcR$-decomposition of size $\leq \Lambda$, then $f g$ has an $\mcR$-decomposition
            of size $\leq \norm{f}_1 \Lambda$ for all $f \in \C^* \ang{X}$. 
    \end{enumerate}
\end{lemma}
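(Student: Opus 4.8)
The plan is to prove all three parts by expanding products into $*$-monomials and reading off the size directly from \Cref{defn:rde}; I expect no real obstacle, as the statement is pure bookkeeping. The one thing to keep in mind is that in (a) and (c) the flanking factors $u_i,v_i$ and the multiplier $f$ are allowed to be arbitrary $*$-polynomials, whereas an $\mcR$-decomposition requires the factors surrounding each $r_i$ to be $*$-monomials; so each part reduces to expanding a polynomial into its monomials and bounding the contribution of each resulting term.

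For (a), I would write $u_i = \sum_j \alpha_{ij} p_{ij}$ and $v_i = \sum_k \beta_{ik} q_{ik}$ as $\C$-linear combinations of distinct $*$-monomials, so that $\sum_j \abs{\alpha_{ij}} = \norm{u_i}_1$, $\sum_k \abs{\beta_{ik}} = \norm{v_i}_1$, and $\sum_k \abs{\beta_{ik}}\deg(q_{ik}) = \norm{v_i}_{1,1}$. Then $f = \sum_{i,j,k} \alpha_{ij}\beta_{ik}\, p_{ij}\, r_i\, q_{ik}$ in $\mcA$, and since each $p_{ij}$ and $q_{ik}$ is a $*$-monomial and $r_i\in\mcR\cup\mcR^*$, this is an $\mcR$-decomposition. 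Its size is $\sum_{i,j,k}\abs{\alpha_{ij}}\abs{\beta_{ik}}\bigl(1 + \norm{r_i}_\mcA\deg(q_{ik})\bigr)$, and distributing the inner sums over $j$ and $k$ rewrites this as $\sum_i \norm{u_i}_1\norm{v_i}_1 + \norm{u_i}_1\norm{r_i}_\mcA\norm{v_i}_{1,1}$, which is the claimed bound.

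Part (b) is immediate: I would concatenate an $\mcR$-decomposition of $f-g$ with one of $g-h$; this is an $\mcR$-decomposition of $(f-g)+(g-h)=f-h$, and since the size is a sum of nonnegative terms, the size of the concatenation is the sum of the two sizes, hence at most $\Lambda_1+\Lambda_2$.

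For (c), I would start from an $\mcR$-decomposition $g=\sum_i\lambda_i u_i r_i v_i$ of size at most $\Lambda$ and write $f=\sum_j\mu_j w_j$ as a $\C$-linear combination of distinct $*$-monomials. Then $fg=\sum_{i,j}\lambda_i\mu_j\,(w_j u_i)\,r_i\,v_i$; each $w_j u_i$ is again a $*$-monomial and the factors $r_i,v_i$ are unchanged, so this is an $\mcR$-decomposition of $fg$, of size $\sum_{i,j}\abs{\lambda_i}\abs{\mu_j}\bigl(1+\norm{r_i}_\mcA\deg(v_i)\bigr) = \bigl(\sum_j\abs{\mu_j}\bigr)\sum_i\abs{\lambda_i}\bigl(1+\norm{r_i}_\mcA\deg(v_i)\bigr)\le\norm{f}_1\Lambda$. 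The only point worth flagging is that left-multiplying $u_i$ by a monomial leaves $\deg(v_i)$ and $\norm{r_i}_\mcA$ untouched, which is precisely why no extra degree factor is incurred; this is the sense in which the ``hard part'' is nothing more than being careful about which factor the degree in the size formula refers to.
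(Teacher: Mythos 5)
Your proof is correct. The paper states this lemma without proof (labeling it ``elementary''), and your argument is exactly the intended bookkeeping: expand $u_i$, $v_i$, and $f$ into $*$-monomials, observe that monomials are closed under left multiplication by monomials, and note that the size formula $\sum_i|\lambda_i|(1+\norm{r_i}_\mcA\deg(v_i))$ depends on the right-hand monomial $v_i$ only, which is why (c) picks up a factor of $\norm{f}_1$ and not a degree factor. All three parts check out.
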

For instance, suppose we can demonstrate that $g = h$ in $\mcA / \ang{\mcR}$ by
finding $f_1,\ldots,f_n$, $u_1,\ldots,u_n$, $v_1,\ldots,v_n \in
\C^*\ang{\mcX}$ and $r_1,\ldots,r_n \in \mcR \cup \mcR^*$ such that $f_{i+1} -
f_i = u_i r_i v_i$ for all $0 \leq i \leq n$, where $f_0 = g$ and $f_{n+1} =
h$. Then we know that $g-h$ has an $\mcR$-decomposition of size at most
$\sum_{i=1}^n \norm{u_i}_1 \norm{v_i}_1 + \norm{u_i}_1 \norm{r_i}_{\mcA}
\norm{v_i}_{1,1}$. 

If $\mcX$ is a set of unitary generators for $\mcA$, then all
$*$-monomials in $\C^*\ang{\mcX}$ are unitary in $\mcA$.  As a result, an
$\mcR$-decomposition of $f$ can be used to bound the state-dependent norm of
$f$ with respect to any tracial $(\eps,\mcR)$-state: 
\begin{proposition}\label{prop:tracerbound}
    Suppose  $\mcA$ is a $*$-algebra generated by a finite set of unitaries $\mcX$.
    Let $\mcR\subset\C^*\ang{\mcX}$ be a set of $*$-polynomials over $\mcX$, and
       let $\tau$ be a tracial $(\epsilon,\mcR)$-state on $\mcA$. If
    $f\in\C^*\ang{\mcX}$ is trivial in $\mcA/\ang{\mcR}$ with an
    $\mcR$-decomposition $\sum_{i=1}^n \lambda_iu_ir_iv_i$, then
    $\norm{f}_{\tau}\leq (\sum_{i=1}^n|\lambda_i|)\sqrt{\epsilon}$.
\end{proposition}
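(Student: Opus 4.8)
The plan is to estimate $\norm{f}_\tau$ directly from the given $\mcR$-decomposition, term by term, using the fact that traciality makes $\norm{\cdot}_\tau$ unitarily invariant. First I would pass to the image of $f$ in $\mcA$: by item (4) of \Cref{defn:rde} the identity $f=\sum_{i=1}^n\lambda_i u_i r_i v_i$ holds in $\mcA$, so it is legitimate to apply the seminorm $\norm{\cdot}_\tau$ to both sides. Since $\norm{\cdot}_\tau$ is a seminorm (with $\norm{\lambda a}_\tau=|\lambda|\,\norm{a}_\tau$ for scalars $\lambda$), the triangle inequality gives
\[
\norm{f}_\tau\ \le\ \sum_{i=1}^n|\lambda_i|\,\norm{u_i r_i v_i}_\tau.
\]

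Next I would observe that each $u_i$ and each $v_i$ is a $*$-monomial over $\mcX$, i.e.\ a product of elements of $\mcX\cup\mcX^*$, and since $\mcX$ consists of unitaries in $\mcA$, the images of $u_i$ and $v_i$ in $\mcA$ are unitary. Because $\tau$ is tracial, $\norm{\cdot}_\tau$ is unitarily invariant, so $\norm{u_i r_i v_i}_\tau=\norm{r_i}_\tau$ for every $i$.

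Finally, each $r_i$ lies in $\mcR\cup\mcR^*$. If $r_i\in\mcR$, then $\tau(r_i^*r_i)\le\epsilon$ directly from \Cref{defn:erstate}; if $r_i=s^*$ for some $s\in\mcR$, then $r_i^*r_i=s s^*=(s^*)^*(s^*)$ with $s^*\in\mcR^*$, so again $\tau(r_i^*r_i)\le\epsilon$ by \Cref{defn:erstate} applied to $s^*$. Hence $\norm{r_i}_\tau=\sqrt{\tau(r_i^*r_i)}\le\sqrt{\epsilon}$ for all $i$, and substituting into the bound above yields $\norm{f}_\tau\le\bigl(\sum_{i=1}^n|\lambda_i|\bigr)\sqrt{\epsilon}$.

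There is no real obstacle here: the argument is a short chain of inequalities, and the only point that requires any attention is the bookkeeping with adjoints in the last step (checking that the bound $\tau(r^*r)\le\epsilon$ applies whether $r\in\mcR$ or $r\in\mcR^*$). Everything else is immediate from the stated unitary invariance of the tracial seminorm and the definition of an $(\epsilon,\mcR)$-state.
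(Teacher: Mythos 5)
Your proof is correct and follows essentially the same route as the paper's: triangle inequality, then unitary invariance of $\norm{\cdot}_\tau$ (valid since $\tau$ is tracial and the $*$-monomials $u_i,v_i$ are unitary in $\mcA$) to reduce to $\norm{r_i}_\tau$, then the $(\eps,\mcR)$-state bound. The adjoint bookkeeping in your last step is unnecessary, since Definition~\ref{defn:erstate} already requires $\tau(r^*r)\le\eps$ for all $r\in\mcR\cup\mcR^*$, which covers $r_i$ directly.
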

Note that $\sum_{i=1}^n |\lambda_i|$ is less than or equal to the size of $\sum_{i=1}^n \lambda_i u_i r_i v_i$.
\begin{proof}
    Since $\norm{\cdotp}_{\tau}$ is unitarily invariant, 
    \begin{align*}
       \norm{f}_{\tau}\leq
    \sum_{i=1}^n|\lambda_i| \norm{u_ir_iv_i}_{\tau} =\sum_{i=1}^n|\lambda_i|
    \norm{r_i}_{\tau}   \leq (\sum_{i=1}^n|\lambda_i|)\sqrt{\epsilon}. 
    \end{align*}
\end{proof}
The proof of \Cref{prop:tracerbound} does not work if $\tau$ is not tracial.
However, we can still bound $\norm{f}_{\tau}$
using the size of an $\mcR$-decomposition if $\tau$ is the restriction of a state
on $\mcA \otimes \mcA$ satisfying a synchronicity\footnote{The term synchronous is borrowed from nonlocal games in quantum information. Roughly speaking, a state $\varphi$ is synchronous if the action of $x\in\mcX$ on $\varphi$ is the same in both tensor factors.} condition:
\begin{definition}\label{defn:syncstate}
    Suppose $\mcA$ is a $*$-algebra.
    A state $\varphi$ on $\mcA \otimes \mcA$ is $(\eps,\mcX)$-synchronous for
    some $\eps \geq 0$ and $\mcX \subseteq \mcA$ if
    \begin{equation}
        \norm{x \otimes 1 - 1 \otimes x}_{\varphi} \leq \sqrt{\eps}\label{eq:x11x}
    \end{equation}
    for all $x \in \mcX$.
\end{definition}

 When working with $(\eps,\mcX)$-synchronous states, it is convenient to let $\omega_\mcX$ denote the linear map  $\C^*\ang{\mcX}\arr\C^*\ang{\mcX}$ sending $x_1x_2\cdots x_n\mapsto x_nx_{n-1}\cdots x_1$ for all  $n\in\N$ and $x_1,\ldots, x_n\in\mcX\cup\mcX^*$. This map satisfies $\omega_\mcX(a^*)=\omega_\mcX(a)^*$ and $\omega_\mcX(ab)=\omega_\mcX(b)\omega_\mcX(a)$ for all $a,b\in\C^*\ang{\mcX}$, and hence $\omega_\mcX$ is a $*$-isomomorphism $\C^*\ang{\mcX}\arr\C^*\ang{\mcX}^{\mathrm{op}}$, where $\C^*\ang{\mcX}^{\mathrm{op}}$ is the opposite algebra of $\C^*\ang{\mcX}$.

\begin{lemma}\label{lemma:xy_stability}
    Let $\mcA$ be a $*$-algebra generated by a finite set of unitaries $\mcX$. If $\varphi$ is an $(\eps,\mcX)$-synchronous state on $\mcA \otimes\mcA$, then 
    \begin{align}
        \norm{u\otimes \Id-\Id\otimes \omega_\mcX(u)}_\varphi\leq \norm{u}_{1,1}\sqrt{\eps}\label{eq:xy_stability}
    \end{align}
    for all $u\in\C^*\ang{\mcX}$.
\end{lemma}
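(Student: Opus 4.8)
The plan is to establish the estimate first for a single $*$-monomial and then extend to arbitrary $u$ by linearity and the triangle inequality for $\norm{\cdot}_\varphi$. So first I would take $u = x_1 x_2 \cdots x_n$ with each $x_i \in \mcX \cup \mcX^*$, so that $\omega_\mcX(u) = x_n x_{n-1}\cdots x_1$ and $\norm{u}_{1,1} = n$, and interpolate between $u \otimes \Id$ and $\Id \otimes \omega_\mcX(u)$ through the sequence
\[
    w_k := (x_1 x_2 \cdots x_{n-k}) \otimes (x_n x_{n-1} \cdots x_{n-k+1}), \qquad 0 \le k \le n,
\]
where $w_0 = u \otimes \Id$ and $w_n = \Id \otimes \omega_\mcX(u)$. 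The key observation is that factoring out the element $g_k := (x_1 \cdots x_{n-k-1}) \otimes (x_n \cdots x_{n-k+1})$ on the left gives the identity
\[
    w_k - w_{k+1} = g_k \bigl( (x_{n-k} \otimes \Id) - (\Id \otimes x_{n-k}) \bigr),
\]
so each step moves exactly one generator across the tensor factor.

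The second thing I would check is that the synchronicity bound \eqref{eq:x11x}, which is stated for $x \in \mcX$, automatically extends to $x \in \mcX^*$: for $x \in \mcX$ we have $x^{-1}\otimes\Id - \Id\otimes x^{-1} = -(x^{-1}\otimes x^{-1})(x\otimes\Id - \Id\otimes x)$, and since $x^{-1}\otimes x^{-1}$ is unitary and $\norm{\cdot}_\varphi$ is left unitarily invariant for any state, $\norm{x^{-1}\otimes\Id - \Id\otimes x^{-1}}_\varphi = \norm{x\otimes\Id - \Id\otimes x}_\varphi \le \sqrt{\eps}$. Because $\mcX$ is a set of unitary generators, each $x_i$ (and hence $g_k$) is unitary, so applying left unitary invariance to the displayed identity gives $\norm{w_k - w_{k+1}}_\varphi \le \sqrt{\eps}$ for every $0 \le k < n$. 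Summing with the triangle inequality then yields $\norm{u \otimes \Id - \Id \otimes \omega_\mcX(u)}_\varphi \le n\sqrt{\eps} = \norm{u}_{1,1}\sqrt{\eps}$. Finally, writing a general $u = \sum_i \lambda_i u_i$ in the $*$-monomial basis, linearity of $\omega_\mcX$, the triangle inequality, and $\norm{u}_{1,1} = \sum_i |\lambda_i|\deg(u_i)$ give the stated bound.

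There is no serious obstacle here; this is a routine telescoping argument. The one place requiring care is that $\norm{\cdot}_\varphi$ is only \emph{left} unitarily invariant (not right), so the unitary factor $g_k$ that gets cancelled at each step must sit on the left. This is exactly why the interpolation peels generators off the right end of $u$ and appends them on the right end of the second tensor leg, and it is worth double-checking the index bookkeeping so that $w_n$ comes out literally as $\Id \otimes \omega_\mcX(u)$ rather than $\Id \otimes u$.
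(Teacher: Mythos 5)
Your proposal is correct and follows essentially the same argument as the paper: the same telescoping decomposition of $u \otimes \Id - \Id \otimes \omega_\mcX(u)$ into $n$ differences each of the form (unitary)$\cdot(x \otimes \Id - \Id \otimes x)$, the same use of left unitary invariance to cancel the unitary prefix, the same observation that \eqref{eq:x11x} extends from $\mcX$ to $\mcX^*$, and the same extension to general $u$ by subadditivity. The only difference is cosmetic indexing; your care about which side the unitary factor sits on matches the paper's reasoning exactly.
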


\begin{proof}
For every $x\in \mcX$, since $x$ is unitary and $\norm{\cdotp}_\varphi$ is left
unitarily invariant, 
\begin{equation*}
    \norm{x^*\otimes \Id -\Id \otimes x^*}_\varphi=\norm{(x\otimes x)(x^*\otimes \Id -\Id \otimes
x^*)}_\varphi=\norm{x\otimes \Id-\Id\otimes x}_\varphi\leq \sqrt{\eps}.
\end{equation*}
 Thus \Cref{eq:x11x} holds for all $x\in\mcX\cup \mcX^*$. Suppose $u=x_1\cdots x_n$ is a monomial in $\C^*\ang{\mcX}$ of degree $n\geq 1$. Let
$x_0 = x_{n+1}=\Id$. Then  
\begin{align*}
    \norm{u\otimes \Id-\Id\otimes \omega_\mcX(u)}_\varphi & = 
\norm{\sum_{k=1}^n (x_0 x_1\cdots x_{k-1}\otimes x_{n+1} x_n \cdots x_{k+1})(x_k\otimes \Id-\Id\otimes x_k)  }_\varphi \\
    & \leq \sum_{k=1}^n\norm{x_k\otimes \Id-\Id\otimes x_k}_\varphi \leq n\sqrt{\eps}.
\end{align*}
By the subadditivity of $\norm{\cdotp}_\varphi$, \Cref{eq:xy_stability} holds for all $u\in\C^*\ang{\mcX}$.
\end{proof}

The following lemmas show that some of the properties of tracial states hold approximately for $(\eps,\mcX)$-synchronous states.

\begin{lemma}\label{lemma:epstracial}
    Let $\mcA$ be a $*$-algebra generated by a finite set of unitaries $\mcX$, and let $\varphi$ be an $(\eps,\mcX)$-synchronous state on $\mcA \otimes
    \mcA$. If $u$ is a monomial in $\C^*\ang{\mcX}$ and $a \in \mcA$, then $|\norm{u^* a u \otimes 1}_{\varphi} - \norm{a \otimes 1}_{\varphi}| \leq \norm{a}_{\mcA} \deg(u) \sqrt{\eps}$.
\end{lemma}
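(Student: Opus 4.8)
The plan is to deduce this from \Cref{lemma:xy_stability} and the left unitary invariance of the seminorm $\norm{\cdot}_\varphi$. The key point is that $u = y_1\cdots y_n$ is a $*$-monomial in the unitary generators, so $u$ and its reversal $\omega_{\mcX}(u)$ are both unitary in $\mcA$; hence $u\otimes\Id$ and $\Id\otimes\omega_{\mcX}(u)$ are unitaries in $\mcA\otimes\mcA$, and the left unitary invariance of $\norm{\cdot}_\varphi$ (which holds for an arbitrary state, not only a tracial one) lets us absorb either of them from the left at no cost.

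First I would write $u^*au\otimes\Id = (u^*\otimes\Id)(au\otimes\Id)$ and apply left unitary invariance to get $\norm{u^*au\otimes\Id}_\varphi = \norm{au\otimes\Id}_\varphi$, reducing the claim to comparing $\norm{au\otimes\Id}_\varphi$ with $\norm{a\otimes\Id}_\varphi$. Here I would use that $a\otimes\Id$ commutes with $\Id\otimes\omega_{\mcX}(u)$, so that, again by left unitary invariance,
\begin{equation*}
    \norm{(a\otimes\Id)(\Id\otimes\omega_{\mcX}(u))}_\varphi = \norm{(\Id\otimes\omega_{\mcX}(u))(a\otimes\Id)}_\varphi = \norm{a\otimes\Id}_\varphi .
\end{equation*}
Combining this with the reverse triangle inequality for $\norm{\cdot}_\varphi$ gives
\begin{equation*}
    \bigl|\,\norm{au\otimes\Id}_\varphi - \norm{a\otimes\Id}_\varphi\,\bigr| \leq \norm{(a\otimes\Id)\bigl(u\otimes\Id - \Id\otimes\omega_{\mcX}(u)\bigr)}_\varphi .
\end{equation*}
Finally I would bound the right-hand side using the general estimate $\norm{cb}_\varphi \leq \norm{c}_{\mcA\otimes\mcA}\norm{b}_\varphi$ together with $\norm{a\otimes\Id}_{\mcA\otimes\mcA}\leq\norm{a}_\mcA$ (every $*$-representation of $\mcA\otimes\mcA$ restricts along $a\mapsto a\otimes\Id$ to a $*$-representation of $\mcA$), and then apply \Cref{lemma:xy_stability} with $\norm{u}_{1,1} = \deg(u)$ for a monomial $u$, which yields exactly $\norm{a}_\mcA\deg(u)\sqrt{\eps}$.

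I do not expect a genuine obstacle here; the calculation is short. The two points requiring care are that $\norm{\cdot}_\varphi$ is only guaranteed to be \emph{left} unitarily invariant, which is precisely why one moves $\Id\otimes\omega_{\mcX}(u)$ to the left via the commutation $(a\otimes\Id)(\Id\otimes\omega_{\mcX}(u)) = (\Id\otimes\omega_{\mcX}(u))(a\otimes\Id)$ instead of attempting to cancel a right unitary factor, and that the inequality $\norm{a\otimes\Id}_{\mcA\otimes\mcA}\leq\norm{a}_\mcA$ should be justified through representations of the tensor product rather than assumed.
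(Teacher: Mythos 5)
Your proof is correct and follows essentially the same approach as the paper: both rely on \Cref{lemma:xy_stability}, left unitary invariance of $\norm{\cdot}_\varphi$, and the operator-norm bound $\norm{cb}_\varphi \leq \norm{c}_{\mcA\otimes\mcA}\norm{b}_\varphi$ together with $\norm{a\otimes\Id}_{\mcA\otimes\mcA}\leq\norm{a}_\mcA$. The only cosmetic difference is that the paper proves the one-sided inequality $\norm{u^*au\otimes\Id}_\varphi \leq \norm{a\otimes\Id}_\varphi + \norm{a}_\mcA\deg(u)\sqrt{\eps}$ and then obtains the reverse direction by substituting $u\mapsto u^*$, $a\mapsto u^*au$, whereas you invoke the reverse triangle inequality directly after identifying $\norm{(a\otimes\Id)(\Id\otimes\omega_\mcX(u))}_\varphi = \norm{a\otimes\Id}_\varphi$.
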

\begin{proof} 
Since $u$ is a monomial in $\C^*\ang{\mcX}$, $u$ and $\omega_\mcX(u)$ are unitary in $\mcA$. By \Cref{lemma:xy_stability}, $\norm{u\otimes \Id-\Id\otimes \omega_\mcX(u)}_\varphi\leq \deg(u)\sqrt{\eps}$. Thus
\begin{align*}
    \norm{u^*au\otimes \Id}_\varphi&\leq\norm{(u^*a\otimes \Id)(u\otimes \Id-\Id\otimes \omega_\mcX(u))}_\varphi + \norm{u^*a\otimes \omega_\mcX(u)}_\varphi\\
    &=\norm{(a\otimes \Id)(u\otimes \Id-\Id\otimes \omega_\mcX(u))}_\varphi + \norm{a\otimes \Id}_\varphi\\
    &\leq \norm{a}_{\mcA}\deg(u)\sqrt{\eps} + \norm{a\otimes \Id}_\varphi. 
\end{align*}
Replacing $u$ with $u^*$ and $a$ with $u^* a u$ in this calculation, we see that 
\begin{equation*}
    \norm{a \otimes 1}_{\varphi} \leq \norm{a}_{\mcA} \deg(u)  \sqrt{\eps} + \norm{u^* a u \otimes 1}_{\varphi}, 
\end{equation*}
and hence
$|\norm{u^* a u \otimes 1}_{\varphi} - \norm{a \otimes 1}_{\varphi}| \leq \norm{a}_{\mcA}\deg(u) \sqrt{\eps}$.
\end{proof}

\begin{lemma}\label{lemma:realpart}
    Let $\mcA$ be a $*$-algebra generated by a finite set of unitaries $\mcX$, and let $\varphi$ be an $(\eps,\mcX)$-synchronous state on $\mcA \otimes
    \mcA$. If $u$ is a monomial in $\C^*\ang{\mcX}$ and $a\in\C^*\ang{\mcX}$ is a hermitian square in $\mcA$, then 
    \begin{align*}
        \Re\big(\varphi(u^*aua\otimes\Id)  \big)\geq -   \norm{u^*au}_{1,1}\norm{a\otimes \Id}_\varphi \sqrt{\eps}.
    \end{align*}
\end{lemma}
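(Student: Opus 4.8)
The plan is to mimic the computation in \Cref{lemma:epstracial}, but now carrying along an extra factor of $a$ on the right and tracking the real part. Write $a = b^*b$ for some $b \in \mcA$ (this is where we use that $a$ is a hermitian square). The key identity is that $\omega_\mcX$ intertwines left and right multiplication: for a monomial $u$ we have $u \otimes \Id \approx \Id \otimes \omega_\mcX(u)$ in $\norm{\cdot}_\varphi$, with error $\norm{u}_{1,1}\sqrt{\eps} = \deg(u)\sqrt\eps$ by \Cref{lemma:xy_stability}. More generally, for $a \in \C^*\ang{\mcX}$ (not necessarily unitary, but a $*$-polynomial in the unitary generators), we should get $a \otimes \Id \approx \Id \otimes \omega_\mcX(a)$ with error controlled by $\norm{a}_{1,1}\sqrt\eps$ — this follows by linearity from the monomial case, and I would state it as an immediate consequence of \Cref{lemma:xy_stability}. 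Applying this to the monomial-times-polynomial $u^*au$ (which lies in $\C^*\ang{\mcX}$), we get $u^*au \otimes \Id \approx \Id \otimes \omega_\mcX(u^*au)$ with error $\norm{u^*au}_{1,1}\sqrt\eps$.

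Next I would write $\varphi(u^*aua \otimes \Id)$ as an inner product in the GNS representation of $\varphi$. Concretely, with $(\pi,\xi)$ the GNS data,
\begin{equation*}
    \varphi(u^*aua \otimes \Id) = \ang{ (a^* \otimes \Id)\xi,\ (u^*au \otimes \Id)\xi }.
\end{equation*}
Now replace the vector $(u^*au \otimes \Id)\xi$ by $(\Id \otimes \omega_\mcX(u^*au))\xi$, incurring an error bounded (by Cauchy--Schwarz) by $\norm{a^* \otimes \Id}_\varphi \cdot \norm{u^*au}_{1,1}\sqrt\eps = \norm{a \otimes \Id}_\varphi \norm{u^*au}_{1,1}\sqrt\eps$, using $\norm{a^*}_\varphi = \norm{a}_\varphi$. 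This reduces the problem to showing that $\Re \ang{(a^* \otimes \Id)\xi,\ (\Id \otimes \omega_\mcX(u^*au))\xi}$ is nonnegative. But the two tensor legs commute, so this equals
\begin{equation*}
    \Re\,\varphi\big(a^* \otimes \omega_\mcX(u^*au)\big) = \Re\,\varphi\big(b^*b \otimes \omega_\mcX(u^*au)\big),
\end{equation*}
and since $\omega_\mcX$ is a $*$-isomorphism onto the opposite algebra, $\omega_\mcX(u^*au) = \omega_\mcX(u)^* \omega_\mcX(a) \omega_\mcX(u)$ is again a hermitian square in $\mcA$ when $a = b^*b$ is — indeed $\omega_\mcX(u^*au) = c^*c$ with $c = \omega_\mcX(b)\omega_\mcX(u)$. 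So we are looking at $\varphi(b^*b \otimes c^*c)$ with $b,c \in \mcA$; but $b^*b \otimes c^*c = (b \otimes c)^*(b \otimes c) \geq 0$, so $\varphi$ evaluated on it is a nonnegative real number, and in particular its real part is $\geq 0$.

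Putting the two error terms together would naively give a bound with an extra additive error $\norm{a \otimes \Id}_\varphi \deg(u)\sqrt\eps$ from the $u^*au \leftrightarrow \Id\otimes\omega_\mcX(\cdot)$ swap, but note that $\norm{u^*au}_{1,1} \ge \norm{a}_{1,1}\ge 1$ typically dominates, and in fact the single swap $u^*au \otimes \Id \to \Id \otimes \omega_\mcX(u^*au)$ with error $\norm{u^*au}_{1,1}\sqrt\eps$ is all that is needed — there is no separate $\deg(u)$ term once we do the swap on the full polynomial $u^*au$ rather than peeling off $u$ and $u^*$ separately. Thus $\Re(\varphi(u^*aua\otimes\Id)) \ge -\norm{u^*au}_{1,1}\norm{a\otimes\Id}_\varphi\sqrt\eps$, as claimed. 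The only mild subtlety — the step I'd flag as needing care — is the clean extension of \Cref{lemma:xy_stability} from monomials to general $u^*au \in \C^*\ang{\mcX}$ with the $\norm{\cdot}_{1,1}$ constant; this is just linearity plus the triangle inequality, exactly as in the last line of the proof of \Cref{lemma:xy_stability}, but one should make sure the seminorm $\norm{\cdot}_{1,1}$ (rather than degree) is what controls the sum over monomials, which it does since each monomial of $u^*au$ of degree $d$ contributes $d\sqrt\eps$.
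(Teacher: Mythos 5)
Your proof is correct and follows essentially the same approach as the paper: split off the ``good'' term $\varphi(a \otimes \omega_\mcX(u^*au))$, observe it is nonnegative because $a \otimes \omega_\mcX(u^*au)$ is a hermitian square, and bound the remainder by Cauchy--Schwarz together with \Cref{lemma:xy_stability} applied directly to the whole polynomial $u^*au$ (which, as you note, is already stated for general elements of $\C^*\ang{\mcX}$ with the $\norm{\cdot}_{1,1}$ constant). The only blemishes are a couple of order-reversal slips from $\omega_\mcX$ being an \emph{anti}-homomorphism: the correct identities are $\omega_\mcX(u^*au) = \omega_\mcX(u)\,\omega_\mcX(a)\,\omega_\mcX(u)^*$ and $\omega_\mcX(u^*au) = c^*c$ with $c = \omega_\mcX(u^*b^*) = \omega_\mcX(b)^*\omega_\mcX(u)^*$, not $\omega_\mcX(b)\omega_\mcX(u)$; likewise your displayed GNS inner product actually equals $\varphi(au^*au\otimes\Id)$ rather than $\varphi(u^*aua\otimes\Id)$, though since $a$ and $u^*au$ are self-adjoint these are complex conjugates and the real part is unchanged. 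None of these slips affect the validity of the argument.
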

\begin{proof}
Observe that
    \begin{align*}
         \varphi(u^*aua\otimes\Id)=\varphi\big(  a\otimes \omega_\mcX(u^*au)\big)+\varphi\Big(\big(u^*au\otimes \Id-\Id\otimes \omega_\mcX(u^*au)\big)\big(a\otimes \Id\big)\Big).
    \end{align*}
Since $a=b^*b$ in $\mcA$ for some $b\in\C^*\ang{\mcX}$, $a\otimes \omega_\mcX(u^*au)=\big(b\otimes \omega_\mcX(u^*b^*)  \big)^*\big(b\otimes \omega_\mcX(u^*b^*)  \big)$ is a hermitian square in $\mcA$. Hence $\varphi\big(  a\otimes \omega_\mcX(u^*au)\big)\geq 0$. By  the Cauchy–Schwarz inequality and \Cref{lemma:xy_stability},
    \begin{align*}
        \left\vert\varphi\Big(\big(u^*au\otimes \Id-\Id\otimes \omega_\mcX(u^*au)\big)\big(a\otimes \Id\big)\Big)\right\vert&\leq \norm{u^*au\otimes \Id-\Id\otimes \omega_\mcX(u^*au)}_\varphi \cdotp\norm{a\otimes \Id}_\varphi\\
        &\leq \norm{u^*au}_{1,1}\norm{a\otimes \Id}_\varphi\sqrt{\eps}.
    \end{align*}
Since $\Re(\alpha)\geq -\abs{\alpha}$ for every complex number $\alpha$, the lemma follows.
\end{proof}

\begin{lemma}\label{prop:rbound}
    Let $\mcA$ be a $*$-algebra generated by a finite set of unitaries $\mcX$, and 
    let $\iota : \mcA \to \mcA \otimes \mcA : a \mapsto a \otimes 1$ be the
    left inclusion. Suppose $\mcR \subseteq \C^*\ang{\mcX}$ is a set of $*$-polynomials
    over $\mcX$, and let $\varphi$ be an $(\eps,\mcX)$-synchronous state on $\mcA \otimes \mcA$
    such that $\tau:=\varphi \circ \iota$ is an $(\eps,\mcR)$-state on $\mcA$. If $f \in \mcA$ has an
    $\mcR$-decomposition of size $\Lambda$, then 
    $\norm{f}_{\tau} \leq \Lambda \sqrt{\eps}$.
\end{lemma}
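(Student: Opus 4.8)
The plan is to take a size-$\Lambda$ $\mcR$-decomposition $f=\sum_{i=1}^n\lambda_i u_i r_i v_i$, rewrite $\norm{f}_\tau=\norm{f\otimes\Id}_\varphi$ (using $\tau=\varphi\circ\iota$), and estimate term by term. Since $\norm{\cdot}_\varphi$ is a seminorm, the triangle inequality gives
\[
    \norm{f\otimes\Id}_\varphi\le\sum_{i=1}^n|\lambda_i|\,\norm{u_ir_iv_i\otimes\Id}_\varphi ,
\]
so it suffices to prove $\norm{u_ir_iv_i\otimes\Id}_\varphi\le\bigl(1+\norm{r_i}_\mcA\deg(v_i)\bigr)\sqrt{\eps}$ for each $i$; summing over $i$ then produces exactly the size of the decomposition times $\sqrt{\eps}$, i.e.\ $\Lambda\sqrt{\eps}$.

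Fix $i$ and suppress the subscript. The monomial $u$ yields a unitary $u\otimes\Id$ in $\mcA\otimes\mcA$, so by left unitary invariance of $\norm{\cdot}_\varphi$ we have $\norm{urv\otimes\Id}_\varphi=\norm{rv\otimes\Id}_\varphi$. To handle the right-hand monomial $v$, I would transport it onto the second tensor leg: writing $v\otimes\Id=\bigl(v\otimes\Id-\Id\otimes\omega_\mcX(v)\bigr)+\Id\otimes\omega_\mcX(v)$ and applying the triangle inequality,
\[
    \norm{rv\otimes\Id}_\varphi\le\norm{(r\otimes\Id)\bigl(v\otimes\Id-\Id\otimes\omega_\mcX(v)\bigr)}_\varphi+\norm{r\otimes\omega_\mcX(v)}_\varphi .
\]
For the first term I would use $\norm{ab}_\varphi\le\norm{a}_{\mcA\otimes\mcA}\norm{b}_\varphi$ (which follows from $\varphi(b^*a^*ab)\le\norm{a}^2_{\mcA\otimes\mcA}\varphi(b^*b)$), the fact that the left inclusion $\iota$ is a unital $*$-homomorphism and hence norm non-increasing so $\norm{r\otimes\Id}_{\mcA\otimes\mcA}\le\norm{r}_\mcA$, and the bound $\norm{v\otimes\Id-\Id\otimes\omega_\mcX(v)}_\varphi\le\norm{v}_{1,1}\sqrt{\eps}=\deg(v)\sqrt{\eps}$ from \Cref{lemma:xy_stability}; together these bound the first term by $\norm{r}_\mcA\deg(v)\sqrt{\eps}$. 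For the second term, the two tensor legs commute, so $r\otimes\omega_\mcX(v)=\bigl(\Id\otimes\omega_\mcX(v)\bigr)(r\otimes\Id)$, and $\Id\otimes\omega_\mcX(v)$ is unitary; left unitary invariance gives $\norm{r\otimes\omega_\mcX(v)}_\varphi=\norm{r\otimes\Id}_\varphi=\norm{r}_\tau\le\sqrt{\eps}$, the last step because $r\in\mcR\cup\mcR^*$ and $\tau$ is an $(\eps,\mcR)$-state. Adding the two bounds gives $\norm{rv\otimes\Id}_\varphi\le\bigl(1+\norm{r}_\mcA\deg(v)\bigr)\sqrt{\eps}$, as required.

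I do not expect a genuine obstacle: the argument reuses the ``push the offending factor onto the second tensor leg via synchronicity'' device already employed in \Cref{lemma:epstracial} and \Cref{lemma:realpart}. The only subtlety worth flagging is that $\varphi$ need not be tracial, so $\norm{\cdot}_\varphi$ is only \emph{left} unitarily invariant; this is precisely why a right-hand monomial $v$ cannot be deleted directly but must first become $\Id\otimes\omega_\mcX(v)$ (at a cost of $\deg(v)$ copies of $\sqrt{\eps}$, weighted by $\norm{r}_\mcA$) before it commutes to the left and is absorbed. One should also keep the normalizations straight: $\norm{v}_{1,1}=\deg(v)$ because $v$ is a monomial, and the constant $1$ in the size $1+\norm{r_i}_\mcA\deg(v_i)$ accounts exactly for the leftover contribution $\norm{r_i}_\tau\le\sqrt{\eps}$.
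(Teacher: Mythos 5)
Your proposal is correct and follows essentially the same route as the paper's proof: decompose $f$, cancel $u_i$ by left unitary invariance, push $v_i$ to the second tensor leg via \Cref{lemma:xy_stability} (whose $\omega_\mcX(v_i)$ is exactly the $\widehat{v}_i$ the paper introduces), and bound the error term by $\norm{r_i}_\mcA\deg(v_i)\sqrt{\eps}$ and the main term by $\sqrt{\eps}$ using that $\tau$ is an $(\eps,\mcR)$-state. The only cosmetic difference is that you spell out the intermediate commutation $r\otimes\omega_\mcX(v)=(\Id\otimes\omega_\mcX(v))(r\otimes\Id)$ before applying left unitary invariance, whereas the paper jumps directly to $\norm{r_i\otimes\widehat v_i}_\varphi=\norm{r_i}_{\varphi\circ\iota}$; these are the same computation.
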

\begin{proof}
    Let $\sum_{i=1}^n\lambda_iu_ir_iv_i$ be an $\mcR$-decomposition for $f$ in
    $\mcA$ of size $\Lambda$.  By \Cref{lemma:xy_stability}, $\norm{v_i\otimes\Id-\Id\otimes
    \omega_\mcX(v_i)}_\varphi\leq deg(v_i)\sqrt{\eps}$ for every $1\leq i\leq n$. Since $\varphi \circ \iota$ is
    an $(\eps,\mcR)$-state on $\mcA$ and $\Id\otimes \omega_\mcX(v_i)$ is unitary in $\mcA\otimes \mcA$, it
    follows that $\norm{r_i\otimes \omega_\mcX(v_i)}_\varphi=\norm{r_i}_{\varphi \circ
    \iota}\leq \sqrt{\eps}$ for all $1\leq i\leq n$. Then
    \begin{align*}
        \norm{f\otimes \Id}_\varphi&=\norm{\sum_{i=1}^n\lambda_iu_ir_iv_i\otimes \Id}_\varphi\leq \sum_{i=1}^n\abs{\lambda_i}\norm{r_iv_i\otimes \Id}_\varphi\\
        &\leq \sum_{i=1}^n\abs{\lambda_i}\big(\norm{(r_i\otimes \Id)(v_i\otimes \Id-\Id\otimes\omega_\mcX(v_i))}_\varphi +\norm{r_i\otimes \omega_\mcX(v_i)}_\varphi \big)\\
        &\leq \sum_{i=1}^n\abs{\lambda_i}\big(\norm{r_i}_{\mcA}\deg(v_i)\sqrt{\eps} +\sqrt{\eps} \big) = \Lambda\sqrt{\eps},
    \end{align*}
 so $\norm{f}_\tau=\norm{f\otimes \Id}_\varphi\leq \Lambda\sqrt{\eps}$.   
\end{proof}

\subsection{Dealing with infinite operator norm}

The results presented so far in this section are useful for algebras $\mcA$
with $\norm{a}_\mcA<\infty$ for all $a\in\mcA$. However, for the free algebras
$\mcA=\C^*\ang{x_1,\cdots,x_n}$ the operator norm $\norm{a}_\mcA= \infty$ for
all $a\in\mcA$ which are not scalar multiples of identity, making the size of
an $\mcR$-decomposition infinite as well. To handle this situation, we show in
this section that for any element $f$ in the free algebra $\mcA$, there is a
set of relations $\mcW(f)$ such that if $\tau$ is a tracial $(\eps,\mcW(f))$-state on $\mcA$, then there is a
tracial state on $\C \Z_2^{*n}$ which is close to $\tau$ on $f$.

\begin{lemma}\label{lemma:stability}
For any $A\in\mcB(\mcH)$, the operator $\widetilde{A}:=\sgn\big( \frac{A+A^*}{2}\big)$ is a unitary involution in $C^*\ang{A}''$. If in addition, $\xi$ is a vector in $\mcH$ satisfying (i) $\norm{(A^2-\I_\mcH)\xi}\leq\epsilon$, (ii) $\norm{((A^*)^2-\I_\mcH)\xi}\leq\epsilon$, (iii) $\norm{(AA^*-\I_\mcH)\xi}\leq\epsilon$, (iv) $\norm{(A^*A-\I_\mcH)\xi}\leq\epsilon$, and (v) $\norm{(A-A^*)\xi}\leq\epsilon$ for some $\eps\geq0$, then $\norm{(A-\widetilde{A})\xi}\leq 2 \epsilon$ and $\norm{(A^*-\widetilde{A})\xi}\leq 2 \epsilon$.
\end{lemma}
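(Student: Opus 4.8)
The first assertion I would dispatch directly with the Borel functional calculus. Write $B := \Re A = \tfrac{1}{2}(A + A^*)$, a self-adjoint element of $C^*\ang{A}$, and adopt the convention that $\sgn(0)$ has modulus $1$ (say $\sgn(0) = 1$). Then $t \mapsto \sgn(t)$ is a bounded real-valued Borel function on $\R$ with $\sgn(t)^2 = 1$ for all $t$, so $\widetilde{A} = \sgn(B)$ is a self-adjoint element of the von Neumann algebra generated by $B$, hence lies in $C^*\ang{A}''$, and $\widetilde{A}^2 = \I_\mcH$; thus $\widetilde{A}$ is a unitary involution in $C^*\ang{A}''$.

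For the quantitative part, the plan is to split $A$ into its self-adjoint real and imaginary parts and reduce the whole estimate to a statement about $B$ alone. Set $C := \tfrac{1}{2i}(A - A^*)$, so that $B,C$ are self-adjoint, $A = B + iC$, and $A^* = B - iC$. Hypothesis (v) gives $\norm{C\xi} = \tfrac{1}{2}\norm{(A - A^*)\xi} \le \tfrac{\epsilon}{2}$, hence $\norm{(A - B)\xi} = \norm{(A^* - B)\xi} = \norm{C\xi} \le \tfrac{\epsilon}{2}$. By the triangle inequality it therefore suffices to prove $\norm{(B - \widetilde{A})\xi} \le \epsilon$.

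The engine of the argument is the elementary pointwise inequality $|t - \sgn(t)| \le |t^2 - 1|$ on $\R$, which I would derive from the identity $(t - \sgn t)(t + \sgn t) = t^2 - 1$ together with $|t + \sgn(t)| = |t| + 1 \ge 1$. Squaring gives $(t - \sgn t)^2 \le (t^2 - 1)^2$ as functions on $\R$, so applying the functional calculus to $B$ yields the operator inequality $(B - \widetilde{A})^2 \le (B^2 - \I_\mcH)^2$, and hence $\norm{(B - \widetilde{A})\xi} \le \norm{(B^2 - \I_\mcH)\xi}$. It remains to estimate the right-hand side: expanding $B^2 = \tfrac{1}{4}\big(A^2 + AA^* + A^*A + (A^*)^2\big)$ gives $B^2 - \I_\mcH = \tfrac{1}{4}\big((A^2 - \I_\mcH) + (AA^* - \I_\mcH) + (A^*A - \I_\mcH) + ((A^*)^2 - \I_\mcH)\big)$, so hypotheses (i)--(iv) give $\norm{(B^2 - \I_\mcH)\xi} \le \tfrac{1}{4}(4\epsilon) = \epsilon$. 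Combining, $\norm{(A - \widetilde{A})\xi} \le \norm{(A - B)\xi} + \norm{(B - \widetilde{A})\xi} \le \tfrac{\epsilon}{2} + \epsilon \le 2\epsilon$, and the identical computation with $A^*$ in place of $A$ gives $\norm{(A^* - \widetilde{A})\xi} \le 2\epsilon$.

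I do not expect a genuine obstacle here: once the pointwise bound $|t - \sgn t| \le |t^2 - 1|$ is written down, the rest is functional calculus and triangle inequalities. The only points requiring a moment of care are the convention for $\sgn$ at $0$ — choosing $|\sgn(0)| = 1$ is what makes $\widetilde{A}$ a unitary involution even when $0$ lies in the continuous spectrum of $B$, and the pointwise bound remains valid at $t=0$ for any such choice — and the check that $(t^2 - 1)^2 - (t - \sgn t)^2 \ge 0$ everywhere, which is what legitimizes passing from the scalar inequality to the operator inequality $(B - \widetilde{A})^2 \le (B^2 - \I_\mcH)^2$.
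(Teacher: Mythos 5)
Your proof is correct and takes essentially the same route as the paper. The paper phrases the central step as the operator identity $(\I + \abs{B})(\sgn(B) - B) = \sgn(B)(\I - B^2)$ combined with the observation that multiplying by $\I + \abs{B} \ge \I$ cannot decrease a norm, whereas you derive the equivalent pointwise bound $\abs{t - \sgn t} \le \abs{t^2 - 1}$ (from the same factorization $(t - \sgn t)(t + \sgn t) = t^2 - 1$ and $\abs{t + \sgn t} = \abs{t} + 1 \ge 1$), square it, and pass to an operator inequality by functional calculus; the two formulations are interchangeable and yield the same estimate $\norm{(B - \widetilde{A})\xi} \le \norm{(B^2 - \I)\xi} \le \eps$. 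The remaining decomposition via the real part $B$ and the bound $\norm{(A - B)\xi} \le \eps/2$ from hypothesis (v) is exactly the paper's, and both arrive at $\tfrac{3}{2}\eps \le 2\eps$.
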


Here $\sgn$ denotes the function $\R\to \{-1,1\}$ sending elements in $[0,+\infty)$ to $1$ and elements in $(-\infty,0)$ to $ -1$. 

\begin{proof}
The spectrum of $\tfrac{A+A^*}{2}$ is in $\R$, so by Borel functional calculus, $\widetilde{A}:=\sgn(\tfrac{A+A^*}{2})$ is a unitary involution and is in $C^*\ang{A}''$. 
Since $\sgn$ and $\abs{\cdotp}$ are measurable functions on $\R$ satisfying $(1+\abs{a})(\sgn(a)-a)=\sgn(a)(1-a^2)$ for all $a\in\R$, we obtain that
\begin{align*}
    \norm{(\widetilde{A}-\tfrac{A+A^*}{2})\xi}&=\norm{(\sgn(\tfrac{A+A^*}{2})-\tfrac{A+A^*}{2})\xi}\leq \norm{(\operatorname{Id}_\mcH+\abs{\tfrac{A+A^*}{2}})(\sgn(\tfrac{A+A^*}{2})-\tfrac{A+A^*}{2})\xi}\\
    &=\norm{\sgn(\tfrac{A+A^*}{2})(\operatorname{Id}_\mcH-(\tfrac{A+A^*}{2})^2)\xi} =\norm{(\operatorname{Id}_\mcH-(\tfrac{A+A^*}{2})^2)\xi}\\
    &=\norm{(\operatorname{Id}_\mcH-\tfrac{A^2+(A^*)^2+AA^*+A^*A}{4})\xi}\leq \epsilon,
\end{align*}
where the last inequality follows from the conditions $(i)-(iv)$. By condition $(v)$, $\norm{(A-\tfrac{A+A^*}{2})\xi}=\norm{\tfrac{A-A^*}{2}\xi}\leq \tfrac{\eps}{2}$, so
$\norm{(\widetilde{A}-A)\xi}\leq \norm{(\widetilde{A}-\tfrac{A+A^*}{2})\xi}+\norm{(A-\tfrac{A+A^*}{2})\xi}
    \leq \tfrac{3}{2} \epsilon\leq 2\eps$.
Similarly we have $\norm{(\widetilde{A}-A^*)\xi}\leq 2 \epsilon$.
\end{proof}

\begin{lemma}\label{lemma:tracialfree} 
    Let $\mcX$ be a finite set. There is a computable mapping $\mcW$ from
    $\Q^*\ang{\mcX}$ to finite subsets of $\Q^*\ang{\mcX}$ such that
    \begin{enumerate}[(a)]
        \item $\mcW(f)\subseteq \ker q$ for all $f\in \Q^*\ang{\mcX}$, where
            $q:\C^*\ang{\mcX}\to\C\Z_2^{*\mcX}$ is the quotient map, and

        \item for any tracial state $\tau$ on $\C^*\ang{\mcX}$, there is a
            tracial state $\widehat{\tau}$ on $\C\Z_2^{*\mcX}$ such that 
            for all $f \in \Q^*\ang{\mcX}$ and $\eps \geq 0$, if $\tau$ is a tracial $(\eps,\mcW(f))$-state, then 
            \begin{align*}
                \big\vert \norm{f}_{\widehat{\tau}}-\norm{f}_\tau \big\vert\leq 2\norm{f}_{1,1}\sqrt{\eps}.
            \end{align*}
    \end{enumerate}
\end{lemma}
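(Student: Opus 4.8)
The plan is to construct $\widehat\tau$ from a GNS representation of $\tau$ by replacing each generator of $\C^*\ang{\mcX}$ with the canonical self-adjoint involution supplied by \Cref{lemma:stability}, and to choose $\mcW(f)$ so that the resulting change can be estimated one $*$-monomial of $f$ at a time. Fix a tracial state $\tau$ on $\C^*\ang{\mcX}$, let $(\pi\colon\C^*\ang{\mcX}\to\mcB(\mcH),\xi)$ be its GNS representation, and put $M:=\pi(\C^*\ang{\mcX})''$. Since $\tau$ is tracial, $\operatorname{tr}(m):=\ang{\xi,m\xi}$ defines a normal tracial state on $M$ extending the vector state on $\pi(\C^*\ang{\mcX})$ (a standard fact about tracial states); this is the only place the traciality of $\tau$ is used. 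For each $x\in\mcX$ set $\wtd A_x:=\sgn\bigl(\tfrac{\pi(x)+\pi(x)^*}{2}\bigr)$, which by \Cref{lemma:stability} is a self-adjoint involution in $C^*\ang{\pi(x)}''\subseteq M$. Since the defining relations of $\C\Z_2^{*\mcX}$ make each generator a self-adjoint involution, there is a $*$-homomorphism $\widehat\pi\colon\C\Z_2^{*\mcX}\to M$ with $\widehat\pi(x)=\wtd A_x$, and I set $\widehat\tau:=\operatorname{tr}\circ\widehat\pi$, which is a tracial state on $\C\Z_2^{*\mcX}$ depending only on $\tau$. Writing $q\colon\C^*\ang{\mcX}\to\C\Z_2^{*\mcX}$ for the quotient map and $\rho:=\widehat\pi\circ q$, we then have $\norm{f}_{\widehat\tau}=\norm{\rho(f)\xi}$ (identifying $f$ with $q(f)$) and $\norm{f}_{\tau}=\norm{\pi(f)\xi}$ for all $f$.

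Next I define $\mcW$. Given a $*$-monomial $u=a_1a_2\cdots a_k$ with each $a_j\in\mcX\cup\mcX^*$, write $x_j\in\mcX$ for the generator underlying $a_j$ and $v_j:=a_{j+1}\cdots a_k$ for the $j$-th suffix (so $v_k=1$), and put
\begin{equation*}
\mcW(u):=\bigcup_{j=1}^{k}\bigl\{(x_j^2-1)v_j,\ ((x_j^*)^2-1)v_j,\ (x_jx_j^*-1)v_j,\ (x_j^*x_j-1)v_j,\ (x_j-x_j^*)v_j\bigr\}.
\end{equation*}
For $f=\sum_i\lambda_iu_i\in\Q^*\ang{\mcX}$, written with distinct $*$-monomials $u_i$ and nonzero $\lambda_i$, set $\mcW(f):=\bigcup_i\mcW(u_i)$. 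Then $\mcW$ is computable with finite values, and every listed element lies in $\ker q$, since each of $x^2-1$, $(x^*)^2-1$, $xx^*-1$, $x^*x-1$, and $x-x^*$ maps to $0$ in $\C\Z_2^{*\mcX}$. This proves (a).

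For (b), suppose $\tau$ is a tracial $(\eps,\mcW(f))$-state, so that $\norm{s}_\tau\le\sqrt\eps$ for every $s\in\mcW(f)$. Fix a $*$-monomial $u=a_1\cdots a_k$ of $f$ and telescope
\begin{equation*}
\rho(u)-\pi(u)=\sum_{j=1}^{k}\rho(a_1)\cdots\rho(a_{j-1})\bigl(\rho(a_j)-\pi(a_j)\bigr)\pi(v_j).
\end{equation*}
Each $\rho(a_i)=\wtd A_{x_i}$ is unitary, so applying this to $\xi$ gives $\norm{(\rho(u)-\pi(u))\xi}\le\sum_{j=1}^{k}\norm{(\rho(a_j)-\pi(a_j))\pi(v_j)\xi}$. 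For fixed $j$ I apply \Cref{lemma:stability} with $A=\pi(x_j)$ and the vector $\eta:=\pi(v_j)\xi$: its hypotheses (i)--(v) hold with parameter $\sqrt\eps$, since, e.g., $\norm{(A^2-\I)\eta}=\norm{(x_j^2-1)v_j}_\tau\le\sqrt\eps$ as $(x_j^2-1)v_j\in\mcW(f)$, and likewise for the other four; hence $\norm{(A-\wtd A_{x_j})\eta}\le2\sqrt\eps$ and $\norm{(A^*-\wtd A_{x_j})\eta}\le2\sqrt\eps$. As $\rho(a_j)=\wtd A_{x_j}$ and $\pi(a_j)\in\{A,A^*\}$, this yields $\norm{(\rho(a_j)-\pi(a_j))\pi(v_j)\xi}\le2\sqrt\eps$, so $\norm{(\rho(u)-\pi(u))\xi}\le2\deg(u)\sqrt\eps$. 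Summing over the $*$-monomials of $f$,
\begin{equation*}
\bigl|\norm{f}_{\widehat\tau}-\norm{f}_\tau\bigr|\le\norm{(\rho(f)-\pi(f))\xi}\le\sum_i|\lambda_i|\,\norm{(\rho(u_i)-\pi(u_i))\xi}\le2\Bigl(\sum_i|\lambda_i|\deg(u_i)\Bigr)\sqrt\eps=2\norm{f}_{1,1}\sqrt\eps,
\end{equation*}
which is the desired bound.

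The point requiring care — and the reason $\mcW(f)$ must contain the five basic $\Z_2$-relations multiplied on the right by \emph{every} suffix of every $*$-monomial of $f$ — is that $\C^*\ang{\mcX}$ is not archimedean: although $\norm{\pi(x)}_{\mcB(\mcH)}$ is finite for the fixed state $\tau$, it cannot be bounded uniformly, so a telescoping that left operator-norm factors in place would be useless. Shifting the $j$-th error onto the vector $\pi(v_j)\xi$ and estimating it directly with \Cref{lemma:stability} is exactly what keeps the final bound linear in $\deg$. The remaining ingredients — that $\widehat\pi$ is well defined and $M$-valued, that $\operatorname{tr}$ restricts to a trace on $M$ (and that $\widehat\tau$ is a bona fide state, boundedness being automatic since $\C\Z_2^{*\mcX}$ is archimedean), and the verification of conditions (i)--(v) for each of the five relation types — are routine.
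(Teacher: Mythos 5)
Your proof is correct and follows essentially the same route as the paper's: the same $\mcW$ (yours is phrased in terms of the underlying generator $x_j\in\mcX$ rather than the letter $a_j\in\mcX\cup\mcX^*$, but these give equivalent relation sets), the same $\widehat\tau$ built from $\sgn$ of the real part of each $\pi(x)$ on the GNS space together with the fact that the cyclic vector remains tracial for the generated von Neumann algebra, and the same telescoping estimate controlled one suffix at a time by \Cref{lemma:stability}. The only cosmetic difference is that you spell out the auxiliary homomorphism $\rho=\widehat\pi\circ q$ and the extension of the vector trace to $M=\pi(\C^*\ang{\mcX})''$ more explicitly than the paper does.
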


\begin{proof}
    If $u = x_1 \cdots x_n \in \C^*\ang{\mcX}$ is a monomial of degree $n \geq 0$, 
    then define 
    \begin{align*}
        \mcW(u):=&\left\{(x_ix_i^*-\Id)x_{i+1}\cdots x_n:1\leq i\leq n  \right\}\cup\left\{(x_i^*x_i-\Id)x_{i+1}\cdots x_n:1\leq i\leq n  \right\}\\
        \cup& \left\{(x_i^2-\Id)x_{i+1}\cdots x_n:1\leq i\leq n  \right\}\cup \left\{((x_i^*)^2-\Id)x_{i+1}\cdots x_n:1\leq i\leq n  \right\}\\
        \cup& \left\{(x_i-x_i^*)x_{i+1}\cdots x_n:1\leq i\leq n  \right\}
\end{align*}
    (so in particular, $\mcW(1) = \emptyset$). If $f \in \Q^*\ang{\mcX}$ has support 
    $u_1,\ldots,u_\ell$, or in other words $f=\sum_{i=1}^\ell\lambda_iu_i\in\Q^*\ang{\mcX}$ for distinct $*$-monomials
    $u_1,\ldots,u_\ell$ and non-zero coefficients
    $\lambda_1,\ldots,\lambda_\ell$, we define
    $\mcW(f):=\bigcup_{i=1}^\ell\mcW(u_i)$. Clearly $\mcW$ is computable. If $u$
    is a monomial, then $q(s^* s) = 0$ for all $s \in \mcW(u)$, so the same is
    true for $\mcW(f)$. 

    Let $\big(\pi:\C^*\ang{\mcX}\arr \mcB(\mcH),v\big)$ be the GNS
    representation of $\tau$. Note that $T \mapsto \ang{v,Tv}$ extends to a tracial
    state on $\pi(\C^*\ang{\mcX})''$. For every $x\in\mcX$, let $\widehat{\pi}(x):=\sgn
    \big(\tfrac{\pi(x)+\pi(x)^*}{2}\big) $. By \Cref{lemma:stability},
    $\widehat{\pi}(x)$ is a unitary involution in $C^*\ang{\pi(x)}''$ for every
    $x\in\mcX$, so $\widehat{\pi}$ induces a $*$-representation of
    $\C\Z_2^{*\mcX}$ on $\mcH$ and
    $\widehat{\tau}(a):=\ang{v,\widehat{\pi}(a)v}$ defines a tracial state on
    $\C\Z_2^{*\mcX}$. 

    Suppose $u = x_1 \cdots x_n$ is a monomial.  If  $\tau(s^*s)\leq \eps$ for
    all $s\in\mcW(u)$, then $\norm{\pi(s)v}\leq \sqrt{\eps}$ for all $s\in\mcW(u)$,
    and hence by \Cref{lemma:stability},
    \begin{align*}
        \norm{\big(\widehat{\pi}(x_i)-\pi(x_i)\big)\pi(x_{i+1}\cdots x_n)v}\leq 2\sqrt{\eps}
    \end{align*}
for all $1\leq i\leq n$. It follows that
\begin{align*}
    \norm{\big(\widehat{\pi}(u)-\pi(u)\big)v}&\leq \sum_{i=1}^n\norm{\widehat{\pi}(x_1\cdots x_{i-1})\big(\widehat{\pi}(x_i)-\pi(x_i)\big)\pi(x_{i+1}\cdots x_{n})v}\\
    &=\sum_{i=1}^n\norm{\big(\widehat{\pi}(x_i)-\pi(x_i)\big)\pi(x_{i+1}\cdots x_{n})\psi}\leq 2n\sqrt{\eps}.
\end{align*}

    For a general element $f \in \Q^*\ang{\mcX}$, write $f = \sum_{i=1}^\ell
    \lambda_i u_i$ as above.  If $\tau(s^*s)\leq \eps$ for all $s\in\mcW(f)$, then
    \begin{align*}
         \big\vert \norm{f}_\tau-\norm{f}_{\widehat{\tau}} \big\vert&= 
      \big\vert \norm{\widehat{\pi}(f)v}-\norm{\pi(f)v}\big\vert\leq \norm{\big(\widehat{\pi}(f)-\pi(f)\big)v}\\
      &\leq \sum_{i=1}^\ell \abs{\lambda_i}\cdotp\norm{\big(\widehat{\pi}(u_i)-\pi(u_i)\big)v}  \leq\sum_{i=1}^\ell 2\abs{\lambda_i}\deg(u_i)\sqrt{\eps}=2\norm{f}_{1,1}\sqrt{\eps}.
    \end{align*}
    We conclude that $\mcW$ is a computable mapping satisfying (a) and (b).
\end{proof}

Note that the proof of \Cref{lemma:tracialfree} only requires $\tau(s^*s)\leq \eps$ for all $s\in\mcW(f)$. We use the stronger hypothesis that $\tau$ is an $(\eps,\mcW(f))$-state for convenience. 

\begin{lemma}\label{lemma:free}
    Let $\mcX$ be a finite set, and let $\mcW$ be the map from
    \Cref{lemma:tracialfree}.  Suppose $\varphi$ is an $(\eps,\mcX)$-synchronous
    state on $\C^*\ang{\mcX}\otimes \C^*\ang{\mcX}$ for some $\eps\geq 0$, and
    $\varphi(s^*s\otimes\Id)\leq \eps,\varphi(\Id\otimes s^*s)\leq \eps$ for all
    $s\in\bigcup_{x\in\mcX}\mcW(x)$. Then there is a $(25\eps,\mcX)$-synchronous
    state $\widehat{\varphi}$ on $\C\Z_2^{*\mcX}\otimes\C\Z_2^{*\mcX}$ such that 
    for all $f\in\Q^*\ang{\mcX}$, if $\varphi(s^*s\otimes \Id) \leq \eps$
    for all $s \in \mcW(f)$, then 
    \begin{align*}
        \big\vert \norm{f\otimes\Id}_{\widehat{\varphi}}-\norm{f\otimes\Id}_\varphi\big\vert\leq 2\norm{f}_{1,1}\sqrt{\eps}.
    \end{align*}
\end{lemma}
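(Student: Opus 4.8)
The plan is to run the construction from the proof of \Cref{lemma:tracialfree} simultaneously on both tensor legs. First I would take a GNS representation $(\pi:\C^*\ang{\mcX}\otimes\C^*\ang{\mcX}\to\mcB(\mcH),v)$ of $\varphi$ and set $\pi_1(a):=\pi(a\otimes\Id)$ and $\pi_2(a):=\pi(\Id\otimes a)$, two commuting $*$-representations of $\C^*\ang{\mcX}$ on $\mcH$. For $x\in\mcX$ and $j\in\{1,2\}$, put $\widehat{\pi_j}(x):=\sgn\big(\tfrac{\pi_j(x)+\pi_j(x)^*}{2}\big)$; by \Cref{lemma:stability} each $\widehat{\pi_j}(x)$ is a self-adjoint unitary lying in the von Neumann algebra generated by $\pi_j(x)$. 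Since the range of $\pi_1$ commutes with the range of $\pi_2$, the von Neumann algebra generated by $\pi_1(x)$ commutes with that generated by $\pi_2(y)$ (Borel functional calculus), so $\widehat{\pi_1}(x)$ commutes with $\widehat{\pi_2}(y)$ for all $x,y\in\mcX$. Being self-adjoint involutions, the $\widehat{\pi_j}(x)$ induce $*$-representations of $\C\Z_2^{*\mcX}$, and since the two families commute these assemble into a $*$-representation $\widehat\pi$ of $\C\Z_2^{*\mcX}\otimes\C\Z_2^{*\mcX}$ on $\mcH$; then $\widehat\varphi(a):=\ang{v,\widehat\pi(a)v}$ is a state (the boundedness condition is automatic, this being a group algebra generated by unitaries).

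Next I would verify $(25\eps,\mcX)$-synchronicity. Fix $x\in\mcX$, so $\mcW(x)=\{xx^*-\Id,\ x^*x-\Id,\ x^2-\Id,\ (x^*)^2-\Id,\ x-x^*\}$. For $j\in\{1,2\}$ the hypotheses $\varphi(s^*s\otimes\Id)\leq\eps$ and $\varphi(\Id\otimes s^*s)\leq\eps$ over $s\in\mcW(x)$ say exactly that $\norm{\pi_j(s)v}\leq\sqrt\eps$ for those five $s$, which are precisely conditions (i)--(v) of \Cref{lemma:stability} for $A=\pi_j(x)$, $\xi=v$ and error $\sqrt\eps$; that lemma then gives $\norm{(\pi_j(x)-\widehat{\pi_j}(x))v}\leq2\sqrt\eps$. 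Combining $j=1$ and $j=2$ with the $(\eps,\mcX)$-synchronicity bound $\norm{(\pi_1(x)-\pi_2(x))v}\leq\sqrt\eps$ and the triangle inequality,
\[
\norm{x\otimes\Id-\Id\otimes x}_{\widehat\varphi}=\norm{(\widehat{\pi_1}(x)-\widehat{\pi_2}(x))v}\leq2\sqrt\eps+\sqrt\eps+2\sqrt\eps=5\sqrt\eps=\sqrt{25\eps},
\]
so $\widehat\varphi$ is $(25\eps,\mcX)$-synchronous.

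For the norm estimate I would reuse the monomial computation from the proof of \Cref{lemma:tracialfree} with $\pi,\widehat\pi$ replaced by $\pi_1,\widehat{\pi_1}$: for a monomial $u=x_1\cdots x_n$ with $\varphi(s^*s\otimes\Id)\leq\eps$ for all $s\in\mcW(u)$, applying \Cref{lemma:stability} to $A=\pi_1(x_i)$ and $\xi=\pi_1(x_{i+1}\cdots x_n)v$ (each of the five quantities in (i)--(v) equals $\norm{\pi_1(s)v}$ for an appropriate $s\in\mcW(u)$, hence is $\leq\sqrt\eps$) gives $\norm{(\widehat{\pi_1}(x_i)-\pi_1(x_i))\pi_1(x_{i+1}\cdots x_n)v}\leq2\sqrt\eps$, and telescoping through the unitaries $\widehat{\pi_1}(x_1\cdots x_{i-1})$ yields $\norm{(\widehat{\pi_1}(u)-\pi_1(u))v}\leq2n\sqrt\eps$. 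Writing $f=\sum_i\lambda_iu_i$ and using $\norm{f\otimes\Id}_{\widehat\varphi}=\norm{\widehat{\pi_1}(f)v}$ and $\norm{f\otimes\Id}_\varphi=\norm{\pi_1(f)v}$, summing gives $\big|\norm{f\otimes\Id}_{\widehat\varphi}-\norm{f\otimes\Id}_\varphi\big|\leq\norm{(\widehat{\pi_1}(f)-\pi_1(f))v}\leq2\norm{f}_{1,1}\sqrt\eps$. The one point I expect to need real care is that here $\tau:=\varphi\circ\iota$ need not be tracial, so \Cref{lemma:tracialfree} itself cannot be applied as a black box; instead I borrow only the operator-norm estimate from its proof, which I would check uses no traciality at all (traciality there serving only to upgrade the resulting functional to a trace). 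A secondary bookkeeping point is the commutation of $\{\widehat{\pi_1}(x)\}_{x\in\mcX}$ with $\{\widehat{\pi_2}(y)\}_{y\in\mcX}$ noted above, which is what makes $\widehat\pi$ a representation of the tensor product rather than merely of the free product.
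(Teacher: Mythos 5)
Your proof is correct and follows essentially the same route as the paper: take a GNS pair, apply the $\sgn$ construction of \Cref{lemma:stability} separately to the two commuting families $\pi(x\otimes\Id)$ and $\pi(\Id\otimes x)$, invoke commutation of the generated von Neumann algebras to assemble a representation of $\C\Z_2^{*\mcX}\otimes\C\Z_2^{*\mcX}$, get $(25\eps,\mcX)$-synchronicity from the $2\sqrt\eps+\sqrt\eps+2\sqrt\eps$ triangle estimate, and finish by rerunning the telescoping monomial bound from the proof of \Cref{lemma:tracialfree} on the left leg. Your observation that the telescoping step nowhere uses traciality is exactly the point the paper leaves implicit in its closing remark that ``the rest of the lemma follows from a similar argument.''
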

\begin{proof}
    Let $(\pi:\C^*\ang{\mcX}\otimes \C^*\ang{\mcX}\arr\mcB(\mcH),\psi)$ be the
    GNS representation of $\varphi$. For every $x\in\mcX$, let
    $\widehat{\pi}(x\otimes
    \Id):=\sgn\big(\tfrac{\pi(x\otimes\Id)+\pi(x\otimes\Id)^*}{2}\big)$ and
    $\widehat{\pi}(\Id\otimes x):=\sgn\big(\tfrac{\pi(\Id\otimes x)+\pi(\Id\otimes
    x)^*}{2}\big)$. Then by \Cref{lemma:stability}, for every $x\in\mcX$,
    $\widehat{\pi}(x\otimes \Id)$ and $\widehat{\pi}(\Id\otimes x)$ are unitary
    involutions in $C^*\ang{\pi(x\otimes\Id)}''$ and $C^*\ang{\pi(\Id\otimes
    x)}''$ respectively. So for every $x,x'\in\mcX$, $[\pi(x\otimes
    \Id),\pi(\Id\otimes x')]=0$ implies $[\widehat{\pi}(x\otimes
    \Id),\widehat{\pi}(\Id\otimes x')]=0$, and hence $\widehat{\pi}$ induces a
    $*$-representation of $\C\Z_2^{*\mcX}\otimes\C\Z_2^{*\mcX}$ on $\mcH$ and
    $\widehat{\varphi}(a):=\ang{\psi,\widehat{\pi}(a)\psi}$ defines a state on
    $\C\Z_2^{*\mcX}\otimes\C\Z_2^{*\mcX}$. By the assumption, $\varphi(s^*s\otimes
    \Id)\leq \eps$ and $\varphi(\Id\otimes s^*s)\leq \eps$ for all
    $s\in\bigcup_{x\in\mcX}\mcW(x)$. This implies $\norm{\pi(s\otimes\Id)\psi}\leq
    \sqrt{\eps}$ and $\norm{\pi(\Id\otimes s)\psi}\leq \sqrt{\eps}$ for all
    $s\in\bigcup_{x\in\mcX}\mcW(x)$, and hence by \Cref{lemma:stability},
    $\norm{\big(\widehat{\pi}(x\otimes\Id)-\pi(x\otimes\Id)\big)\psi}\leq
    2\sqrt{\eps}$ and $\norm{\big(\widehat{\pi}(\Id\otimes x)-\pi(\Id\otimes
    x)\big)\psi}\leq 2\sqrt{\eps}$ for all $x\in\mcX$. It follows that
    \begin{align*}
        \norm{\big(\widehat{\pi}(x\otimes\Id)-\widehat{\pi}(\Id\otimes x)\big)\psi}\leq& \norm{\big(\widehat{\pi}(x\otimes\Id)-\pi(x\otimes\Id)\big)\psi}+\norm{\big(\pi(x\otimes \Id)-\pi(\Id\otimes x)\big)\psi}\\
        &+\norm{\big(\widehat{\pi}(\Id\otimes x)-\pi(\Id\otimes x)\big)\psi}\leq 2\sqrt{\eps}+\sqrt{\eps}+2\sqrt{\eps}=5\sqrt{\eps}.
    \end{align*}
    This means $\widehat{\varphi}$ is a $(25\eps,\mcX)$-synchronous state
    $\widehat{\varphi}$ on $\C\Z_2^{*\mcX}\otimes\C\Z_2^{*\mcX}$. The rest of
    the lemma follows from a similar argument to the proof of 
    \Cref{lemma:tracialfree}.
\end{proof}

\section{Construction of a suitable family of algebras}\label{sec:algebracons}

We begin the proofs of Theorems \ref{thm:mainbipartite} and \ref{thm:maintracial},
with the case $\mcA = \Q \Z_2^{*N}$. To explain the idea of the proof,
suppose that $\mcL \subseteq \N$ is an RE set, and that we can find an integer
$N$, a self-adjoint element $w \in \Q\Z_2^{*N}$, and a computable family of finitely
presented algebras $\mcA_{\mcL}(m) = \C^*\ang{ \C \Z_2^{*N}: \mcR_m}$ such that
$\mcA_{\mcL}(m)$ has a tracial state $\tau$ with $\tau(w^2) > 0$ if and only
if $m \in \mcL$. If $\alpha(m) := \sum_{r \in \mcR_m} r^* r - w^2$, then a
tracial state $\tau$ on $\mcA_{\mcL}(m)$ with $\tau(w^2) > 0$ pulls back to a
tracial state on $\C \Z_2^{*N}$ with $\tau(\alpha(m)) < 0$. Unfortunately this
idea runs into a \textit{stability problem}: it is not necessarily the case
that every tracial state on $\C \Z_2^{*N}$ with $\tau(\alpha(m))<0$ comes from
a tracial state on $\mcA_{\mcL}(m)$. Indeed, it might be possible to find a
tracial state $\tau$ for which $\tau\left(\sum_{r \in \mcR_m} r^* r\right)$ is small, but
$\tau(w^2)$ is large. 

The main technical idea of this paper is that we can overcome the stability
problem with a carefully chosen algebra $\mcA_{\mcL}(m)$. Specifically,
we start with the algebra generated by a family of positive contractions
$\{\wtd{P}_0, \wtd{P}_1,\ldots\}$ and a family of unitary involutions
$\{\wtd{X}_0,\wtd{X}_1,\ldots\}$ satisfying the relations
\begin{equation}\label{eq:keyrelation}
    \wtd{P}_{n+1} = \wtd{P}_{n} + \wtd{X}_n \wtd{P}_n \wtd{X}_n
\end{equation}
for all $n$ such that $M_{\mcL}$ does not halt on input $m$ after $n$ steps,
where $M_{\mcL}$ is a Turing machine recognizing $\mcL$. This algebra is not
finitely presented, so the purpose of this section is to show that this algebra
can be embedded in a finitely presented algebra $\mcA_{\mcL}(m)$, in such a way
that the size of relation \eqref{eq:keyrelation} is polynomial in $n$. Relation
\eqref{eq:keyrelation} implies that $\tau(\wtd{P}_{n}) = \tfrac{1}{2}
\tau(\wtd{P}_{n+1})$ for any tracial state $\tau$ on $\mcA_{\mcL}(m)$, and
in the next section we use this fact to overcome the stability problem and
finish the proofs of Theorems \ref{thm:mainbipartite} and \ref{thm:maintracial}
(for the case $\mcA = \Q \Z_2^{*N}$) using the strategy outlined in the previous
paragraph.

\begin{theorem}\label{thm:main}
    Let $\mcL\subset \N$ be an RE set, and let $M_\mcL$ be a Turing machine that recognizes $\mcL$. Then there is a 
    finite set $\mcX$, two sequences $\{\wtd{P}_0,\wtd{P}_1, ... \}$ and
    $\{\wtd{X}_0,\wtd{X}_1, ...\}$ in $\Q^*\ang{\mcX}$, and a computable mapping
    $m\mapsto \mcR_m $ from  $\N$ to finite subsets of $\Q\Z_2^{*\mcX}$, such that 
    \begin{enumerate}[(a)]
        \item for all $n \geq 0$, $\wtd{P}_n$ is a hermitian square in $\Q
            \Z_2^{*\mcX}$ with $ \norm{\wtd{P}_n}_1 =1$ and $\norm{\wtd{P}_n}_{1,1} = 3(n+1)/2$, and
            $\wtd{X}_n$ is a monomial in $\Q^*\ang{\mcX}$ of degree $4n+1$ such that $\wtd{X}_n^2=1$ in $\Q \Z_2^{*\mcX}$,
        \item there are constants $C>0,k\in\N$ such that for every $m\in\N$ and
            $n \geq 0$, if $M_\mcL$ hasn't halted after $n$ steps on input $m$, then
            $ \wtd{P}_n + \wtd{X}_n \wtd{P}_n \wtd{X}_n-\wtd{P}_{n+1}$ is trivial in
            $\mcA_{\mcL}(m) := \C^*\ang{\C\Z_2^{*\mcX}:\mcR_m}$ and has an
            $\mcR_m$-decomposition in $\C \Z_2^{\mcX}$ of size $\leq C \big((n+1)m\big)^k$,
        \item if $m\in \mcL$, then there is a tracial state $\tau$ on $\mcA_\mcL(m)$ such that $\tau(\wtd{P}_0^2)>0$, and 
        \item there is a positive integer $D$ such that $\norm{r}_{1,1}\leq Dm$ for all $r\in\mcR_m,m\in\N$.
    \end{enumerate} 
\end{theorem}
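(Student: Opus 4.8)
The plan is to fix explicit shapes for $\wtd{X}_n$ and $\wtd{P}_n$, reduce \eqref{eq:keyrelation} to a single algebra relation, and then realize the resulting ``paradoxical doubling'' skeleton inside a finitely presented algebra by simulating $M_\mcS$ in a group with polynomial isoperimetric function. For the shapes, take $\wtd{X}_n := v_n x v_n^*$ for a fixed generator $x \in \mcX$ and a $*$-monomial $v_n$ of degree $2n$; then $\wtd{X}_n$ is a $*$-monomial of degree $4n+1$ with $\wtd{X}_n^2 = v_n x^2 v_n^* = 1$ in $\Q\Z_2^{*\mcX}$. Take $\wtd{P}_n := \tfrac12 + \tfrac14 u_n + \tfrac14 u_n^*$ for a reduced word $u_n$ over $\mcX$ of length $3(n+1)$; since $u_n^* u_n = 1$ in $\Q\Z_2^{*\mcX}$ we get $\wtd{P}_n = b_n^* b_n$ with $b_n := \tfrac12(1+u_n)$, so $\wtd{P}_n$ is a hermitian square there, and reading off its support gives $\norm{\wtd{P}_n}_1 = 1$ and $\norm{\wtd{P}_n}_{1,1} = \tfrac14|u_n| + \tfrac14|u_n^*| = 3(n+1)/2$, which is (a). With these shapes $\wtd{X}_n \wtd{P}_n \wtd{X}_n = \tfrac12 + \tfrac14 w_n + \tfrac14 w_n^*$ where $w_n := \wtd{X}_n u_n \wtd{X}_n$, so \eqref{eq:keyrelation} becomes the single self-adjoint relation $u_{n+1} + u_{n+1}^* = 2 + u_n + u_n^* + w_n + w_n^*$ in $\mcA_\mcS(m)$; the remaining freedom is the choice of the words $u_n, v_n$ and of $\mcR_m$.

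For the finitely presented algebra, I would use a standard group-theoretic simulation of $M_\mcS$ as an iterated HNN extension with a ``clock'' stable letter $t$, so that conjugation by $t$ applies one step of the transition function to the encoded configuration of $M_\mcS$ on input $m$, while Britton's lemma (\Cref{lemma:britton}) forces the simulation to jam once a halting state appears; applying \Cref{lem:involution} to split non-involutive generators into products of two involutions turns this into a computable family of finitely presented groups $\Gamma_m$, each a quotient of a fixed $\Z_2^{*\mcX}$. The words $u_n$, $v_n$ would be read off (padded to lengths $3(n+1)$ and $2n$) from the configuration reached after $n$ steps, which $\Gamma_m$ makes available exactly while $M_\mcS$ has not halted. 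Since that configuration is found by direct simulation, the relevant word problem is solvable in time polynomial in $n$ and $m$, so \Cref{thm:BORS} makes $\Gamma_m$ finitely presented with a polynomial isoperimetric function, and \Cref{lemma:isod} bounds the conjugators in its van Kampen diagrams polynomially in $n$ and $m$. The set $\mcR_m$ would then consist of the bounded-size group relators presenting $\Gamma_m$ over $\Z_2^{*\mcX}$, one relation of length $O(m)$ initializing the configuration of $M_\mcS$ on input $m$, and a bounded family of ``skeleton'' algebra relations: the $n=0$ instance of \eqref{eq:keyrelation} together with idempotency and nesting relations making each $\wtd{P}_n$ a projection and a subprojection of $\wtd{P}_{n+1}$. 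Gating is then automatic: transporting the $n=0$ skeleton relation along the clock using the simulation relators produces \eqref{eq:keyrelation} for every $n$ below the halting time, and is blocked precisely once the simulation jams.

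It remains to verify (b)--(d). For (b), when $M_\mcS$ has not halted after $n$ steps, I would derive $\wtd{P}_n + \wtd{X}_n \wtd{P}_n \wtd{X}_n - \wtd{P}_{n+1}$ from $\mcR_m$ by a derivation of length $\mathrm{poly}(n,m)$ that follows the simulation up to step $n$; each step conjugates by a word of length $\mathrm{poly}(n,m)$ (by the isoperimetric bound) and passes through polynomials of bounded $1$-norm, so \Cref{lemma:sizecalc} assembles the steps into an $\mcR_m$-decomposition of size $\le C\big((n+1)m\big)^k$ for suitable $C, k$. For (c), if $m \in \mcS$ then $M_\mcS$ halts in $T$ steps, so $\mcR_m$ forces \eqref{eq:keyrelation} only for $n < T$, and the finite accepting computation lets me build a $*$-representation $\mcA_\mcS(m) \to M_{2^T}(\C)$ in which $\wtd{P}_0$ is a rank-one projection and each $\wtd{X}_n$ with $n < T$ is the permutation doubling the support of $\wtd{P}_n$; composing with the normalized trace gives a tracial state $\tau$ with $\tau(\wtd{P}_0^2) = \tau(\wtd{P}_0) = 2^{-T} > 0$. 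For (d), every relation in $\mcR_m$ besides the configuration-initializing one is an $m$-independent template of bounded size, and that one has $\norm{\cdot}_{1,1} = O(m)$, so $\norm{r}_{1,1} \le Dm$ for all $r \in \mcR_m$.

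The crux, and the bulk of the section, is constructing the simulation and $\mcR_m$ so that these requirements hold simultaneously. Controlling the decomposition sizes in (b) forces the simulation to use only local, bounded-size relators and a clock whose powers have linear length; the tracial state in (c) forces there to be no ``collateral'' collapse --- $\wtd{P}_0$ must survive as a nonzero projection in the matrix quotient --- which dictates how reversible the encoding must be so that Britton's lemma pins down exactly which words become trivial; and (d) requires all $m$-dependence to be confined to the single initialization relator. Threading these constraints together while keeping $\abs{\mcX}$ finite (independent of $m$) is the main difficulty.
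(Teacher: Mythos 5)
The proposal captures the high-level strategy --- simulate $M_\mcS$ group-theoretically, apply \Cref{thm:BORS} for a polynomial isoperimetric function, package the doubling relation \eqref{eq:keyrelation} so it propagates while the machine is running, and build the $m\in\mcS$ tracial state from a finite-dimensional representation --- and it correctly names the central tension (gating the relation at the halting time so $\wtd{P}_0$ survives). But two load-bearing steps are left as assertions where the actual difficulty lives, and as stated they do not work.

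First, the sequences $\{\wtd{P}_n\}$ and $\{\wtd{X}_n\}$ must lie in a single fixed $\Q^*\ang{\mcX}$ \emph{independent of $m$}; all $m$-dependence has to be pushed into $\mcR_m$. Your $u_n$, $v_n$ are ``read off $\ldots$ from the configuration reached after $n$ steps,'' which is a function of both $n$ and $m$, so the resulting $\wtd{P}_n$ would not be a single sequence. Relatedly, applying \Cref{thm:BORS} to a per-input group $\Gamma_m$ produces a finitely presented target whose generating set depends on $m$, contradicting the fixed finite $\mcX$. The paper avoids this by building one group $G_\mcS$ whose infinite generating family $\{x_{mi},z_{mi}\}$ indexes over all $m$ simultaneously but is still finitely generated (by $\{J,S,T,W,X,Z\}$ via the HNN stable letters), proving its word problem is uniformly polynomial-time (\Cref{prop:polyword}), and embedding it \emph{once} into a single $H_\mcS$ via \Cref{thm:BORS}; $\wtd{P}_n = QU^nPU^{-n}Q$ and $\wtd{X}_n = U^n\wtd{X}U^{-n}$ are then fixed elements of $\Q^*\ang{\mcX}$, and the only $m$-dependence in $\mcR_m$ is relation (R3), which ties $\wtd{X}Q$, $\wtd{Z}Q$ to $X_{m,0}Q$, $Z_{m,0}Q$.

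Second, and more fundamentally, the gating mechanism is not given. You write that ``transporting the $n=0$ skeleton relation along the clock $\ldots$ produces \eqref{eq:keyrelation} for every $n$ below the halting time, and is blocked precisely once the simulation jams,'' but conjugating an algebra relation by a group element always preserves it, so transport never blocks on its own; if $t\cdot(\text{relation}_0)\cdot t^{-1}$ yields relation$_1$, then another conjugation yields relation$_2$, indefinitely, and the inductive argument of \Cref{prop:nonhalting} would then kill $\wtd{P}_0$ even for $m\in\mcS$, contradicting (c). The paper's actual gate is relation (R6),
\begin{equation*}
    (P+\wtd{X}P\wtd{X}-UPU^*)(\Id+J\wtd{X}\wtd{Z}\wtd{X}\wtd{Z})=0,
\end{equation*}
where the right-hand factor equals $2$ precisely when the image of $J\wtd{X}\wtd{Z}\wtd{X}\wtd{Z}$ (after transport and compression by $Q$) is $J[X_{mn},Z_{mn}]=J\cdot J=1$, i.e.\ for $n+1\not\equiv 0\pmod{h(m)+1}$; at $n$ equal to the halting time the commutator becomes $1$, the factor degenerates to $1+J$, and the relation is satisfied vacuously rather than forcing the doubling. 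This periodicity is installed in the group $G_\mcS$ itself by taking central products of dihedral groups over $\Z_{h(m)+1}$ (relations (K2)/(G2)), not by a Britton's-lemma jam of an HNN clock. Your proposal identifies \emph{that} a gate is needed (``no collateral collapse'') but does not supply one, and the ``idempotency and nesting'' skeleton relations you mention would not produce it either: a finite set of projection/subprojection relations at $n=0$, transported by a clock, again propagates forever. Finally, incidental to all this, your claimed $\norm{\wtd{P}_n}_{1,1}=3(n+1)/2$ is forced by choosing $\wtd{P}_n$ of the specific shape $\tfrac12+\tfrac14 u_n+\tfrac14 u_n^*$; the paper achieves the $1$-norm and Sobolev bounds with a different closed form, so this matching is coincidental rather than evidence the two constructions line up.
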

In part (a), $\norm{\cdot}_{1}$ and $\norm{\cdot}_{1,1}$ denote the 1-norm and Sobolev seminorm on $\C^*\ang{\mcX}$ defined in the previous section. Since monomials in $\C^*\ang{\mcX}$ are unitary in $\C\Z_2^{*\mcX}$, $\norm{a}_{\C\Z_2^{*\mcX}}\leq \norm{a}_1$ for all $a\in\C^*\ang{\mcX}$. So part (a) implies that $\wtd{P}_n$ is a contraction in $\C\Z_2^{*\mcX}$, i.e.~$\norm{\wtd{P}_n}_{\C\Z_2^{*\mcX}}\leq 1$, and  $\wtd{X}_n$ is a unitary involution in $\C\Z_2^{*\mcX}$ for all $n\geq 0$.

For the rest of this section, we fix an RE set $\mcL\subset\N$, and a deterministic Turing
machine $M_\mcL$ that recognizes $\mcL$. Define $h:\Z \arr \N\cup\{\infty\}$ by
\begin{equation*}
    h(m)= \begin{cases}
    n & m\in\mcL \text{ and } M_\mcL \text{ halts at the } n\text{th step on the input }m  \\
    \infty & m\notin \mcL
    \end{cases},
\end{equation*}
so $h|_\N$ is the running time of $M_\mcL$. Recall that in our convention, all
Turing machines take at least one step, so $h(m) \geq 1$ for all $m \in \Z$. To
prove \Cref{thm:main}, we start in \Cref{SS:groupstuff} by encoding the Turing
machine $M_{\mcL}$ in certain group relations. We finish the proof by constructing
$\mcA_{\mcL}(m)$ in \Cref{ss:construction}.

\subsection{Group stage of construction}\label{SS:groupstuff}
In the first
stage of constructing $\mcA_\mcL(m)$, we build up a group $G_\mcL$ out of amalgamations, central products, and
HNN extensions. We show that $G_{\mcL}$ has a finite generating set, and that
the word problem with respect to this generating set is decidable in polynomial
time. This allows us to embed $G_{\mcL}$ in a finitely presented group $H_S$,
which we use as the basis for defining the algebras $\mcA_{\mcL}(m)$.

For the construction of $G_{\mcL}$, let $[A,B]:=ABA^{-1}B^{-1}$ denote the
group commutator, and let $\Z_n$ be the additive cyclic group of order $n \in\N
\cup \{+\infty\}$ (so in particular, $\Z_{\infty} = \Z$). To order the elements
of $\Z_n$, we represent elements of $\Z_n$ by integers 
    $-\lfloor \tfrac{n+2}{2} \rfloor < j < \lfloor \tfrac{n+1}{2} \rfloor$.
For instance, for $\Z_4$ we use representatives 
$-2,-1,0,1$. This choice of representatives 
has the property that every congruence class is represented by
an element that is minimal in absolute value, i.e.~if $k'$ is the representative
of $[k]$, then $|k'| \leq |k|$.  Also, note that this set of representatives
contains $-1$ for every $n \geq 2$. When $n=+\infty$, elements of $\Z_n = \Z$
represent themselves.

For each $m\in\Z$ and $i\in\Z_{h(m)+1}$, let $K_\mcL(m,i)$ be the group
generated by indeterminates $J,x_{mi},z_{mi}$, subject to the
relations: 
\begin{itemize}
    \item $J^2=x_{mi}^2=z_{mi}^2=\Id$,
    \item $[x_{mi},J] = [z_{mi},J] = 1$, and
    \item  $[x_{mi},z_{mi}] = J^a$, where $a=1$ if $i{+}1 \neq 0$ in $\Z_{h(m)+1}$, 
        and $a=0$ otherwise. 
\end{itemize}
If $i+1\neq 0$ in $\Z_{h(m)+1}$, then the map $x_{mi} \mapsto a$, $z_{mi}
\mapsto b$, $J \mapsto abab$ gives an isomorphism from $K_{S}(m,i)$ to the rank
4 dihedral group 
\begin{equation*}
    D_4=\ang{a,b: a^2 = b^2 = (ab)^4=1 },
\end{equation*}
while if $i+1=0$ in $\Z_{h(m)+1}$ then all generators commute. Hence 
\begin{equation*}
    K_\mcL(m,i) = \begin{cases} D_4 & i+1 \neq 0 \text{ in } \Z_{h(m)+1} \\
                        \Z_2 \times \Z_2 \times \Z_2 & i+1 = 0 \text{ in } \Z_{h(m)+1}
                    \end{cases}.
\end{equation*}
In particular, $J$ is central and $\ang{J} = \Z_2$ in both cases. 
For $m \in \Z$, let
\begin{align*}
    K_\mcL(m):=\sideset{_{\ang{J}}}{_{i\in\Z_{h(m)+1}}} \prod K_\mcL(m,i)
\end{align*}
be the central product of the groups $K_\mcL(m,i)$, $i \in \Z_{h(m)+1}$, over
the central subgroup $\ang{J} = \Z_2$, and let 
\begin{align*}
    K_\mcL:={}_{\ang{J}} \Asterisk_{m\in\Z} K_\mcL(m)
\end{align*}
be the amalgamated free product of the groups $K_\mcL(m)$, $m \in \Z$, over $\ang{J} = \Z_2$.
In other words, $K_\mcL$ is generated by indeterminates 
\begin{equation*}
    \mcX_{K} := \{J,x_{mi},z_{mi},m\in\Z,i\in\Z_{h(m)+1}\}, 
\end{equation*}
subject to relations
\begin{enumerate}[(K1)]
    \setcounter{enumi}{-1}
    \item $J^2=x_{mi}^2=z_{mi}^2=1$ for all $m\in\Z,i\in\Z_{h(m)+1}$,
    \item $[x_{mi},J] = [z_{mi},J] = 1$ for all $m \in \Z$, $i \in \Z_{h(m)+1}$,
    \item for all $m\in\Z,i \in \Z_{h(m)+1}$, 
        \begin{equation*}
            [x_{mi},z_{mi}] = \begin{cases} J & i+1 \neq 0 \text{ in } \Z_{h(m)+1} \\
                                            \Id & i+1 = 0 \text{ in } \Z_{h(m)+1} 
                              \end{cases}, \text{ and }
        \end{equation*} 
    \item for all $m\in\Z$ and $i \neq j \in \Z_{h(m)+1}$,
        \begin{equation*}
            [x_{mi},z_{mj}] = [x_{mi},x_{mj}] = [z_{mi},z_{mj}] = \Id.
        \end{equation*}
\end{enumerate} 
Note from relations (K1) that $J$ is central in $K_{\mcL}$. For each $m \in \Z$
and $i \in \Z_{h(m)+1}$, the elements $\{1,x_{mi},z_{mi},x_{mi}z_{mi}\}$ form
a set of right coset representatives for $\ang{J}$. By the normal form theorem
for central products, every element $w \in K_{\mcL}(m)$, $m \in \Z$ 
can be written uniquely as
\begin{equation}\label{KSmnormalform}
    w = J^cx_{m,i_1}^{a_1}z_{m,i_1}^{b_1}x_{m,i_2}^{a_2}z_{m,i_2}^{b_2}\cdots
x_{m,i_k}^{a_k}z_{m,i_k}^{b_k},
\end{equation}
where $k \geq 0$, $i_1 < i_2 < \ldots < i_k$ 
are representatives of elements from $\Z_{h(m)+1}$ as described above, $c \in
\Z_2$, and $(a_j,b_j) \in \{(0,1),(1,0),(1,1)\} \subset \Z_2 \times \Z_2$ for
all $1 \leq j \leq k$. The function $\eta_m(w) =
J^cx_{m,i_1}^{a_1}z_{m,i_1}^{b_1}\cdots x_{m,i_k}^{a_k}z_{m,i_k}^{b_k}$ is a
normal form for $K_{\mcL}(m)$. Note that we take the indices $i_1,\ldots,i_k$
to be from the chosen set of representatives for $\Z_{h(m)+1}$, rather than 
elements of $\Z_{h(m)+1}$, so $\eta(w)$ is a word over
\begin{equation*}
    \widetilde{\mcX}_K := \left\{J, x_{mi}, z_{mi}, m \in \Z, 
    -\lfloor \tfrac{h(m)+3}{2}\rfloor < i < \lfloor \tfrac{h(m)+2}{2} \rfloor \right\}
\end{equation*}
rather than $\mcX_K$. We use this convention so that the indices are ordered,
and so we can encode words over the generators as binary strings. 
For convenience in what follows, we assume that words over $\widetilde{\mcX}_K
\cup \widetilde{\mcX}_{K}^{-1}$ are encoded in a fairly typical way as a
sequence of substrings representing symbols and indices, where symbols are
encoded with a fixed number of bits, and indices are written in binary with a
sign bit.  For instance, we might encode $x^{-1}_{m,i}$ by the bit sequence
for $x^{-1}$ followed by the binary representations of $m$ and $i$ separated
by delimiters, and $x^{-3}_{m,i} z_{m',i'}$ by this same sequence repeated three times (we
don't represent exponents in binary) followed by the encoding of $z_{m',i'}$. 
The \textbf{bit-length} (as opposed to the usual length) of a word over
$\widetilde{\mcX}_K \cup \widetilde{\mcX}_{K}^{-1}$ is the length of this
binary encoding. The bit-length of a word $w$ depends on the indices $m$ and
$i$ of generators $x_{mi}^{\pm 1}$ and $z_{mi}^{\pm 1}$ appearing in $w$. In
what follows, we need to pay special attention to how large $i$ gets, so we
define the \textbf{maximum index} of a word $w$ over $\widetilde{\mcX}_{K} \cup
\widetilde{\mcX}_{K}^{-1}$ to be
\begin{equation*}
    \max \{r \geq 0: \text{one of }x_{m,\pm r}^{\pm 1}, z_{m,\pm r}^{\pm 1} \text{ appears in 
    } w \text{ for some } m \in \Z \} \cup \{0\}.
\end{equation*}

Returning to normal forms, the subset 
\begin{align*}
    \mcC(m):=\big\{x_{m,i_1}^{a_1}z_{m,i_1}^{b_1}\cdots x_{m,i_k}^{a_k}z_{m,i_k}^{b_k}:\ & 
        k \geq 0, -\lfloor \tfrac{h(m)+3}{2} \rfloor < i_1 < \ldots < i_k < \lfloor \tfrac{h(m)+2}{2} \rfloor, \\*
        & (a_j,b_j) \in \{(0,1),(1,0),(1,1)\}, 1 \leq j \leq k\big\}
\end{align*}
is a set of right coset representatives for $\ang{J}$ in $K_{\mcL}(m)$. 
By the normal form theorem for amalgamated free products, every element $w \in
K_{\mcL}$ can be written uniquely as 
\begin{equation}\label{KSnormalform}
    w = J^c w_{1} \cdots w_{\ell}
\end{equation}
where $\ell \geq 0$, $c \in \Z_2$, and $w_j \in \mcC(m_j) \setminus \{1\}$
for some integers $m_1,\ldots,m_\ell$ such that $m_j \neq m_{j+1}$ for all $1
\leq j \leq \ell-1$. The function $\eta(w) = J^c w_1 \cdots
w_\ell$ is a normal form for $K_{\mcL}$. This normal form can be computed efficiently:
\begin{proposition}\label{prop:normalform}
    Let $\eta$ be the normal form for $K_S$ described above.
    If $w$ is given as a word over $\widetilde{X}_K \cup \widetilde{X}_{K}^{-1}$,
    then $\eta(w)$ is computable in polynomial time in the bit-length of $w$.
    If we write $\eta(w) = J^c w_1 \cdots w_\ell$ as in \cref{KSnormalform}, then
    the length, bit-length, and maximum index of $w_1 \cdots w_{\ell}$ are at
    most the length, bit-length, and maximum index of $w$ respectively. 
\end{proposition}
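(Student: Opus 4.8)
The plan is to compute $\eta(w)$ by a left-to-right collection process: the letters of $w$ are read one at a time, and after each one the algorithm stores the normal form of the prefix read so far. The stored word is kept in the shape of \cref{KSnormalform} --- a bit $c\in\Z_2$ (the collected power of $J$) followed by a list of blocks, each block a nonempty product of syllables $x_{m,i}^{a}z_{m,i}^{b}$ with $(a,b)\in\{(0,1),(1,0),(1,1)\}$ and a common upper index $m$, the syllables of a block sorted by increasing lower index, and consecutive blocks having distinct upper index. To append a letter $J^{\pm1}$, flip $c$. To append $x_{m,i}^{\pm1}$ or $z_{m,i}^{\pm1}$ --- equal in $K_\mcS$ to $x_{mi}$, resp.\ $z_{mi}$ --- when the last block has upper index $m$: commute the new generator leftward past every syllable of that block whose lower index exceeds $i$ (relation (K3)), then either insert it as a new syllable of lower index $i$ if none is present, or multiply it into the syllable of lower index $i$ via the group law of $K_\mcS(m,i)$, pushing any factor of $J$ (which is central) to the front and deleting the syllable, and then the whole block, if it becomes trivial; when the last block has a different upper index (or the stored word is just $J^c$), append the new generator as a fresh one-syllable block. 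Crucially, this never requires $h(m)$: since $w$ is a word over $\widetilde{\mcX}_K$, the lower index $i$ of any generator in $w$ is a chosen representative of $\Z_{h(m)+1}$, and as $-1$ is the chosen representative of its class, $i+1\equiv0$ in $\Z_{h(m)+1}$ holds exactly when $i=-1$; so by relation (K2), $[x_{mi},z_{mi}]=\Id$ when $i=-1$ and $[x_{mi},z_{mi}]=J$ otherwise, which is all that the multiplication in $K_\mcS(m,i)$ needs.

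For correctness, each elementary update --- flipping $c$, commuting generators of distinct lower index or a generator past $J$, and the substitutions from multiplication in some $K_\mcS(m,i)$ (e.g.\ $z_{mi}x_{mi}=J^{a}x_{mi}z_{mi}$ or $x_{mi}^{2}=\Id$) --- is a consequence of relations (K0)--(K3), so after every letter the stored word equals $w$ in $K_\mcS$; and since the stored word always has the shape of \cref{KSnormalform}, uniqueness of that normal form identifies the final stored word with $\eta(w)$. The structural fact used is that appending a generator to a normal-form word interacts only with the last block: if that block has upper index $m'\neq m$, the normal form theorem for amalgamated free products makes the new generator a fresh final block; if $m'=m$, everything happens inside the single factor $K_\mcS(m)$, where the normal form theorem for central products lets one slide the generator to its lower-index slot and multiply inside $K_\mcS(m,i)$. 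After an emptied block is deleted, the preceding block still has upper index $\neq m$, so consecutive blocks remain distinct.

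For the size bounds, the key invariant is that the only update creating a new letter is the substitution $z_{mi}x_{mi}\mapsto J^{a}x_{mi}z_{mi}$ (or its mirror), and the new letter is a power of $J$, which is absorbed into $c$ at once and never appears inside $w_1\cdots w_\ell$; every other update permutes or deletes letters. Hence throughout --- and in particular at the end --- for every generator $g\neq J$ the number of occurrences of $g$ or $g^{-1}$ in the stored $w_1\cdots w_\ell$ is at most the number in the processed prefix of $w$, since lower indices are never changed and, the generators being involutions, one may count $x_{mi}$ and $x_{mi}^{-1}$ together (likewise $z_{mi}$, $z_{mi}^{-1}$). Applied to all of $w$ this yields at once that the length, bit-length, and maximum index of $w_1\cdots w_\ell$ are at most those of $w$ (using that a letter and its inverse have the same encoding length, and that lower indices are preserved). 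For the running time, the algorithm does one update per letter, hence at most as many updates as the length of $w$, which is at most the bit-length of $w$; by the invariant the stored word has bit-length $O(\text{bit-length of }w)$ throughout; and each update parses a letter, scans the last block to the appropriate lower-index slot, performs $O(1)$ group operations in $K_\mcS(m,i)$, and possibly inserts or removes one syllable or block, with all index arithmetic on integers of bit-length $O(\text{bit-length of }w)$. So each update costs time polynomial in the bit-length of $w$, and therefore so does the whole computation.

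I expect the main obstacle to be the bookkeeping rather than any single deep step: checking carefully that the collection moves always produce a word of the exact normal-form shape of \cref{KSnormalform} (the interaction-only-with-the-last-block and distinct-consecutive-upper-index points above), and stating the ``sub-multiset of letters'' invariant sharply enough to force monotonicity of the \emph{bit-length} of $w_1\cdots w_\ell$ --- bit-length is not monotone under arbitrary group rewriting, so the argument must use that each letter surviving in the normal form is, up to inversion, a letter of $w$ with the same encoding length.
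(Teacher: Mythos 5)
Your proof is correct, but it organizes the computation differently from the paper's. The paper's algorithm first factors $w$ as $J^{c}w_{1}\cdots w_{\ell}$ into maximal segments $w_j$ lying in a single factor $K_{\mcS}(m_j)$, normalizes each segment independently inside its $K_{\mcS}(m_j)$ using relations (K0)--(K3), and then, if any segment collapses to the identity, deletes it and restarts; termination is controlled by the fact that the number of segments strictly decreases across iterations. You instead run a single left-to-right collection pass, maintaining the normal form of the processed prefix as a loop invariant, with each new letter either opening a fresh block (when its upper index differs from that of the current last block) or being commuted into the correct slot of the last block and multiplied in via the $K_{\mcS}(m,i)$ group law, absorbing any emitted $J$ into the leading power and pruning empty syllables and blocks. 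Both approaches ultimately invoke the same two normal-form theorems (for central products within a single $K_{\mcS}(m)$, and for the amalgamated free product across distinct $m$), and both observe the same crucial point that since the input is over $\widetilde{\mcX}_K$ the test $i+1\equiv 0$ in $\Z_{h(m)+1}$ reduces to $i=-1$, so no computation of $h(m)$ is needed.

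The two size arguments are also in the same spirit but phrased differently: the paper argues segment-by-segment that no generator repeats in a normalized segment and every generator of the output was already present in the input, and then sums; you phrase it as a letter-by-letter sub-multiset invariant (each non-$J$ letter of the stored word, counted with multiplicity and up to inversion, occurs in the processed prefix), which you maintain by noting that the only rewriting step that creates a letter is the $z_{mi}x_{mi}\mapsto J^{a}x_{mi}z_{mi}$ move and the created letter is a $J$ absorbed into $c$. Your invariant is somewhat sharper and makes the monotonicity of all three quantities (length, bit-length, maximum index) immediate in one shot, at the cost of a more detailed case analysis of the elementary moves; the paper's version is terser but relies on the observation ``no generator appears twice in a normalized segment'' which you don't need. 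One structural point you handled correctly and which is worth spelling out is that when the last block is emptied, the preceding block necessarily had a different upper index (by the invariant that consecutive blocks differ), so the stored word remains in normal form without further repair --- this is what makes the single-pass algorithm work without the paper's outer iteration. Both arguments are valid and give polynomial time; yours is closer to an actual implementation, the paper's is closer to a direct application of the normal form theorems.
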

\begin{proof}
    To compute $\eta(w)$, suppose first that $w$ is a word over $\widetilde{\mcX}_K(m) \cup \widetilde{\mcX}_K(m)^{-1}$,
    where
    \begin{equation*}
        \widetilde{\mcX}_K(m) := \left\{J, x_{mi}, z_{mi}, -\lfloor
        \tfrac{h(m)+3}{2} \rfloor < i < \lfloor \tfrac{h(m)+2}{2} \rfloor \right\}.
    \end{equation*} 
    Note that if $-\lfloor \tfrac{h(m)+3}{2} \rfloor < i < \lfloor \tfrac{h(m)+2}{2} \rfloor$,
    then $i+1 = 0$ in $\Z_{h(m)+1}$ if and only if $i=-1$. Using relations (K1)-(K3), we can
    reorder $w$ so that it can be written as in \cref{KSmnormalform} for some
    $c, a_1,b_1,\ldots,a_k,b_k \in \Z$ (rather than $\Z_2$). Using relations (K0), we can then
    reduce $c, a_1,b_1,\ldots,a_k,b_k$ mod $2$ to put $w$
    in normal form $\eta_m(w) = J^c \overline{w}$, where $\overline{w} \in
    \mcC(m)$. Since a list of integers can be sorted in polynomial time, this
    process takes polynomial time in the bit-length of $w$. Since every generator in $\overline{w}$
    occurs in $w$, and no generator appears in $\overline{w}$ more than once, the length, bit-length,
    and maximum index of $\overline{w}$ are at most the length, bit-length, and maximum index of $w$
    respectively. 

    If $w$ is a word over $\widetilde{\mcX}_K \cup \widetilde{\mcX}_K^{-1}$, then we
    can write $w = J^c w_1 \cdots w_{\ell}$, where $c \in \Z_2$, $w_j$ is a word over
    $\widetilde{\mcX}_K(m_j) \cup \widetilde{\mcX}_K(m_j)^{-1}$ for $1 \leq i \leq \ell$, and  $m_1,\ldots,m_{\ell}$ is
    some sequence of integers such that $m_{j} \neq m_{j+1}$ for $1 \leq j \leq \ell-1$. Using
    the procedure in the last paragraph, we can put $w_j$ in normal form $J^{c_j} \overline{w}_j$
    for $K_{\mcL}(m_j)$, where $\overline{w}_j \in \mcC(m_j)$ and $c_j \in \Z_2$. The
    length of $\overline{w}_j$ is at most the length of $w_j$, so the length of $\overline{w}_1
    \cdots \overline{w}_{\ell}$ is at most the length of $w_1 \cdots w_{\ell}$, and the same
    is true of bit-length and maximum index. Let $c' = c+ \sum_j c_j$. If
    $\overline{w}_j \neq 1$ for all $1 \leq j \leq \ell$, then $\eta(w) =
    J^{c'} \overline{w}_1 \cdots \overline{w}_{\ell}$. Otherwise, if
    $\overline{w}_j = 1$ for some $1 \leq j \leq \ell$, then we can repeat this
    procedure with $w' = J^{c'} \overline{w}_1 \cdots \overline{w}_{j-1}
    \overline{w}_{j+1} \cdots \overline{w}_\ell$. Since $\ell$ decreases in each iteration
    of this algorithm, the overall running time will be polynomial in the bit-length of $w$.
    Also, since the length of $w_1 \cdots w_{\ell}$ does not increase in each iteration, the 
    length of $\overline{w}_1 \cdots \overline{w}_\ell$ in the final iteration is at most the
    length of the initial input $w$, and the same is true of bit-length and maximum index.
\end{proof}
\begin{remark}\label{rmk:noJs}
    Since, for instance, $\eta(z_{m0} x_{m0}) = J x_{m0} z_{m0}$, the length
    and bit-length of $\eta(w)$ is not necessarily less than the length and
    bit-length of $w$. 
    Proposition \ref{prop:normalform} does imply that the length of
    $\eta(w)$ is at most one more than the length of $w$.  Since the generator
    $J$ does not contribute to the maximum index, the maximum index of
    $\eta(w)$ is at most the maximum index of $w$.
\end{remark}
\begin{remark}\label{rmk:representative}
The above algorithm for computing $\eta$ assumes that the generators $x_{mi}$
and $z_{mi}$ in the input are given by specifying the representative of the index $i \in \Z_{h(m)+1}$.
The algorithm does not work as stated without this assumption. For instance, on $w = x_{m,-3} \cdot x_{m,7}$ the
algorithm will return $w$ back again, but if $h(m)=9$ then $x_{m,7} = x_{m,-3}$ in $K_{\mcL}$, and $\eta(w) = 1$.
If $i \in \Z$, then the representative for $i$ in $\Z_{h(m)+1}$ can be computed
by running the Turing machine $M_{\mcL}$ on input $m$ for $2|i|-1$ steps. If
$M_{\mcL}$ halts by this point then we know $h(m)$ and can replace $i$ by its
representative if necessary, while if it does not halt then $h(m) > 2 |i| - 1$
and hence $-\lfloor \frac{h(m)+3}{2} \rfloor < i < \lfloor \frac{h(m)+2}{2}
\rfloor$. The running time of this procedure is polynomial in the bit-length
of $m$ and the value of $i$, but not polynomial in the bit-length of $i$. We
will use this procedure for computing the representative of $i$ later.  
\end{remark}

For the final step of constructing $ G_\mcL$, we take iterated HNN extensions over several subgroups of $K_\mcL$:
\begin{align*}
    K_{\mcL,x}&:=\ang{J,x_{mi},m\in\Z,i\in\Z_{h(m)+1}},\\
    K_{\mcL,z}&:=\ang{J,z_{mi},m\in\Z,i\in\Z_{h(m)+1}}, \text{ and}\\
    K_{\mcL,0}&:=\ang{J,x_{m,0},z_{m,0},m\in\Z}.
\end{align*}

\begin{lemma}\label{lemma:subgroups}\ 
\begin{enumerate}[(a)]

    \item An element $w\in K_\mcL$ is in $ K_{\mcL,x}$ if and only if its
        normal form $\eta(w)$ is a word over $\{J, x_{mi},\ m \in \Z, -\lfloor
        \tfrac{h(m)+3}{2}\rfloor < i < \lfloor \tfrac{h(m)+2}{2} \rfloor \}$. In
        addition, there is an automorphism $\phi_S:K_{\mcL,x}\arr K_{\mcL,x}$ sending
        $J\mapsto J$ and $x_{mi}\mapsto x_{m,i+1}$ for all $m\in\Z,i\in\Z_{h(m)+1}$.
    
    \item An element $w\in K_\mcL$ is in $ K_{\mcL,z}$ if and only if its
        normal form $\eta(w)$ is a word over $\{J,z_{mi},\ m \in \Z, -\lfloor
        \tfrac{h(m)+3}{2}\rfloor < i < \lfloor \tfrac{h(m)+2}{2} \rfloor \}$. In
        addition, there is an automorphism $\phi_T:K_{\mcL,z}\arr K_{\mcL,z}$ sending
        $J\mapsto J$ and \,  $z_{mi}\mapsto z_{m,i+1}$ for all
        $m\in\Z,i\in\Z_{h(m)+1}$.
    
    \item An element $w\in K_\mcL$ is in $ K_{\mcL,0}$ if and only if its
        normal form $\eta(w)$ is a word over $\{J,x_{m,0},z_{m,0},m \in \Z\}$. In
        addition, there is an automorphism $\phi_W:K_{\mcL,0}\arr K_{\mcL,0}$ sending
        $J\mapsto J$, $x_{m,0}\mapsto x_{m+1,0}$, and $z_{m,0}\mapsto z_{m+1,0}$ for
        all $m\in\Z$.
\end{enumerate}
\end{lemma}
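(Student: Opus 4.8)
The plan is to prove part (a) in full and then observe that (b) and (c) follow from the same template --- (b) with $z_{mi}$ in place of $x_{mi}$, and (c) with the entire factor $K_{\mcS}(m,0)$ in place of the ``$x$-part'' of $K_{\mcS}(m)$. The underlying idea is that each of $K_{\mcS,x},K_{\mcS,z},K_{\mcS,0}$ is itself an amalgamated free product over $\ang J$ of suitable subgroups of the factors $K_{\mcS}(m)$, and that its elements are exactly the elements of $K_{\mcS}$ whose $\eta$-normal form uses only the allowed generators.

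First I would establish the local picture. Set $H_m:=\ang{J,x_{mi}:i\in\Z_{h(m)+1}}\le K_{\mcS}(m)$. Relations (K0)--(K3) say the generators $J,x_{mi}$ pairwise commute and square to $\Id$, so $H_m$ is a quotient of the free elementary abelian $2$-group on $\{J\}\cup\{x_{mi}\}$; and by the normal form \eqref{KSmnormalform}, the elements $J^c x_{m,i_1}\cdots x_{m,i_k}$ with $c\in\Z_2$ and $i_1<\cdots<i_k$ (representatives of $\Z_{h(m)+1}$) are pairwise distinct in $K_{\mcS}(m)$, so $H_m$ is exactly that free elementary abelian $2$-group: it contains $\ang J$, the ``$x$-words'' $\mcC_x(m):=\{x_{m,i_1}\cdots x_{m,i_k}:i_1<\cdots<i_k\}\subseteq\mcC(m)$ are a transversal for $\ang J$ in $H_m$, and uniqueness of normal forms gives $H_m\cap\mcC(m)=\mcC_x(m)$. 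For (c), the corresponding fact --- obtained from the normal form theorem for central products --- is that $K_{\mcS}(m,0)=\ang{J,x_{m,0},z_{m,0}}$ is an entire free factor of the central product defining $K_{\mcS}(m)$, contains $\ang J$, and satisfies $K_{\mcS}(m,0)\cap\mcC(m)=\{\Id,x_{m,0},z_{m,0},x_{m,0}z_{m,0}\}$.

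Next I would globalize. Since each $H_m$ is a subgroup of the free factor $K_{\mcS}(m)$ of $K_{\mcS}={}_{\ang J}\Asterisk_{m}K_{\mcS}(m)$ containing the amalgamated subgroup $\ang J$, the standard argument with the amalgamated-free-product normal form theorem gives $K_{\mcS,x}=\ang{\bigcup_m H_m}={}_{\ang J}\Asterisk_{m}H_m$, and shows that $w\in K_{\mcS}$ belongs to $K_{\mcS,x}$ if and only if every syllable $w_j$ of its normal form $\eta(w)=J^cw_1\cdots w_\ell$ lies in $H_{m_j}\cap\mcC(m_j)=\mcC_x(m_j)$ --- equivalently, $\eta(w)$ contains no $z$-generator, i.e.\ $\eta(w)$ is a word over $\{J,x_{mi}\}$. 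Concretely: write $w$ as a product of generators $J^{\pm1},x_{mi}^{\pm1}$, move all $J$'s to the front by (K1), group maximal runs with a common first index $m$ to get $w=J^cg_1\cdots g_r$ with $g_t\in H_{m_t}$, and then reduce to the $K_{\mcS}$-normal form (dropping trivial syllables, merging adjacent syllables in a common factor, pulling out elements of $\ang J$); each such step keeps every syllable inside some $H_m$, because $H_m$ is a subgroup containing $\ang J$, so in the end every syllable of $\eta(w)$ lies in $H_m\cap\mcC(m)=\mcC_x(m)$. The converse is immediate. The same argument proves the membership statements in (b) and (c).

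Finally I would construct the automorphisms. On $H_m$ the assignment $J\mapsto J$, $x_{mi}\mapsto x_{m,i+1}$ (indices in $\Z_{h(m)+1}$) permutes the free generating set --- because $i\mapsto i+1$ is a bijection of $\Z_{h(m)+1}$ --- and fixes $J$, so it is an automorphism of $H_m$ restricting to the identity on $\ang J$, with inverse $x_{mi}\mapsto x_{m,i-1}$; since a family of automorphisms of the free factors that are the identity on the amalgamated subgroup induces, via the universal property, an automorphism of the amalgamated free product, this yields $\phi_S$ on $K_{\mcS,x}={}_{\ang J}\Asterisk_m H_m$, and likewise $\phi_T$ on $K_{\mcS,z}$. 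For $\phi_W$, note that $J\mapsto J$, $x_{m,0}\mapsto x_{m+1,0}$, $z_{m,0}\mapsto z_{m+1,0}$ is an isomorphism $K_{\mcS}(m,0)\iso K_{\mcS}(m+1,0)$ fixing $\ang J$ for every $m$; a compatible family of such isomorphisms permuting the factors by $m\mapsto m+1$ induces an automorphism of $K_{\mcS,0}$, with inverse coming from $m\mapsto m-1$. The step I expect to require the most care is the globalization --- correctly reading off ``$w\in K_{\mcS,x}$ iff $\eta(w)$ avoids all $z$-generators'' from the two-layer structure of $K_{\mcS}$ (an outer amalgamated free product whose factors are inner central products); everything else is routine bookkeeping with the normal form theorems already recorded.
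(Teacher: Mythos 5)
Your proof is correct and follows essentially the same route as the paper's: identify the local subgroup $H_m = K_{\mcS,x}(m)$ as a central product of $\Z_2\times\Z_2$'s over $\ang{J}$ (your "free elementary abelian $2$-group"), use the normal form theorems for central products and then amalgamated free products to read off the membership criterion, and extend the compatible family of local automorphisms fixing $\ang{J}$ to an automorphism of the amalgamated free product via the universal property. The spelled-out reduction in your globalization step is the same "similar argument using the normal form theorem for amalgamated free products" that the paper invokes more tersely.
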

\begin{proof}
For part (a), observe that the subgroup
$K_{\mcL,x}(m,i):=\ang{J,x_{mi}}\subseteq K_\mcL(m,i)$ is isomorphic to $\Z_2
\times \Z_2$ for all $m\in\Z,i\in\Z_{h(m)+1}$. For $m\in \Z$, let
$K_{\mcL,x}(m):=\ang{J,x_{mi}:i\in\Z_{h(m)+1}}\subseteq K_{\mcL}(m)$, and
consider the natural surjective homomorphism 
\begin{align*}
    \Phi_m:\sideset{_{\ang{J}}}{_{i\in\Z_{h(m)+1}}} \prod K_{\mcL,x}(m,i)\arr K_{\mcL,x}(m).
\end{align*}
By the normal form theorem for central products, every element $w$ in the
domain of $\Phi_m$ can be written uniquely as $ w = J^cx_{m,i_1}x_{m,i_2}\cdots
x_{m,i_k}$, where $k \geq 0$, $-\lfloor \tfrac{h(m)+3}{2}\rfloor <
i_1<\cdots<i_k < \lfloor \tfrac{h(m)+2}{2} \rfloor$, and $c \in \Z_2$.
Comparing with the normal form $\eta_m$ for $K_{\mcL}(m)$, we see that $\Phi_m$
sends normal forms to normal forms. It follows that $\Phi_m$ is an isomorphism,
and that if $w\in K_{\mcL,x}(m)$, then the normal form $\eta_m(w)$ in
$K_\mcL(m)$ is a word containing only $J$ and $x_{mi}$'s.

A similar argument using the normal form theorem for amalgamated free products shows that
 \begin{align*}
     K_{\mcL,x} = {}_{\ang{J}} \Asterisk_{m\in\Z} K_{\mcL,x}(m),
 \end{align*}
and the normal form $\eta(w)$ of an element $w\in K_{\mcL,x}$ is a word over
$\{J,x_{mi}:m\in\Z,-\lfloor \tfrac{h(m)+3}{2}\rfloor < i < \lfloor
\tfrac{h(m)+2}{2} \rfloor \}$.

For the automorphism, observe that 
\begin{align*}
    \sideset{_{\ang{J}}}{_{i\in\Z_{h(m)+1}}} \prod K_{\mcL,x}(m,i)\cong \Z_2\times \Z_2^{\oplus h(m)+1},
\end{align*}
where the first $\Z_2$ is generated by $J$, and $\Z_2^{\oplus h(m)+1}$ is
generated by $\{x_{mi}:i\in \Z_{h(m)+1}\}$. Hence for every $m\in \Z$ there is
an automorphism $\phi_S^{(m)}:K_{\mcL,x}(m)\arr K_{\mcL,x}(m)$ sending
$J\mapsto J$ and $x_{mi}\mapsto x_{m,i+1}$ for all $i\in \Z_{h(m)+1}$, and
these extend to the desired automorphism $\phi_S:K_{\mcL,x}\arr K_{\mcL,x}$.

The argument for part (b) is identical. For part (c), observe that $K_\mcL(m,0)\cong D_4$ for all $m\in \Z$ (since $h(m)\geq 1$), and that $K_{\mcL,0}={}_{\ang{J}} \Asterisk_{m\in\Z} K_{\mcL}(m,0)$ by the normal form theorem for amalgamated free products again. Hence for every $m\in \Z$ there is an isomorphism $\phi_W^{(m)}:K_\mcL(m,0)\arr K_\mcL(m+1,0)$ sending $J\mapsto J$, $x_{m,0}\mapsto x_{m+1,0}$, and $z_{m,0}\mapsto z_{m+1,0}$, and these extend to the desired automorphism $\phi_W:K_{\mcL,0}\arr K_{\mcL,0}$. 
\end{proof}

We can now define $ G_\mcL$ as the iterated HNN extensions of $K_\mcL$ by the automorphisms $\phi_S,\phi_T$, and $\phi_W$ from \Cref{lemma:subgroups}, using variables $S,T$, and $W$ respectively. This means that
\begin{align*}
     G_\mcL=\ang{K_\mcL,S,T,W:&\  SJS^{-1}=TJT^{-1}=WJW^{-1}=J,\\
    & S x_{mi} S^{-1} = x_{m,i+1}, T z_{mi} T^{-1} = z_{m,i+1}, m \in \Z, i \in\Z_{h(m)+1},\\
    & W x_{m,0} W^{-1} = x_{m+1,0}, W z_{m,0}  W^{-1} = z_{m+1,0}, m \in \Z}.
\end{align*}
Unlike $K_\mcL$, this group is finitely generated:
\begin{lemma}\label{lemma:finitelygenerated}
$G_\mcL$ is generated by $\{ J,S,T,W,x_{00},z_{00}\} $.
\end{lemma}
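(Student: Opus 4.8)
The plan is to show that the subgroup $H := \ang{J,S,T,W,x_{00},z_{00}} \le G_\mcS$ already contains the entire generating set $\mcX_K = \{J\}\cup\{x_{mi},z_{mi}\}$ of $K_\mcS$. Since $G_\mcS = \ang{K_\mcS,S,T,W}$ by construction of the iterated HNN extension, this immediately gives $H = G_\mcS$.

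First I would use the HNN relations for the stable letter $W$, namely $W x_{m,0} W^{-1} = x_{m+1,0}$ and $W z_{m,0} W^{-1} = z_{m+1,0}$ for all $m \in \Z$ (and their inverse forms). An easy induction on $|m|$ gives $x_{m,0} = W^m x_{00} W^{-m}$ and $z_{m,0} = W^m z_{00} W^{-m}$ for every $m \in \Z$, so all the ``level-zero'' generators $x_{m,0},z_{m,0}$ lie in $H$. Next I would use the relations $S x_{mi} S^{-1} = x_{m,i+1}$ and $T z_{mi} T^{-1} = z_{m,i+1}$ to move along the second index: fixing $m$ and $i \in \Z_{h(m)+1}$ and identifying $i$ with its chosen integer representative, iterating these relations that many times (using the inverse relation when the representative is negative) yields $S^{i} x_{m,0} S^{-i} = x_{m,i}$ and $T^{i} z_{m,0} T^{-i} = z_{m,i}$. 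Since $x_{m,0},z_{m,0} \in H$ from the previous step, we conclude $x_{mi},z_{mi} \in H$ for all $m$ and $i$, hence $\mcX_K \subseteq H$ and $K_\mcS \subseteq H$. As $S,T,W \in H$ as well, $H = G_\mcS$.

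I do not expect any real obstacle. The only point deserving a moment's care is that the relations involving $S$ and $T$ shift the second index inside the cyclic group $\Z_{h(m)+1}$, which is finite when $M_\mcS$ halts on input $m$ and equal to $\Z$ otherwise; but in either case an integer number of conjugations of $x_{m,0}$ by $S$ reaches the generator indexed by any prescribed representative of $\Z_{h(m)+1}$, so every $x_{mi}$ (and likewise every $z_{mi}$) is obtained. One should also note that $J$ is in fact redundant as a generator — since $h(m)\ge 1$ one has $[x_{00},z_{00}] = J$ by relation (K2) — though the lemma only claims generation, not minimality, so this observation is not needed for the proof.
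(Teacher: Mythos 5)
Your proof is correct and takes essentially the same route as the paper, which simply records the identities $x_{mi}=S^{i}W^m x_{00} W^{-m} S^{-i}$ and $z_{mi}=T^{i}W^m z_{00} W^{-m} T^{-i}$ obtained from the HNN relations; you reach the same formulas by the same two-step conjugation and merely add more detail (and a correct aside that $J=[x_{00},z_{00}]$ is redundant).
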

\begin{proof}
For any $m\in\Z,i\in\Z_{h(m)+1}$, we have $x_{mi}=S^{i}W^mx_{00}W^{-m}S^{-i}$ and $z_{mi}=T^{i}W^mz_{00}W^{-m}T^{-i}$ in $G_\mcL$.
\end{proof}
We use this generating set frequently going forward. Hence for notational
convenience, we let $X:=x_{00}$, $Z:=z_{00}$, and $\mcX_G:=\{J,S,T,W,X,Z\}$.
Also, we let $X_{mi}$ and $Z_{mi}$ denote the words
\begin{equation*}
    X_{mi} := S^iW^m X W^{-m}S^{-i} \text{ and }Z_{mi} :=T^i W^mZW^{-m}T^{-i},m,i\in \Z.
\end{equation*} 
The words $X_{mi}$ and $Z_{mi}$ both have length $2|m|+2|i|+1$.

If we restate the presentation of $G_\mcL$ with respect to this generating set, we see that $G_\mcL$ is the group generated by $\mcX_G$, subject to the following relations: 
\begin{enumerate}[(G1)]
    \setcounter{enumi}{-1}
    \item $J^2 = X^2 = Z^2 = 1$. 
    \item $[S,J]=[T,J]=[W,J]=[X,J]=[Z,J]=\Id$, i.e., $J$ is a central element.
    \item \label{anti} For every $m,i \in \Z$, 
            \begin{equation*}
            [X_{mi},Z_{mi}]=
                \begin{cases} J & i+1 \neq 0 \text{ mod } h(m)+1 \\
                              \Id & i+1 = 0 \text{ mod } h(m)+1
                \end{cases}.
            \end{equation*}
        \item\label{Gcommute} For all $m \in \Z$ and $i \neq j$ mod $h(m)+1$,
            \begin{equation*}
                [X_{mi},Z_{mj}]
                = [X_{mi},X_{mj}]
                = [Z_{mi},Z_{mj}] = \Id.
            \end{equation*}
    \item\label{ll:Ghalt} For every $m \in \N$ with $h(m) < +\infty$, 
            \begin{equation*}
                X_{m,h(m)+1} = X_{m,0} \text{ and }
                Z_{m,h(m)+1} = Z_{m,0}.
            \end{equation*}
\end{enumerate}

In the next stage of our construction, we want to use \cref{thm:BORS} to embed $G_\mcL$ into a finitely presented group with polynomial isoperimetric function. For this, we need:

\begin{proposition}\label{prop:polyword}
The word problem for $G_\mcL$ with respect to $\mcX_G$ is solvable in deterministic polynomial time.
\end{proposition}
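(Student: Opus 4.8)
The plan is to decide the word problem by running the standard Britton's-lemma reduction for the iterated HNN extension $G_\mcS = \ang{K_\mcS, S, T, W : \ldots}$, using \Cref{prop:normalform} to compute in the base group $K_\mcS$ and \Cref{lemma:subgroups} to test membership in the associated subgroups $K_{\mcS,x}$, $K_{\mcS,z}$, $K_{\mcS,0}$. Given a word $w$ over $\mcX_G \cup \mcX_G^{-1}$ of length $n$, I would first rewrite it as a reduced sequence $w_0 s_1^{e_1} w_1 \cdots s_k^{e_k} w_k$ with each $s_j \in \{S,T,W\}$, $e_j \in \{\pm 1\}$, and each $w_j$ a (possibly empty) word over $\{J,X,Z\}^{\pm 1}$, and store each $w_j$ by its normal form $\eta(w_j)$ --- a word over $\wtd{\mcX}_K$ in only the index-$0$ generators, since $J$, $X = x_{00}$, $Z = z_{00}$ all lie in $K_\mcS(0,0) \cong D_4$. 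Since $\phi_S$ is an automorphism of $K_{\mcS,x}$, $\phi_T$ of $K_{\mcS,z}$, and $\phi_W$ of $K_{\mcS,0}$, Britton's lemma (\Cref{lemma:britton}) applies: if $k \geq 1$ and $w = \Id$, there must be a \emph{pinch} --- an index $j$ with $s_j = s_{j+1}$, $e_j = -e_{j+1}$, and $w_j$ in the subgroup attached to $s_j$. The algorithm would then repeatedly locate the leftmost pinch, replace $w_{j-1}\, s_j^{e_j} w_j\, s_{j+1}^{e_{j+1}}\, w_{j+1}$ by $w_{j-1}\,\phi_{s_j}^{\pm 1}(w_j)\,w_{j+1}$ (recomputing its normal form in $K_\mcS$), and delete the two stable letters, which strictly decreases $k$. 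When no pinch remains: if $k = 0$, then $w = \Id$ iff $\eta(w_0)$ is empty, while if $k \geq 1$, then $w \neq \Id$ by Britton's lemma. This procedure is deterministic.

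The elementary operations are all polynomial-time in the current encoding length: recomputing a normal form in $K_\mcS$ and multiplying two normal forms is \Cref{prop:normalform}; testing $w_j \in K_{\mcS,x}$ (resp.\ $K_{\mcS,z}$, $K_{\mcS,0}$) is, by \Cref{lemma:subgroups}, just a syntactic check that $\eta(w_j)$ contains no $z$'s (resp.\ no $x$'s, resp.\ no generators of nonzero index); and applying $\phi_S^{\pm1}$, $\phi_T^{\pm1}$, or $\phi_W^{\pm1}$ to a normal form means shifting the relevant indices by $\pm 1$, replacing each shifted index by its representative in the appropriate $\Z_{h(m)+1}$ via the procedure of \Cref{rmk:representative}, and renormalizing.

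The key point, which I expect to be the main obstacle, is to bound the growth of all intermediate data so that the subroutine of \Cref{rmk:representative} --- which simulates the arbitrary RE-machine $M_\mcS$ --- is only ever run for polynomially many steps. Here the bound comes out cleanly. Each pinch deletes two stable letters, so there are at most $n/2$ pinches in total. Every second index $i$ occurring in some $x_{mi}$ or $z_{mi}$, and every first index $m$, starts at $0$ (the input contains only $X = x_{00}$ and $Z = z_{00}$) and is changed by at most $1$ under each application of a $\phi$, and reduction to a representative never increases $|i|$; so at every stage $|m|, |i| \leq n/2$. By \Cref{prop:normalform} the total number of generator-letters across the $w_j$ never increases under normal-form computation, apart from at most one extra central $J$ per pinch, so it stays $O(n)$; combined with the index bound, this keeps the total bit-length $O(n\log n)$. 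Consequently, reducing a shifted index to its $\Z_{h(m)+1}$-representative requires running $M_\mcS$ on $m$ for only $2|i| - 1 = O(n)$ steps, which costs $\mathrm{poly}(n)$ time even though $M_\mcS$ may be arbitrarily slow: the procedure of \Cref{rmk:representative} is expensive only relative to the bit-length of $i$, which we have arranged to be $O(\log n)$.

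Finally I would note that rescanning for the leftmost pinch after each of the $\le n/2$ reductions costs $O(n)$ scans of $O(n)$ positions, each with a $\mathrm{poly}(n)$ test, so the whole algorithm runs in deterministic polynomial time in $n$. The remaining work beyond establishing the index bound --- parsing the input, normal-form bookkeeping, and the membership tests --- is routine.
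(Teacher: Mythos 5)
Your proposal is correct and follows essentially the same route as the paper: a Britton's-lemma pinch-reduction for the iterated HNN extension, with normal forms in $K_\mcS$ handled by \Cref{prop:normalform}, subgroup membership tested syntactically via \Cref{lemma:subgroups}, index representatives computed via \Cref{rmk:representative}, and the critical observation that since the input lies over $\mcX_G$ all indices start at $0$ and change by at most $1$ per pinch, so the $M_\mcS$-simulation in \Cref{rmk:representative} runs for only $O(n)$ steps. Your bit-length bound $O(n\log n)$ is a little tighter than the paper's cruder $O(n^2)$, but both suffice and the argument is the same in substance.
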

\begin{proof}
First consider the larger generating set
$\widehat{\mcX}_G:=\widetilde{\mcX}_K\cup\{S,T,W\}$ where
$\widetilde{\mcX}_K=\left\{J,x_{mi},z_{mi},m\in \Z, -\lfloor
\tfrac{h(m)+3}{2}\rfloor < i < \lfloor \tfrac{h(m)+2}{2} \rfloor \right\}$ as
in \cref{prop:normalform}. The word problem with respect to $\widehat{\mcX}_G$
is solvable using Britton's Lemma as follows:
\begin{enumerate}[(1)]
    \item Given a word $w$  over $\widehat{\mcX}_G\cup\widehat{\mcX}_G^{-1}$, write
        \begin{align*}\label{wordoverhatX}
            w=g_0t_1^{e_1}g_1t_2^{e_2}\cdots t_n^{e_n}g_n,
        \end{align*}
        where $ e_1, \ldots , e_n \in \{\pm 1\} $, $t_1,\ldots,t_n\in\{S,T,W\}$, and
        $g_0,\ldots,g_n$ are words over $\widetilde{\mcX}_K\cup \widetilde{\mcX}_K^{-1}$. 

    \item For every $0\leq i\leq n$, compute the normal form $\eta(g_i)$ of
        $g_i$ from \cref{prop:normalform}. 

    \item If $n=0$, then $w=\Id$ in $G_\mcL$ if and only if $\eta(g_0)=\Id$, and we can stop.
        
    \item Otherwise, use \cref{lemma:subgroups} to check whether there is an
        index $1\leq j<n$ such that $e_{j}=-e_{j+1}$, $t_{j}=t_{j+1}=S$, and $g_j\in K_{\mcL,x}$. 
        If such a $j$ exists, then let $g_j'=J^cx_{m_1,i_1'}\cdots x_{m_k,i_k'}$ where
        $\eta(g_j)=J^cx_{m_1,i_1}\cdots x_{m_k,i_k}$ and $-\lfloor
        \tfrac{h(m_l)+3}{2}\rfloor < i_l' < \lfloor \tfrac{h(m_l)+2}{2}\rfloor$
        is the representative of $[i_l+e_j]\in\Z_{h(m_l)+1}$ 
        (computed as in \cref{rmk:representative}). Return to step (1) with
        \begin{align*}
            w=g_0t_1^{e_1}\cdots t_{j-1}^{e_{j-1}}(g_{j-1}g_j'g_{j+1})t_{j+2}^{e_{j+2}}g_{j+2}\cdots t_n^{e_n}g_n.
        \end{align*}
    \item If no such $j$ exists, perform step (4) with $S,K_{\mcL,x}$
        replaced by $T,K_{\mcL,z}$ and $W,K_{\mcL,0}$, with $g_j'$ adjusted to match the automorphisms
        $\phi_T$ and $\phi_W$ from Lemma \ref{lemma:subgroups} respectively.

    \item If no such $j$ exists for $S,T$ and $W$, then $w\neq \Id$ in $G_\mcL$ by Britton's Lemma, 
        and we can stop.
\end{enumerate}

Define the bit-length and maximum index of a word $w$ over
$\widehat{\mcX}_G\cup\widehat{\mcX}_G^{-1}$ analogously to words over
$\widetilde{\mcX}_K \cup \widetilde{\mcX}_K^{-1}$. Specifically, if if $w$ is
written as in step (1), then define the maximum index of $w$ to be $\max_i
I_i$, where $I_i$ is the maximum index of $g_i$.  
Let $N$, $B$, and $I$ be the length, bit-length, and maximum index of the
initial input to the above algorithm.  Since $n$ decreases by two after each
iteration, and $n$ is bounded by $N$, the algorithm is guaranteed to halt after
at most $\lceil N/2 \rceil$ iterations.  In steps (4) and (5), the length of
$g_j'$ is equal to the length of $\eta(g_j)$, which is at most one more than
the length of $g_j$.  The generators $t_j$ and $t_{j+1}$ are removed from $w$
in this step, and $w$ does not change in any other step, so the length of $w$
also decreases in each iteration. The maximum index can increase by one in
steps (4) and (5) when the automorphisms $\phi_S$ and $\phi_T$ are applied, so
the maximum index throughout is bounded by $I+\lceil N/2 \rceil$. Applying automorphisms
$\phi_S$, $\phi_T$, and $\phi_W$ in steps (4) and (5) can also increase the
bit-length of each generator by one (note that the bit-length of the
representative of $[i] \in \Z_{h(m)+1}$ is at most the bit-length of $i$), so
the overall bit-length of $w$ can go up by at most $N$ in each step. Thus the
bit-length of $w$ is bounded by $B + N \cdot \lceil N/2 \rceil$ throughout.

By \cref{prop:normalform}, each normal form $\eta(g_j)$ in step (2) can be
computed in time polynomial in the bit-length of $w$, and we compute at most
$N$ normal forms in this step. In steps (4) and (5), each representative 
$i_\ell'$ can be computed in time polynomial in the bit-length and maximum
index of $w$, and we have to compute at most $N$ representatives. The other
steps can be done in time polynomial in the bit-length of $w$, so we conclude
that the entire algorithm takes time polynomial in $N$, $B$, and $I$. 

Since $I$ is not polynomial in $N$ and $B$ in general, the above algorithm
doesn't run in polynomial time in $N$ or $B$. However, if the initial input
is a word in $\mcX_G \cup \mcX_G^{-1}$, then $I=0$, and $B = CN$ where
$C$ is the number of bits needed to encode each element of $\mcX_G \cup
\mcX_G^{-1}$. So the above algorithm takes time polynomial in $N$ for words
from $\mcX_G \cup \mcX_G^{-1}$.
\end{proof}

We can now embed $G_{\mcL}$ into a finitely presented group:
\begin{proposition}\label{prop:HGroup} 
    There is a finitely presented group $H_\mcL = \ang{\mcX_H:\mcR_H}$, 
    a polynomial $d_{\mcL}$, and an embedding
    $\iota : \F_{\mcX_G} \incl \F_{\mcX_H}$ such that
    \begin{enumerate}[(a)]
        \item $z^2 \in \mcR_H$ for all $z \in \mcX_H$ (so all the generators of $H_{\mcL}$ are involutions), 
        \item $\iota(J)$, $\iota(X)$, and  $\iota(Z)$ are elements of $\mcX_H$, while $\iota(S)$, $\iota(T)$, and $\iota(W)$ have length
            two in $H_\mcL$, and
        \item if $w \in \F_{\mcX_G}$ is a word
    which is trivial in $G_{\mcL}$, then there 
    there exist $r_1,\cdots,r_k\in\mcR_H\cup\mcR_H^{-1}$, $z_1,\cdots,z_k\in
\F_{\mcX_H}$ with $k\leq d_\mcL(\abs{\iota(w)})$ and $\abs{z_i}\leq d_\mcL(\abs{\iota(w)})$
for all $1\leq i\leq k$ satisfying 
    \begin{align*}
        \iota(w)=z_1r_1z_1^{-1}z_2r_2z_2^{-1}\cdots z_kr_kz_k^{-1}
    \end{align*}
in $\F_{\mcX_H}$.
    \end{enumerate}
\end{proposition}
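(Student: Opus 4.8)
The plan is to embed $G_\mcS$ into a finitely presented group using the theorem of Birget, Ol'shanskii, Rips, and Sapir (\Cref{thm:BORS}), and then to rework the resulting presentation --- first by Tietze transformations introducing named generators for the images of $J,X,Z,S,T,W$, and then by repeated use of \Cref{lem:involution} to make every generator an involution --- keeping the isoperimetric function polynomial throughout, so that (c) follows at the end from Gersten's \Cref{lemma:isod}. Concretely: since by \Cref{prop:polyword} the word problem of $G_\mcS$ with respect to $\mcX_G$ is solvable in polynomial time, hence lies in NP, \Cref{thm:BORS} embeds $G_\mcS$ into a finitely presented group $H^{(1)}=\ang{\mcX^{(1)}:\mcR^{(1)}}$ with a polynomial isoperimetric function (this is the consequence of \Cref{thm:BORS} noted immediately after its statement); write $\mu:G_\mcS\hookrightarrow H^{(1)}$ for the embedding. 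I would then adjoin to this presentation six new generators $a_J,a_X,a_Z,a_S,a_T,a_W$ together with the defining relations $a_g=\mu(g)$ for $g\in\mcX_G$. This is a sequence of Tietze transformations, so the group is unchanged, and since each new relator has bounded length the new finite presentation of $H^{(1)}$ still has a polynomial isoperimetric function. Here $a_J,a_X,a_Z$ are involutions, being images of the involutions $J,X,Z$, while $a_S,a_T,a_W$ have infinite order.

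Next I would apply \Cref{lem:involution} once for each generator $y$ in $\mcX^{(1)}\cup\{a_S,a_T,a_W\}$ (but not to $a_J,a_X,a_Z$), adjoining a pair of involutions $s_y,t_y$ with $y=s_yt_y$, and then Tietze-eliminating the generators $y=s_yt_y$. Call the resulting group $H_\mcS=\ang{\mcX_H:\mcR_H}$, where $\mcX_H=\{a_J,a_X,a_Z\}\cup\{s_y,t_y:y\in\mcX^{(1)}\cup\{a_S,a_T,a_W\}\}$ and $\mcR_H$ consists of the relators $s_y^2,t_y^2$, the rewrites of the earlier relators (with each eliminated $y$ replaced by $s_yt_y$), and the three redundant relators $a_J^2,a_X^2,a_Z^2$. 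Then every element of $\mcX_H$ squares to a relator, which is (a). Define $\iota:\F_{\mcX_G}\to\F_{\mcX_H}$ by $\iota(J)=a_J$, $\iota(X)=a_X$, $\iota(Z)=a_Z$, $\iota(S)=s_{a_S}t_{a_S}$, $\iota(T)=s_{a_T}t_{a_T}$, and $\iota(W)=s_{a_W}t_{a_W}$. The three length-two words involve six pairwise-distinct generators, none of them $a_J,a_X,a_Z$, so no cancellation occurs between consecutive factors in any product of these six elements; hence they freely generate a free subgroup of $\F_{\mcX_H}$ and $\iota$ is an embedding. Property (b) holds by construction.

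Since each application of \Cref{lem:involution} gives a group embedding and the Tietze steps are isomorphisms, $H^{(1)}$ embeds into $H_\mcS$; composing with $\mu$ gives an embedding $G_\mcS\hookrightarrow H_\mcS$ under which each $g\in\mcX_G$ maps to $\iota(g)$. In particular $\iota(w)$ is trivial in $H_\mcS$ whenever $w\in\F_{\mcX_G}$ is trivial in $G_\mcS$. Now $H_\mcS$ is finitely presented, its longest relator has some fixed length $l$, and --- granting that $H_\mcS$ has a polynomial isoperimetric function $f$ --- \Cref{lemma:isod} writes any word $v$ of length $n$ trivial in $H_\mcS$ as $z_1r_1z_1^{-1}z_2r_2z_2^{-1}\cdots z_kr_kz_k^{-1}$ in $\F_{\mcX_H}$ with $k\leq f(n)$ and $|z_i|\leq kl+l+n$. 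Taking $d_\mcS(n):=f(n)l+l+n$ (a polynomial, which also dominates $f$) and specializing to $v=\iota(w)$ gives exactly (c).

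The step I expect to be the main obstacle is justifying that $H_\mcS$ has a polynomial isoperimetric function. Each application of \Cref{lem:involution} replaces the current group by its amalgamated free product with a finite or infinite dihedral group over the cyclic subgroup generated by one of its generators, and controlling the isoperimetric function under such an amalgamation requires that this cyclic subgroup be at most polynomially distorted. For generators of finite order this is automatic; for the remaining generators --- $a_S,a_T,a_W$ and the infinite-order elements of $\mcX^{(1)}$ --- it should follow from the structure of the BORS construction, namely that the embedding $G_\mcS\hookrightarrow H^{(1)}$ is at most polynomially distorted and that $\langle S\rangle,\langle T\rangle,\langle W\rangle$ are undistorted in the HNN extension $G_\mcS$, together with the standard estimate for isoperimetric functions of amalgamated free products over an undistorted cyclic subgroup. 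Since \Cref{lem:involution} is applied only a constant number of times, iterating these estimates keeps the isoperimetric function polynomial, and adjoining the redundant relators $a_J^2,a_X^2,a_Z^2$ changes it by at most a constant factor. The overall bookkeeping --- propagating a polynomial isoperimetric bound through \Cref{thm:BORS}, the Tietze transformations, the redundant relators, and the iterated amalgamations, while simultaneously arranging the generator conditions (a) and (b) --- is where the real work lies.
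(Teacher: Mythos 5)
Your construction of $H_\mcS$ is essentially the paper's: apply \Cref{thm:BORS} via \Cref{prop:polyword} to get a finitely presented $H_0 = \langle \mcX_0 : \mcR_0\rangle$ with polynomial isoperimetric function, then pass to $\mcX_H$ by replacing each non-involutory generator $y$ with a pair $s_y, t_y$ of involutions satisfying $y = s_y t_y$, using \Cref{lem:involution} to see this is an embedding. (The paper leaves implicit what you make explicit via the Tietze step of adjoining $a_J,\dots,a_W$, namely that the generators $\mcX_G$ can be taken to lie in $\mcX_0$; either handling is fine.)

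The genuine gap is exactly where you flag it: you assume, without proof, that $H_\mcS$ itself has a polynomial isoperimetric function, and the sketch you offer to close this --- polynomial distortion of the BORS embedding plus undistortedness of the cyclic subgroups generated by the infinite-order generators, fed into an estimate for isoperimetric functions of amalgamated products --- is not available in the paper and is not clearly true. The stable letters $S,T,W$ do generate undistorted $\Z$'s in the HNN extension $G_\mcS$, but there is no reason the infinite-order elements of $\mcX^{(1)}$ (the raw BORS generators) generate undistorted cyclic subgroups of $H^{(1)}$; controlling the Dehn function of the iterated amalgamation would then require a nontrivial separate argument. The paper sidesteps this entirely. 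It never claims or needs an isoperimetric bound for $H_\mcS$. Instead it applies Gersten's \Cref{lemma:isod} \emph{in $H_0$}, where the polynomial isoperimetric function is directly supplied by \Cref{thm:BORS}: a trivial $w\in\F_{\mcX_G}$ is trivial in $H_0$, so $w = z_1 r_1 z_1^{-1}\cdots z_k r_k z_k^{-1}$ in $\F_{\mcX_0}$ with $k$ and $|z_i|$ polynomially bounded in $|w|$. Then one simply pushes this decomposition forward through the letter-replacement map $f\colon \F_{\mcX_0}\to\F_{\mcX_H}$ (the free-group homomorphism fixing $J,X,Z$ and sending $y\mapsto s_y t_y$). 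Since $\mcR_H$ contains $f(r)$ for every $r\in\mcR_0$ by construction, and $|u|\le |f(u)|\le 2|u|$, the identity $\iota(w) = f(z_1) f(r_1) f(z_1)^{-1}\cdots$ in $\F_{\mcX_H}$ is already the required $\mcR_H$-decomposition, with $d_\mcS(n) = 2\ell\, d_{H_0}(n) + 2\ell + 2n$. This gives part (c) with no need to bound the Dehn function of $H_\mcS$ at all; it is worth internalizing, since it is the observation that makes the whole reduction run through without any auxiliary geometric group theory.
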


Item (c) implies that $\iota$ embeds $G_\mcL$ into $H_\mcL$. 

\begin{proof}
    By \Cref{prop:polyword} and \Cref{thm:BORS}, $G_{\mcL}$ can be embedded in
    a finitely presented group $H_0 = \langle \mcX_0 : \mcR_0 \rangle$ with
    polynomial isoperimetric function $d_{H_0}$. Let $\mcX_1 = \mcX_0 \setminus
    \{J,X,Z\}$, and $\mcX_H = \{J,X,Z\} \cup \{s_y, t_y : y \in \mcX_1\}$.
    Define $f : \F_{\mcX_0} \to \F_{\mcX_H}$ by $f(y) = y$ for $y \in
    \{J,X,Z\}$ and $f(y) = s_y t_y$ for $y\in \mcX_1$, so in other words $f(w)$
    is the word $w$ with $y$ replaced by $s_y t_y$ for all $y \in \mcX_1$. Finally,
    let $\mcR_H = \{z^2 : z \in \mcX_H\} \cup \{f(r) : r \in \mcR_0\}$. Then
    \begin{equation*}
        \ang{\mcX_H : \mcR_H} = \ang{H_0, s_y, t_y \text{ for } y \in \mcX_1 : s_y^2 = t_y^2 = 1 \text{ and } 
            y = s_y t_y \text{ for all } y \in \mcX_1},
    \end{equation*}
    and thus $H_0$ is a subgroup of $H := \ang{\mcX_H : \mcR_H}$ by repeatedly
    applying \Cref{lem:involution}. The inclusion $\iota : G_0 \incl H_0 \incl H$
    satisfies parts (a) and (b) of the proposition. Let $\ell$ be the length of
    the longest relation in $\mcR_0$. If $w \in \F_{\mcX_G}$ is
    trivial in $G_{\mcL}$, then $w$ is also trivial in $H_0$.  Hence by
    \Cref{lemma:isod}, there are $r_1,\ldots,r_k \in \mcR_0 \cup \mcR_0^{-1}$
    and $z_1,\ldots,z_k \in \F_{\mcX_0}$ such that 
    \begin{equation*}
        w = z_1 r_1 z_1^{-1} z_2 r_2 z_2^{-1} \cdots z_k r_k z_k^{-1},
    \end{equation*}    
    where $k \leq d_0(|w|)$ and $|z_i| \leq \ell k + \ell + |w|$ for all $1 \leq i \leq k$.
    Thus $\iota(w) = f(z_1) f(r_1) f(z_1)^{-1} \cdots f(z_k) f(r_k) f(z_k)^{-1}$.  
    Since $d_{H_0}$ is polynomial, $f(r_i) \in \mcR_H \cup \mcR_H^{-1}$ for all $1 \leq i \leq k$, and $|w|
    \leq |f(w)| \leq 2|w|$ for all $w \in \F_{\mcX_0}$, 
    part (c) is true with e.g. $d_\mcL(n) := 2 \ell d_{H_0}(n) + 2 \ell + 2n$.
\end{proof}

\subsection{Construction of $\mcA_\mcL(m)$}\label{ss:construction}
For this section, we fix a finitely presented group $H_{\mcL} = \ang{\mcX_H :
\mcR_H}$ and a polynomial $d_\mcL$ as in \Cref{prop:HGroup}, and identify
$G_{\mcL}$ with its image in $H_{\mcL}$ (so we don't mention the embedding
$\iota$ explicitly). In particular, if $w \in \F_{\mcX_G}$, then $|w|$ will
denote the length of $w$ in $H_{\mcL}$ rather than $G_\mcL $ unless otherwise
noted. The variables $J$, $X$, and $Z$ will continue to denote generators of
$H_{\mcL}$, while $S$, $T$ and $W$ will be length two words in $H_{\mcL}$. 
We also continue with the notation $X_{mi} = S^i W^m X W^{-m} S^{-i}$
and $Z_{mi} = T^i W^m Z W^{-m} T^{-i}$. Now that we are working with algebras,
we let $[A,B]$ denote the algebra commutator $AB-BA$ instead of the group
commutator. 

To define $\mcA_\mcL(m)$, consider the set of variables
\begin{equation*}
    \mcX_0:=\{U_1,U_2,\widetilde{X},\widetilde{Z},O_P,O_Q\},
\end{equation*}
and let
\begin{align*}
    U :=U_1U_2,\quad P:=\frac{\Id-O_P}{2}, \text{ and } Q :=\frac{\Id-O_Q}{2}
\end{align*}
in $\C^*\ang{\mcX_0}$. For every $m\in\N$, let $\mcA_{\mcL}(m)$ be the finitely
presented $*$-algebra generated by $\mcX_H \cup \mcX_0$, subject to the
relations
\begin{enumerate}[(R1)]
\setcounter{enumi}{-1}
    \item $x^* x = x x^* = x^2 = 1$ for all $x \in \mcX_H \cup \mcX_0$,
    \item $r=1$ for all $r \in \mcR_H$, i.e. all the relations for $H_{\mcL}$ hold, 
    \item $[U,X]=[U,Z]=[U,S]=[U,T]=[U,J]=[Q,X]=[Q,Z]=[Q,S]=[Q,T]=[Q,J]=0$, i.e. $U$ and $Q$ commute with $X$,$Z$,$S$,$T$, and $J$,
    \item $[\widetilde{X},Q] = \widetilde{X}Q-X_{m,0}Q = 0$ and $[\widetilde{Z},Q]=\widetilde{Z}Q-Z_{m,0}Q = 0$,
    \item $U\widetilde{X}U^*-S\widetilde{X}S^* = U\widetilde{Z}U^*-T\widetilde{Z}T^* = 0$,
    \item $[P,Q]=0$, i.e. $P$ commutes with $Q$, and
    \item $(P+\widetilde{X}P\widetilde{X}-UPU^*)(\Id+J\widetilde{X}\widetilde{Z}\widetilde{X}\widetilde{Z})=0$.
\end{enumerate}
Note that relations (R0) imply that all the generators are order-two unitaries,
so $\mcA_\mcL(m) = \C \Z_2^{*\mcX_H \cup \mcX_0} / \ang{\mcR_m}$, 
where $\mcR_m$ denotes the relations in (R1)-(R6). Since $O_P$ and $O_Q$
have order two, $P$ and $Q$ are projections in
$\mcA_{\mcL}(m)$. The point of the
generators $O_P$ and $O_Q$ is to add the projections $P$ and
$Q$ while making $\mcA_\mcL(m)$ a quotient of a free product
of $\Z_2$'s. Similarly, the point of $U_1$ and $U_2$ is to add the unitary
$U$ without putting any restriction on its order. For the intuition behind
the remaining relations, see the representation constructed in the proof of
\Cref{lemma:globaltrace} below.  Before proceeding to the lemma, we record some
facts about the presentation for later use:
\begin{remark}\label{rmk:mcr}\ %
    \begin{enumerate}
        \item If $\mcA = \C\Z_2^{*\mcX_H\cup\mcX_0}$, then $\norm{r}_{\mcA} \leq 6$ for all $r\in\mcR_m$.

        \item The number of relations in $\mcR_m$ is independent of $m$.

        \item The relations $r$ in (R3) satisfy $\norm{r}_{1,1}\leq 2m+3$, and the other relations in $\mcR_m$ do not depend on $m$.

    \end{enumerate}
\end{remark}

\begin{lemma}\label{lemma:globaltrace}
If $m\in\mcL$ then there is a tracial state $\tau$ on $\mcA_\mcL(m)$ such that $\tau(PQ)>0$.

\end{lemma}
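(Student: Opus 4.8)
The plan is to build the required tracial state from an explicit $*$-representation of $\mcA_\mcS(m)$. Since $m\in\mcS$, the machine $M_\mcS$ halts on input $m$; set $h:=h(m)<\infty$. I would construct a finite-dimensional $*$-representation $\pi\colon\mcA_\mcS(m)\to M_N(\C)$ and take $\tau:=\tfrac1N\operatorname{Tr}\circ\pi$. Every such $\tau$ is automatically a tracial state, and since $[P,Q]=0$ by (R5) the image $\pi(PQ)$ is an orthogonal projection; hence it suffices to arrange $\pi(P)\pi(Q)\neq 0$, which then gives $\tau(PQ)=\tfrac1N\dim\operatorname{ran}\pi(PQ)>0$.

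The representation should combine two interlocking pieces. The first is a representation of $H_\mcS$ in which $J$ acts as $-\Id$ and the generators $X,Z,S,T$ (hence all the words $X_{m,i},Z_{m,i}$) are realized so that, for the fixed $m$, relations (G2)--(G4) make the block indexed by $m$ into a cycle of length $h+1$: the order-two unitaries $X_{m,i}$ and $Z_{m,i}$ anticommute (their product squares to $J=-\Id$) for $i\not\equiv -1$ and commute for $i\equiv -1\pmod{h+1}$, with finite-order operators implementing the shifts $\phi_S,\phi_T,\phi_W$. It is precisely the hypothesis $h(m)<\infty$ that makes relation (G4) available and this block finite, hence finite-dimensionally realizable. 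The second piece is a ``doubling gadget'': a unitary $U=U_1U_2$ of finite order $h+1$ together with a projection $P$ whose range lies inside the range of $Q$. One then chooses $\wtd X,\wtd Z$, and sets $O_P=\Id-2P$, $O_Q=\Id-2Q$, so that (R2)--(R5) hold---in particular $\wtd XQ=X_{m,0}Q$, $\wtd ZQ=Z_{m,0}Q$ from (R3), and $U\wtd XU^*=S\wtd XS^*$, $U\wtd ZU^*=T\wtd ZT^*$ from (R4)---and so that the operator $\Id+J\wtd X\wtd Z\wtd X\wtd Z$ vanishes off the range of $Q$ (for instance by making $\wtd X,\wtd Z$ commuting involutions there, compatibly with (R4)), which makes (R6) automatic off the range of $Q$. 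On the range of $Q$ this operator equals $2\Id$, since there $\wtd X=X_{m,0}$, $\wtd Z=Z_{m,0}$, $\wtd X\wtd Z\wtd X\wtd Z=J$ and $J^2=\Id$; so on the range of $Q$ relation (R6) becomes the genuine requirement $P+\wtd XP\wtd X=UPU^*$. Propagating this requirement around the $U$-orbit, the $h$ steps with $i\not\equiv -1$ force a doubling, while the single step with $i\equiv -1\pmod{h+1}$ imposes no constraint---because there $X_{m,h}$ and $Z_{m,h}$ commute, so the factor $\Id+J\,U^h\wtd X\wtd Z\wtd X\wtd Z\,U^{-h}$ restricted to the range of $Q$ equals $\Id+J=0$---and this vacuous step is exactly what lets the orbit close up consistently once $U^{h+1}=\Id$. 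Producing $U$, $\wtd X$, $\wtd Z$, and a nonzero projection $P$ inside the range of $Q$ realizing all of this is the heart of the construction.

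Granting the construction, the verification of (R0)--(R6) for $\pi$ is routine, so $\pi$ is a $*$-representation of $\mcA_\mcS(m)$, $\tau=\tfrac1N\operatorname{Tr}\circ\pi$ is a tracial state, and $\pi(PQ)$ is the projection onto the nonzero subspace $\operatorname{ran}P\subseteq\operatorname{ran}Q$, so $\tau(PQ)>0$. I expect the main obstacle to be the construction itself---getting all of (R2)--(R6) to hold simultaneously. The key tension is that $U$ cyclically permutes the reflections $U^n\wtd XU^{-n}$ with period $h+1$, whereas the commutators of these reflections with the corresponding $U^n\wtd ZU^{-n}$ must alternate between $J=-\Id$ and $\Id$ as $n$ runs modulo $h+1$; this is reconcilable only because the range of $Q$ is not $U$-invariant (so $U$ genuinely shuttles vectors between it and its complement, where the commutation pattern differs) and because $h<\infty$, so the orbit is finite and the ``wrap-around'' step of (R6) is vacuous. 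A secondary technical point is realizing the $H_\mcS$-layer concretely, as $H_\mcS$ is a complicated finitely presented group; but only the images of the finitely many elements $J,X,Z,S,T$ need to be prescribed, these satisfy only $G_\mcS$-relations, and the block-$m$ structure of $G_\mcS$ is finite precisely because $h(m)<\infty$, so a suitable finite-dimensional representation is available.
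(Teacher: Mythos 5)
Your high-level plan matches the paper's: use $h(m)<\infty$ to build an explicit representation of $\mcA_\mcS(m)$ whose restriction to the ``$m$-block'' realizes the shift-and-doubling structure of (R6), then pull back a trace and note $\tau(PQ)>0$. Your observations about (R6) -- that on the range of $Q$ the second factor $\Id+J\wtd X\wtd Z\wtd X\wtd Z$ becomes $2\Id$ so (R6) reduces to $P+\wtd X P\wtd X = UPU^*$, and that the wrap-around step $i\equiv -1\pmod{h(m)+1}$ is vacuous because $X_{m,h(m)}$ and $Z_{m,h(m)}$ commute -- are both correct and capture the key mechanism.

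However there is a genuine gap in the choice of representation. You insist on a \emph{finite-dimensional} $*$-representation $\pi\colon\mcA_\mcS(m)\to M_N(\C)$ with $\tau=\tfrac1N\operatorname{Tr}\circ\pi$, and claim at the end that ``only the images of $J,X,Z,S,T$ need to be prescribed'' and ``satisfy only $G_\mcS$-relations.'' That is not right. Relation (R1) imposes $r=1$ for every $r\in\mcR_H$, so $\pi$ must genuinely be a representation of $H_\mcS$, which means prescribing images for the entire (much larger) generating set $\mcX_H$, not just $J,X,Z,S,T$. And $H_\mcS$ is the output of \Cref{thm:BORS}: a finitely presented group with a polynomial isoperimetric function, constructed by a non-explicit embedding theorem, about which we have no residual-finiteness guarantee. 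Even if one restricted attention to $G_\mcS$, that group is already infinite -- it is built from amalgamated free products and iterated HNN extensions -- and the finiteness of the single subgroup $K_\mcS(m)$ (which has order $2\cdot 4^{h(m)+1}$ when $h(m)<\infty$) does not produce a finite-dimensional representation of $G_\mcS$, let alone of $H_\mcS$, that is faithful on $\langle J, X_{m,i}, Z_{m,i}\rangle$ with $J\mapsto -\Id$. The paper sidesteps this completely by taking $\pi$ to be the GNS representation of the \emph{canonical trace} $\tau_0$ on $\C H_\mcS$ (the left regular representation, hence infinite-dimensional), tensoring it with $\ell^2(\Z_{h(m)+1})$ and a $2\times 2$ block, and taking $\tau$ to be $\tau_0\otimes\tau'$ where $\tau'$ is the normalized trace on $M_{2(h(m)+1)}(\C)$; the computation $\tau(PQ)=\tfrac{1}{2^{h(m)+1}(h(m)+1)}>0$ then follows by expanding $P_0$ and using that $\tau_0(g)=\delta_{g,1}$. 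Your normalized-matrix-trace formula does not apply to this construction, and the finite-dimensionality you rely on is exactly what is not available.

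Finally, even setting aside dimension, the proposal explicitly defers ``the heart of the construction'' -- the simultaneous realization of (R2)--(R6), including producing $\wtd X,\wtd Z,P,Q$ and a unitary $U$ of order $h(m)+1$ with all the interlocking commutation properties -- so as written this is an outline of a strategy rather than a proof. The missing step is precisely the explicit block formulas for $\wtd\pi(\wtd X),\wtd\pi(\wtd Z),\wtd\pi(P),\wtd\pi(Q),\wtd\pi(U_1),\wtd\pi(U_2)$ and the verification of (R0)--(R6), which the paper carries out in detail.
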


\begin{proof}
Suppose $m\in\mcL$ and let $n:=h(m)<\infty$, so the Turing machine $M$ halts
at the $n$-th step on the input $m$. Let $\tau_0$ be the canonical trace on $\C
H_{\mcL}$, and let $(\pi:\C H_\mcL \arr \mcB(\mcH_0),v)$ be the GNS
representation of $\tau_0$. Let $\mcH_1$ be the Hilbert space
$\ell^2(\Z_{n+1})$ with the canonical basis $\{ e_i :i\in\Z_{n+1} \}$, let
$E_{i,i}$ be the rank-one projection onto $e_i$ for every $i\in\Z_{n+1}$, and
let $\zeta:e_i \mapsto e_{i+1}$ be the left cyclic shift operator on
$\ell^2(\Z_{n+1})$.  Let $\widetilde{\mcH}=\mcH_0\otimes\mcH_1$, and
define a $*$-representation
$\widetilde{\pi}:\C^*\ang{\mcX_H\cup\mcX_0}\arr\mcB(\widetilde{\mcH}\oplus\widetilde{\mcH})$
by 
\begin{align*}
    & \widetilde{\pi}(x)=
    \begin{pmatrix}
    \pi(x)\otimes Id_{\mcH_1} & \quad\\
    \quad & \pi(x)\otimes Id_{\mcH_1}
    \end{pmatrix} 
    \text{ for all } x\in\mcX_H,\\   
   & \widetilde{\pi}(U_1)=
    \begin{pmatrix}
    \quad & Id_{\mcH_0}\otimes \zeta\\
    Id_{\mcH_0}\otimes \zeta^{-1}
    \end{pmatrix}, \quad
    \widetilde{\pi}(U_2)=
    \begin{pmatrix}
    \quad & Id_{\widetilde{\mcH}}\\
    Id_{\widetilde{\mcH}} 
    \end{pmatrix},\\
    &\widetilde{\pi}(\widetilde{X})=
    \begin{pmatrix}
    \sum\limits_{i=0}^n \pi(X_{mi})\otimes E_{-i,-i} & \quad\\
    \quad & \sum\limits_{i=0}^n \pi(X_{mi})\otimes E_{i,i} 
    \end{pmatrix},\\
    &\widetilde{\pi}(\widetilde{Z})=
    \begin{pmatrix}
    \sum\limits_{i=0}^n \pi(Z_{mi})\otimes E_{-i,-i} &\quad\\
    \quad & \sum\limits_{i=0}^n \pi(Z_{mi})\otimes E_{i,i}
    \end{pmatrix},\\ 
    &\widetilde{\pi}(Q)=
    \begin{pmatrix}
    Id_{\mcH_0}\otimes E_{0,0} & \quad\\
    \quad & Id_{\mcH_0}\otimes E_{0,0}  
    \end{pmatrix}, \text{ and }\\
    &\widetilde{\pi}(P)=
    \begin{pmatrix}
    \sum\limits_{i=0}^n \pi(P_i)\otimes E_{-i,-i} & \quad\\
    \quad & \sum\limits_{i=0}^n \pi(P_i)\otimes E_{i,i}
    \end{pmatrix},
\end{align*}
   where in the last equation $P_i:=(\frac{\Id-J}{2})(\frac{\Id-Z_{m,i}}{2})(\frac{\Id-Z_{m,i+1}}{2})\cdots(\frac{\Id-Z_{m,n-1}}{2})$ for all $0\leq i\leq n-1$, and $P_n:=\frac{\Id-J}{2}$. Note that 
   \begin{align*}
      & \widetilde{\pi}(U)=\widetilde{\pi}(U_1)\widetilde{\pi}(U_2) = 
    \begin{pmatrix}
     Id_{\mcH_0}\otimes \zeta & \quad\\
   \quad & Id_{\mcH_0}\otimes \zeta^{-1}
    \end{pmatrix}.
\end{align*}
   
It is not hard to check that $\widetilde{\pi}(r)=0$ for all relations $r$ in
(R0)-(R6).  Indeed, it is clear that $\widetilde{\pi}(x)$ is a unitary squaring
to $\Id$ for $x\in\mcX_H\cup\{U_1,U_2,\widetilde{X},\widetilde{Z}\}$, and
$\widetilde{\pi}(Q)$ is a projection. Since $Z_{m,0},\cdots,Z_{m,n-1}$ and $J$
commute in $\C H_\mcL$, $\widetilde{\pi}(P)$ is a projection as well. So
$\widetilde{\pi}(r)=0$ for all $r$ in (R0). We also have $\widetilde{\pi}(r)=0$
for $r$ in (R1), (R2), and (R5) by
inspection. For (R3), 
\begin{equation*}
    \widetilde{\pi}(\widetilde{X})\widetilde{\pi}(Q) = \widetilde{\pi}(Q)\widetilde{\pi}(\widetilde{X}) =  
    \widetilde{\pi}(X_{m,0}) \widetilde{\pi}(Q) = 
    \begin{pmatrix} \pi(X_{m,0})\otimes E_{0,0} & \quad\\
        \quad & \pi(X_{m,0})\otimes E_{0,0} 
    \end{pmatrix},
\end{equation*}
and similarly $  \widetilde{\pi}(\widetilde{Z})\widetilde{\pi}(Q) =  \widetilde{\pi}(Q)\widetilde{\pi}(\widetilde{Z}) = \widetilde{\pi}(Z_{m,0}) \widetilde{\pi}(Q)$. For (R4), recall from the presentation of $G_{\mcL}$ that $\pi(X_{m,0}) =\pi(X_{m,n+1}) $ and $ \pi(S)\pi(X_{m,i}) \pi(S^*) = \pi(X_{m,i+1})$ for all $i \in \Z$. Since $\zeta E_{i,i}\zeta^{-1}=E_{i+1,i+1}$ for all $i\in\Z_{n+1}$,
\begin{align*}
    \widetilde{\pi}(S) \widetilde{\pi}(\widetilde{X}) \widetilde{\pi}(S^*) &= \begin{pmatrix}
       \sum\limits^{n}_{i=0} \pi(X_{m,i+1})\otimes E_{-i,-i} & \quad\\
    \quad & \sum\limits^{n}_{i=0} \pi(X_{m,i+1})\otimes E_{i,i}
    \end{pmatrix}\\ &= \begin{pmatrix}
       \sum\limits^{n}_{i=0} \pi(X_{m,i})\otimes E_{-(i-1),-(i-1)} & \quad\\
    \quad & \sum\limits^{n}_{i=0} \pi(X_{m,i})\otimes E_{i-1,i-1}
    \end{pmatrix}\\ &= \widetilde{\pi}(U) \widetilde{\pi}(\widetilde{X}) \widetilde{\pi}(U^*).
\end{align*}
Similarly $\widetilde{\pi}(T) \widetilde{\pi}(\widetilde{Z})
\widetilde{\pi}(T^*) = \widetilde{\pi}(U) \widetilde{\pi}(\widetilde{Z})
\widetilde{\pi}(U^*)$. This just leaves (R6). For this case,
\begin{align*}
 &\wtd{\pi}\big( (P+\widetilde{X}P\widetilde{X}-UPU^*)(\Id+J\widetilde{X}\widetilde{Z}\widetilde{X}\widetilde{Z})\big)=\begin{pmatrix}
    A_1 & \quad\\
    \quad & A_2
\end{pmatrix} \text{ where}\\   &A_1=\sum_{i=0}^n\pi\big((P_i+X_{mi}P_{i}X_{mi}-P_{i+1})(1+JX_{mi}Z_{mi}X_{mi}Z_{mi})\big)\otimes E_{-i,-i},\\
&A_2=\sum_{i=0}^n\pi\big((P_i+X_{mi}P_{i}X_{mi}-P_{i+1})(1+JX_{mi}Z_{mi}X_{mi}Z_{mi})\big)\otimes E_{i,i}, 
\end{align*}
and $P_{n+1}:=P_0$.  Recall from the presentation of $G_\mcL$ that $J$ is a
unitary of order 2 and is central. Hence in $\C H_\mcL$, $\tfrac{1- J}{2}$ and
$\tfrac{1+J}{2}$ are projections, and $(\tfrac{1+J}{2})P_i=0$ for $0\leq i\leq
n$. Also for all $0\leq i\leq n-1$, $X_{mi}Z_{mi}X_{mi}Z_{mi}=J$, $X_{mi}$ commutes
with $P_{i+1}$, and $P_i=(\tfrac{1-Z_{mi}}{2})P_{i+1}$. Hence
$1+JX_{mi}Z_{mi}X_{mi}Z_{mi}=2$, and
\begin{align*}
P_i+X_{mi}P_{i}X_{mi}-P_{i+1}
=(\tfrac{1-Z_{mi}}{2})P_{i+1}+(\tfrac{1-JZ_{mi}}{2})P_{i+1}-P_{i+1}=-Z_{mi}(\tfrac{1+J}{2})P_{i+1}=0.  
\end{align*}
For $i=n$, $X_{mn}Z_{mn}X_{mn}Z_{mn}=1$, so 
\begin{align*}
    (P_n+X_{mn}P_nX_{mn}-P_0)(1+ JX_{mn}Z_{mn}X_{mn}Z_{mn})=(2-P_0)\tfrac{(1-J)(1+J)}{2}=0.
\end{align*}
 It follows that $\pi\big((P_i+X_{mi}P_{i}X_{mi}-P_{i+1})(1+JX_{mi}Z_{mi}X_{mi}Z_{mi})\big)=0$ for all $0\leq i\leq n$, and hence $A_1=A_2=0$. We conclude that $\wtd{\pi}(r)=0$ for all  $r$ in (R0)-(R6). Hence $\wtd{\pi}$ induces a representation of $\mcA_{\mcL}(m)$ on $\wtd{\mcH}\oplus\wtd{\mcH}$.

The image of $\wtd{\pi}$ is contained in $M_2\big(\pi(\C H_\mcL)\otimes
\mcB(\mcH_1) \big)$ This algebra is isomorphic to $ \pi(\C
H_\mcL) \otimes M_{2(n+1)}(\C)$, which has a tracial state
$\tau:=\tau_0\otimes \tau'$ where $\tau'$ is the unique tracial state on
$M_{2(n+1)}(\C)$. Finally, 
\begin{align*}
     \tau\big(\wtd{\pi}(PQ)\big)&=\tau\begin{pmatrix}
         \pi(P_0)\otimes E_{0,0} & \quad\\
         \quad & \pi(P_0)\otimes E_{0,0}
     \end{pmatrix}
     =\tfrac{1}{(n+1)}\tau_0(P_0)=\tfrac{1}{2^{n+1}(n+1) } >0,
 \end{align*}
 so the lemma follows.
\end{proof}

We now look at what happens when $m \not\in \mcL$. For all $n \geq 0$, let 
\begin{equation*}
    \widetilde{P}_n:=QU^nPU^{-n}Q \text{ and } \widetilde{X}_n:=U^n\widetilde{X}U^{-n}
\end{equation*}
in $\C^*\ang{\mcX_H \cup \mcX_0}$. Note that $\wtd{X}_n^2 = 1$ in $\C \Z_2^{*\mcX_H \cup \mcX_0}$. 

\begin{proposition}\label{prop:sizeofRdecomp}
    There are constants $C$ and $k$ such that for any $m \in \mbN$ and $0 \leq
    n < h(m)$, $ \wtd{P}_n + \wtd{X}_n \wtd{P}_n \wtd{X}_n-\wtd{P}_{n+1}$
    is trivial in $\mcA_{\mcL}(m) = \C \Z_2^{* \mcX_H \cup \mcX_0} /
    \ang{\mcR_m}$, and has an $\mcR_m$-decomposition in $\C \Z_2^{*\mcX_H \cup
    \mcX_0}$ of size $\leq C \big((n+1)m\big)^k$. 
\end{proposition}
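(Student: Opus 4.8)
The plan is to prove the identity $\wtd{P}_n + \wtd{X}_n \wtd{P}_n \wtd{X}_n - \wtd{P}_{n+1} = 0$ in $\mcA_{\mcS}(m)$ in a way that builds an explicit $\mcR_m$-decomposition step by step, and then to bound its size by composing sizes with \Cref{lemma:sizecalc} and \Cref{rmk:mcr}. Write $\rho := P + \wtd{X} P \wtd{X} - UPU^*$ for the left-hand factor of relation (R6), so that $r_6 := \rho\,(\Id + J\wtd{X}\wtd{Z}\wtd{X}\wtd{Z})$ lies in $\mcR_m$, and set $B_n := U^n \rho\, U^{-n}$ and $\wtd{Z}_n := U^n \wtd{Z} U^{-n}$ in $\C\Z_2^{*\mcX_H\cup\mcX_0}$; since $U=U_1U_2$ is invertible there with $U^{-1}=U_2U_1=U^*$, all of these make sense.

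The argument rests on two ``workhorse'' facts, which I would establish first. From relation (R4) and the commutation relation $[U,S]=0$ of (R2), an induction on $n$ gives $\wtd{X}_n = S^n\wtd{X}S^{-n}$ in $\mcA_{\mcS}(m)$ (and likewise $\wtd{Z}_n = T^n\wtd{Z}T^{-n}$, using $[U,T]=0$); since $Q$ commutes with $S,\wtd{X}$ (resp.\ $T,\wtd{Z}$) by (R2)--(R3), it follows that $Q$ commutes with $\wtd{X}_n$ and $\wtd{Z}_n$, and that $\wtd{X}_n Q = X_{m,n} Q$, $\wtd{Z}_n Q = Z_{m,n} Q$. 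Each of these identities carries an $\mcR_m$-decomposition obtained by telescoping: moving one $U$ past $S^n$ costs $O(n)$ applications of $[U,S]$ with monomial multipliers of degree $O(n)$, so $\wtd{X}_n = S^n\wtd{X}S^{-n}$ has a decomposition of size $O(n^3)$, and absorbing the relation $\wtd{X}Q = X_{m,0}Q$ of (R3) (whose multiplier $X_{m,0}$ is a monomial of degree $O(m)$) keeps the size polynomial in $n$ and $m$.

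With these in hand the computation has three moves. \emph{(i)} Using $[\wtd{X}_n,Q]=0$ and $U^{-n}U^n=\Id$ in $\C\Z_2^{*\mcX_H\cup\mcX_0}$, rewrite $\wtd{X}_n\wtd{P}_n\wtd{X}_n = QU^n(\wtd{X}P\wtd{X})U^{-n}Q$; combined with $\wtd{P}_{n+1} = QU^n(UPU^*)U^{-n}Q$ and $\wtd{P}_n = QU^nPU^{-n}Q$ this gives $\wtd{P}_n + \wtd{X}_n\wtd{P}_n\wtd{X}_n - \wtd{P}_{n+1} = QB_nQ$. \emph{(ii)} Conjugating relation (R6) by $U^n$ and using $[U,J]=0$ to replace $U^nJU^{-n}$ by $J$ yields $B_n(\Id + J\wtd{X}_n\wtd{Z}_n\wtd{X}_n\wtd{Z}_n) = 0$ in $\mcA_{\mcS}(m)$, and multiplying on the right by $Q$ gives $B_n Q = -\,B_n J\,\wtd{X}_n\wtd{Z}_n\wtd{X}_n\wtd{Z}_n\, Q$. \emph{(iii)} Push $Q$ to the left past all four factors of $\wtd{X}_n\wtd{Z}_n\wtd{X}_n\wtd{Z}_n$ (legal as $Q$ commutes with $\wtd{X}_n,\wtd{Z}_n$) and then back to the right, converting the word into $X_{m,n}Z_{m,n}X_{m,n}Z_{m,n}$ via $Q\wtd{X}_n = X_{m,n}Q$ and $Q\wtd{Z}_n = Z_{m,n}Q$; since $0\le n<h(m)$ we have $n+1\not\equiv 0 \bmod h(m)+1$, so the defining relation of $G_\mcS$ asserting $X_{m,n}Z_{m,n}X_{m,n}Z_{m,n}=J$ applies, whence $\wtd{X}_n\wtd{Z}_n\wtd{X}_n\wtd{Z}_n Q = JQ$. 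Feeding (iii) into (ii) and using $J^2=1$ gives $B_nQ = -B_n J(JQ) = -B_nQ$, so $2QB_nQ\in\langle\mcR_m\rangle$; working over $\C$, $QB_nQ\in\langle\mcR_m\rangle$, and by (i) the element $\wtd{P}_n + \wtd{X}_n\wtd{P}_n\wtd{X}_n - \wtd{P}_{n+1}$ is trivial in $\mcA_{\mcS}(m)$.

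Carrying out (i)--(iii) with explicit $\mcR_m$-decompositions in place of equalities, every ingredient contributes a size polynomial in $n$ and $m$: the workhorse identities are polynomial by the telescoping estimate above; move (ii) contributes the one-term decomposition $U^n r_6 U^{-n}$ plus a telescoping error from $U^nJU^{-n}-J$ built from $[U,J]$, all of degree $O(n)$; and move (iii) needs, besides the workhorse identities, an $\mcR_m$-decomposition of $X_{m,n}Z_{m,n}X_{m,n}Z_{m,n}-J$, which \Cref{prop:HGroup}(c) supplies of size polynomial in the word-length $O(n+m)$ of the relevant relator of $G_\mcS$. Every relation appearing has $\norm{r}_{\mcA}\le 6$ by \Cref{rmk:mcr}(1), and multiplying the pieces through $B_n$ (with $\norm{B_n}_1\le 3$) and through $Q$ leaves the total size polynomial in $(n+1)$ and $m$, hence $\le C\big((n+1)m\big)^k$ for suitable constants $C,k$. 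I expect the main obstacle to be exactly this last bookkeeping: the algebraic identity itself is short, but keeping track of the degrees of the many monomial multipliers and of the number of telescoped terms, and confirming that none of these blow up superpolynomially, is where the work lies.
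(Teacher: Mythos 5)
Your proof is correct and follows essentially the same route as the paper. Both arguments take the single instance $U^n r_6 U^{-n}Q$ of relation (R6) conjugated by $U^n$, push $Q$ through using (R2)--(R3), use (R4) together with $[U,S]=[U,T]=0$ to replace $U^n\wtd{X}U^{-n}$, $U^n\wtd{Z}U^{-n}$ by $S^n\wtd{X}S^{-n}$, $T^n\wtd{Z}T^{-n}$ and then by $X_{m,n}$, $Z_{m,n}$ under $Q$, and finally invoke \Cref{prop:HGroup}(c) to decompose the group relation $X_{m,n}Z_{m,n}X_{m,n}Z_{m,n}=J$ (valid because $0\leq n<h(m)$) with size polynomial in $m+n$. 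Your packaging via $B_nQ=-B_nQ$ and the paper's via $fg=2fQ$ are the same computation, and your size estimates match the paper's $O(n^3+nm+d_\mcS(m+n)^2)$ bound; the only thing left implicit in your sketch is the careful use of \Cref{lemma:sizecalc}(b),(c) at each step, which the paper spells out but which your outline clearly supports.
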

Although we won't need this, the constants $C$ and $k$ can be computed from a
description of the Turing machine $M_{\mcL}$ that recognizes $\mcL$.
\begin{proof}
For the proof, say that $\alpha(n)=O(\beta(n))$ for functions $\alpha,\beta$ if there is a constant $C>0$ such that $\alpha(n)\leq C\beta(n)+C$ for all $n\geq 0$.
    Recall from \Cref{rmk:mcr} that $\norm{r}_{\mcA} \leq 6$ for all $r \in
    \mcR_m$ when $\mcA = \C \Z_2^{*\mcX_H \cup \mcX_0}$. Let
    \begin{equation*}
        f =  QU^n(P + \wtd X P \wtd X - U P U^*)U^{-n}\text{ and }g = U^n(1 + J \wtd X \wtd Z \wtd X \wtd Z)U^{-n} Q.
    \end{equation*}
    By relation (R6), $fg$ is trivial in $\mcA_{\mcL}(m)$ and has an $\mcR_m$-decomposition in $\mcA$
    of size $6n+4$. To prove the proposition, we show that 
    \begin{equation*}
        fg = 2 \left(\wtd P_n + \wtd X_n \wtd P_n \wtd X_n - \wtd P_{n+1}\right)
    \end{equation*}
    in $\mcA_{\mcL}(m)$, while keeping track of the size using
    \Cref{lemma:sizecalc}. For the first step, since $U$ and $S$ commute and $U
    \wtd X U^* = S \wtd X S^*$, we see that $\wtd X_n = S^n \wtd X S^{-n}$ in
    $\mcA_{\mcL}(m)$. Similarly $U^n \wtd Z U^{-n} = T^n \wtd Z T^{-n}$ and $U$
    commutes with $J$, so
    \begin{equation*}
        g = (1 + J S^n \wtd X S^{-n} \cdot T^n \wtd Z T^{-n} \cdot S^n \wtd X S^{-n} \cdot T^n \wtd Z T^{-n}) Q.
    \end{equation*}
    Replacing $U^n \wtd X U^{-n}$ and $U^{n} \wtd Z U^{-n}$ with $S^n \wtd X S^{-n}$
    and $T^n \wtd Z T^{-n}$ requires $O(n^2)$ applications of the defining relations,
    and the intermediate polynomials in this calculation have degree $O(n)$, so 
    by Lemma \ref{lemma:sizecalc}, 
    \begin{equation*}
        g - (1 + J S^n \wtd X S^{-n} \cdot T^n \wtd Z T^{-n} \cdot S^n \wtd X S^{-n} \cdot T^n \wtd Z T^{-n}) Q
    \end{equation*}
    has an $\mcR_m$-decomposition of size $O(n^3)$. 

    Next, $\wtd X_n Q = Q \wtd X_n = S^n \wtd X Q S^{-n} = Q X_{mn}$ and
    $U^n \wtd Z U^{-n} Q = Q Z_{mn}$ for all $n \geq 0$ by relations (R2) and (R3), 
    so
    \begin{equation*}
        (1 + J S^n \wtd X S^{-n} \cdot T^n \wtd Z T^{-n} \cdot S^n \wtd X S^{-n} \cdot T^n \wtd Z T^{-n}) Q
        = Q (1 + J X_{mn} Z_{mn} X_{mn} Z_{mn}).
    \end{equation*}
    This identity requires using the relations $O(n)$ times, and $X_{mn}$ and $Z_{mn}$ have degree $O(m+n)$, 
    so by \Cref{lemma:sizecalc} again,
    \begin{equation*}
        (1 + J S^n \wtd X S^{-n} \cdot T^n \wtd Z T^{-n} \cdot S^n \wtd X S^{-n} \cdot T^n \wtd Z T^{-n}) Q
        - Q (1 + J X_{mn} Z_{mn} X_{mn} Z_{mn})
    \end{equation*}
    has an $\mcR_m$-decomposition of size $O(n^2 + nm)$. 
    
    Now suppose that $0 \leq n < h(m)$. From the presentation of $G_{\mcL}$, we see
    that $X_{mn} Z_{mn} X_{mn} Z_{mn} = J$, so $Q (1 + J X_{mn} Z_{mn} X_{mn}
    Z_{mn}) = Q (1 + J^2) = 2 Q$. The word $J X_{mn} Z_{mn} X_{mn} Z_{mn}$ has
    length $16m + 16n + 5$, so by \Cref{prop:HGroup}, part (c) there are
    $r_1,\ldots,r_k \in \mcR_H \cup \mcR_H^{-1}$ and $z_1,\ldots,z_k \in
    \F_{\mcX_H}$ with $J X_{mn} Z_{mn} X_{mn} Z_{mn} = z_1 r_1 z_1^{-1} \cdots
    z_{k} r_k z_k^{-1}$ in $\F_{\mcX_H}$, $k \leq d_{\mcL}(16m + 16n + 5)$, and
    $|z_i|\leq d_{\mcL}(16m+16n+5)$ for all $1 \leq i \leq k$. Replacing $r_i$
    with $1$ from right to left, we see that $Q (1 + J X_{mn} Z_{mn} X_{mn}
    Z_{mn}) - 2Q$ has an $\mcR_m$-decomposition of size $ O(d_\mcL(16m+16n+5)^2)
    = O(d_\mcL(m)^2 + d_\mcL(n)^2)$. From \Cref{lemma:sizecalc}, part (c), we get
    that $fg - 2fQ$ has an $\mcR_m$-decomposition of size
    $O(n^3 + n^2 + nm + d_{\mcL}(m)^2 + d_\mcL(n)^2)$. 

    We now look at $fQ$. Since $Q$ commutes with $\wtd X_n$, 
    \begin{align*}
        fQ & = Q U^n P U^{-n} Q + Q U^n \wtd X P \wtd X U^{-n} Q - Q U^{n+1} P U^{-(n+1)} Q \\
          & = \wtd P_{n} + Q \wtd X_{n} \cdot U^n P U^{-n} \cdot \wtd X_n  Q - \wtd P_{n+1} \\
          & = \wtd P_{n} + \wtd X_{n} \cdot \wtd P_{n} \cdot \wtd X_n  - \wtd P_{n+1}.
    \end{align*}
    This identity requires replacing $\wtd X_n$ with $S^n \wtd X S^{-n}$ again, so
    \begin{equation*}
        fQ - (\wtd P_{n} + \wtd X_{n} \cdot \wtd P_{n} \cdot \wtd X_n  - \wtd P_{n+1})
    \end{equation*}
    has an $\mcR_m$-decomposition of size $ O(n^3)$. Ultimately we conclude that
    $\wtd P_{n} + \wtd X_{n} \cdot \wtd P_{n} \cdot \wtd X_n  - \wtd P_{n+1}$ has an
    $\mcR_m$-decomposition of size $ O(n^3 + n^2 + nm + d_{\mcL}(m)^2 + d_\mcL(n)^2)$.
\end{proof}

\begin{proof}[Proof of \Cref{thm:main}]

    Let $\mcX = \mcX_H \cup \mcX_0$, and let $\wtd{P}_n$, $\wtd{X}_n$, and
    $\mcR_m$ be as above. The relations in (R1) are independent of $m$, and the
    remaining relations are spelled out explicitly in (R2)-(R6), so we see that
    the mapping $m \mapsto \mcR_m$ is computable. Parts (a) and (d) of the theorem hold
    by construction, and part (b) follows from \Cref{prop:sizeofRdecomp}.
    Finally, since $[P,Q]=0$ in $\mcA_{\mcL}(m)$, \Cref{lemma:globaltrace} implies
    that there is a tracial state $\tau$ on $\mcA_{\mcL}(m)$ with $\tau(\wtd{P}_0^2) 
    = \tau(PQ) > 0$.
\end{proof}

\section{Proof of Theorem \ref{thm:maintracial}}

In this section, we prove the following theorem.
\begin{theorem}\label{thm:actualmaintracial}
    There is an integer $n_0$, such that for all $N \geq n_0$, the promise problem
    \begin{quote}
        Given $\alpha \in \Q^*\ang{x_1,\ldots,x_N}$, determine if $\alpha$ is
        trace-positive in $\C^*\ang{x_1,\ldots,x_N}$, or if $\pi(\alpha)$ is
        not trace-positive in $\C \Z_2^{*N}$, promised that one of the two is 
        the case. 
    \end{quote}
    is coRE-hard, where $\pi : \Q^*\ang{x_1,\ldots,x_N} \to \Q \Z_2^{*N}$ is the
    homomorphism sending $x_i$ to the generator of the $i$th factor of $\Z_2^{*N}$.
\end{theorem}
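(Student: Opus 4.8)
The plan is to reduce from the complement of the halting problem. Fix an RE-complete set $\mcS \subseteq \N$ together with a computable map sending a pair $(M,x)$ (a Turing machine and an input) to an integer $m(M,x)$ with $m(M,x) \in \mcS$ if and only if $M$ halts on $x$, and apply \Cref{thm:main} to $\mcS$. This produces a finite generating set $\mcX$; put $n_0 := \abs{\mcX}$ and identify $\Q^*\ang{\mcX}$ with $\Q^*\ang{x_1,\ldots,x_{n_0}}$, observing that for $N > n_0$ one may pad with unused generators, which affects nothing since a tracial state on the larger free $*$-algebra restricts to the smaller one and conversely extends along the retraction $x_i\mapsto x_i$ ($i\le n_0$), $x_i\mapsto 1$ ($i>n_0$). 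It also produces sequences $\{\wtd P_n\}$, $\{\wtd X_n\}$ in $\Q^*\ang{\mcX}$, a computable map $m\mapsto\mcR_m\subseteq\Q\Z_2^{*\mcX}$, and constants $C,k,D$. For each $r\in\mcR_m$ fix the evident lift $\hat r\in\Q^*\ang{\mcX}$. The element $\alpha_m$ witnessing the reduction will be a combination, of degree bounded independently of $m$ and with coefficients polynomial in $m$, of the hermitian squares $\hat r^*\hat r$ ($r\in\mcR_m$), the hermitian squares of the auxiliary relations $\mcW(\wtd P_0)$, $\mcW(\hat r)$, and $\mcW(x)$ ($x\in\mcX$) from \Cref{lemma:tracialfree} (all of which lie in $\ker(\C^*\ang{\mcX}\to\C\Z_2^{*\mcX})$), minus the target $\wtd P_0^*\wtd P_0$; the reduction $(M,x)\mapsto\alpha_{m(M,x)}$ is computable since $m\mapsto\mcR_m$ and $\mcW$ are.

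For the \emph{halting} case, suppose $m:=m(M,x)\in\mcS$. By \Cref{thm:main}(c) there is a tracial state $\tau$ on $\mcA_\mcS(m)=\C^*\ang{\C\Z_2^{*\mcX}:\mcR_m}$ with $\tau(\wtd P_0^2)>0$. Composing the quotient map $\C\Z_2^{*\mcX}\twoheadrightarrow\mcA_\mcS(m)$ with $\tau$ gives a tracial state $\sigma$ on $\C\Z_2^{*\mcX}$ under which every relation-square maps to $0$ — the relations $\mcR_m$ vanish in $\mcA_\mcS(m)$, and the $\mcW$-relations already vanish in $\C\Z_2^{*\mcX}$ by \Cref{lemma:tracialfree}(a) — while the image of $\wtd P_0^*\wtd P_0$ still has trace $\tau(\wtd P_0^2)>0$. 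Hence $\sigma(\pi(\alpha_m))<0$, so $\pi(\alpha_m)$ is not trace-positive in $\C\Z_2^{*N}$, the second alternative of the promise problem. (In particular $\sigma\circ\pi$ shows $\alpha_m$ is then not trace-positive in the free algebra either.)

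For the \emph{non-halting} case, suppose $m\notin\mcS$; we must show $\alpha_m$ is trace-positive in $\C^*\ang{\mcX}$. Let $\tau$ be any tracial state and set $\eps:=\tau(\sum_s s^*s)$, the sum running over all relations $s$ in $\alpha_m$, so $\tau$ is a tracial $(\eps,\cdot)$-state with respect to each. First pass to the tracial state $\widehat\tau$ on $\C\Z_2^{*\mcX}$ from \Cref{lemma:tracialfree}: since $\tau(\hat r^*\hat r)\le\eps$, $\tau$ is small on each $\mcW(\hat r)$, and $\norm{\hat r}_{1,1}$ is polynomial in $m$ (\Cref{rmk:mcr}), $\widehat\tau$ is a tracial $(\eps',\mcR_m)$-state with $\eps'\le(\mathrm{poly}(m))^2\eps$, and $\norm{\wtd P_0}_{\widehat\tau}$ agrees with $\norm{\wtd P_0}_\tau$ up to $O(\sqrt\eps)$. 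Because $m\notin\mcS$, \Cref{thm:main}(b) gives, for \emph{every} $n$, an $\mcR_m$-decomposition of $f_n:=\wtd P_n+\wtd X_n\wtd P_n\wtd X_n-\wtd P_{n+1}$ of size $\le C((n+1)m)^k$, so \Cref{prop:tracerbound} yields $\norm{f_n}_{\widehat\tau}\le C((n+1)m)^k\sqrt{\eps'}$. As $\widehat\tau$ is tracial and $\wtd X_n$ is a unitary involution in $\C\Z_2^{*\mcX}$, $\widehat\tau(f_n)=2\widehat\tau(\wtd P_n)-\widehat\tau(\wtd P_{n+1})$, and as $\wtd P_n$ is a positive contraction there, $\widehat\tau(\wtd P_n)\in[0,1]$. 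Telescoping the recursion $\widehat\tau(\wtd P_{n+1})\ge 2\widehat\tau(\wtd P_n)-C((n+1)m)^k\sqrt{\eps'}$ against $\widehat\tau(\wtd P_N)\le 1$ and using that $\sum_{n\ge0}2^{-n}(n+1)^k$ converges — this convergence is precisely what the polynomial size bound of \Cref{thm:main}(b) buys, and is how the stability problem described at the start of \Cref{sec:algebracons} is overcome — gives $\widehat\tau(\wtd P_0)\le(\mathrm{poly}(m))\sqrt{\eps'}\le(\mathrm{poly}(m))\sqrt\eps$, hence, $\wtd P_0$ being a positive contraction, bounds on $\widehat\tau(\wtd P_0^*\wtd P_0)$ and then on $\tau(\wtd P_0^*\wtd P_0)$.

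The main obstacle is making this last estimate quantitatively strong enough: the argument just sketched controls $\tau(\wtd P_0^*\wtd P_0)$ only by a ($m$-dependent) constant times $\sqrt\eps$, whereas a polynomial-in-$m$ multiple of $\sum_s s^*s$ contributes only $\mathrm{poly}(m)\cdot\eps$, so a naive $\alpha_m$ would fail to be trace-positive against tracial states with $\eps$ tiny but positive. The fix — the technical heart of the proof — is to not argue state-by-state but to exhibit, for each $\delta>0$, an explicit representation of $\alpha_m+\delta$ as a sum of hermitian squares and commutators in $\C^*\ang{\mcX}$, built from the algebraic identity obtained by telescoping the relations $f_n$ down to a level $N=N(\delta,m)$: one rewrites $\wtd X_n\wtd P_n\wtd X_n-\wtd P_n$ as a commutator modulo the relation $\wtd X_n^2=1$, folds the $\mcR_m$-decompositions of the $f_n$ into the ``relations'' part, and absorbs the residual term $2^{-N}\wtd P_N$ using the effective SOS property of $\C\Z_2^{*\mcX}$ together with the relations $x^2=1$, $x^*x=1$, $xx^*=1$ (the fixed auxiliary $\mcW$-relations are what make the passage between $\C^*\ang{\mcX}$ and $\C\Z_2^{*\mcX}$ uniform in $N$ without adjoining new generators). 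Trace-positivity of $\alpha_m$ then follows from the trace positivstellensatz for free $*$-polynomial algebras of Klep, Scheiderer, and Vol\v{c}i\v{c} \cite{KSV23}. Combining the three cases, $\alpha_m$ is trace-positive in $\C^*\ang{\mcX}$ exactly when $m\notin\mcS$, i.e.\ when $M$ does not halt, while $m\in\mcS$ forces $\pi(\alpha_m)$ not trace-positive in $\C\Z_2^{*N}$; hence the promise problem is coRE-hard with $n_0=\abs{\mcX}$.
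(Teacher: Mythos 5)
Your reduction setup, the shape of the witness element $\alpha_m$ (relation-squares minus $\wtd P_0^*\wtd P_0$), and the halting-case analysis all match the paper. But in the non-halting case you go astray at the crucial quantitative step, and the ``fix'' you then propose is both unnecessary and unsound.

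The gap: you telescope a recursion on the \emph{trace} $\widehat\tau(\wtd P_n)$, getting $\widehat\tau(\wtd P_{n+1})\geq 2\widehat\tau(\wtd P_n)-C((n+1)m)^k\sqrt{\eps'}$ and hence $\widehat\tau(\wtd P_0)\leq\operatorname{poly}(m)\sqrt{\eps}$; you then pass to $\widehat\tau(\wtd P_0^*\wtd P_0)\leq\widehat\tau(\wtd P_0)$ via the positive-contraction bound, which is only $O(\sqrt\eps)$, and you correctly note this can't be beaten by a $\operatorname{poly}(m)\cdot\eps$ budget. But this ``obstacle'' is an artifact of recursing on the trace rather than the $L^2$-norm. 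The paper's \Cref{prop:nonhalting} instead controls $\norm{\wtd P_n}_\tau=\sqrt{\tau(\wtd P_n^*\wtd P_n)}$ directly: since $\wtd P_n$ is a hermitian square and $\tau$ tracial, the cross-terms $\tau(\wtd P_n\wtd X_n\wtd P_n\wtd X_n)$ are nonnegative, which gives $\tau(\wtd P_n^2)\leq\tfrac12\tau\bigl((\wtd P_n+\wtd X_n\wtd P_n\wtd X_n)^2\bigr)$ and hence the recursion
\begin{align*}
\norm{\wtd P_n}_\tau\leq\tfrac{1}{\sqrt2}\norm{\wtd P_n+\wtd X_n\wtd P_n\wtd X_n}_\tau\leq\tfrac{1}{\sqrt2}\norm{\wtd P_{n+1}}_\tau+\tfrac{1}{\sqrt2}C((n+1)m)^k\sqrt\eps.
\end{align*}
Iterating and using $\norm{\wtd P_t}_\tau\leq1$ gives $\norm{\wtd P_0}_\tau\leq\operatorname{poly}(m)\sqrt\eps$, so $\tau(\wtd P_0^*\wtd P_0)=\norm{\wtd P_0}_\tau^2\leq\operatorname{poly}(m)^2\eps$ --- linear in $\eps$, exactly what a polynomial-in-$m$ multiple of $\sum_s s^*s$ can absorb. (The contraction factor $1/\sqrt2$ on the norm plays the same summability role your factor $1/2$ on the trace did.) With this, the paper's element $\alpha(m)=\sum_{r\in\mcW_m\cup\mcW_m^*}r^*r-\frac{1}{\wtd\Lambda^2m^{2k'}}\wtd P_0^*\wtd P_0$ is trace-positive for $m\notin\mcS$ by a clean state-by-state argument, with no need for explicit SOS-plus-commutator certificates.

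Your proposed fix is a wrong turn: you would need to \emph{construct} SOS-plus-commutator representations of $\alpha_m+\delta$ for all $\delta>0$, which is at least as hard as (and logically redundant with) the direct state argument, and invoking the KSV positivstellensatz in the direction you describe is circular --- if you had the decompositions in hand you would already know trace-positivity without the theorem, and if you don't have them the theorem doesn't help you produce them. The ``absorb the residual term $2^{-N}\wtd P_N$ via effective SOS in $\C\Z_2^{*\mcX}$'' step is not fleshed out and there is no reason to expect it to be uniform in $N$ in the way you need. Replace your non-halting analysis with the norm-recursion of \Cref{prop:nonhalting} (and the passage between $\C^*\ang\mcX$ and $\C\Z_2^{*\mcX}$ in \Cref{prop:actaultracial}), and the proof closes.
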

Suppose that $N \geq n_0$, and that the homomorphism $\pi$ in
\Cref{thm:actualmaintracial} factors through a $*$-algebra $\mcA$, meaning that
there are homomorphisms $\pi_1 : \Q^*\ang{x_1,\ldots,x_N} \to \mcA$ and $\pi_2
: \mcA \to \Q \Z_2^{*N}$ such that $\pi = \pi_2 \circ \pi_1$. Suppose also that
we have some way of representing the elements of $\mcA$ as strings such that
the homomorphism $\pi_1$ is computable.  Then \Cref{thm:actualmaintracial} has
the following immediate corollary:
\begin{cor}\label{cor:actualmaintracial}
    It is coRE-hard to determine whether $\alpha \in \mcA$ is trace-positive in
    $\mcA_{\C}$. 
\end{cor}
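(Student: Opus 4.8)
The plan is to reduce the coRE-hard promise problem of \Cref{thm:actualmaintracial} to the problem of deciding trace-positivity in $\mcA_\C$, by post-composing with $\pi_1$. First I would fix $N \ge n_0$ and extend $\pi_1$, $\pi_2$, $\pi$ $\C$-linearly to $*$-homomorphisms of the complexified algebras, keeping the same names, so that $\pi = \pi_2 \circ \pi_1$ as maps $\C^*\ang{x_1,\ldots,x_N} \to \C\Z_2^{*N}$. The reduction itself takes $\alpha \in \Q^*\ang{x_1,\ldots,x_N}$ to $\pi_1(\alpha) \in \mcA$; this is computable because $\pi_1$ is computable for the chosen encoding of $\mcA$. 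The goal is then to show that, on inputs $\alpha$ satisfying the promise of \Cref{thm:actualmaintracial}, $\pi_1(\alpha)$ is trace-positive in $\mcA_\C$ exactly in the ``yes'' case.

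The one auxiliary observation I would record is that the pullback $\psi\circ\phi$ of a bounded tracial state $\psi$ along any unital $*$-homomorphism $\phi:\mcB\to\mcC$ is again a bounded tracial state: positivity, unitality, and traciality are immediate, and boundedness follows from $(\psi\circ\phi)(b^*a^*ab)=\psi(\phi(b)^*\phi(a)^*\phi(a)\phi(b))\le M^{\psi}_{\phi(a)}\,(\psi\circ\phi)(b^*b)$, where $M^{\psi}_{\phi(a)}<\infty$ is the supremum in \eqref{eq:bounded} for $\psi$ at the element $\phi(a)$. Using this, the ``yes'' case (where $\alpha$ is trace-positive in $\C^*\ang{x_1,\ldots,x_N}$) is handled by pulling back an arbitrary bounded tracial state $\tau$ on $\mcA_\C$ along $\pi_1$: then $\tau(\pi_1(\alpha))=(\tau\circ\pi_1)(\alpha)\ge 0$, so $\pi_1(\alpha)$ is trace-positive in $\mcA_\C$. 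The ``no'' case (where $\pi(\alpha)$ is not trace-positive in $\C\Z_2^{*N}$) is handled by pulling a witnessing bad tracial state $\sigma$ on $\C\Z_2^{*N}$ back along $\pi_2$: then $\sigma\circ\pi_2$ is a bounded tracial state on $\mcA_\C$ with $(\sigma\circ\pi_2)(\pi_1(\alpha))=\sigma(\pi(\alpha))<0$. Since the promise guarantees exactly one case occurs, $\alpha\mapsto\pi_1(\alpha)$ is a correct many-one reduction, which gives coRE-hardness.

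I do not expect a genuine obstacle here: all the substance is in \Cref{thm:actualmaintracial}, and the corollary is bookkeeping. The only point that needs a little care is that the two directions must be argued on opposite ``sides'' of the factorization — one cannot work directly inside $\mcA_\C$, whose operator norm may be infinite, so the negative direction must go through the well-behaved algebra $\C\Z_2^{*N}$ via $\pi_2$, while the positive direction comes for free from $\C^*\ang{x_1,\ldots,x_N}$ via $\pi_1$ — together with the standard remark that the decision procedure is only ever invoked on inputs in the image of $\pi_1$ that inherit the promise.
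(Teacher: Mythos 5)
Your proof is correct and follows the paper's argument exactly: the reduction $\alpha\mapsto\pi_1(\alpha)$, the pullback of a tracial state on $\mcA_\C$ along $\pi_1$ for the ``yes'' case, and the pullback of a witnessing tracial state on $\C\Z_2^{*N}$ along $\pi_2$ for the ``no'' case. The only addition is your explicit verification that bounded tracial states pull back to bounded tracial states along unital $*$-homomorphisms, which the paper leaves implicit but which is a correct and harmless bit of bookkeeping.
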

\begin{proof}
    Suppose $\alpha \in \Q^*\ang{x_1,\ldots,x_N}$. If $\alpha$ is trace-positive
    in $\C^*\ang{x_1,\ldots,x_N}$, then $\pi_1(\alpha)$ is trace-positive in
    $\mcA_{\C}$. If $\pi(\alpha)$ is not trace-positive in $\C \Z_2^{*N}$, then
    there is a tracial state $\tau$ on $\C \Z_2^{*N}$ such that $(\tau \circ \pi_2)
    (\pi_1(\alpha)) = \tau(\pi(\alpha)) < 0$, so $\pi_1(\alpha)$ is not trace-positive
    in $\mcA_{\C}$. Since $\pi_1$ is computable, the promise problem in \Cref{thm:actualmaintracial} reduces
    to the problem of determining if $\alpha \in \mcA$ is trace-positive. 
\end{proof}
\Cref{thm:maintracial} is a special case of \Cref{cor:actualmaintracial}
with $\mcA = \Q^*\{x_1,\ldots,x_N\}$, $\Q \F_N$, or $\Q \Z_M^{*N}$ (for every $M\geq M'$, there is a $*$-homomorphism $\Q\Z_{M}^{*N}\arr\Q\Z_{M'}^{*N}$ sending  the $i$th generator of $\Q\Z_{M}^{*N}$ to the $i$th generator of $\Q\Z_{M'}^{*N}$).
The corollary also applies to other algebras such as the algebra of
contractions and the algebra of self-adjoint contractions, as well as free
products of any of the algebras mentioned.

For the proof of \Cref{thm:actualmaintracial}, suppose $\mcL \subseteq \N$ is
an RE set, and let $\mcX$, $\wtd{P}_n$ and $\wtd{X}_n$, $n \geq 0$, and
$\mcA_{\mcL}(m) = \C^*\ang{\C \Z_2^{*\mcX} : \mcR_m}$, $m \geq 1$, be as in
\Cref{thm:main}. Before proceeding to the proof, we need the following
propositions.

\begin{proposition}\label{prop:nonhalting}
    There are positive integers $\Lambda,k$ such that if $m\notin \mcL$ and $\tau$
    is a tracial $(\epsilon,\mcR_m)$-state on $\C\Z_2^{*\mcX}$ for some
    $\epsilon\geq 0$, then $\norm{\wtd{P}_0}_{\tau}\leq \Lambda m^{k}\sqrt{\eps}$.
\end{proposition}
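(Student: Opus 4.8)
The plan is to use the recursion from \Cref{thm:main}(b). Since $m \notin \mcS$, the machine $M_\mcS$ never halts on input $m$, so for \emph{every} $n \geq 0$ the element
\begin{equation*}
    g_n := \wtd P_n + \wtd X_n \wtd P_n \wtd X_n - \wtd P_{n+1}
\end{equation*}
is trivial in $\mcA_\mcS(m)$ and has an $\mcR_m$-decomposition in $\C\Z_2^{*\mcX}$ of size at most $C\big((n+1)m\big)^k$, where $C > 0$ and $k \in \N$ are the constants from \Cref{thm:main}(b). As $\mcX$ is a finite set of order-two unitaries generating $\C\Z_2^{*\mcX}$, \Cref{prop:tracerbound} (together with the remark that $\sum_i|\lambda_i|$ is bounded by the size of a decomposition) gives
\begin{equation*}
    \norm{g_n}_\tau \;\leq\; C\big((n+1)m\big)^k\sqrt\epsilon \qquad \text{for all } n \geq 0 .
\end{equation*}

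The second ingredient is that the map $\wtd P_n \mapsto \wtd P_n + \wtd X_n\wtd P_n\wtd X_n$ multiplies the tracial seminorm by at least $\sqrt2$. Write $\wtd P_n = b_n^* b_n$, possible by \Cref{thm:main}(a). Recall $\wtd X_n$ is a unitary involution in $\C\Z_2^{*\mcX}$, so $\wtd X_n\wtd P_n\wtd X_n = (b_n\wtd X_n)^*(b_n\wtd X_n)$ is again a hermitian square, $\norm{\wtd X_n\wtd P_n\wtd X_n}_\tau = \norm{\wtd P_n}_\tau$ by unitary invariance of $\norm{\cdotp}_\tau$, and $c_n := b_n\wtd X_n b_n^*$ is self-adjoint. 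Hence, using traciality of $\tau$ and cyclicity,
\begin{align*}
    \norm{\wtd P_n + \wtd X_n\wtd P_n\wtd X_n}_\tau^2
    &= \tau(\wtd P_n^2) + 2\,\tau(\wtd P_n\wtd X_n\wtd P_n\wtd X_n) + \tau\big((\wtd X_n\wtd P_n\wtd X_n)^2\big) \\
    &= 2\norm{\wtd P_n}_\tau^2 + 2\,\tau(c_n^*c_n) \;\geq\; 2\norm{\wtd P_n}_\tau^2 .
\end{align*}
Combining this with the bound on $\norm{g_n}_\tau$ and the triangle inequality for $\norm{\cdotp}_\tau$,
\begin{equation*}
    \norm{\wtd P_{n+1}}_\tau \;\geq\; \sqrt2\,\norm{\wtd P_n}_\tau - C\big((n+1)m\big)^k\sqrt\epsilon \qquad \text{for all } n \geq 0 .
\end{equation*}

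Finally I iterate. Set $b := \norm{\wtd P_0}_\tau$ and $\kappa := \sum_{j\geq 0} 2^{-j/2}(j+1)^k < \infty$. Unrolling the recursion yields, for every $n \geq 1$,
\begin{equation*}
    \norm{\wtd P_n}_\tau \;\geq\; (\sqrt2)^n b - \sum_{j=0}^{n-1}(\sqrt2)^{\,n-1-j}\,C\big((j+1)m\big)^k\sqrt\epsilon
    \;\geq\; (\sqrt2)^{\,n-1}\Big(\sqrt2\,b - C\kappa\, m^k\sqrt\epsilon\Big).
\end{equation*}
On the other hand, since $*$-monomials are unitary in $\C\Z_2^{*\mcX}$ we have $\norm{\wtd P_n}_{\C\Z_2^{*\mcX}} \leq \norm{\wtd P_n}_1 = 1$ by \Cref{thm:main}(a), so $\norm{\wtd P_n}_\tau \leq \norm{\wtd P_n}_{\C\Z_2^{*\mcX}} \leq 1$ for all $n$. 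If $\sqrt2\,b - C\kappa\, m^k\sqrt\epsilon$ were strictly positive, the lower bound above would tend to $+\infty$, a contradiction; therefore $b \leq \tfrac{C\kappa}{\sqrt2}\, m^k\sqrt\epsilon$. Taking $\Lambda := \max\{1,\lceil C\kappa/\sqrt2\rceil\}$, a positive integer, and keeping the same exponent $k$ gives $\norm{\wtd P_0}_\tau \leq \Lambda m^k\sqrt\epsilon$, as required.

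The one non-routine point is the inequality $\norm{\wtd P_n + \wtd X_n\wtd P_n\wtd X_n}_\tau \geq \sqrt2\,\norm{\wtd P_n}_\tau$: it is essential to phrase the whole argument in terms of the tracial seminorm rather than of $\tau$ itself — estimating $\tau(\wtd P_n)$ directly through the same telescoping only yields a bound of order $\epsilon^{1/4}$ — and the cross term $\tau(\wtd P_n\wtd X_n\wtd P_n\wtd X_n)$ must be seen to be nonnegative precisely because both factors are hermitian squares and $\tau$ is tracial. The remainder is geometric-series bookkeeping against the uniform bound $\norm{\wtd P_n}_\tau \leq 1$.
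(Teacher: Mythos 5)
Your proposal is correct and follows essentially the same approach as the paper: the same decomposition size bound from Theorem~\ref{thm:main}(b) via Proposition~\ref{prop:tracerbound}, the same key inequality $\norm{\wtd P_n + \wtd X_n\wtd P_n\wtd X_n}_\tau^2 \geq 2\norm{\wtd P_n}_\tau^2$ obtained from traciality together with both factors being hermitian squares, and the same geometric-series iteration against the uniform bound $\norm{\wtd P_n}_\tau \leq 1$. The only cosmetic difference is that you unroll the recursion forward and conclude by contradiction, whereas the paper unrolls it backward as a direct inequality $\norm{\wtd P_0}_\tau \leq 2^{-t/2}\norm{\wtd P_t}_\tau + C\sum_n 2^{-n/2}n^k m^k\sqrt\eps$ and lets $t\to\infty$.
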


\begin{proof}
    Let $C$ and $k$ be the constants from \Cref{thm:main}, part (b). Suppose
    $m\notin \mcL$, $\eps\geq 0$, and $\tau$ is a tracial $(\epsilon,\mcR_m)$-state
    on $\C\Z_2^{*\mcX}$. By \Cref{thm:main}, part (a), $\wtd{X}_n$ is a unitary 
    involution in $\C\Z_2^{*\mcX}$. Since $\tau$ is tracial,
    $\norm{\wtd{P}_n}_\tau = \norm{\widetilde{X}_n \widetilde{P}_n\wtd{X}_n}_\tau$
    for all $n$. Also $\wtd{P}_n$ and $\widetilde{X}_n \widetilde{P}_n\wtd{X}_n$
    are hermitian squares in $\C\Z_2^{*\mcX}$, so
    $\tau(\widetilde{P}_n \widetilde{X}_n \widetilde{P}_n \widetilde{X}_n) =
    \tau(\widetilde{X}_n\widetilde{P}_n\widetilde{X}_n\widetilde{P}_n)\geq 0$. Hence 
    \begin{align*}
        \norm{\widetilde{P}_n}_{\tau}^2=\tau(\widetilde{P}_n^2)&=\tfrac{1}{2}\Big(\tau(\widetilde{P}_n^2)+\tau\big(( \widetilde{X}_n\widetilde{P}_n\widetilde{X}_n )^2  \big)  \Big)\\
        &\leq\tfrac{1}{2}\Big(\tau(\widetilde{P}_n^2)+\tau\big(( \widetilde{X}_n\widetilde{P}_n\widetilde{X}_n )^2  \big)+\tau(\widetilde{P}_n\widetilde{X}_n\widetilde{P}_n\widetilde{X}_n)+\tau(\widetilde{X}_n\widetilde{P}_n\widetilde{X}_n\widetilde{P}_n)  \Big)\\
        &=\tfrac{1}{2}\tau\big((\widetilde{P}_n+ \widetilde{X}_n\widetilde{P}_n\widetilde{X}_n )^2  \big)=\tfrac{1}{2}\norm{\widetilde{P}_n+ \widetilde{X}_n\widetilde{P}_n\widetilde{X}_n}_{\tau}^2.
    \end{align*} 
    By \Cref{prop:tracerbound} and \Cref{thm:main}, part (b), $\norm{\widetilde{P}_n + \widetilde{X}_n \widetilde{P}_n
    \widetilde{X}_n-\widetilde{P}_{n+1}}_{\tau}\leq C((n+1)m)^k\sqrt{\eps}$ for all
    $n\geq 0$. It follows that
    \begin{align*}
        \norm{\widetilde{P}_n}_{\tau}&\leq\tfrac{1}{\sqrt{2}}\norm{\widetilde{P}_n+ \widetilde{X}_n\widetilde{P}_n\widetilde{X}_n}_{\tau}\\
        &\leq \tfrac{1}{\sqrt{2}}(\norm{\widetilde{P}_{n+1}}_{\tau}+\norm{\widetilde{P}_n + \widetilde{X}_n \widetilde{P}_n \widetilde{X}_n-\widetilde{P}_{n+1}}_{\tau}  )\\
        &\leq \tfrac{1}{\sqrt{2}}\norm{\widetilde{P}_{n+1}}_{\tau}+\tfrac{1}{\sqrt{2}}C((n+1)m)^k\sqrt{\epsilon}.
    \end{align*}
    By induction, we obtain
    \begin{align*}
        \norm{\wtd{P}_0}_\tau\leq (\tfrac{1}{\sqrt{2}})^{t}\norm{\wtd{P}_{t}}_\tau+C\sum_{n=1}^{t}\frac{ n^k}{2^{n/2}} m^k\sqrt{\eps}
    \end{align*}
    for all $t\geq 1$. Since $\wtd{P}_t$ is a contraction in
    $\C\Z_2^{*\mcX}$ by \Cref{thm:main}, part (a), $\norm{\wtd{P}_t}_\tau\leq 1$ for all $t\geq 1$.
    The series $\sum_{n=1}^{\infty}\frac{ n^k}{2^{n/2}}$ converges, so taking
    $t\arr + \infty$, we conclude that 
    \begin{align*}
        \norm{\wtd{P}_0}_\tau\leq \Lambda m^k\sqrt{\eps}
    \end{align*}
    for any integer $\Lambda\geq C \sum_{n=1}^{\infty}\frac{ n^k}{2^{n/2}}$.
\end{proof}

\begin{proposition}\label{prop:actaultracial}
Let $\mcW$ be the mapping from \Cref{lemma:tracialfree}, and let 
\begin{align*}
\mcW_m:=\left(\bigcup_{r\in\mcR_m\cup\mcR_m^*}\mcW(r)\right)\cup\mcW(\wtd{P}_0)\cup\mcR_m
\end{align*} 
for $m\in\N$.
There are positive integers $\wtd{\Lambda},k'$ such that if $m\notin\mcL$ and $\tau$ is a tracial $(\eps,\mcW_m)$-state on $\C^*\ang{\mcX}$, then $\norm{\wtd{P}_0}_\tau\leq \wtd{\Lambda} m^{k'}\sqrt{\eps}$.  
\end{proposition}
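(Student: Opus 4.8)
The plan is to transfer the bound from the group-algebra setting, where it has already been established in \Cref{prop:nonhalting}, to the free-algebra setting, using \Cref{lemma:tracialfree} to pass from $\tau$ to a nearby tracial state on $\C\Z_2^{*\mcX}$.

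First I would apply \Cref{lemma:tracialfree} to the given tracial state $\tau$ on $\C^*\ang{\mcX}$, obtaining a tracial state $\widehat\tau$ on $\C\Z_2^{*\mcX}$. The set $\mcW_m$ has been set up precisely so that the hypothesis ``$\tau$ is a tracial $(\eps,\mcW_m)$-state'' supplies everything needed: since $\mcW(\wtd{P}_0)\subseteq\mcW_m$, \Cref{lemma:tracialfree}(b) gives
\begin{equation*}
    \bigl|\,\norm{\wtd{P}_0}_{\widehat\tau} - \norm{\wtd{P}_0}_{\tau}\,\bigr| \le 2\norm{\wtd{P}_0}_{1,1}\sqrt\eps = 3\sqrt\eps,
\end{equation*}
using $\norm{\wtd{P}_0}_{1,1} = \tfrac{3}{2}$ from \Cref{thm:main}(a); and since $\mcW(r)\subseteq\mcW_m$ for every $r\in\mcR_m\cup\mcR_m^*$, the same lemma gives $\bigl|\norm{r}_{\widehat\tau} - \norm{r}_{\tau}\bigr|\le 2\norm{r}_{1,1}\sqrt\eps \le 2Dm\sqrt\eps$, where $D$ is the constant from \Cref{thm:main}(d).

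Next I would observe that $\widehat\tau$ is itself a tracial $(\eps',\mcR_m)$-state on $\C\Z_2^{*\mcX}$ with $\eps' := (1+2Dm)^2\,\eps$: because $\mcR_m\subseteq\mcW_m$ we have $\norm{r}_{\tau}\le\sqrt\eps$, hence $\norm{r}_{\widehat\tau}\le (1+2Dm)\sqrt\eps$, i.e.\ $\widehat\tau(r^*r)\le\eps'$, for all $r\in\mcR_m\cup\mcR_m^*$. Since $m\notin\mcS$, \Cref{prop:nonhalting} applied to $\widehat\tau$ (with $\eps$ replaced by $\eps'$) yields $\norm{\wtd{P}_0}_{\widehat\tau}\le \Lambda m^k\sqrt{\eps'} = \Lambda m^k(1+2Dm)\sqrt\eps$. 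Combining this with the displayed estimate,
\begin{equation*}
    \norm{\wtd{P}_0}_{\tau} \le \norm{\wtd{P}_0}_{\widehat\tau} + 3\sqrt\eps \le \bigl(\Lambda m^k(1+2Dm) + 3\bigr)\sqrt\eps,
\end{equation*}
and since $m\ge 1$ the right-hand side is at most $\wtd\Lambda\, m^{k'}\sqrt\eps$ with, say, $k' = k+1$ and $\wtd\Lambda = \Lambda(1+2D) + 3$.

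There is no genuine obstacle here: the proof is a bookkeeping assembly of \Cref{lemma:tracialfree}, \Cref{prop:nonhalting}, and parts (a) and (d) of \Cref{thm:main}. The only point deserving attention is that the inflation of the error $\eps$ to $\eps'$ must stay polynomial in $m$, which holds because $\norm{r}_{1,1}\le Dm$ uniformly over $r\in\mcR_m$ by \Cref{thm:main}(d); and, relatedly, that $\mcW_m$ was defined to include all three ingredients — $\mcW(\wtd{P}_0)$, the sets $\mcW(r)$ for $r$ a relation, and $\mcR_m$ itself — so that both the comparison estimates and the smallness of $\widehat\tau$ on $\mcR_m$ are available.
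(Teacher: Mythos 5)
Your proof is correct and matches the paper's own argument essentially step for step: apply \Cref{lemma:tracialfree} to obtain $\widehat\tau$, use $\mcR_m\subseteq\mcW_m$ together with \Cref{thm:main}(d) to show $\widehat\tau$ is a tracial $((2Dm+1)^2\eps,\mcR_m)$-state, invoke \Cref{prop:nonhalting}, and transfer the bound back to $\tau$ via the $\mcW(\wtd P_0)$ estimate. The only difference is the cosmetic choice of final constant ($\Lambda(1+2D)+3$ versus the paper's $6D\Lambda$); both are valid under $m\ge 1$.
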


\begin{proof}
Let $\Lambda,k$ be integers from \Cref{prop:nonhalting}, and let $D$ be the integer from \Cref{thm:main}, part (d). Suppose $m\notin\mcL$, $\eps\geq 0$, and $\tau$ is a tracial $(\eps,\mcW_m)$-state on $\C^*\ang{\mcX}$. By \Cref{lemma:tracialfree}, there is a tracial state $\widehat{\tau}$ on
    $\C\Z_2^{*\mcX}$ such that 
    \begin{align*}
    \big\vert
    \norm{r}_{\widehat{\tau}}-\norm{r}_\tau\big\vert\leq
    2\norm{r}_{1,1}\sqrt{\eps}\leq 2Dm\sqrt{\eps}
    \end{align*}
    for all
    $r\in\mcR_m\cup\mcR_m^*$. Since $\mcR_m\subseteq \mcW_m$, $\tau$ is an $(\eps,\mcR_m)$-state. For every $r\in\mcR_m\cup \mcR_m^*$, $\norm{r}_{\tau}\leq \sqrt{\eps}$, and hence $\norm{r}_{\widehat{\tau}}\leq (2Dm+1)\sqrt{\eps}$.
    We conclude that $\widehat{\tau}$ is a tracial
    $\big((2Dm+1)^2\eps,\mcR_m\big)$-state on $\C\Z_2^{*\mcX}$. By
    \Cref{prop:nonhalting}, $\norm{\wtd{P}_0}_{\widehat{\tau}}\leq\Lambda
    m^{k}\sqrt{(2Dm+1)^2\eps}=(2Dm+1)\Lambda
    m^{k}\sqrt{\eps}$. Again by \Cref{lemma:tracialfree}, since $\tau(s^*s)\leq
    \eps$ for all $s\in\mcW(\wtd{P}_0)$, we have 
    \begin{align*}
    \big\vert
    \norm{\wtd{P}_0}_{\widehat{\tau}}-\norm{\wtd{P}_0}_\tau\big\vert\leq
    2\norm{\wtd{P}_0}_{1,1}\sqrt{\eps}=3\sqrt{\eps}.  
    \end{align*}
It follows that
\begin{align*}
    \norm{\wtd{P}_0}_{\tau}\leq \left((2Dm+1)\Lambda m^{k}+3 \right)\sqrt{\eps}\leq
   6D\Lambda m^{k+1}\sqrt{\eps}.
\end{align*}
Taking $\wtd{\Lambda}:=6D\Lambda$ and $k':=k+1$ completes the proof. 
\end{proof}

\begin{proof}[Proof of \Cref{thm:actualmaintracial} ] 
    Let $n_0:=\abs{\mcX}$, and suppose we are given an integer $N\geq n_0$. 
    Pick an ordering on $\mcX$ so we can identify $\Q^*\ang{\mcX}$ with the
    subalgebra $\Q^*\ang{x_1,\ldots,x_{n_0}}$ of
    $\Q^*\ang{x_1,\ldots,x_N}$. We think of $\C^*\ang{\mcX}$ and $\C\Z_2^{*\mcX}$
    as subalgebras of  $ \C^*\ang{x_1,\ldots,x_N}$ and $\C\Z_2^{*N}$ similarly. Let
    $\wtd{\Lambda}$, $k'$, and $\mcW_m$ be as in \Cref{prop:actaultracial}.  
    For any $m\in\N$, define a $*$-polynomial 
    \begin{align}
         \alpha(m)=\sum_{r\in\mcW_m\cup\mcW_m^*}r^*r-\frac{1}{\wtd{\Lambda}^2m^{2k'}} \wtd{P}_0^*\wtd{P}_0 \in \Q^*\ang{\mcX}\subseteq\Q^*\ang{x_1,\ldots,x_N}.
    \end{align}
    Then $\alpha(m)$ is self-adjoint and the map $\alpha:\N \arr
    \Q^*\ang{x_1,\ldots,x_N}$ is computable by \Cref{thm:main,lemma:tracialfree}.

    Let $\pi : \Q^*\ang{x_1,\ldots,x_N} \to \Q \Z_2^{*N}$ be the homomorphism
    sending $x_i$ to the $i$th generator of $\Z_2^{*N}$. Also let $q = q_2 \circ q_1$, 
    where $q_1 : \Q \Z_2^{*N} \to \Q \Z_2^{*\mcX}$ is the homomorphism sending
    $x_i \mapsto x_i$ for $1 \leq i \leq n_0$ and $x_i \mapsto 1$ for $i >
    n_0$, and $q_2 : \Q \Z_2^{*\mcX} \to \mcA_{\mcL}(m)$ is the quotient
    map. By \Cref{thm:main}, if $m \in \mcL$ then there is a tracial state
    $\tau$ on $\mcA_{\mcL}(m)$ such that $\tau(\wtd{P}_0^2) > 0$. By
    \cref{lemma:tracialfree}, $\pi(t)=0$ for all $t \in
    \big(\bigcup_{r\in\mcR_m\cup\mcR_m^*}\mcW(r)\big)\cup\mcW(\wtd{P}_0)$.
    Since $q(r)=0$ for all $r \in \mcR_m$, $q(\pi(r^* r)) = 0$ for all
    $r \in \mcW_m \cup \mcW_m^*$, and hence $(\tau \circ q)(\pi(\alpha(m))) =
    -\frac{1}{\wtd{\Lambda}^2 m^{2k'}}\tau(\wtd{P}_0^2)<0$. We conclude that
    $\pi(\alpha(m))$ is not trace-positive. 

    Now suppose $m\notin\mcL$, and that $\tau$ is a tracial state on $\C^*\ang{x_1, \ldots, x_N}$. 
    Let $\eps := \sum_{r\in\mcW_m\cup\mcW_m^*}\tau(r^*r)$. Then $\tau$
    is a tracial $(\eps,\mcW_m)$-state, so by \Cref{prop:actaultracial},
    $\tau(\wtd{P}^{*}_{0}\wtd{P}_{0})=\norm{\wtd{P}_0}^2_{\tau}\leq \wtd{\Lambda}^2m^{2k'}\eps$. Hence
    $\tau(\alpha(m))\geq 0$. Thus the complement $\overline{\mcL}$ of $\mcL$
    reduces to the promise problem in the theorem statement. Taking $\mcL$ to
    be RE-hard, we see that the promise problem is coRE-hard. 
\end{proof}

\section{Proof of Theorem \ref{thm:mainbipartite}}\label{sec:actualmain}

The proof of Theorem \ref{thm:mainbipartite} follows the same approach as the proof of
Theorem \ref{thm:maintracial}. We start with a promise problem version of the theorem.
\begin{theorem}\label{thm:actualmain}
    Suppose either $N \geq 2$ and $\mcC=\Q\Z_3^{*N}$, or $N \geq 3$ and $\mcC=\Q\Z_2^{*N}$. Let $\mcP = \Q^*\ang{x_1,\ldots,x_N}$, and let $\pi : \mcP \to \mcC$ be the homomorphism
    sending $x_i$ to the $i$th generator of $\mcC$. Then the promise problem
    \begin{quote}
        Given $\alpha \in \mcP \otimes_{\Q} \mcP$, determine if $\alpha$ is
        positive in $\mcP_{\C} \otimes_{\C} \mcP_{\C}$, or if $(\pi \otimes
        \pi)(\alpha)$ is not positive in $\mcC_{\C} \otimes_{\C} \mcC_{\C}$,
        promised that one of the two is the case. 
    \end{quote}
    is coRE-hard. 
\end{theorem}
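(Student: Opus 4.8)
The proof will mirror that of \Cref{thm:actualmaintracial} step for step, with tracial states replaced by $(\eps,\mcX)$-synchronous states on $\mcA\otimes\mcA$ and the exact tracial identities in \Cref{prop:nonhalting} replaced by the approximate identities of \Cref{sec:approxrepn}. Fix an RE-hard set $\mcS\subseteq\N$ with recognizing machine $M_\mcS$, and let $\mcX$, $\{\wtd P_n\}$, $\{\wtd X_n\}$, $m\mapsto\mcR_m$, and $\mcA_\mcS(m)=\C^*\ang{\C\Z_2^{*\mcX}:\mcR_m}$ be as in \Cref{thm:main}; put $n_0:=|\mcX|$, so $\C\Z_2^{*\mcX}=\C\Z_2^{*n_0}$. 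As in \Cref{prop:actaultracial}, set $\mcW_m:=\big(\bigcup_{r\in\mcR_m\cup\mcR_m^*}\mcW(r)\big)\cup\mcW(\wtd P_0)\cup\mcR_m\cup\big(\bigcup_{x\in\mcX}\mcW(x)\big)$ with $\mcW$ from \Cref{lemma:tracialfree}. I will first prove the theorem for $\mcC=\C\Z_2^{*n_0}$, identifying $\Q^*\ang{\mcX}$ with $\Q^*\ang{x_1,\dots,x_{n_0}}$ and $\pi$ with the quotient map $\mcP\to\C\Z_2^{*\mcX}$; the stated ranges of $N$ and $m$ are recovered at the end. With $\wtd\Lambda,k'$ as produced in Step~1 below, the reducing element is
\[
  \alpha(m):=\sum_{x\in\mcX}(x\otimes\Id-\Id\otimes x)^*(x\otimes\Id-\Id\otimes x)+\sum_{s\in\mcW_m}\!\Big(s^*s\otimes\Id+\Id\otimes\omega_\mcX(s)^*\omega_\mcX(s)\Big)-\frac{1}{\wtd\Lambda^2 m^{2k'}}\,\wtd P_0^*\wtd P_0\otimes\Id,
\]
a self-adjoint element of $\mcP\otimes_\Q\mcP$ that is computable in $m$ by \Cref{thm:main} and \Cref{lemma:tracialfree}.

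\textbf{Step 1: the bipartite non-halting bound (the main obstacle).} The crux is the synchronous analogue of \Cref{prop:nonhalting,prop:actaultracial}: there are integers $\wtd\Lambda,k'$ such that if $m\notin\mcS$ and $\varphi$ is an $(\eps,\mcX)$-synchronous state on $\C^*\ang{\mcX}\otimes\C^*\ang{\mcX}$ with $\varphi(s^*s\otimes\Id)\leq\eps$ and $\varphi(\Id\otimes s^*s)\leq\eps$ for all $s\in\mcW_m$, then $\norm{\wtd P_0\otimes\Id}_\varphi\leq\wtd\Lambda m^{k'}\sqrt\eps$. First apply \Cref{lemma:free} to get a $(25\eps,\mcX)$-synchronous state $\widehat\varphi$ on $\C\Z_2^{*\mcX}\otimes\C\Z_2^{*\mcX}$ with $\widehat\varphi(r^*r\otimes\Id),\widehat\varphi(\Id\otimes r^*r)\leq(2Dm+1)^2\eps=:\eps'$ for $r\in\mcR_m$ (using $\norm r_{1,1}\leq Dm$ from \Cref{thm:main}(d)) and $\big|\norm{\wtd P_0\otimes\Id}_{\widehat\varphi}-\norm{\wtd P_0\otimes\Id}_\varphi\big|\leq 3\sqrt\eps$. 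Then run the induction of \Cref{prop:nonhalting} on $\widehat\varphi$, writing $p_n:=\norm{\wtd P_n\otimes\Id}_{\widehat\varphi}\leq 1$ (contractivity is \Cref{thm:main}(a)): the identity $\norm{\wtd P_n}_\tau=\norm{\wtd X_n\wtd P_n\wtd X_n}_\tau$ is replaced by \Cref{lemma:epstracial}, the inequality $\tau(\wtd P_n\wtd X_n\wtd P_n\wtd X_n)\geq 0$ by \Cref{lemma:realpart}, and $\norm{\wtd P_n+\wtd X_n\wtd P_n\wtd X_n-\wtd P_{n+1}}_\tau\leq C((n+1)m)^k\sqrt{\eps'}$ by \Cref{prop:rbound} together with \Cref{thm:main}(b) — all applicable because $\wtd X_n$ is a monomial of degree $4n+1$ and $\wtd P_n$ is a hermitian square in $\C\Z_2^{*\mcX}$ with $\norm{\wtd P_n}_1=1$, $\norm{\wtd P_n}_{1,1}=3(n+1)/2$. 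The resulting squared inequality reads $2p_n^2\leq\norm{(\wtd P_n+\wtd X_n\wtd P_n\wtd X_n)\otimes\Id}_{\widehat\varphi}^2+O(n)\,p_n\sqrt{\eps'}+O(n^2)\eps'$; the key point — where care is needed — is that the error from \Cref{lemma:realpart} carries a factor of $\norm{\wtd P_n\otimes\Id}_{\widehat\varphi}=p_n$, so that \emph{completing the square in $p_n$} yields $p_n\leq\tfrac1{\sqrt2}p_{n+1}+C'((n+1)m)^k\sqrt{\eps'}$ with a genuine $\sqrt{\eps'}$ rather than the useless $(\eps')^{1/4}$ that a cruder estimate would give. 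Since $\sum_n(n+1)^k 2^{-n/2}<\infty$, iterating and letting $n\to\infty$ gives $p_0=O(m^k)\sqrt{\eps'}=O(m^{k+1})\sqrt\eps$, and transferring back along \Cref{lemma:free} gives the claim with $k'=k+1$.

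\textbf{Step 2: the reduction for $\mcC=\C\Z_2^{*n_0}$.} If $m\notin\mcS$ and $\varphi$ is any state on $\mcP_\C\otimes\mcP_\C$, set $\eps:=\sum_{x\in\mcX}\norm{x\otimes\Id-\Id\otimes x}_\varphi^2+\sum_{s\in\mcW_m}\big(\varphi(s^*s\otimes\Id)+\varphi(\Id\otimes\omega_\mcX(s)^*\omega_\mcX(s))\big)$. Then $\varphi$ is $(\eps,\mcX)$-synchronous, and by \Cref{lemma:xy_stability} and left unitary invariance it satisfies $\varphi(\Id\otimes s^*s)=O(\mathrm{poly}(m))\,\eps$ for $s\in\mcW_m$, so Step~1 applies and gives $\varphi(\wtd P_0^*\wtd P_0\otimes\Id)=\norm{\wtd P_0\otimes\Id}_\varphi^2\leq\wtd\Lambda^2 m^{2k'}\eps$; hence $\varphi(\alpha(m))\geq\eps-\eps=0$, so $\alpha(m)$ is positive in $\mcP_\C\otimes\mcP_\C$. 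If $m\in\mcS$, take the tracial state $\tau$ on $\mcA_\mcS(m)$ with $\tau(PQ)>0$ from \Cref{lemma:globaltrace}, pull it back to a tracial state $\bar\tau$ on $\C\Z_2^{*\mcX}$ (so $\bar\tau(r^*r)=0$ for $r\in\mcR_m$), and form $\varphi(a\otimes b):=\bar\tau(a\,\omega_\mcX(b))$, which is a state on $\C\Z_2^{*\mcX}\otimes\C\Z_2^{*\mcX}$ (positivity follows from traciality of $\bar\tau$ and the fact that $\omega_\mcX$ is an anti-isomorphism) that is $(0,\mcX)$-synchronous, annihilates $s^*s\otimes\Id$ and $\Id\otimes\omega_\mcX(s)^*\omega_\mcX(s)$ for all $s\in\mcW_m$ (by \Cref{lemma:tracialfree}(a) and $q(\mcR_m)=0$), and has $\varphi(\wtd P_0^*\wtd P_0\otimes\Id)=\bar\tau(\wtd P_0^2)=\tau(PQ)>0$ (using $[P,Q]=0$ in $\mcA_\mcS(m)$). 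Evaluating $\varphi$ on $(\pi\otimes\pi)(\alpha(m))\in\C\Z_2^{*\mcX}\otimes\C\Z_2^{*\mcX}$ gives $-\tfrac{1}{\wtd\Lambda^2 m^{2k'}}\tau(PQ)<0$, so $(\pi\otimes\pi)(\alpha(m))$ is not positive. Thus $\overline{\mcS}$ reduces to the promise problem, which is coRE-hard for $\mcC=\C\Z_2^{*n_0}$.

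\textbf{Step 3: general $N$ and the $\Z_m$ cases.} For $N>n_0$ one embeds $\Z_2^{*n_0}$ as the first $n_0$ factors of $\Z_2^{*N}$. For $3\leq N<n_0$, use the explicit embedding $\Z_2^{*n_0}\hookrightarrow\Z_2^{*N}$ sending the $j$-th generator to the involution $(x_1x_2)^jx_3(x_1x_2)^j$; a normal-form (ping-pong) check shows these freely generate a copy of $\Z_2^{*n_0}$, which is possible precisely because $\Z_2^{*N}$ with $N\geq 3$ is not virtually cyclic. This induces a $*$-homomorphism $\phi:\Q^*\ang{x_1,\dots,x_{n_0}}\to\Q^*\ang{x_1,\dots,x_N}$ with $\pi_N\circ\phi=\bar\iota\circ\pi_{n_0}$, where $\bar\iota:\C\Z_2^{*n_0}\to\C\Z_2^{*N}$ is the induced inclusion; then $\phi^{\otimes 2}(\alpha(m))$ works for parameter $N$, since $*$-homomorphisms preserve positivity in the free algebra, while non-positivity of $(\pi_{n_0}\otimes\pi_{n_0})(\alpha(m))$ transfers to $\bar\iota^{\otimes 2}$ of it because subgroup inclusions induce injections of full group $C^*$-algebras, $\bar\iota\otimes_{\max}\bar\iota$ is injective, and injective $*$-homomorphisms reflect positivity. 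The cases $\mcC=\C\Z_3^{*N}$ ($N\geq 2$), and more generally $\mcC=\C\Z_m^{*N}$ in the ranges of \Cref{thm:mainbipartite}, are handled by running the whole construction of \Cref{thm:main} with order-$m$ unitary generators in place of involutions (replacing \Cref{lem:involution} by its triangle-group analogue), then using the Kurosh-style embedding $\Z_m^{*n_0}\hookrightarrow\Z_m^{*N}$, available for $N\geq 2$ when $m\geq 3$ and for $N\geq 3$ when $m=2$. Finally, \Cref{thm:mainbipartite} follows from \Cref{thm:actualmain} exactly as \Cref{thm:maintracial} follows from \Cref{cor:actualmaintracial}, composing $\alpha(m)$ with the evident $*$-homomorphisms relating $\Q^*\ang{x_1,\dots,x_N}$, $\Q\F_N$, $\Q\Z_m^{*N}$ and the relevant $\mcC$.
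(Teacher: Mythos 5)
Your Steps 1 and 2 track the paper's Propositions \ref{prop:productnonhalting} and \ref{prop:actual} and the reduction in the proof of \Cref{thm:actualmain} quite closely, including the genuinely subtle point about completing the square in $p_n$ to keep the error at order $\sqrt{\eps'}$ rather than $(\eps')^{1/4}$. Two things in Step 2 are stated more confidently than they can be justified, though both are repairable. First, \Cref{lemma:xy_stability} requires $\mcA$ to be generated by \emph{unitaries}, so it cannot be applied directly on $\C^*\ang{\mcX}\otimes\C^*\ang{\mcX}$ to deduce $\varphi(\Id\otimes s^*s)=O(\mathrm{poly}(m))\eps$ for general $s\in\mcW_m$; and even on $\C\Z_2^{*\mcX}$ I don't see how that bound follows from your $\eps$. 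Fortunately you never need it for the full $\mcW_m$: \Cref{lemma:free} only requires $\varphi(\Id\otimes s^*s)\le\eps$ for $s\in\bigcup_{x\in\mcX}\mcW(x)$, and on that set $\omega_\mcX$ permutes $\{s^*s\}$, so your $\eps$ already provides it. The cleaner fix (which is essentially what the paper does with $\wtd{\mcW}_m$) is to put the $\Id\otimes s^*s$ penalties only for $s\in\bigcup_{x\in\mcX}\mcW(x)$ and drop the claim. Second, the assertion that $\varphi(a\otimes b):=\bar\tau(a\,\omega_\mcX(b))$ defines a \emph{state} (i.e.\ is positive on arbitrary elements, not just elementary tensors) is exactly the content of \Cref{lem:extendtrace}, which the paper proves via \cite[Prop.~6.1.2]{BO08}; you should invoke that lemma rather than wave at it.

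The genuine gap is in Step 3, in the case $\mcC=\C\Z_3^{*N}$. You propose to "run the whole construction of \Cref{thm:main} with order-$m$ unitary generators in place of involutions," combined with a Kurosh embedding $\Z_3^{*n_0}\hookrightarrow\Z_3^{*2}$. That is not a small adaptation: the stability arguments of \Cref{lemma:stability,lemma:tracialfree,lemma:free} are built specifically around $\sgn\big(\tfrac{A+A^*}{2}\big)$ producing \emph{involutions}, and the whole algebra $\mcA_\mcS(m)$ (including the extra generators $O_P,O_Q,U_1,U_2$ used to encode projections and a unitary inside $\C\Z_2^{*\mcX}$) is a quotient of a free product of $\Z_2$'s. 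A $\Z_3$ analogue would require rebuilding Sections~\ref{sec:approxrepn}--\ref{sec:algebracons} from scratch, and you give no indication that the analogues (e.g.\ a stability lemma rounding approximate order-3 unitaries to exact ones) even hold. Note also that $\Z_2^{*n_0}$ does \emph{not} embed in $\Z_3^{*2}$ (the latter has no order-2 torsion), so you cannot just transport the $\Z_2$ result across a subgroup inclusion. The paper sidesteps all of this: having established coRE-hardness for $\mcC=\C\Z_2^{*N}$, $N\ge n_0$, it observes that $\pi^{(N)}:\mcP_N\to\Q\Z_2^{*N}$ factors through $\Q\F_N$, so the corresponding problem for $\mcC=\Q\F_N$ is also coRE-hard; then \Cref{lem:commute} embeds $\F_N$ into both $\Z_3^{*2}$ and $\Z_2^{*3}$ with computable lifts at the free-algebra level, and \Cref{lem:subgroupsos} (applied to $\F_N\times\F_N\le\Z_m^{*K}\times\Z_m^{*K}$) lets non-positivity transfer along the inclusion. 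This reduction requires no change to the construction at all and covers both $\Z_2^{*K}$, $K\ge3$, and $\Z_3^{*K}$, $K\ge2$, uniformly; you should replace your Step 3 with it. (Your explicit embedding $\Z_2^{*n_0}\hookrightarrow\Z_2^{*N}$ also has a sign typo — the conjugating word should be $(x_1x_2)^j x_3 (x_1x_2)^{-j}$ — but that route, while plausible for the $\Z_2$ case, is superseded by the $\F_N$ detour anyway.)
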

To get \Cref{thm:mainbipartite} from \Cref{thm:actualmain}, suppose that $N$,
$\mcC$, $\mcP$, and $\pi$ are as in the theorem,  and that
$\pi \otimes \pi$ factors through a $*$-algebra $\mcA$, meaning that there are
homomorphisms $\pi_1 : \mcP \otimes \mcP \to \mcA$ and $\pi_2 : \mcA \to \mcC
\otimes \mcC$ such that $\pi \otimes \pi = \pi_2 \circ \pi_1$. Suppose also
that the first homomorphism $\pi_1$ in this factorization is computable. Then:

\begin{cor}\label{cor:actualmain}
    It is coRE-hard to determine whether $\alpha \in \mcA$ is positive in
    $\mcA_{\C}$. 
\end{cor}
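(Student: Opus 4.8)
The plan is to mirror the proof of \Cref{cor:actualmaintracial} essentially verbatim, replacing ``trace-positive'' by ``positive'' and tracial states by arbitrary states (equivalently, GNS representations). Concretely, I would reduce the promise problem of \Cref{thm:actualmain} to the problem of deciding positivity in $\mcA_{\C}$ via the map $\alpha \mapsto \pi_1(\alpha)$, which is computable by hypothesis. Since $\pi_1$ and $\pi_2$ are $*$-homomorphisms they extend $\C$-linearly, giving $\pi_1 : \mcP_{\C} \otimes_{\C} \mcP_{\C} \to \mcA_{\C}$ and $\pi_2 : \mcA_{\C} \to \mcC \otimes_{\C} \mcC$ with $\pi_2 \circ \pi_1 = \pi \otimes \pi$ still holding after extending scalars.

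There are two halves of the promise to check. First, if $\alpha \in \mcP \otimes_{\Q} \mcP$ is positive in $\mcP_{\C} \otimes_{\C} \mcP_{\C}$, then $\pi_1(\alpha)$ is positive in $\mcA_{\C}$: any representation $\rho : \mcA_{\C} \to \mcB(\mcH)$ yields a representation $\rho \circ \pi_1$ of $\mcP_{\C} \otimes_{\C} \mcP_{\C}$, so $\rho(\pi_1(\alpha)) = (\rho \circ \pi_1)(\alpha) \geq 0$. Second, if $(\pi \otimes \pi)(\alpha)$ is not positive in $\mcC \otimes_{\C} \mcC$, choose (by the GNS theorem for $*$-algebras) a state $\psi$ on $\mcC \otimes_{\C} \mcC$ with $\psi((\pi \otimes \pi)(\alpha)) < 0$; then $\psi \circ \pi_2$ is a state on $\mcA_{\C}$ — the state axioms, including the boundedness condition \eqref{eq:bounded}, are preserved because $\pi_2$ is a unital $*$-homomorphism — and $(\psi \circ \pi_2)(\pi_1(\alpha)) = \psi\big((\pi \otimes \pi)(\alpha)\big) < 0$, so $\pi_1(\alpha)$ is not positive in $\mcA_{\C}$.

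Putting these together, the computable map $\alpha \mapsto \pi_1(\alpha)$ carries the ``$\alpha$ positive in $\mcP_{\C} \otimes_{\C} \mcP_{\C}$'' instances of the promise problem of \Cref{thm:actualmain} to positive elements of $\mcA_{\C}$, and the ``$(\pi \otimes \pi)(\alpha)$ not positive in $\mcC \otimes_{\C} \mcC$'' instances to non-positive elements of $\mcA_{\C}$. Hence any decision procedure for positivity in $\mcA_{\C}$ would solve that promise problem, and since the promise problem is coRE-hard, so is deciding whether $\alpha \in \mcA$ is positive in $\mcA_{\C}$.

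I do not anticipate any genuine difficulty here: this is a routine transport-of-structure argument with exactly the same shape as \Cref{cor:actualmaintracial}. The only points needing a moment's care are that states (with the boundedness axiom \eqref{eq:bounded}) pull back along unital $*$-homomorphisms, and the bookkeeping between $\Q$- and $\C$-coefficients; both are standard.
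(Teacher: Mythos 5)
Your proof is correct and takes essentially the same approach as the paper, which simply says the proof is identical to that of \Cref{cor:actualmaintracial} with ``trace'' deleted; you spell out the same reduction (pulling back a witnessing state along $\pi_2$ for the non-positive case, composing with $\pi_1$ for the positive case) in more detail.
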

The proof is the same as the proof of \Cref{cor:actualmaintracial}, with the
word ``trace'' deleted. \Cref{thm:mainbipartite} follows immediately from
\Cref{cor:actualmain} by
taking $\mcA$ to be one of $\Q^*\ang{x_1,\ldots,x_N} \otimes 
\Q^*\ang{x_1,\ldots,x_N}$ or $\Q \mcF_N \otimes \Q \mcF_N$ with $N \geq 2$,
 or $\Q \Z_M^{*N} \otimes \Q \Z_M^{*N}$ with $N \geq 3,M\geq 2$ or $N \geq 2,M\geq 3$. As with
\Cref{cor:actualmaintracial}, \Cref{cor:actualmain} also applies to other
algebras such as the algebra of contractions and the algebra of self-adjoint
contractions, as well as free products of any of the algebras mentioned.

For the proof of \Cref{thm:actualmain}, we use the same setup as in the
previous section: we suppose $\mcL\subseteq\N$ is an RE set, and let
$\mcX,\wtd{P}_n,\wtd{X}_n$, $n\geq 0$, and
$\mcA_\mcL(m)=\C^*\ang{\C\Z_2^{*\mcX}:\mcR_m}$, $m\in\N$, be as in
\Cref{thm:main}. In addition, let $\iota: \C\Z_2^{*\mcX}\arr
\C\Z_2^{*\mcX}\otimes \C\Z_2^{*\mcX}:a\mapsto a\otimes\Id$ be the left
inclusion.

\begin{proposition}\label{prop:productnonhalting}
    There are positive integers $\Gamma,k$ such that if $m\notin\mcL$, $\varphi$ is an $(\eps,\mcX)$-synchronous state on $\C\Z_2^{*\mcX}\otimes \C\Z_2^{*\mcX}$, and $\tau:=\varphi\circ\iota$ is an $(\eps,\mcR_m)$-state on $\C\Z_2^{*\mcX}$ for some $\eps\geq 0$, then $\norm{\wtd{P}_0}_\tau\leq \Gamma m^k\sqrt{\eps}$.
\end{proposition}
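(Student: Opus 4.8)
The plan is to follow the proof of \Cref{prop:nonhalting} line by line, replacing the exact identities available for tracial states with their approximate counterparts for $(\eps,\mcX)$-synchronous states from \Cref{sec:approxrepn}, and then to observe that the resulting error terms, although individually of order $\sqrt\eps$, each carry a factor of the quantity being bounded, so that no $\eps^{1/4}$ loss occurs. Write $\tau := \varphi\circ\iota$ and $b_n := \norm{\wtd P_n}_\tau = \norm{\wtd P_n\otimes\Id}_\varphi$. By \Cref{thm:main}(a) and the remark following it, $\wtd P_n$ is a hermitian square (hence self-adjoint) and a contraction in $\C\Z_2^{*\mcX}$ with $\norm{\wtd P_n}_1=1$ and $\norm{\wtd P_n}_{1,1}=3(n+1)/2$, and $\wtd X_n$ is a monomial of degree $4n+1$ that is a unitary involution in $\C\Z_2^{*\mcX}$; thus $\wtd X_n^*=\wtd X_n$ there, $\wtd X_n\wtd P_n\wtd X_n$ is self-adjoint, and $b_n\leq 1$. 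Expanding the square and using $\tau(c^*)=\overline{\tau(c)}$ gives
\begin{equation*}
    \norm{\wtd P_n+\wtd X_n\wtd P_n\wtd X_n}_\tau^2 = b_n^2 + \norm{\wtd X_n\wtd P_n\wtd X_n}_\tau^2 + 2\Re\,\tau\!\left(\wtd P_n\wtd X_n\wtd P_n\wtd X_n\right).
\end{equation*}
I would then apply \Cref{lemma:epstracial} with $u=\wtd X_n$, $a=\wtd P_n$ (and $\norm{\wtd P_n}_{\C\Z_2^{*\mcX}}\leq 1$) to get $\bigl|\norm{\wtd X_n\wtd P_n\wtd X_n}_\tau-b_n\bigr|\leq(4n+1)\sqrt\eps$, whence $\norm{\wtd X_n\wtd P_n\wtd X_n}_\tau^2\geq b_n^2-2(4n+1)b_n\sqrt\eps$; and \Cref{lemma:realpart} with the same $u,a$ (together with $\wtd X_n^*=\wtd X_n$ in $\C\Z_2^{*\mcX}$, so that $u^*aua$ maps to $\wtd X_n\wtd P_n\wtd X_n\wtd P_n$) to get $\Re\,\tau(\wtd P_n\wtd X_n\wtd P_n\wtd X_n)\geq-\beta_n b_n\sqrt\eps$, where $\beta_n=\norm{\wtd X_n^*\wtd P_n\wtd X_n}_{1,1}=O(n)$ by submultiplicativity of $\norm{\cdot}_1$ and the product rule for $\norm{\cdot}_{1,1}$.

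Putting $y_n:=\norm{\wtd P_n+\wtd X_n\wtd P_n\wtd X_n}_\tau$, these two estimates combine into the quadratic inequality $2b_n^2-c_nb_n\sqrt\eps-y_n^2\leq 0$ with $c_n=2(4n+1)+2\beta_n=O(n)$. Solving for $b_n\geq 0$ and using $\sqrt{c_n^2\eps+8y_n^2}\leq c_n\sqrt\eps+2\sqrt2\,y_n$ yields $b_n\leq\tfrac12 c_n\sqrt\eps+\tfrac{1}{\sqrt2}y_n$; this is the crucial point, since it keeps the $\eps$-dependence linear in $\sqrt\eps$ rather than degrading it to $\eps^{1/4}$. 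Since $m\notin\mcS$, the machine $M_\mcS$ never halts, so by \Cref{thm:main}(b) the element $\wtd P_n+\wtd X_n\wtd P_n\wtd X_n-\wtd P_{n+1}$ has an $\mcR_m$-decomposition of size $\leq C((n+1)m)^k$ in $\C\Z_2^{*\mcX}$; as $\tau=\varphi\circ\iota$ is an $(\eps,\mcR_m)$-state, \Cref{prop:rbound} gives $\norm{\wtd P_n+\wtd X_n\wtd P_n\wtd X_n-\wtd P_{n+1}}_\tau\leq C((n+1)m)^k\sqrt\eps$, hence $y_n\leq b_{n+1}+C((n+1)m)^k\sqrt\eps$. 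Combining, $b_n\leq\tfrac{1}{\sqrt2}b_{n+1}+\mu_n$ with $\mu_n:=\tfrac{1}{\sqrt2}C((n+1)m)^k\sqrt\eps+\tfrac12 c_n\sqrt\eps\leq C_1((n+1)m)^k\sqrt\eps$ for a suitable constant $C_1$ (absorbing $c_n$ using $m\geq1$).

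Finally, iterating the recursion gives $b_0\leq(\tfrac{1}{\sqrt2})^t b_t+\sum_{n=0}^{t-1}(\tfrac{1}{\sqrt2})^n\mu_n$ for every $t\geq1$; since $b_t\leq1$ and $\sum_{n\geq0}(n+1)^k 2^{-n/2}$ converges, letting $t\to\infty$ yields $\norm{\wtd P_0}_\tau=b_0\leq\Gamma m^k\sqrt\eps$ for any integer $\Gamma\geq C_1\sum_{n\geq0}(n+1)^k 2^{-n/2}$. The only real obstacle is the observation in the second paragraph: because each synchronicity defect produced by \Cref{lemma:epstracial} and \Cref{lemma:realpart} is multiplied by $b_n$ (or by the contraction bound $1$), passing from the squared inequality to a bound on $b_n$ costs nothing; everything else is a routine adaptation of the proof of \Cref{prop:nonhalting}.
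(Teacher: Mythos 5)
Your proposal is correct and follows essentially the same path as the paper's proof: replace the exact tracial identities of \Cref{prop:nonhalting} by the approximate ones from \Cref{lemma:epstracial}, \Cref{lemma:realpart}, and \Cref{prop:rbound}, observe that the synchronicity error terms all come multiplied by $b_n=\norm{\wtd P_n}_\tau$, derive the recursion $b_n \leq \tfrac{1}{\sqrt 2}b_{n+1} + O\big(((n+1)m)^k\big)\sqrt\eps$, and iterate using $b_t\leq 1$. The only cosmetic difference is that you solve the resulting quadratic in $b_n$ directly (and drop the $\delta^2$ term from $(b_n-\delta)^2$), whereas the paper completes the square; both give the same linear-in-$\sqrt\eps$ bound.
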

\begin{proof}
     Suppose
    $m\notin \mcL$, $\eps\geq 0$, and $\varphi$ is an $(\eps,\mcX)$-synchronous state on $\C\Z_2^{*\mcX}\otimes \C\Z_2^{*\mcX}$ such that $\tau:=\varphi\circ\iota$ is an $(\eps,\mcR_m)$-state on $\C\Z_2^{*\mcX}$. By \Cref{thm:main}, part (a), $\wtd{P}_n$ is a contraction in $\C \Z_2^{*\mcX}$, $\wtd{X}_n$ is a monomial in $\C^*\ang{\mcX}$ of degree $4n+1$, and $\wtd{X}_n$ is a unitary involution in $\C\Z_2^{*\mcX}$, so \Cref{lemma:epstracial} implies
    \begin{align*}
        \left\vert\norm{\wtd{X}_n\wtd{P}_n\wtd{X}_n}_\tau -\norm{\wtd{P}_n}_\tau \right\vert=\left\vert\norm{\wtd{X}_n\wtd{P}_n\wtd{X}_n \otimes \Id}_\varphi -\norm{\wtd{P}_n \otimes \Id}_\varphi  \right\vert\leq (4n+1)\sqrt{\eps}
    \end{align*}
 for all $n\geq 0$. Hence
    \begin{align*}
        \left\vert\norm{\wtd{X}_n\wtd{P}_n\wtd{X}_n}^2_\tau -\norm{\wtd{P}_n}^2_\tau \right\vert &=\big(\norm{\wtd{X}_n\wtd{P}_n\wtd{X}_n}_\tau +\norm{\wtd{P}_n}_\tau \big)\left\vert\norm{\wtd{X}_n\wtd{P}_n\wtd{X}_n}_\tau -\norm{\wtd{P}_n}_\tau\right\vert\\
        &\leq \big( 2\norm{\wtd{P}_n}_\tau+ (4n+1)\sqrt{\eps}\big)(4n+1)\sqrt{\eps},
    \end{align*}
so
\begin{align}
\norm{\wtd{X}_n\wtd{P}_n\wtd{X}_n}^2_\tau \geq \norm{\wtd{P}_n}^2_\tau-(8n+2)\norm{\wtd{P}_n}_\tau\sqrt{\eps}-(4n+1)^2\eps\label{eq:squarediff}
\end{align}
for all $n\geq 0$. By \Cref{thm:main}, part (a) again, 
$\wtd{P}_n$ is a hermitian square in $\C\Z_2^{*\mcX}$ with $\norm{\wtd{P}_n}_{1}=1$ and $\norm{\wtd{P}_n}_{1,1}= 3(n+1)/2\leq 2n+2$ for every $n\geq 0$, so $\norm{\wtd{X}_n\wtd{P}_n\wtd{X}_n}_{1,1}\leq \norm{\wtd{P}_n}_{1,1}+ \norm{\wtd{P}_n}_{1}(8n+2) \leq 10n+4$.  \Cref{lemma:realpart} implies 
 \begin{align*}
\varphi\big(\wtd{X}_n\wtd{P}_n\wtd{X}_n\wtd{P}_n\otimes\Id  +\wtd{P}_n\wtd{X}_n\wtd{P}_n\wtd{X}_n\otimes\Id  \big)&=2\Re\Big(\varphi\big(\wtd{X}_n\wtd{P}_n\wtd{X}_n\wtd{P}_n\otimes\Id\big)\Big)\\
&\geq -2\norm{\wtd{X}_n\wtd{P}_n\wtd{X}_n}_{1,1}\norm{\wtd{P}_n\otimes\Id}_\varphi\sqrt{\eps}\\
&\geq -(20n+8)\norm{\wtd{P}_n}_{\tau}\sqrt{\eps}.
\end{align*}
By \Cref{eq:squarediff}, we obtain
 \begin{align*}
 \norm{\widetilde{P}_n+ \widetilde{X}_n\widetilde{P}_n\widetilde{X}_n}^2_\tau&=\norm{\widetilde{X}_n\widetilde{P}_n\widetilde{X}_n}_{\tau}^2+\norm{\widetilde{P}_n}_{\tau}^2+\varphi(\widetilde{X}_n\widetilde{P}_n\widetilde{X}_n\widetilde{P}_n\otimes\Id+\widetilde{P}_n\widetilde{X}_n\widetilde{P}_n\widetilde{X}_n\otimes\Id)\\
    &\geq2\norm{\widetilde{P}_n}_{\tau}^2-(28n+10)\norm{\widetilde{P}_n}_{\tau}\sqrt{\epsilon}-(4n+1)^2\epsilon\\
    &\geq  2\big(\norm{\widetilde{P}_n}_{\tau}-(7n+7)\sqrt{\eps}\big)^2 -(11n+11)^2\eps,
 \end{align*}
where the coefficients here (and below) are chosen for convenience. It follows that
 \begin{align*}
     2\big(\norm{\widetilde{P}_n}_{\tau}-(7n+7)\sqrt{\eps}\big)^2\leq \big(\norm{\widetilde{P}_n+ \widetilde{X}_n\widetilde{P}_n\widetilde{X}_n}_\tau+(11n+11)\sqrt{\eps}  \big)^2,
  \end{align*}    
and hence 
\begin{align*}
   \norm{\widetilde{P}_n}_{\tau}&\leq \frac{1}{\sqrt{2}}\big(\norm{\widetilde{P}_n+ \widetilde{X}_n\widetilde{P}_n\widetilde{X}_n}_\tau+(25n+25)\sqrt{\eps}  \big).  
\end{align*}
Let $C$ and $k$ be the constants from \Cref{thm:main}, part (b). By \Cref{prop:rbound}, $\norm{\widetilde{P}_n + \widetilde{X}_n \widetilde{P}_n
    \widetilde{X}_n-\widetilde{P}_{n+1}}_{\tau}\leq C((n+1)m)^k\sqrt{\eps}$ for all
    $n\geq 0$. Thus
    \begin{align*}
        \norm{\widetilde{P}_n}_{\tau}
        &\leq \tfrac{1}{\sqrt{2}}\big(\norm{\widetilde{P}_{n+1}}_{\tau}+C((n+1)m)^k\sqrt{\eps}+ (25n+25)\sqrt{\eps} \big)\\
        &\leq \tfrac{1}{\sqrt{2}}\norm{\widetilde{P}_{n+1}}_{\tau}+\tfrac{1}{\sqrt{2}}(C+25 
        )((n+1)m)^k\sqrt{\epsilon}.
    \end{align*}
    By induction, we obtain
    \begin{align*}
        \norm{\wtd{P}_0}_\tau\leq (\tfrac{1}{\sqrt{2}})^{t}\norm{\wtd{P}_{t}}_\tau+(C+25)\sum_{n=1}^{t}\frac{ n^k}{2^{n/2}} m^k\sqrt{\eps}
    \end{align*}
    for all $t\geq 1$. Since $\wtd{P}_t$ is a contraction, $\norm{\wtd{P}_t}_\tau\leq 1$ for all $t\geq 1$.
    As in the proof of \Cref{thm:maintracial}, the series $\sum_{n=1}^{\infty}\frac{ n^k}{2^{n/2}}$ converges, so taking
    $t\arr+\infty$, we conclude that 
    \begin{align*}
        \norm{\wtd{P}_0}_\tau\leq \Gamma m^k\sqrt{\eps}
    \end{align*}
    for any integer $\Gamma\geq (C+25)\sum_{n=1}^{\infty}\frac{ n^k}{2^{n/2}}$.
\end{proof}

\begin{proposition}\label{prop:actual}
Let $\mcW$ be the mapping from \Cref{lemma:tracialfree}. For every $m\in\N$, let $\mcW_m=\big(\bigcup_{r\in\mcR_m\cup\mcR_m^*}\mcW(r)\big)\cup\mcW(\wtd{P}_0)\cup\mcR_m$ be as in \Cref{prop:actaultracial}, and let 
\begin{align*}
    \wtd{\mcW}_m:=\{s\otimes \Id:s\in\mcW_m\}\cup\{s\otimes\Id,\Id\otimes s:s\in \bigcup_{x\in\mcX}\mcW(x) \}.
\end{align*}
There are positive integers $\wtd{\Gamma},k'$ such that if $m\notin\mcL$, $\varphi$ is an $(\eps,\wtd{\mcW}_m)$-state on $\C^*\ang{\mcX}\otimes\C^*\ang{\mcX}$, and $\varphi$ is $(\eps,\mcX)$-synchronous, then $\norm{\wtd{P}_0\otimes \Id}_\varphi\leq \wtd{\Gamma}m^{k'}\sqrt{\eps}$.
\end{proposition}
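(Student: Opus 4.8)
The plan is to transcribe the proof of \Cref{prop:actaultracial} into the bipartite, synchronous setting, using \Cref{lemma:free} in place of \Cref{lemma:tracialfree} and \Cref{prop:productnonhalting} in place of \Cref{prop:nonhalting}. Fix $m\notin\mcS$, let $\eps\geq 0$, and suppose $\varphi$ is an $(\eps,\wtd{\mcW}_m)$-state on $\C^*\ang{\mcX}\otimes\C^*\ang{\mcX}$ which is $(\eps,\mcX)$-synchronous. Since $\{s\otimes\Id,\ \Id\otimes s:s\in\bigcup_{x\in\mcX}\mcW(x)\}\subseteq\wtd{\mcW}_m$, the defining inequality of an $(\eps,\wtd{\mcW}_m)$-state gives $\varphi(s^*s\otimes\Id)\leq\eps$ and $\varphi(\Id\otimes s^*s)\leq\eps$ for all $s\in\bigcup_{x\in\mcX}\mcW(x)$. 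So \Cref{lemma:free} applies and produces a $(25\eps,\mcX)$-synchronous state $\widehat{\varphi}$ on $\C\Z_2^{*\mcX}\otimes\C\Z_2^{*\mcX}$ such that $\bigl|\norm{f\otimes\Id}_{\widehat{\varphi}}-\norm{f\otimes\Id}_{\varphi}\bigr|\leq 2\norm{f}_{1,1}\sqrt{\eps}$ whenever $f\in\Q^*\ang{\mcX}$ satisfies $\varphi(s^*s\otimes\Id)\leq\eps$ for all $s\in\mcW(f)$.

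Next I would set $\tau:=\widehat{\varphi}\circ\iota$, a state on $\C\Z_2^{*\mcX}$, and bound its relation norms. For $r\in\mcR_m\cup\mcR_m^*$ we have $\mcW(r)\subseteq\mcW_m$ and $\{s\otimes\Id:s\in\mcW_m\}\subseteq\wtd{\mcW}_m$, so the comparison estimate applies with $f=r$; using $\norm{r}_{\tau}=\norm{r\otimes\Id}_{\widehat{\varphi}}$ and $\norm{r}_{1,1}\leq Dm$ from \Cref{thm:main}(d), this gives $\bigl|\norm{r}_{\tau}-\norm{r\otimes\Id}_{\varphi}\bigr|\leq 2Dm\sqrt{\eps}$. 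Because $\mcR_m\subseteq\mcW_m$ as well, $\norm{r\otimes\Id}_{\varphi}\leq\sqrt{\eps}$, so $\norm{r}_{\tau}\leq(2Dm+1)\sqrt{\eps}$ for every $r\in\mcR_m\cup\mcR_m^*$; that is, $\tau$ is an $\bigl((2Dm+1)^2\eps,\mcR_m\bigr)$-state (no traciality is needed, only this $(\eps,\cdot)$-state property). Putting $\eps':=26(2Dm+1)^2\eps$, which exceeds both $25\eps$ and $(2Dm+1)^2\eps$, we see that $\widehat{\varphi}$ is $(\eps',\mcX)$-synchronous and $\tau=\widehat{\varphi}\circ\iota$ is an $(\eps',\mcR_m)$-state, so \Cref{prop:productnonhalting} yields positive integers $\Gamma,k$ independent of $m$ with $\norm{\wtd{P}_0}_{\tau}\leq\Gamma m^k\sqrt{\eps'}=\sqrt{26}\,\Gamma m^k(2Dm+1)\sqrt{\eps}$.

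Finally, applying the comparison estimate of \Cref{lemma:free} once more with $f=\wtd{P}_0$ — legitimate since $\mcW(\wtd{P}_0)\subseteq\mcW_m$ — and using $\norm{\wtd{P}_0}_{1,1}=\tfrac{3}{2}$ from \Cref{thm:main}(a), we get $\bigl|\norm{\wtd{P}_0\otimes\Id}_{\widehat{\varphi}}-\norm{\wtd{P}_0\otimes\Id}_{\varphi}\bigr|\leq 3\sqrt{\eps}$. Combining with the previous bound and $\norm{\wtd{P}_0\otimes\Id}_{\widehat{\varphi}}=\norm{\wtd{P}_0}_{\tau}$ gives $\norm{\wtd{P}_0\otimes\Id}_{\varphi}\leq\wtd{\Gamma}m^{k'}\sqrt{\eps}$ for, say, $k':=k+1$ and any integer $\wtd{\Gamma}\geq 3\sqrt{26}\,D\Gamma+3$, which completes the argument. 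There is no deep obstacle here: the geometric-series mechanism that converts the per-step $\mcR_m$-decomposition bound on $\wtd{P}_n+\wtd{X}_n\wtd{P}_n\wtd{X}_n-\wtd{P}_{n+1}$ into a uniform bound on $\norm{\wtd{P}_0}_{\tau}$ is already packaged inside \Cref{prop:productnonhalting} and \Cref{thm:main}; the only point requiring care is the $\eps$-bookkeeping, i.e.\ collecting the factor $25\eps$ coming from the rounding step of \Cref{lemma:free} together with the factor $(2Dm+1)^2\eps$ coming from the relation-norm estimates into a single multiple of $\eps$ that is polynomial in $m$.
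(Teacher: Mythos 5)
Your argument is correct and follows the paper's proof essentially step for step: it applies \Cref{lemma:free} to produce the $(25\eps,\mcX)$-synchronous state $\widehat{\varphi}$ on $\C\Z_2^{*\mcX}\otimes\C\Z_2^{*\mcX}$, transfers relation norms via the $\|\cdot\|_{1,1}$ comparison estimate, invokes \Cref{prop:productnonhalting}, and transfers back for $\wtd{P}_0$, differing only in cosmetic bookkeeping (the paper rounds $2Dm+1$ up to $2Dm+3$ so that $(2Dm+3)^2\geq 25$, whereas you take $\eps'=26(2Dm+1)^2\eps$; both yield the same $k'=k+1$ with slightly different $\wtd{\Gamma}$).
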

\begin{proof}
    Let $\Gamma,k$ be integers from \Cref{prop:productnonhalting}, and let $D$
    be the integer from \Cref{thm:main}, part (d). Suppose $m\not\in \mcL$,
    $\eps\geq 0$, and that $\varphi$ is an $(\eps,\wtd{\mcW}_m)$-state on
    $\C^*\ang{\mcX}\otimes\C^*\ang{\mcX}$ which is $(\eps,\mcX)$-synchronous. Since
    $\varphi(s^*s\otimes\Id)\leq \eps$ and $\varphi(\Id\otimes s^*s)\leq \eps$ for
    all $s\in \bigcup_{x\in\mcX}\mcW(x)$, and $\{s\otimes
    \Id:s\in\bigcup_{r\in\mcR_m\cup\mcR_m^*}\mcW(r)\}\subseteq\wtd{\mcW}_m$, by
    \Cref{lemma:free}, there is a $(25\eps,\mcX)$-synchronous state
    $\widehat{\varphi}$ on $\C\Z_2^{*\mcX}\otimes \C\Z_2^{*\mcX}$ such that 
    \begin{align*}
        \big\vert \norm{r\otimes\Id}_{\widehat{\varphi}}-\norm{r\otimes\Id}_\varphi\big\vert\leq 2\norm{r}_{1,1}\sqrt{\eps}\leq 2Dm\sqrt{\eps}
    \end{align*}
    for all $r\in \mcR_m\cup\mcR_m^*$. Since $\{r\otimes \Id:r\in \mcR_m\}\subseteq
    \wtd{\mcW}_m$, for every $r\in\mcR_m\cup\mcR_m^*$, $\norm{r\otimes
    1}_{\varphi}\leq \sqrt{\eps}$, and hence
    $\norm{r\otimes\Id}_{\widehat{\varphi}}\leq (2Dm+1)\sqrt{\eps}$. Rounding $2Dm+1$
    up to $2Dm+3$ so that $(2Dm+3)^2 \geq 25$, we conclude
    that $\widehat{\varphi}$ is a $((2Dm+3)^2\eps,\mcX)$-synchronous state on
    $\C\Z_2^{*\mcX}\otimes \C\Z_2^{*\mcX}$ and $\widehat{\varphi}\circ \iota$ is
    a $((2Dm+3)^2\eps,\mcR_m)$-state on $\C\Z_2^{*\mcX}$. 
    By \Cref{prop:productnonhalting},
    $\norm{\wtd{P}_0\otimes\Id}_{\widehat{\varphi}}\leq \Gamma m^k
    \sqrt{(2Dm+3)^2\eps}=(2Dm+3)\Gamma m^k \sqrt{\eps}$. Again by
    \Cref{lemma:free}, since $\{s\otimes 1:s\in\mcW(\wtd{P}_0)\}\subseteq
    \wtd{\mcW}_m$, we have 
     \begin{align*}
        \big\vert \norm{\wtd{P}_0\otimes\Id}_{\widehat{\varphi}}-\norm{\wtd{P}_0\otimes\Id}_\varphi\big\vert\leq 2\norm{\wtd{P}_0}_{1,1}\sqrt{\eps}\leq 3\sqrt{\eps}.
    \end{align*}
    It follows that 
    \begin{align*}
        \norm{\wtd{P}_0\otimes\Id}_\varphi\leq \left((2Dm+3)\Gamma m^k+3\right)\sqrt{\eps}\leq 8D\Gamma m^{k+1}\sqrt{\eps}.
    \end{align*}
    Taking $\wtd{\Gamma}=8D\Gamma$ and $k'=k+1$ completes the proof.
\end{proof}

Before proceeding to the proof of \Cref{thm:actualmain}, we need a few more
lemmas.  The first is a standard fact about extending tracial states to (not
necessarily tracial) states on tensor products.
\begin{lemma}\label{lem:extendtrace}
    If $\tau$ is a tracial state on $\C \Z_2^{*\mcX}$, then there is a $(0,\mcX)$-synchronous state
    $\varphi$ on $\C\Z_2^{*\mcX}\otimes \C\Z_2^{*\mcX}$ with $\varphi\circ \iota = \tau$. 
\end{lemma}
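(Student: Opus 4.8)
The plan is to construct $\varphi$ from the GNS representation of $\tau$. Let $(\pi:\C\Z_2^{*\mcX}\arr\mcB(\mcH),v)$ be the GNS representation of the tracial state $\tau$, and let $M:=\pi(\C\Z_2^{*\mcX})''$ be the von Neumann algebra it generates; the vector state $T\mapsto\ang{v,Tv}$ extends to a (normal, since $v$ is cyclic and separating) tracial state on $M$. The key classical fact is that for a tracial von Neumann algebra $(M,\tau)$ the action of $M$ on $L^2(M,\tau)$ together with the commuting right action of $M^{\mathrm{op}}$ gives a representation of $M\otimes_{\mathrm{alg}}M^{\mathrm{op}}$; concretely, there is a representation $\rho:\C\Z_2^{*\mcX}\otimes\C\Z_2^{*\mcX}\arr\mcB(\mcH)$ with $\rho(a\otimes 1)=\pi(a)$ for the left factor, and $\rho(1\otimes a)=J_M\pi(\omega_{\mcX}(a))^*J_M$ (equivalently $J_M\pi(\omega_\mcX(a^*))J_M$) for the right factor, where $J_M$ is the modular conjugation, i.e. the antilinear isometry extending $\pi(b)v\mapsto\pi(b^*)v$. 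Because the generators $x\in\mcX$ are self-adjoint involutions, $\omega_\mcX(x^*)=x$, so $\rho(1\otimes x)=J_M\pi(x)J_M$, and the two copies of $\mcX$ commute as required. Then set $\varphi(\cdot):=\ang{v,\rho(\cdot)v}$.

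The steps, in order: (1) Build the GNS data $(\pi,\mcH,v)$ and the modular conjugation $J_M$, noting $J_Mv=v$. (2) Check that $a\mapsto J_M\pi(\omega_\mcX(a^*))J_M$ is a well-defined $*$-representation of $\C\Z_2^{*\mcX}$ on $\mcH$ whose image commutes with $\pi(\C\Z_2^{*\mcX})$ — this is the standard Tomita–Takesaki fact that $J_MMJ_M=M'$, but for a free product of $\Z_2$'s one can argue directly: it suffices (since $\C\Z_2^{*\mcX}$ is generated by the involutions $x\in\mcX$) to verify that $J_M\pi(x)J_M$ is a self-adjoint unitary commuting with every $\pi(y)$, $y\in\mcX$, which follows from traciality via $\ang{J_M\pi(x)J_M\,\pi(y)v_1,v_2}=\ang{\pi(y)v_1,J_M\pi(x)J_Mv_2}$ and the identity $\pi(b)v\mapsto\pi(b^*)v$ after expanding on the dense subspace $\pi(\C\Z_2^{*\mcX})v$. (3) Define $\rho$ on $\C\Z_2^{*\mcX}\otimes\C\Z_2^{*\mcX}$ by the two commuting representations, and set $\varphi:=\ang{v,\rho(\cdot)v}$; this is a state. (4) Verify $\varphi\circ\iota=\tau$: for $a\in\C\Z_2^{*\mcX}$, $\varphi(a\otimes 1)=\ang{v,\pi(a)v}=\tau(a)$. (5) Verify $(0,\mcX)$-synchronicity: for $x\in\mcX$, $\norm{x\otimes 1-1\otimes x}_\varphi^2=\norm{\pi(x)v-J_M\pi(x)J_Mv}^2=\norm{\pi(x)v-J_M\pi(x)v}^2=\norm{\pi(x)v-\pi(x^*)v}^2=0$ since $x=x^*$; here I used $J_Mv=v$ and $J_M\pi(x)v=\pi(x^*)v=\pi(x)v$.

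The main obstacle is step (2): making the right action well-defined and commuting precise. One can sidestep invoking the full Tomita–Takesaki machinery by working purely algebraically on the pre-Hilbert space $\mcA_0:=\C\Z_2^{*\mcX}$ with inner product $\ang{a,b}:=\tau(a^*b)$; define $L_a(b):=ab$ and $R_a(b):=b\,a$ (right multiplication), check $\ang{L_a b,c}=\ang{b,L_{a^*}c}$ and $\ang{R_a b,c}=\ang{b,R_{a^*}c}$ using only associativity and $\tau(x^*)=\overline{\tau(x)}$ (traciality is not even needed for adjointability, only for boundedness of $R_a$, which holds since $R_a$ is conjugate to $L_{a^*}$ via the tracial symmetry, or simply because $R_a$ has the same norm considerations — a short estimate using $\norm{a}_{\C\Z_2^{*\mcX}}\le\norm{a}_1$), note $[L_a,R_b]=0$ is just associativity, and then take Hilbert space completions. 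The map $\iota':a\otimes b\mapsto L_aR_{\omega_\mcX(b)}$ is then a $*$-representation because $\omega_\mcX$ is a $*$-anti-isomorphism (as recorded in the paper before Lemma \ref{lemma:xy_stability}), so $R_{\omega_\mcX(\cdot)}$ is a genuine $*$-representation of the second tensor factor. Everything else is then a one-line computation as in step (5).
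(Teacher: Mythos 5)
Your proof is correct and takes essentially the same route as the paper's. Both constructions build $\varphi$ out of the standard bimodule $L^2(M,\tau)$, with the right action realized via the modular conjugation, and both yield the same formula $\varphi(a\otimes b)=\ang{v,\pi(a\,\omega_\mcX(b))v}=\tau(a\,\omega_\mcX(b))$; the paper just delegates the commutation theorem to \cite[Proposition~6.1.2]{BO08} rather than unpacking $J_M$ explicitly. One small correction to the parenthetical in your last paragraph: traciality \emph{is} already needed for adjointability of the right multiplication $R_a$, not just for its boundedness. Indeed $\ang{R_a b, c}=\tau(a^*b^*c)$ while $\ang{b, R_{a^*}c}=\tau(b^*ca^*)$, and these agree precisely because $\tau$ is tracial; by contrast, adjointability of $L_a$ only uses that $\tau$ is a positive $*$-functional. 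This does not affect your main argument, which correctly supplies a tracial state throughout.
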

\begin{proof}
    Let $(\pi,v)$ be the GNS representation of $\tau$, and let $\mcC = \overline{\pi(\mcA)}$ be the closure of
    $\pi(\mcA)$ in the operator norm, where $\mcA = \C \Z_2^{*\mcX}$. By
    \cite[Proposition 6.1.2]{BO08}, the tracial state defined by $v$ on
    $\mcC$ extends to a state $\varphi' : \mcC \otimes \mcC^{op} \to \C : a \otimes b
    \mapsto \ang{v, ab v}$. Note that the isomorphism $\omega_\mcX : \C^*\ang{\mcX} \to \C^*\ang{\mcX}^{op}$ 
    defined in \Cref{sec:approxrepn} descends to an isomorphism $\mcA \to \mcA^{op}$, and that
    $\pi$ defines a $*$-homomorphism $\pi^{op} : \mcA^{op} \to \mcC^{op}$. Let $\varphi = 
    \varphi' \circ (\pi \otimes (\pi^{op} \circ \omega_{\mcX}))$. Then $\varphi$ is a state
    on $\mcA \otimes \mcA$ with $\varphi(a \otimes b) = \ang{v, \pi(a\omega_\mcX(b)) v} = 
    \tau(a \cdot \omega_\mcX(b))$. Since
    \begin{equation*}
        \norm{x \otimes 1 - 1 \otimes x}_{\varphi}^2 = 
            \varphi(x^* x \otimes 1 - x \otimes x^* - x^* \otimes x + 1 \otimes x^* x)
            = \tau(x^2 - x^2 - x^2 + x^2) = 0
    \end{equation*}
    for all $x \in \mcX$, $\varphi$ is $(0,\mcX)$-synchronous.
\end{proof}

The remaining two lemmas are used to get from $\Q \mcF_N$ and $\Q \Z_2^{*N}$
for large $N$ down to $\Q \Z_3^{*2}$ and $\Q \Z_2^{*3}$. Both lemmas will be familiar to experts.
\begin{lemma}\label{lem:subgroupsos}
    If $H$ be a subgroup of a group $G$, then any
    state on $\C H$ extends to a state on $\C G$. 
\end{lemma}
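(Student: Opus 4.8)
The plan is to extend $\psi$ by realizing its GNS representation inside a representation of $\C G$ induced from $H$. Let $\psi$ be a state on $\C H$ with GNS representation $(\pi : \C H \to \mcB(\mcH), v)$, so that $\psi(a) = \ang{v,\pi(a)v}$ for all $a\in\C H$ and $\norm{v}=1$. Restricting $\pi$ to $H\subseteq\C H$ gives a unitary representation $\sigma : H \to \mcU(\mcH)$.

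First I would form the induced module $\mcK_0 := \C G \otimes_{\C H}\mcH$, where $\C H$ acts on $\C G$ by right multiplication. Choosing a set of left coset representatives $\{t_\alpha\}$ for $G/H$ with $t_0 = \Id$ gives a vector space decomposition $\mcK_0 = \bigoplus_\alpha t_\alpha \otimes \mcH$, and I would equip $\mcK_0$ with the inner product that is the orthogonal direct sum of these copies of $\ang{\cdot,\cdot}_{\mcH}$; equivalently, $\ang{g_1\otimes\xi_1, g_2\otimes\xi_2} := \ang{\xi_1, \sigma(g_1^{-1}g_2)\xi_2}$ when $g_1 H = g_2 H$ and $0$ otherwise, which one checks is well-defined (compatible with $g'h\otimes\xi = g'\otimes\sigma(h)\xi$ for $h\in H$) and positive definite. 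Let $\mcK$ be its Hilbert space completion. The left-translation action $g\cdot(g'\otimes\xi) := gg'\otimes\xi$ permutes the summands and acts within each summand by some $\sigma(h)$, hence is unitary, so it extends to a $*$-representation $\rho : \C G \to \mcB(\mcK)$.

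Now I would set $\wtd v := \Id\otimes v \in \mcK$, so that $\norm{\wtd v}=\norm{v}=1$. For $h\in H$ we have $\rho(h)\wtd v = h\otimes v = \Id\otimes\sigma(h)v$ in $\C G\otimes_{\C H}\mcH$, so $\ang{\wtd v, \rho(h)\wtd v} = \ang{v,\sigma(h)v} = \psi(h)$, and by linearity $\ang{\wtd v,\rho(a)\wtd v} = \psi(a)$ for all $a \in \C H$. Define $\wtd\psi : \C G \to \C$ by $\wtd\psi(b) := \ang{\wtd v,\rho(b)\wtd v}$. Then $\wtd\psi(\Id)=1$ and $\wtd\psi$ is a vector functional of the $*$-representation $\rho$, so it is a state on $\C G$ by the GNS theorem for $*$-algebras, and it restricts to $\psi$ on $\C H$. (The boundedness condition \eqref{eq:bounded} is automatic here, since $\wtd\psi(b^*a^*ab) = \norm{\rho(a)\rho(b)\wtd v}^2 \le \norm{\rho(a)}^2\,\wtd\psi(b^*b)$ and $\norm{\rho(a)}<\infty$ because $a$ is a finite linear combination of the unitaries $\rho(g)$, $g\in G$.)

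The only point requiring care is the verification that the form on $\mcK_0$ is well-defined and positive, and both reduce immediately to the observation that, after choosing coset representatives, it is literally an orthogonal direct sum of copies of $\ang{\cdot,\cdot}_{\mcH}$ indexed by $G/H$. This is exactly the standard construction of the induced unitary representation $\operatorname{Ind}_H^G\sigma$, so I expect no real obstacle; the lemma is essentially the statement that restriction of $\operatorname{Ind}_H^G\sigma$ to $H$ contains $\sigma$ as a subrepresentation.
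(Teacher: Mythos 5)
Your proof is correct, but it follows a genuinely different path from the paper's. The paper's argument is positivstellensatz-flavoured: it introduces the compression map $r : \C G \to \C H$ (the ``conditional expectation'' sending $g \mapsto g$ for $g \in H$ and $g \mapsto 0$ otherwise), computes that $r(\alpha^*\alpha)$ is a sum of squares in $\C H$ for any $\alpha \in \C G$, concludes that the cone of sums of squares in $\C G$ intersected with $\C H$ is exactly the cone of sums of squares in $\C H$, and then invokes the M.~Riesz extension theorem (together with the archimedean property of $\C G$) to extend the functional. Your argument, by contrast, is constructive on the representation side: you pass to the GNS representation of the given state, induce it up to $G$ as $\operatorname{Ind}_H^G\sigma$, and read off the extended state as the vector state at $\Id \otimes v$. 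Both are complete proofs. Your approach has the advantage of being explicit and not requiring the Riesz/Hahn–Banach machinery or the archimedean hypothesis; in return, the paper's cone-intersection computation $\C G_+ \cap \C H = \C H_+$ (obtained via the compression $r$) is a slightly stronger structural statement that can be reused in other order-theoretic arguments. The well-definedness and positivity of your sesquilinear form, which you flag as the point requiring care, are indeed handled correctly by the coset decomposition, and your verification that the boundedness condition~\eqref{eq:bounded} is automatic is also correct.
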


\begin{proof}
    Let $\C H_+$ and $\C G_+$ be the convex cones of sums of squares in $\C H$ and $\C G$ respectively, and let $r : \C G \to \C H$ be the linear function with $r(g) = g$ if $g \in H$
    and $r(g) = 0$ if $g \not\in H$. By \cite[Proposition 4]{SS11}, $r(\C G_+)\subseteq \C H_+$. Consequently, if $a \in \C G_+\cap\C H$, then $a = r(a) \in \C H_+$, so $\C G_+\cap \C H=\C H_+$. If $f$ is a state on $\C H$, then $f$ is nonnegative on $
    \C H_{+}$.  Since
    $\C G$ is archimedean, the M. Riesz extension theorem implies that $f$
    extends to a real linear functional on $\text{span}_{\R} \C G_{+}$ which is nonnegative
    on $\C G_+$. From there, we can extend further to get a linear functional
    $\wtd{f}$ on $\C G$ such that $\wtd{f}(a^*) = \overline{\wtd{f}(a)}$ for all $a
    \in \C G$, $\wtd{f}|_{\C H} = f$, and $\wtd{f}$ is nonnegative on $\C G_+$.
    Since $\wtd{f}(1) = f(1) = 1$, the archimedean condition implies that
    $\wtd{f}$ is a state. 
\end{proof}

Another way to prove this lemma is to observe that $\C H$ and $\C G$ are subalgebras of the full group $C^*$-algebras $C^*(H)$ and $C^*(G)$. By definition of the full group $C^*$-algebra, any state on $\C H$ is the restriction of a state on $C^*(H)$, and since $C^*(H)$ is a subalgebra of $C^*(G)$,  any state on $C^*( H)$ extends to a state on $C^*( G)$.

\begin{lemma}\label{lem:commute} 
    Suppose $N \geq 1$ and $G$ is either $\mcF_K$ or $\Z_3^{*K}$ with $K\geq 2$, or $\Z_2^{*K}$ with $K\geq 3$. Then there is an injective group
    homomorphism $\mu : \mcF_N \to G$ and a computable $*$-homomorphism
    $\wtd{\mu} : \Q^*\ang{x_1,\ldots,x_N} \to \Q^*\ang{x_1,\ldots,x_K}$
    such that the diagram
    \begin{equation*}
        \begin{tikzpicture}
            \node (1) at (0,0) {$\Q^*\ang{x_1,\ldots,x_N}$};
            \node (2) at (6,0) {$\Q^*\ang{x_1,\ldots,x_K}$}
                edge [<-] node [above] {$\wtd{\mu}$} (1);
            \node (3) at (0,-2) {$\Q \mcF_N$}
                edge [<-] node [left] {$q_1$} (1);
            \node (4) at (6,-2) {$\Q G$}
                edge [<-] node [below] {$\mu$} (3)
                edge [<-] node [right] {$q_2$} (2);
        \end{tikzpicture}
    \end{equation*}
    commutes, where $q_1$ and $q_2$ are the quotient homomorphisms sending
    $x_j$ to the $j$th generator of $\mcF_N$ and $G$ respectively.
\end{lemma}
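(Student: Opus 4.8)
The plan is to reduce the statement to the purely group-theoretic fact that $\mcF_N$ embeds into $G$, exploiting that $\Q^*\ang{x_1,\ldots,x_N}$ is the free unital $*$-algebra on $x_1,\ldots,x_N$: a $*$-homomorphism out of it is determined by, and may be prescribed freely on, the images of the generators $x_i$ (the images of the $x_i^*$ being then forced). So once an injective group homomorphism $\mu\colon \mcF_N\incl G$ is in hand, the top map $\wtd{\mu}$ will be built by lifting $\mu$ generator-by-generator, and both commutativity and computability of the square will be automatic.

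In more detail, suppose $\mu\colon\mcF_N\arr G$ is an injective homomorphism given explicitly, and for each $i$ write $\mu(\bar x_i)$ in reduced form as a word $g_{j_1}^{e_1}\cdots g_{j_\ell}^{e_\ell}$ in the generators $g_1,\ldots,g_K$ of $G$ (with all $e_t=1$ when $G=\Z_2^{*K}$), where $\bar x_i:=q_1(x_i)$ is the $i$th generator of $\mcF_N$. Define $\wtd{\mu}(x_i)$ to be the $*$-monomial $y_{j_1}^{e_1}\cdots y_{j_\ell}^{e_\ell}\in\Q^*\ang{x_1,\ldots,x_K}$, where $y_j$ is the $j$th generator and $y_j^{-1}$ is read as $y_j^*$. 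Since $\Q^*\ang{x_1,\ldots,x_N}$ is free, this extends uniquely to a $*$-homomorphism $\wtd{\mu}$, which is computable because $\mu$ is given explicitly. Using $q_2(y_j^*)=g_j^{-1}$ one checks $q_2(\wtd{\mu}(x_i))=\mu(\bar x_i)=\mu(q_1(x_i))$ and likewise $q_2(\wtd{\mu}(x_i^*))=\mu(\bar x_i)^{-1}=\mu(q_1(x_i^*))$; since $q_2\circ\wtd{\mu}$ and $\mu\circ q_1$ are $*$-homomorphisms agreeing on the $x_i$ and their adjoints, the diagram commutes.

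It remains to produce $\mu$. First embed $\mcF_N$ into $\mcF_2=\ang{a,b}$ by $\bar x_i\mapsto a^iba^{-i}$ for $1\le i\le N$, a standard embedding (the elements $a^iba^{-i}$ freely generate a subgroup of $\mcF_2$). If $G=\mcF_K$ with $K\ge 2$, post-compose with the inclusion $\mcF_2\incl\mcF_K$ of the first two free factors. If $G=\Z_3^{*K}$ with $K\ge 2$ (resp. $G=\Z_2^{*K}$ with $K\ge 3$), it suffices to embed $\mcF_2$ into the subgroup $\Z_3*\Z_3$ (resp. $\Z_2^{*3}$) generated by the first two (resp. three) factors and then include that subgroup into $G$. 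For $\mcF_2\incl\Z_3*\Z_3$: the homomorphism $\Z_3*\Z_3\arr\Z_3$ sending both generators to a fixed generator of $\Z_3$ has kernel containing no conjugate of either factor, hence torsion-free, hence free by the Kurosh subgroup theorem; since Euler characteristic is multiplicative on finite-index subgroups, the kernel has $\chi=3(\tfrac13+\tfrac13-1)=-1=\chi(\mcF_2)$, so it is free of rank $2$, and a Reidemeister--Schreier computation with transversal $\{1,a,a^2\}$ exhibits an explicit free basis such as $\{ba^{-1},aba\}$. The embedding $\mcF_2\incl\Z_2^{*3}$ is obtained the same way from $\Z_2^{*3}\arr\Z_2$, whose kernel has $\chi=2(\tfrac32-2)=-1$ and free basis $\{ab,ac\}$. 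Composing $\mcF_N\incl\mcF_2\incl G$ gives the required $\mu$.

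This lemma is bookkeeping layered on classical group theory, so there is no serious obstacle. The points requiring the most care are keeping the correspondence $y_j^{-1}\leftrightarrow y_j^*$ consistent throughout the reduction (so that the square genuinely commutes, not merely up to the defining relations of $\Q G$), and checking injectivity rather than mere homomorphy of the group maps — which is exactly where the hypotheses $K\ge 2$ for $\Z_3^{*K}$ and $K\ge 3$ for $\Z_2^{*K}$ enter, since $\Z_2*\Z_2$ is virtually cyclic and contains no nonabelian free subgroup.
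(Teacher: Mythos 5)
Your proof is correct and takes essentially the same route as the paper: factor $\mu$ through the standard embedding $\mcF_N\hookrightarrow\mcF_2$ followed by an embedding $\mcF_2\hookrightarrow G$, then lift generator-by-generator to the free $*$-algebras using the universal property of $\Q^*\langle x_1,\ldots,x_N\rangle$. The only real difference is that where the paper simply quotes well-known explicit embeddings $\gamma_2:\mcF_2\hookrightarrow\Z_3^{*2}$ and $\gamma_3:\mcF_2\hookrightarrow\Z_2^{*3}$, you derive (different but equally valid) embeddings as the index-$3$ and index-$2$ kernels of the augmentation-type maps to $\Z_3$ and $\Z_2$, using Kurosh, Euler characteristics, and Reidemeister--Schreier to exhibit explicit free bases.
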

\begin{proof}
    Let $x_1,\ldots,x_N$ denote the generators of $\mcF_N$, $y_1,y_2$ denote
    the generators of $\mcF_2$, $w_1, w_2$ denote the generators of
    $\Z_3^{*2}$, and $z_1,z_2,z_3$ denote the generators of 
    $\Z_2^{*3}$. It is well-known that the homomorphisms
    \begin{align*}
        &\gamma_1 : \mcF_N \to \mcF_2 : x_i \mapsto y_1^i y_2 y_1^{-i}, 1 \leq i \leq N, \\
        &\gamma_2 : \mcF_2 \to \Z_3^{*2} : y_1 \mapsto w_1 w_2^{-1}, y_2 \mapsto w_1^{-1} w_2, \text{ and} \\
        &\gamma_3 : \mcF_2 \to \Z_2^{*3} : y_i \mapsto z_1^{1-i} z_2 z_3 z_1^{1-i}, i=1,2
    \end{align*}
    are injective. If $G = \mcF_K$ with $K \geq 2$, then we can take $\mu = \gamma_1$, where
    we think of $\mcF_2$ as a subgroup of $\mcF_K$ by mapping $y_1$ and $y_2$ to the first
    two generators, and let $\wtd{\mu}$ be the lift of $\mu$
    sending $x_i \mapsto x_1^i x_2 (x_1^*)^i$. 
    If $G = \Z_3^{*K}$ with $K \geq 2$, we can take $\mu = \gamma_2 \circ \gamma_1$, where
    again we think of $\Z_3^{*2}$ as a subgroup of $\Z_3^{*K}$ by mapping $w_1$ and $w_2$
    to the first two generators, and let $\wtd{\mu}$ be the lift of $\mu$
    sending $x_i \mapsto (w_1 w_2^*)^i w_1^* w_2 (w_2 w_1^*)^i$. 
    Finally if $G = \Z_2^{*K}$ with $K \geq 3$, we can take $\mu = \gamma_3
    \circ \gamma_1$, where we think of $\Z_2^{*3}$ as a subgroup of
    $\Z_2^{*K}$, and let $\wtd{\mu}$ be the lift of $\mu$ sending $x_i \mapsto
    (x_2 x_3)^i (x_1 x_2 x_3 x_1) (x_3^* x_2^*)^i$. In all three cases, we see from
    the explicit formula that $\wtd{\mu}$ is computable.
\end{proof}

\begin{proof}[Proof of Theorem \ref{thm:actualmain}] 
Set $\mcP_N:=\Q^*\ang{x_1,\ldots,x_N}$ for $N\in\N$.  Let $P_1(N)$ be the
promise problem from the statement of the theorem with $\mcP=\mcP_N$ and $\mcC
= \Q \Z_2^{*N}$, and let $P_2(N)$ be the promise problem with $\mcP=\mcP_N$ and
$\mcC = \Q \Z_3^{*N}$. Although it isn't mentioned in the theorem statement,
let $P_3(N)$ be the promise problem with $\mcP=\mcP_N$ and $\mcC = \Q \mcF_N$. 
As in the proof of \Cref{thm:actualmaintracial}, let $n_0=\abs{\mcX}$.  We
start by showing that $P_1(N)$ is coRE-hard for all $N \geq n_0$. 

For this, suppose $N\geq n_0$, and once again pick an ordering on $\mcX$ and identify $\Q^*\ang{\mcX}$, $\C^*\ang{\mcX}$, and $\C\Z_2^{*\mcX}$ with subalgebras of $\mcP_N$, $\C^*\ang{x_1,\ldots,x_N}$, and $\C\Z_2^{*N}$ respectively. Let $\pi:\mcP_N \to\Q\Z_2^{*N}$ be the quotient map, and let $q=q_2\circ q_1$, where 
$q_1:\Q\Z_2^{*N}\to\Q\Z_2^{*\mcX}$ is the homomorphism sending $x_i\mapsto x_i$ for $1\leq i \leq n_0$, and $q_2:\Q\Z_2^{*\mcX}\to \mcA_\mcL(m)$ is the quotient map. Let
$\wtd{\Gamma},k'$, and $\wtd{W}_m$ be as in \Cref{prop:actual}.  For every
$m\in\N$, define a $*$-polynomial 
\begin{align*}
\beta(m)=\sum_{r\in\wtd{\mcW}_m\cup\wtd{\mcW}_m^*}r^*r +\sum_{x\in\mcX} (x\otimes \Id-\Id\otimes x)^*(x\otimes \Id-\Id\otimes x)-\tfrac{1}{\wtd{\Gamma}^2m^{2k'}}\wtd{P}_0^*\wtd{P}_0\otimes \Id  
\end{align*}
in $\Q^*\ang{\mcX}\otimes_\Q\Q^*\ang{\mcX}\subseteq\mcP_N\otimes_\Q\mcP_N$. Then $\beta(m)$ is self-adjoint and the map $\beta:\N\to\mcP_N\otimes_\Q\mcP_N$ is computable by \Cref{lemma:tracialfree,thm:main,prop:actual}. 

By \Cref{lemma:tracialfree}, $(\pi\otimes\pi)(t\otimes \Id)=(\pi\otimes\pi)(s\otimes \Id)=(\pi\otimes\pi)(\Id\otimes s)=0$ for all $t\in \big(\bigcup_{r\in\mcR_m\cup\mcR_m^*}\mcW(r)\big)\cup\mcW(\wtd{P}_0)$ and $s\in\bigcup_{x\in\mcX}\mcW(x)$. Hence 
\begin{align*}
    (\pi\otimes\pi)(\beta(m))=\sum_{r\in\mcR_m\cup\mcR_m^*}r^*r\otimes \Id + \sum_{x\in\mcX} (x\otimes \Id-\Id\otimes x)^2-\tfrac{1}{\wtd{\Gamma}^2m^{2k'}}\wtd{P}_0^2\otimes \Id
\end{align*}
in $\Q\Z_2^{*\mcX}\otimes_\Q \Q\Z_2^{*\mcX}\subseteq \Q\Z_2^{*N}\otimes_\Q \Q\Z_2^{*N}$. If $m\in\mcL$, then by \Cref{thm:main}, part (c), there is a tracial state $\tau$ on $\mcA_\mcL(m)$ such that $\tau(\wtd{P}_0^2)>0$. Since $\tau\circ q_2$ is a tracial state on $\C\Z_2^{*\mcX}$ and $q_2(r)=0$ for all $r\in \mcR_m$, \Cref{lem:extendtrace} implies that there is a $(0,\mcX)$-synchronous state $\varphi$ on $\C\Z_2^{*\mcX}\otimes\C\Z_2^{*\mcX}$ such that $\varphi(\wtd{P}_0^2\otimes \Id)=\tau(\wtd{P}_0^2)>0$ and $\varphi(r^*r\otimes \Id)=\tau(q_2(r^*r))=0$ for all $r\in\mcR_m\cup\mcR_m^*$. Then $\wtd{\varphi}:=\varphi\circ(q_1\otimes q_1)$ is  a state on $\C\Z_2^{*N}\otimes\C\Z_2^{*N}$ such that $\wtd{\varphi}\left( (\pi\otimes\pi)(\beta(m))\right)=-\tfrac{1}{\wtd{\Gamma}^2m^{2k'}}\tau(\wtd{P}_0^2)<0$. We conclude that $(\pi\otimes\pi)(\beta(m))$ is not positive. 

Now suppose $m\notin\mcL$. Let $\varphi$ be a state on $(\mcP_N)_\C\otimes_\C(\mcP_N)_\C$, and let
\begin{align*}
\eps:=\varphi\left(\sum_{r\in\wtd{\mcW}_m\cup\wtd{\mcW}_m^*}r^*r +\sum_{x\in\mcX} (x\otimes \Id-\Id\otimes x)^*(x\otimes \Id-\Id\otimes x) \right).
\end{align*} 
The restriction of $\varphi$ to $\C^*\ang{\mcX}\otimes \C^*\ang{\mcX}$ is an
$(\eps,\wtd{\mcW}_m)$-state which is $(\eps,\mcX)$-synchronous, so by
\Cref{prop:actual},
$\varphi(\wtd{P}_0^*\wtd{P}_0\otimes\Id)=\norm{\wtd{P}_0\otimes\Id}^2_\varphi\leq
\wtd{\Gamma}^2m^{2k'}\eps$. Hence $\varphi (\beta(m))\geq 0$. We conclude that the complement $\overline{\mcL}$ of $\mcL$
reduces to $P_1(N)$. Taking $\mcL$ to
be RE-hard implies that $P_1(N)$ is coRE-hard for $N \geq n_0$.

Next, the quotient map $\pi^{(N)}: \mcP_N\to\Q\Z_2^{*N}$ factors as
$\pi^{(N)}=\pi_2^{(N)}\circ\pi_1^{(N)}$, where $\pi_1^{(N)}$ is the quotient
map $\mcP_N \to \Q \F_N$, and $\pi_2^{(N)}$ is the quotient map 
$\Q \F_N \to \Q \Z_2^{*N}$. If $(\pi \otimes \pi)(\alpha)$ is not positive in $\C
\Z_2^{*N} \otimes \C \Z_2^{*N}$ for $\alpha \in \mcP_N \otimes \mcP_N$, then
$(\pi_1^{(N)} \otimes \pi_1^{(N)})(\alpha)$ is not positive in $\C \F_N \otimes \C \F_N$. Thus
$P_1(N)$ reduces to $P_3(N)$, and $P_3(N)$ is coRE-hard
for $N \geq n_0$. 

By Lemma \ref{lem:commute}, if $N \geq 1$ and $K \geq 2$ then there is an
injective homomorphism $\mu : \mcF_{N} \to \Z_3^{*K}$ and a computable
homomorphism $\wtd{\mu} : \mcP_{N} \to \mcP_{K}$ such that $\pi_3^{(K)} \circ
\wtd{\mu} = \mu \circ \pi_1^{(N)}$, where $\pi_3^{(K)}$ is the quotient map
$\mcP_K \to \Q \Z_3^{*K}$. If $\alpha \in \mcP_N \otimes \mcP_N$ is
positive in $(\mcP_N)_{\C} \otimes (\mcP_N)_{\C}$, then $(\wtd{\mu} \otimes
\wtd{\mu})(\alpha)$ is positive in $(\mcP_K)_{\C} \otimes (\mcP_K)_{\C}$. If
$(\pi_1^{(N)} \otimes \pi_1^{(N)})(\alpha)$ is not positive in $\C \F_N \otimes
\C \F_N$, then there is a state $f$ on $\C \F_N \times \F_N$ such that
$f(\pi_1^{(N)} \otimes \pi_1^{(N)})(\alpha)) < 0$.  If we identify $\C \mcF_N
\otimes \C \mcF_N$ with $\C \mcF_N \times \mcF_N$ and $\C \Z_3^{*K} \otimes \C
\Z_3^{*K}$ with $\C \Z_3^{*K} \times \Z_3^{*K}$, then the homomorphism $\mu \otimes \mu$
is the algebra homomorphism coming from the injective group homomorphism $\mu
\times \mu : \mcF_N \times \mcF_N \to \Z_3^{*K} \times \Z_3^{*K}$.  By
\Cref{lem:subgroupsos}, the state $f$ extends to a state $\wtd{f}$ on $\C
\Z_3^{*K} \times \Z_3^{*K}$, and 
\begin{equation*} 
    \wtd{f}((\mu \times \mu)(\pi_1^{(N)} \otimes \pi_1^{(N)})(\alpha)) = f((\pi_1^{(N)} \otimes \pi_1^{(N)})(\alpha)) < 0.
\end{equation*}
Hence $(\pi_3^{(K)} \otimes \pi_3^{(K)})(\wtd{\mu} \otimes \wtd{\mu})(\alpha) =
(\mu \otimes \mu)(\pi_1^{(N)} \otimes \pi_1^{(N)})(\alpha)$ is not
positive.  Since $\wtd{\mu} \otimes \wtd{\mu}$ is computable, 
$P_3(N)$ reduces to $P_2(K)$ for any $N \geq 1$ and $K \geq 2$. It follows that
$P_2(K)$ is coRE-hard for all $K \geq 2$. Similarly, $P_3(N)$ reduces to
$P_1(K)$ for all $N \geq 1$ and $K \geq 3$, so $P_1(K)$ is coRE-hard for all $K
\geq 3$.
\end{proof}

The following example shows that \Cref{thm:actualmain} is not true for
$\mcC = \Q \Z_2^{*2}$. 
\begin{example}\label{ex:Z22}
    Let $G = \Z_2^{*2}$ be the infinite dihedral group. This group is amenable
    and linear. Hence the product group $G \times G$ is also amenable and linear.
    By a theorem of Bekka, $\C G \times G$ is residually finite-dimensional \cite{Bekka}, in
    the sense that an element $\alpha \in \C G \otimes \C G = \C G \times G$ is
    positive if and only if $\phi(\alpha)$ is positive for all
    finite-dimensional representations $\phi$.  Every finite-dimensional
    representation of $\C G \times G$ decomposes into irreducible
    representations, so $\alpha$ is positive if and only if $\phi(\alpha)$ is
    positive for all irreducible finite-dimensional representations. The
    irreducible representations of $G$ have dimension $1$ and $2$, so the
    irreducible representations of $G \times G$ have dimension $1$, $2$, and
    $4$. Thus $\alpha \in \C G \times G$ is positive if and only if $\phi(\alpha)$
    is positive for all representations of dimension $4$. The question of
    whether $\phi(\alpha)$ is not positive for all representations of dimension
    $4$ lies in the existential theory of the reals, and thus can be solved in
    $\PSPACE$ \cite{Can88}.  In particular, the question of whether $\alpha \in
    \C G \times G$ is positive is decidable. We do not know whether 
    $\C G \times G$ is archimedean closed. 
\end{example}

\bibliographystyle{alpha}
\bibliography{bibliography}

\end{document}